\documentclass[11pt,reqno]{amsart} 

\usepackage[utf8]{inputenc}

\usepackage[utf8]{inputenc}
\usepackage[a4paper]{geometry}

\usepackage{amsmath}
\usepackage{amsfonts}
\usepackage{amssymb}
\usepackage{amsthm}
\usepackage{paralist}
\usepackage{natbib}
\usepackage{color}
\usepackage[colorlinks=true,linkcolor=blue,citecolor=blue,pdfborder={0 0 0}]{hyperref}

\usepackage{dsfont}
\usepackage{bm}

\usepackage[linesnumbered,ruled]{algorithm2e}

\usepackage{color}

\usepackage{dsfont}
\usepackage{bm}

\usepackage{graphicx}
\usepackage{enumerate}
\usepackage{epstopdf}
\usepackage{diagbox}

\newtheorem{theorem}{Theorem}[section]
\newtheorem{proposition}[theorem]{Proposition}
\newtheorem{lemma}[theorem]{Lemma}

\theoremstyle{definition}

\newtheorem{condition}[theorem]{Condition}

\theoremstyle{remark}
\newtheorem{remark}[theorem]{Remark}
\newtheorem{example}[theorem]{Example}

\numberwithin{equation}{section}

\newcommand{\dto}{\rightsquigarrow}
\newcommand{\pto}{\xrightarrow{p}}

\newcommand{\R}{\mathbb{R}}

\newcommand{\Z}{\mathbb{Z}}
\newcommand{\N}{\mathbb{N}}
\newcommand\Prob{\mathbb{P}}    

\newcommand{\FF}{\mathbb{F}}
\newcommand{\TT}{\mathbb{T}}

\newcommand{\Bc}{\mathcal{B}}

\newcommand{\Nc}{\mathcal{N}}
\newcommand{\Fc}{\mathcal{F}}
\newcommand{\Jc}{\mathcal{J}}

\newcommand{\scs}{\scriptscriptstyle}

\newcommand{\eps}{\varepsilon}

\newcommand{\ind}{\operatorname{\bf{1}}}
\newcommand{\diff}{{\,\mathrm{d}}}

\newcommand{\bn}{b_n} 
\newcommand{\kn}{k_n} 
\newcommand{\tailp}{e_n} 
\newcommand{\tailplim}{{e}} 
\newcommand{\matr}[1]{
\begin{pmatrix}
    #1
\end{pmatrix}
}

\newcommand{\ip}[1]{\lfloor #1 \rfloor}

\newcommand{\Fin}{F^{\leftarrow}}

\newcommand{\Exp}{\operatorname{E}}
\newcommand{\Var}{\operatorname{Var}}
\newcommand{\Cov}{\operatorname{Cov}}

\newcommand{\Be}{\operatorname{B}}
\newcommand{\No}{\operatorname{N}}
\newcommand{\slb}{\operatorname{sl}}
\newcommand{\djb}{\operatorname{dj}}

\newcommand{\even}{\operatorname{even}}
\newcommand{\odd}{\operatorname{odd}}

\begin{document}

\title[Maximum Likelihood Estimation of the Extremal Index]{Weak convergence of a pseudo maximum likelihood estimator for the extremal index}

\author{Betina Berghaus}
\address{Ruhr-Universit\"at Bochum, Fakult\"at f\"ur Mathematik, Universit\"atsstr.\ 150, 44780 Bochum, Germany}
\email{betina.berghaus@rub.de}

\author{Axel B\"ucher}
\address{Ruhr-Universit\"at Bochum, Fakult\"at f\"ur Mathematik, Universit\"atsstr.\ 150, 44780 Bochum, Germany}
\email{axel.buecher@rub.de}

\date{\today}

\begin{abstract}
The extremes of a stationary time series typically occur in clusters. A primary measure for this phenomenon is the extremal index, representing  the reciprocal of the expected cluster size. Both a disjoint and a sliding blocks estimator for the extremal index are analyzed in detail. In contrast to many competitors, the estimators only depend on the choice of one parameter sequence.  
We derive an asymptotic expansion, prove asymptotic normality
and show consistency of an estimator for the asymptotic variance. Explicit calculations in certain models and a finite-sample Monte Carlo simulation study  reveal that the sliding blocks estimator outperforms other blocks estimators, and that 
it is competitive to runs- and inter-exceedance estimators in  various models. The methods are applied to a variety of financial time series.
\medskip

\noindent
\textsc{Key words.} Clusters of extremes, extremal index, stationary time series, mixing coefficients, block maxima.

\end{abstract}

\maketitle

\date{\today}


\section{Introduction}

An adequate description of the extremal behavior of a time series is important in many applications, such as in hydrology, finance or actuarial science (see, e.g., Section~1.3 in the monograph \citealp{BeiGoeSegTeu04}). The extremal behavior can be characterized by the tail of the marginal law of the time series and by the serial dependence; that is, by the tendency that extremal observations tend to occur in clusters. A primary measure of extremal serial dependence is given by the extremal index $\theta\in[0,1]$, which can be interpreted as being equal to the reciprocal of the mean cluster size.  The underlying theory was worked out in \cite{Lea83, LeaLinRoo83, Obr87, HsiHusLea88, LeaRoo88}.

Estimating the extremal index based on a finite stretch from the time series has been extensively studied in the literature. Common approaches are based on the blocks method, the runs method and the inter-exceedance times method (see \citealp{BeiGoeSegTeu04}, Section~10.3.4, for an overview). The first two methods usually depend on two parameters to be chosen by the statistician: a threshold sequence and a cluster identification scheme parameter (such as a block length). In contrast, inter-exceedance type-estimators are attractive since  they only depend on a threshold sequence. Some references are \cite{Hsi93, SmiWei94, WeiNov98, FerSeg03, Suv07,Rob09, RobSegFer09,SuvDav10}, among others. 
The present paper is on a blocks estimator (and a slightly modified version) due to \cite{Nor15}, which, remarkably, only depends on a cluster identification parameter. This makes the estimator practically appealing in comparison to other blocks methods.

In many papers on estimating the extremal index, either no asymptotic theory is given (such as in \citealp{Suv07,Nor15}), or the asymptotic theory is incomplete in the sense that theory is developed for a non-random threshold sequence, while in practice a random sequence must be used (as, e.g., in \citealp{WeiNov98,RobSegFer09}). As pointed out in the latter paper, ``the mathematical treatment of such random threshold sequences requires complicated empirical process theory''. In the present paper, the mathematical treatment is comprehensive, working out all the arguments needed from empirical process theory.

Let us proceed by motivating and defining the estimator: throughout, $X_1, X_2, \dots$ denotes a stationary sequence of real-valued random variables with stationary cumulative distribution function (cdf) $F$.  The sequence is assumed to have an extremal index $\theta \in (0,1]$: for any $\tau>0$, there exists a sequence $u_n=u_n(\tau)$ such that $\lim_{n \to \infty} n \bar F(u_n) = \tau$ and such that
\begin{align*} 
\lim_{n\to\infty} \Prob(M_{1:n} \le u_n) = e^{-\theta \tau}.
\end{align*}
Here, $\bar F=1-F$ and $M_{1:n} = \max(X_1, \dots, X_n)$.

For simplicity, we assume that $F$ is continuous (c.f.\ Remark~\ref{rem:cont} below) and define a sequence of  standard uniform random variables by $U_s = F(X_s)$. For $x\in(0,1)$, let $u_n=\Fin(1-x/n)$
 and $u_n'=\Fin(e^{-x/n})$, 
where $F^{\leftarrow}$ denotes the generalized, left-continuous inverse of the cdf $F$. 
Then, $n \bar F(u_n) =  x$ and $n \bar F(u_n') = n(1-e^{-x/n}) \to x$  as $n\to \infty$, whence
\begin{align} \label{eq:exp2}
\Prob( n(1-N_{1:n}) \ge x) 
&= 
\Prob( M_{1:n} \le u_n)   
\to 
 e^{-\theta x}, \\
\Prob( - n\log(N_{1:n}) \ge x)  \label{eq:exp3}
&= 
\Prob( M_{1:n} \le u_n')   
\to 
e^{-\theta x},
\end{align}
where $N_{1:n} = F(M_{1:n}) = \max\{U_1, \dots, U_n\}$. In other words, both $Y_{1:n} = -n\log(N_{1:n})$ and $Z_{1:n}=n(1-N_{1:n})$ asymptotically follow an exponential distribution with parameter~$\theta$. The result concerning $Y_{1:n}$ inspired \cite{Nor15} to estimate $\theta$ by the maximum likelihood estimator for the exponential distribution, based on a sample of estimated block maxima.  

More precisely, suppose that we observe a stretch of length $n$ from the time series $(X_s)_{s\ge 1}$. Divide the sample into $\kn$ blocks of length $\bn$, and for simplicity assume that $n=\bn \kn$ (otherwise, the final block would consist of less than $\bn$ observations and should be omitted). For $i=1, \dots, \kn$, let
\[
M_{ni} = M_{((i-1)\bn + 1) : (i\bn)} =  \max\{ X_{(i-1)\bn + 1}, \dots, X_{i\bn} \} 
\]
denote the maximum over the $X_s$ from the $i$th block. Also, let $N_{ni} = F(M_{ni}) =  \max\{ U_{(i-1)\bn + 1}, \dots, U_{i\bn} \}$ and $Y_{ni} = -\bn \log(N_{ni})$. If $\bn$ is sufficiently large, then, by~\eqref{eq:exp3}, the (unobservable) random variables $Y_{n1}, \dots, Y_{nk}$ 
form an approximate sample from the Exponential$(\theta)$-distribution. Moreover, as common when working with block maxima of a time series, they may be considered as asymptotically independent, which prompted \cite{Nor15} to estimate $\theta$ by the maximum-likelihood estimator for the Exponen\-tial$(\theta)$ distribution:
\[
\tilde \theta_n^{\No} = \Big( \frac{1}{\kn} \sum_{i=1}^{\kn} Y_{ni} \Big)^{-1}.
\]
Note that $\tilde \theta_n^{\No}$ should not be considered an estimator, as it is based on the unknown cdf~$F$. Subsequently, we call $\tilde \theta_n^{\No}$ an oracle for $\theta$. 

In practice, the $U_s$ are not observable, whence they need to be replaced by their observable counterparts giving rise to the definitions
\[
\hat N_{ni} =  \hat F_n(M_{ni}) \quad  \text{and} \quad \hat Y_{ni} = -\bn\log(\hat N_{ni}), 
\]
where $\hat F_n(x) = n^{-1} \sum_{s=1}^n \ind(X_s \le x)$ denotes the empirical cdf of $X_1, \dots, X_n$.
We obtain, up to a bias correction discussed below,  Northrop's estimator
\begin{align} \label{eq:djbN}
\hat \theta_n^{\No} = \hat \theta_n^{\No,\djb} = \Big( \frac{1}{\kn} \sum_{i=1}^{\kn} \hat Y_{ni} \Big)^{-1}.
\end{align}
In \cite{Nor15}, no asymptotic theory on $\hat \theta_n^{\No}$ given. 
While deriving the asymptotic distribution of the oracle $\tilde \theta_n^{\No}$ may appear tractable (see also \citealp{Rob09}: essentially, a central limit theorem for rowwise dependent triangular arrays is to be shown, followed by an argument using the delta method), asymptotic theory on the estimator $\hat \theta_n^{\No}$ is substantially more difficult due to the additional serial dependence induced by the rank transformation (which on top of that operates between blocks instead of within blocks).

A central contribution of the present paper is the derivation of the asymptotic distribution of $\hat \theta_n^{\No}$.  It will further turn out that the impact of the rank transformation is non-negligible, resulting in different asymptotic variances of $\hat \theta_n^{\No}$ and the corresponding oracle~$\tilde \theta_n^{\No}$. 
For that purpose, it will be convenient to consider the following (mathematically  simpler) variant of Northrop's estimator,
\begin{align} \label{eq:djb}
\hat \theta_n^{\Be} = \hat \theta_n^{\Be,\djb} = \Big( \frac{1}{\kn} \sum_{i=1}^{\kn} \hat Z_{ni} \Big)^{-1}, \qquad  \hat Z_{ni} = \bn(1-\hat N_{ni}).
\end{align}
This estimator can either be motivated following the above lines, but using \eqref{eq:exp2} rather than \eqref{eq:exp3} as a starting point, or by consulting \cite{Rob09} and writing 
\begin{align} \label{eq:robeq}
\frac{1}{\hat \theta_n^{\Be, \djb}}= \int_0^\infty \hat p_n^{(\tau)}(0)\mathrm{d}\tau,
\end{align}
with $\hat p_n^{(\tau)}(0)$ denoting Robert's estimator for $e^{-\theta \tau}$ (page 275 in \citealp{Rob09}, with `$>$' replaced by `$\ge$' in his definition of $\hat N^{\scs (\tau)}_{\scs r_n,j}$).
We will show below (Theorem~\ref{theo:Northrop}) that $\hat \theta_n^{\Be}$ and $\hat \theta_n^{\No}$ are in fact asymptotically equivalent.  
We also present asymptotic theory  for modifications of $\hat \theta_n^{\No}$ and $\hat \theta_n^{\Be}$ based on sliding block maxima, which is the second main contribution of the paper. Finally, the asymptotic expansions for $\hat \theta_n^{\Be}$ suggest estimators for the asymptotic variance of $\hat \theta_n^{\No}$ and $\hat \theta_n^{\Be}$ (and its sliding blocks variants); proving  their consistency is the third main contribution.

The remaining parts of this paper are organized as follows: in Section~\ref{sec:pre}, we present mathematical preliminaries needed to formulate and derive the asymptotic distributions of the estimators for $\theta$. Asymptotic equivalence, consistency and asymptotic normality is then shown in Section~\ref{sec:main}. Estimators of the asymptotic variance are handled in Section~\ref{sec:boot}.  In Section~\ref{sec:bias}, we propose a simple device to reduce the bias of the estimator and relate it to the ad-hoc approach in \cite{Nor15}. Examples are worked out in detail in Section~\ref{sec:examples}, while finite-sample results and a case study are presented in Sections~\ref{sec:sim} and~\ref{sec:app}, respectively.
Sections~\ref{sec:proofs} and~\ref{sec:proofs2} contain a sequence of auxiliary lemmas needed for the proof of the main results. Their proofs, as well as additional proofs are postponed to the supplementary material (Appendices~\ref{sec:disjoint}, \ref{sec:sliding},~\ref{sec:nor} and \ref{sec:addproofs}). The supplementary material also contains additional simulation results (Appendix~\ref{sec:addsim}).

\section{Mathematical preliminaries} \label{sec:pre}

The serial dependence of the time series $(X_s)_s$ will be controlled via mixing coefficients. For two sigma-fields $\Fc_1, \Fc_2$ on a probability space $(\Omega, \Fc, \Prob)$, let
\begin{align*}
\alpha({\Fc}_1,{\Fc}_2)
&=
\sup_{A \in {\Fc}_1, B\in {\Fc}_2} |\Prob(A\cap B)-\Prob(A)\Prob(B)| .
\end{align*}
In time series extremes, one usually imposes assumptions on the decay of the mixing coefficients between sigma-fields generated by $\{X_i \ind(X_s > \Fin(1-\eps_n)): s\le  \ell\}$ and  $\{X_s \ind(X_s > \Fin(1-\eps_n)): s \ge \ell+k \}$, where $\eps_n \to 0$ is some sequence reflecting the fact that only the dependence in the tail needs to be restricted (see, e.g., \citealp{Roo09}). For our purposes, we need slightly more to control even the dependence between the smallest of all block maxima (see also Condition~\ref{cond:cond}\eqref{item:blockdiv} below). More precisely, for $-\infty \le p < q \le \infty$ and $\eps\in(0,1]$, let $\Bc_{p:q}^\eps$  denote the sigma algebra generated by $U_s^\eps:=U_s \ind(U_s > 1- \eps)$ with $s\in \{p, \dots, q\}$ and define, for $\ell\ge 1$,
\[
\alpha_{\eps}(\ell) = \sup_{k \in \N} \alpha(\Bc_{1:k}^\eps, \Bc_{k+n:\infty}^\eps).
\] 
Note that the coefficients are increasing in $\eps$, whence they are bounded by the standard alpha-mixing coefficients of the sequence $U_s$, which can be retrieved for $\eps=1$.   In Condition~\ref{cond:cond}\eqref{item:rate2} below, we will impose a condition on the decay of the mixing coefficients for small values of $\eps$.

The extremes of a time series may be conveniently described by the point process of normalized exceedances. The latter is defined, 
for a Borel set $A\subset E:= (0,1]$ and a number $x\in[0,\infty)$, by
\[
N_n^{(x)}(A) = \sum_{s=1}^n \ind(s/n \in A, U_s > 1-x/n).
\]
Note that $N_{n}^{(x)}(E)=0$ iff $N_{1:n} \le 1-x/n$; the probability of that event converging to $e^{-\theta x}$ under the assumption of the existence of extremal index~$\theta$.

Fix $m\ge 1$ and $x_1> \dots > x_m>0$. For $1\le p < q \le n$, let
$\Fc_{p:q,n}^{(x_1, \dots, x_m)}$ denote the sigma-algebra generated by the events $\{U_i > 1- x_j/n\}$ for $p\le i \le q$ and  $1 \le j \le m$. For $1\le \ell \le n$, define
\begin{multline*}
\alpha_{n,\ell}(x_1, \dots, x_m)=\sup\{ |\Prob(A\cap B) - \Prob(A) \Prob(B)| : \\
A \in \Fc_{1:s,n}^{(x_1, \dots, x_m)}, B \in \Fc_{s+\ell:n,n}^{(x_1, \dots, x_m)}, 1 \le s \le n-\ell\}.
\end{multline*}
The condition $\Delta_n(\{u_n(x_j)\}_{1\le j \le m})$ is said to hold if there exists a sequence $(\ell_n)_n$ with $\ell_n=o(n)$ such that $\alpha_{n,\ell_n}(x_1, \dots, x_m) =o(1)$ as $n\to\infty$. 
A sequence $(q_n)_n$ with $q_n=o(n)$ is said to be $\Delta_n(\{u_n(x_j)\}_{1\le j \le m})$-separating if there exists a sequence $(\ell_n)_n$ with $\ell_n=o(q_n)$ such that $nq_n^{-1} \alpha_{n,\ell_n}(x_1, \dots,x_m) =o(1)$ as $n\to\infty$. 
If $\Delta_n(\{u_n(x_j)\}_{1\le j \le m})$ is met, then such a sequence always exists, simply take $q_n=\ip{\max\{ n\alpha_{n,\ell_n}^{\scs 1/2}, (n\ell_n)^{\scs 1/2}\}}.$ 
 
 By Theorems 4.1 and 4.2 in \cite{HsiHusLea88},
if the extremal index exists and the $\Delta(u_n(x))$-condition is met ($m=1$), then a necessary and sufficient condition for weak convergence of $N_n^{\scriptscriptstyle(x)}$ is convergence of the conditional distribution of $N_{n}^{\scriptscriptstyle(x)}(B_n)$ with $B_n=(0,q_n/n]$ given that there is at least one exceedance of $1-x/n$ in $\{1, \dots, q_n\}$ to a probability distribution $\pi$ on $\N$, that is,
\[
\lim_{n\to\infty} \Prob(N_n^{(x)} (B_n) = j \mid N_{n}^{(x)}(B_n)>0) = \pi(j)  \qquad \forall \, j\ge 1,
\]
where $q_n$ is some $\Delta(u_n(x))$-separating sequence. Moreover, in that case, the convergence in the last display holds for any $\Delta(u_n(x))$-separating sequence $q_n$. If the $\Delta(u_n(x))$-condition holds for any $x>0$, then $\pi$ does not depend on $x$ (\citealp{HsiHusLea88}, Theorem~5.1). 
 
A multivariate version of the latter results is stated in \cite{Per94}, see also the summary in \cite{Rob09}, page 278, and the thesis \cite{Hsi84}. Suppose that the extremal index exists and that  the $\Delta(u_n(x_1), u_n(x_2))$-condition is met for any $x_1\ge x_2\ge0, x_1 \ne0$. Moreover assume that there exists a family of probability measures $\{\pi_2^{\scs(\sigma)}: \sigma\in [0,1]\}$ on $\Jc = \{(i,j): i \ge j \ge 0, i \ge 1\}$ such that, for all $(i,j) \in \Jc$,
\[
\lim_{n\to\infty} \Prob(N_n^{(x_1)} (B_n) = i, N_n^{(x_2)} (B_n) = j \mid N_{n}^{(x_1)}(B_n) >0) = \pi_2^{(x_2/x_1)}(i,j),
\]
where $q_n$ is some $\Delta(u_n(x_1), u_n(x_2))$-separating sequence.  In that case, the two-level point process $\bm N_n^{\scriptscriptstyle(x_1,x_2)}=(N_n^{\scriptscriptstyle(x_1)}, N_{n}^{\scriptscriptstyle(x_2)})$ converges in distribution to a point process with characterizing Laplace transform explicitly stated in \cite{Rob09} on top of page 278. Note that
\[
\pi_2^{(1)}(i,j)=\pi(i) \ind(i=j), \qquad \pi_2^{(0)}(i,j) = \pi(i) \ind(j=0).
\]

The following set of conditions will be imposed to establish asymptotic normality of the estimators.

\begin{condition} \label{cond:cond}~
\begin{enumerate}[(i)]
\item\label{item:generalpoint} \textbf{Extremal index and the point process of exceedances.} The extremal index $\theta\in (0,1]$ exists and the above assumptions guaranteeing convergence of the one- and two-level point process of exceedances are satisfied.

\item\label{item:momn} \textbf{Moment assumption on the point process.} There exists $\delta>0$ such that, for any $\ell>0$, there exists a constant $C_\ell'$ such that
\[
\Exp[|N_n^{(x_1)}(E) - N_n^{(x_2)}(E)|^{2+\delta}] \le C_\ell'(x_1 - x_2) \qquad \forall\, \ell \ge x_1 \ge x_2 \ge 0, n \in \N.
\]

\item\label{item:rate2} \textbf{Asymptotic independence in the big-block/small-block heuristics.}
There exists $c_2\in(0,1)$ and $C_2>0$ such that  
\[
\alpha_{c_2}(\ell) \le C_2 \ell^{-\eta}
\]
for some $\eta\ge 3(2+\delta)/(\delta-\mu)>3$ with $0< \mu<\delta \wedge(1/2)$ and with $\delta>0$ from Condition~\eqref{item:momn}.
The block size $\bn\to\infty$ is chosen in such a way that 
\begin{align} \label{eq:rate1}
\kn=o(\bn^{2}) , \qquad n\to\infty,
\end{align}
and such
that there exists a sequence $\ell_n \to \infty$ (to be thought of as the length of small blocks which are to be clipped-of at the end of each block of size $\bn$) satisfying
$
\ell_n = o(\bn^{2/(2+\delta)})$ and  
$\kn \alpha_{c_2} (\ell_n) =o(1); 
$
all convergences being for $n\to\infty$.

\item\label{item:varbound} \textbf{Bound on the variance of the empirical process.}
There exist some constants $c_1\in(0,1), C_1>0$ such that, for all $y\in(0,c_1)$ and all $n \in \N$,
\[
\Var\Big\{\sum_{s=1}^{n} \ind(U_s > 1- y)  \Big\} \le C_1(n y + n^2y^2).
\]

\item\label{item:blockdiv} \textbf{All standardized block maxima of size $\bn/2$ converge to $1$.} For all $c\in(0,1)$, we have
\[
\lim_{n \to \infty} \Prob\left( \textstyle \min_{i=1}^{2\kn} N_{ni}' \le c\right)  = 0,
\] 
where $N_{ni}' = \max\{ U_s: s \in [ (i-1) \bn/2 + 1, \dots, i \bn/2 ]\}$, for $i=1, \dots , 2\kn$, denote consecutive standardized block maxima of (approximate) size $\bn/2$.

\item\label{item:moment} \textbf{Existence of moments of maxima.} With $\delta>0$ from Condition~\eqref{item:momn}, we have
\[
\limsup_{n\to\infty} \Exp[Z_{1:n}^{2+\delta}] < \infty.
\]

\item\label{item:bias} \textbf{Bias.} 
As $n\to\infty$,
\[
\Exp[Z_{1:\bn}] = \theta^{-1} + o(\kn^{-1/2}).
\]
\end{enumerate}

\end{condition}

Assumptions \eqref{item:generalpoint}--\eqref{item:rate2} are suitable adaptations of Conditions (C1) and (C2) in \cite{Rob09}; in fact, they can be seen to imply the latter. Among other things, these conditions are needed to apply his central result, Theorem 4.1, on the weak convergence of the tail empirical process on $[0,\infty)$.
Note that the assumptions are satisfied for solutions of stochastic difference equations, see Example 3.1 in \cite{Rob09}.
The Assumption in \eqref{eq:rate1} is a growth condition that is needed in the proof of Lemma~\ref{lem:boundsupp}. As argued in \cite{RobSegFer09}, it is actually a weak requirement, as in many time series models it is a necessary condition for the bias condition in \eqref{item:bias} to be true (see Section~\ref{sec:examples} below). Finally, a positive extremal index can be guaranteed by assuming that
\begin{align} \label{eq:positive}
\lim_{m \to \infty} \limsup_{n \to \infty} \Prob  ( N_{m:\bn} > 1-\tfrac{x}{n} \mid U_1 \geq 1- \tfrac{x}{n} ) =0
\end{align}
for any $x>0$, see \cite{BeiGoeSegTeu04}, formula (10.8). We will additionally need this assumption for the calculation of the asymptotic variance of the estimators.

In a slightly different form concerning only the tail, Assumption~\eqref{item:varbound} has also been made in Condition (C3) in \cite{Dre00} for proving weak convergence of the tail empirical process. In comparison to there, the extra factor $n^2y^2$ allows for additional flexibility, in that it allows for $O(n^2)$-non-negligible covariances, as long as their contribution is at most $y^2$. 
In Section~\ref{sec:examples}, we show that 
the assumption holds for solutions of stochastic difference equations, such as the squared ARCH-model, and for max-autoregressive models.

Recall that $N_{ni}^{\bn}$ is approximately Beta$(\theta,1)$-distributed. As a consequence, every standardized block maximum $N_{ni}$ must  converge to $1$ as the sample size grows to infinity. Still, out of the sample of $\kn$ block maxima, the smallest one could possibly be smaller than one, especially when the number of blocks is large. Assumption \eqref{item:blockdiv} prevents this from happening; note that a similar assumption has also been made in \cite{BucSeg15}, Condition 3.2. Imposing the assumption even for block maxima $N_{ni}'$ of size $\bn/2$ guarantees that also the minimum over all big sub-block maxima (needed in the proof for the disjoint blocks estimator) and the minimum over all sliding block maxima of size $\bn$ (needed in the proof for the sliding blocks estimator) converges to $1$.

Assumption~\eqref{item:moment} is needed to deduce uniform integrability of the sequence $Z_{1:\bn}^2$. It implies convergence of the variance of $Z_{1:\bn}$ to that of an exponential distribution with parameter $\theta$.
Finally,~\eqref{item:bias} requires the approximation of the first moment of $Z_{1:\bn}$ by that of an exponential distribution to be sufficiently accurate.

\section{Main results} 
\label{sec:main} 

In this section we prove consistency and asymptotic normality of the disjoint blocks estimators $\hat \theta_n^{\No,\djb}$ and $\hat \theta_n^{\Be,\djb}$ defined in \eqref{eq:djbN} and \eqref{eq:djb}, respectively, as well as of variants which are based on sliding blocks and which we will denote by $\hat \theta_n^{\No,\slb}$ and $\hat \theta_n^{\Be,\slb}$, respectively. We begin by defining the latter estimators.

Divide the sample into $n-\bn+1$ blocks of length $\bn$, i.e., for $t = 1, \dots ,n-\bn+1$, let
\[
M_{nt}^{\slb} = M_{t:(t+\bn-1)} = \max\{ X_{t}, \dots , X_{t+\bn-1}\}.
\]
Analogously  to the notation used in the definition of the estimators for disjoint blocks, we will write $N_{nt}^{\slb} = F (M_{nt}^{\slb})$, $Z_{nt}^{\slb} = \bn (1- N_{nt}^{\slb})$ and  $Y_{nt}^{\slb} = -\bn \log(N_{nt}^{\slb})$ and define their empirical counterparts $\hat N_{nt}^{\slb} = \hat{ F}_ n(M_{nt}^{\slb}), \hat Z_{nt}^{\slb} = \bn (1- \hat N_{nt}^{\slb})$ and $\hat Y_{nt}^{\slb} = -\bn \log(\hat N_{nt}^{\slb})$, where $\hat F_n$ is the empirical cdf of $X_1, \dots, X_n$. Just as for the disjoint blocks estimators, the (pseudo-)obser\-vations $\hat Z_{nt}^{\slb}$ and $\hat Y_{nt}^{\slb}$ are approximately exponentially distributed with mean $\theta^{-1}$, which suggests to estimate $\theta$ by the reciprocal of their empirical mean:
\[
\hat{\theta}_n^{\Be,\slb} = \Big(  \frac{1}{n-\bn+1} \sum_{t=1}^{n-\bn+1} \hat Z_{nt}^{\slb} \Big)^{-1}, \qquad
\hat{\theta}_n^{\No,\slb} = \Big(  \frac{1}{n-\bn+1} \sum_{t=1}^{n-\bn+1} \hat Y_{nt}^{\slb} \Big)^{-1}.
\]
Up to a bias correction discussed below, $\hat \theta_n^{\No, \slb}$ is the sliding blocks estimator proposed in \cite{Nor15}.
Note that, for both estimators, no data has to be discarded if $\bn$ is not a divisor of the sample size $n$. 

The first central result is on first order asymptotic equivalence between the proposed estimators, proven in Section~\ref{sec:nor} in the supplementary material.

\begin{theorem}\label{theo:Northrop}
Suppose that
Condition~\ref{cond:cond} and \eqref{eq:positive} is 
met. Then, as $n\to\infty$,
\[
\sqrt{\kn}(\hat \theta_n^{\Be,\djb} - \hat \theta_n^{\No,\djb})=o_\Prob(1) 
\quad \text{ and } \quad
\sqrt{\kn}( \hat \theta_n^{\Be, \slb} - \hat \theta_n^{\No,\slb})=o_\Prob(1).
\]
\end{theorem}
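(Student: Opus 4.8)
The goal is to show that the Beta- and Northrop-type estimators are asymptotically equivalent at rate $\sqrt{\kn}$, for both the disjoint and sliding versions. The plan is to reduce each equivalence to a comparison of the underlying empirical means, exploiting that $\hat\theta_n^{\Be}$ and $\hat\theta_n^{\No}$ are reciprocals of $\bar{\hat Z}_n := \kn^{-1}\sum_i \hat Z_{ni}$ and $\bar{\hat Y}_n := \kn^{-1}\sum_i \hat Y_{ni}$ (and analogously for the sliding case). Since both averages converge in probability to $\theta^{-1}$ (a positive constant), a first-order Taylor expansion of $x\mapsto 1/x$ around $\theta^{-1}$ gives
\[
\hat\theta_n^{\Be,\djb} - \hat\theta_n^{\No,\djb}
= -\theta^2\,(\bar{\hat Z}_n - \bar{\hat Y}_n) + o_\Prob(\bar{\hat Z}_n - \bar{\hat Y}_n) + (\text{higher order}),
\]
so it suffices to prove $\sqrt{\kn}\,(\bar{\hat Z}_n - \bar{\hat Y}_n) = o_\Prob(1)$, and similarly for the sliding averages. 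I would first record the consistency $\bar{\hat Z}_n \pto \theta^{-1}$, $\bar{\hat Y}_n \pto \theta^{-1}$ (this should follow from the consistency results proved in Section~\ref{sec:main}, or be established en route), which legitimizes the linearization and controls the error term uniformly.

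The heart of the matter is then a termwise comparison of $\hat Z_{ni} = \bn(1-\hat N_{ni})$ and $\hat Y_{ni} = -\bn\log(\hat N_{ni})$. Writing $\hat N_{ni} = 1 - \hat Z_{ni}/\bn$, a Taylor expansion of the logarithm gives
\[
\hat Y_{ni} - \hat Z_{ni}
= -\bn\log\!\Big(1 - \tfrac{\hat Z_{ni}}{\bn}\Big) - \hat Z_{ni}
= \frac{\hat Z_{ni}^{2}}{2\bn} + \frac{\hat Z_{ni}^{3}}{3\bn^{2}} + \cdots,
\]
valid once $\hat Z_{ni}/\bn < 1$, i.e.\ $\hat N_{ni}>0$. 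The leading discrepancy is therefore of order $\hat Z_{ni}^2/(2\bn)$, and the task is to show
\[
\sqrt{\kn}\cdot\frac{1}{\kn}\sum_{i=1}^{\kn} \frac{\hat Z_{ni}^{2}}{2\bn}
= \frac{\sqrt{\kn}}{2\bn}\cdot\frac{1}{\kn}\sum_{i=1}^{\kn}\hat Z_{ni}^{2} = o_\Prob(1).
\]
Since $\sqrt{\kn}/\bn = \sqrt{\kn/\bn^2}\to 0$ by the growth condition $\kn=o(\bn^2)$ in \eqref{eq:rate1}, this reduces to showing that the empirical second moment $\kn^{-1}\sum_i \hat Z_{ni}^2$ is $O_\Prob(1)$. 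That in turn should follow from the moment assumption Condition~\ref{cond:cond}\eqref{item:moment}, which yields uniform integrability of $Z_{1:\bn}^2$ and hence a bounded limiting second moment, together with an argument that the rank transformation inflating $Z_{ni}$ into $\hat Z_{ni}$ is asymptotically negligible (controlled via the tail-empirical-process bounds and auxiliary lemmas such as Lemma~\ref{lem:boundsupp}).

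I expect the main obstacle to be uniform control of the remainder and the replacement of $Z_{ni}$ by $\hat Z_{ni}$ throughout. Two issues must be handled carefully. First, the Taylor expansion requires $\hat N_{ni}>0$, i.e.\ that no empirical block maximum collapses to the smallest order statistic; this is exactly what Condition~\ref{cond:cond}\eqref{item:blockdiv} is designed to prevent, so I would invoke it to restrict, with probability tending to one, to an event on which $\min_i \hat N_{ni}$ is bounded away from zero, making the higher-order terms of the expansion uniformly dominated by the quadratic one. Second, the second-moment bound must be shown for the \emph{pseudo}-observations $\hat Z_{ni}$ rather than the oracle $Z_{ni}$; bridging this gap is where the empirical-process machinery enters, and it is the genuinely delicate point, since the rank transformation couples the blocks. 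For the sliding-blocks case the scheme is identical—the same pointwise expansion applies to $\hat Y_{nt}^{\slb}-\hat Z_{nt}^{\slb}$—but one must control an average over the $n-\bn+1$ overlapping blocks; here I would rely on the fact that $\sqrt{\kn}/\bn\to 0$ again dominates and that the relevant second-moment bounds for sliding block maxima follow from the same conditions (with Condition~\ref{cond:cond}\eqref{item:blockdiv} stated for half-blocks precisely to cover the sliding maxima of size $\bn$).
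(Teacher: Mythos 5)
Your proposal is correct, and it reaches the conclusion by a more direct route than the paper. The paper writes $\sqrt{\kn}(\hat\theta_n^{\No}-\hat\theta_n^{\Be})=(\hat T_n^{\Be}\hat T_n^{\No})^{-1}\sqrt{\kn}(\hat T_n^{\No}-\hat T_n^{\Be})$ and then splits $\hat T_n^{\No}-\hat T_n^{\Be}$ into the \emph{oracle} discrepancy $T_n^{\No}-T_n^{\Be}$ (handled exactly by your quadratic bound, applied to $Z_{ni}$ instead of $\hat Z_{ni}$, giving $O(\sqrt{\kn}/\bn)=o(1)$) plus the difference of the two rank-transformation effects, which it expresses as $\int_0^{\max Z_{ni}}\tfrac{-x}{\bn-x}\,\tailp(x)\,\diff\hat H_{\kn}(x)$ and controls with the tail-empirical-process machinery (a bounded-domain part killed by the extra $x/(\bn-x)=O(1/\bn)$ factor, and an unbounded-domain part treated as in Lemma~\ref{lem:boundsupp}). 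Your termwise comparison of $\hat Y_{ni}$ with $\hat Z_{ni}$ collapses both pieces into the single inequality $0\le\sqrt{\kn}(\bar{\hat Y}_n-\bar{\hat Z}_n)\le\tfrac{\sqrt{\kn}}{2\bn(1-\eps)}\,\kn^{-1}\sum_i\hat Z_{ni}^2$, valid on $\{\min_i\hat N_{ni}>1-\eps\}$, whose probability tends to one by Condition~\ref{cond:cond}\eqref{item:blockdiv} together with the uniform $O_\Prob(n^{-1/2})$ bound on $\hat F_n-F$ in the tail (the same bound the paper uses in its Lemma on the logarithm). That uniform bound is then also the \emph{only} empirical-process input you need for the second moment: $\max_i|\hat Z_{ni}-Z_{ni}|=O_\Prob(\bn n^{-1/2})$ gives $\kn^{-1}\sum_i\hat Z_{ni}^2\le 2\kn^{-1}\sum_i Z_{ni}^2+O_\Prob(\bn/\kn)=O_\Prob(1)+O_\Prob(\bn/\kn)$, and multiplying by $\sqrt{\kn}/\bn$ yields $O_\Prob(\sqrt{\kn}/\bn)+O_\Prob(\kn^{-1/2})=o_\Prob(1)$ by \eqref{eq:rate1}. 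One small adjustment: you should aim for exactly this weaker bound rather than the claim $\kn^{-1}\sum_i\hat Z_{ni}^2=O_\Prob(1)$ stated in your plan, since the latter is not implied by the crude uniform bound when $\kn\ll\bn$ and proving it would pull you back into the unbounded-domain tail-process arguments you are trying to avoid; the weaker bound suffices. Finally, the sandwich $\bar{\hat Z}_n\le\bar{\hat Y}_n\le\bar{\hat Z}_n+o_\Prob(\kn^{-1/2})$ lets you deduce consistency of $\bar{\hat Y}_n$ from that of $\bar{\hat Z}_n$ (Theorem~\ref{theo:main}), so the linearization of $x\mapsto 1/x$ is not circular, and the sliding-blocks case goes through verbatim by stationarity of the $Z_{nt}^{\slb}$.
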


As a consequence of this theorem, we may concentrate on the mathematically simpler estimators $\hat{\theta}_n^{\Be,\djb}$ and $\hat{\theta}_n^{\Be,\slb}$ in the following asymptotic analysis. We will shortly write $\hat{\theta}_n^{\djb}$ and $\hat{\theta}_n^{\slb}$, respectively. Note that, while $\hat{\theta}_n^{\slb}$ is based on a substantially larger number of blocks than the disjoint blocks estimator, the blocks are heavily correlated. The following theorem is the central result of this paper and shows that both estimators are consistent and converge at the same rate to a normal distribution. The disjoint blocks estimator has a larger asymptotic variance than the sliding blocks estimator (see also \citealp{RobSegFer09}).

\begin{theorem} \label{theo:main}
Suppose that Condition~\ref{cond:cond} and \eqref{eq:positive} is met.  Then
\[
\sqrt{\kn} (\hat \theta_n^{\djb} - \theta) \dto \Nc(0, \theta^4 \sigma_{\djb}^2) 
\quad \text{ and } \quad 
\sqrt {\kn} (\hat \theta_n^{\slb} - \theta) \dto \Nc(0, \theta^4 \sigma_{\slb}^2),
\]
where 
\begin{align*}
\sigma^2_{\djb} 
&=
4  \int_0^1\frac{ \Exp[\zeta_{1}^{\scs(\sigma)} \zeta_{2}^{\scs(\sigma)}] }{(1+\sigma)^{3}}\, \diff \sigma  + 4 \theta^{-1} \int_0^1 \frac{ \Exp[\zeta_{1}^{\scs(\sigma)} \ind(\zeta_{2}^{\scs(\sigma)}=0)] }{(1+\sigma)^{3}} \, \diff \sigma - \theta^{-2}, \\
\sigma^2_{\slb} 
&=4  \int_0^1\frac{ \Exp[\zeta_{1}^{\scs(\sigma)} \zeta_{2}^{\scs(\sigma)}] }{(1+\sigma)^{3}}\, \diff \sigma  + 4 \theta^{-1} \int_0^1 \frac{ \Exp[\zeta_{1}^{\scs(\sigma)} \ind(\zeta_{2}^{\scs(\sigma)}=0)] }{(1+\sigma)^{3}} \, \diff \sigma - \frac{4-4\log(2)}{\theta^2},
\end{align*}
with $(\zeta_{1}^{\scs(\sigma)}, \zeta_{2}^{\scs(\sigma)})\sim \pi_2^{\scs(\sigma)}$.  
In particular,
$\sigma^2_{\djb} = \sigma^2_{\slb} + \{3-4\log(2)\} / \theta^2 \approx \sigma^2_{\slb} + 0.2274/\theta^2$.
\end{theorem}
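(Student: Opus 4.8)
The plan is to prove both statements at once by reducing each estimator to a normalized average of rank-based block statistics and then analyzing that average with tail empirical process theory. By Theorem~\ref{theo:Northrop} it suffices to treat the Beta-type estimators, so write $\hat\theta_n^{\djb}=(\bar{\hat Z}_n^{\djb})^{-1}$ and $\hat\theta_n^{\slb}=(\bar{\hat Z}_n^{\slb})^{-1}$ with $\bar{\hat Z}_n^{\djb}=\kn^{-1}\sum_{i=1}^{\kn}\hat Z_{ni}$ and $\bar{\hat Z}_n^{\slb}=(n-\bn+1)^{-1}\sum_{t}\hat Z_{nt}^{\slb}$; denote either one by $\bar{\hat Z}_n$. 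Since $g(u)=1/u$ has $g'(\theta^{-1})=-\theta^2$, the delta method reduces the whole theorem to showing $\bar{\hat Z}_n\pto\theta^{-1}$ together with
\[
\sqrt{\kn}\,(\bar{\hat Z}_n-\theta^{-1})\dto\Nc(0,\sigma^2),
\]
after which $\sqrt{\kn}(\hat\theta_n-\theta)\dto\Nc(0,\theta^4\sigma^2)$ with $\sigma^2\in\{\sigma_{\djb}^2,\sigma_{\slb}^2\}$. So everything comes down to a central limit theorem for the rank-based block average and the identification of its limiting variance.

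For the normal limit I would first remove the bias via \eqref{item:bias}, reducing to $\sqrt{\kn}(\bar{\hat Z}_n-\Exp[Z_{1:\bn}])$, and then isolate the rank transformation through the oracle average $\bar Z_n=\kn^{-1}\sum_i Z_{ni}$ (with $\hat F_n$ replaced by $F$) and the remainder $\bar{\hat Z}_n-\bar Z_n$. The key algebraic observation is that $\hat Z_{ni}=\kn^{-1}\sum_{s=1}^n\ind(U_s>N_{ni})$, so that $\hat Z_{ni}-Z_{ni}$ is precisely the uniform tail empirical process evaluated at the random level $N_{ni}=1-Z_{ni}/\bn$, which sits at distance $O_\Prob(1/\bn)$ from the upper endpoint and couples block $i$ to all other blocks. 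The central object is therefore a tail empirical process, and it is natural to pass to Robert's representation \eqref{eq:robeq}, $\bar{\hat Z}_n=\int_0^\infty\hat p_n^{(\tau)}(0)\,\diff\tau$, with $\hat p_n^{(\tau)}(0)$ the rank-based fraction of blocks free of an exceedance at cluster-level $\tau$. The crux is to establish a uniform-in-$\tau$ asymptotic linear expansion of $\tau\mapsto\sqrt{\kn}(\hat p_n^{(\tau)}(0)-e^{-\theta\tau})$ that makes the between-block rank coupling explicit and negligible in the appropriate sense; this is exactly the delicate empirical process step flagged in the introduction, and I expect it to be the main obstacle. The conditions are tailored to it: \eqref{item:varbound} bounds the variance of the tail empirical process, \eqref{item:momn} and \eqref{item:moment} supply the moments for tightness and for uniform integrability of the tails of $Z_{1:\bn}$, and \eqref{item:blockdiv} forces all relevant block maxima (including half-blocks and sliding blocks) into the region where the expansion is valid, so that no mass escapes as $\tau\to\infty$.

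Granting the expansion, I would prove finite-dimensional convergence of $\tau\mapsto\sqrt{\kn}(\hat p_n^{(\tau)}(0)-e^{-\theta\tau})$ to a centered Gaussian process by a big-block/small-block argument: group consecutive blocks, clip small blocks of length $\ell_n$ as allowed by \eqref{item:rate2}, use $\kn\alpha_{c_2}(\ell_n)=o(1)$ to replace big blocks by independent copies, and apply a Lyapunov central limit theorem to the resulting triangular array (the $(2+\delta)$-th moments coming from \eqref{item:momn}). Tightness follows from Robert's Theorem~4.1 and the moment bounds, and the continuous mapping theorem applied to the integral functional $w\mapsto\int_0^\infty w(\tau)\,\diff\tau$ transfers the convergence to $\sqrt{\kn}(\bar{\hat Z}_n-\theta^{-1})$, yielding a normal limit with variance $\sigma^2=\int_0^\infty\int_0^\infty c(\tau_1,\tau_2)\,\diff\tau_1\,\diff\tau_2$, where $c$ is the limiting covariance kernel.

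It remains to evaluate this double integral. I would compute $c(\tau_1,\tau_2)$ from the one- and two-level point process limits of \eqref{item:generalpoint}: for $\tau_1\ge\tau_2$ the joint exceedance/non-exceedance behaviour at the two cluster-levels is governed by $\pi_2^{\scs(\sigma)}$ with $\sigma=\tau_2/\tau_1$, which is exactly why the cluster functionals $\Exp[\zeta_{1}^{\scs(\sigma)}\zeta_{2}^{\scs(\sigma)}]$ and $\Exp[\zeta_{1}^{\scs(\sigma)}\ind(\zeta_{2}^{\scs(\sigma)}=0)]$ enter. Substituting $\tau_1=r$, $\tau_2=\sigma r$ with $r\in(0,\infty)$, $\sigma\in(0,1)$ (Jacobian $r$) factors the integral into a radial part, which through the accompanying $e^{-\theta\tau}$-factors reduces to elementary Gamma integrals producing the denominators $(1+\sigma)^3$, and an angular part carrying the cluster functionals; this gives the two integral terms, which are common to both estimators. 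The estimator-specific last term comes solely from the self/overlap part of the kernel that is not seen by the between-cluster two-level limit: for disjoint blocks the blocks are asymptotically independent and this contribution integrates to $\theta^{-2}$, whereas for sliding blocks two blocks at lag $h=\ip{s\bn}$ share a fraction $1-s$ of their observations, and integrating the overlap covariance over $s\in(0,1)$ yields $(4-4\log 2)/\theta^2$; subtracting gives $\sigma_{\djb}^2=\sigma_{\slb}^2+\{3-4\log(2)\}/\theta^2$. I expect this final bookkeeping---verifying that the oracle, cross and correction contributions combine consistently and that only this diagonal term distinguishes the two schemes---to be intricate but essentially mechanical once the expansion and Gaussian limit are in place.
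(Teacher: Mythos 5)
Your reduction (Theorem~\ref{theo:Northrop}, delta method, splitting off the oracle average, and recognizing $\hat Z_{ni}-Z_{ni}$ as the tail empirical process evaluated at the random levels $1-Z_{ni}/\bn$) matches the paper, and your identification of the disjoint-blocks variance via the substitution $\tau_2=\sigma\tau_1$ is essentially the computation in Lemma~\ref{lem:convvar}. The genuine gap is in the engine you choose for the Gaussian limit. You route everything through Robert's representation $\bar{\hat Z}_n=\int_0^\infty\hat p_n^{(\tau)}(0)\,\diff\tau$ and a functional CLT for $\tau\mapsto\sqrt{\kn}\{\hat p_n^{(\tau)}(0)-e^{-\theta\tau}\}$. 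That is precisely the alternative sketched in Remark~3.5 of the paper, and it only works for disjoint blocks: for sliding blocks you would need a weak-convergence result for the rank-transformed empirical survival function $(n-\bn+1)^{-1}\sum_t\ind(\hat Z_{nt}^{\slb}>\tau)$, which does not follow from Robert's Theorem~4.2 and which your proposal does not supply. Your overlap computation for lagged blocks only yields the variance $2(\log 4-1)/\theta^2$ of the \emph{oracle} part $G^{\slb}$ (the paper's Lemma~\ref{lemma:covZZ}); it does not produce a CLT for the full rank-based sliding average. The paper avoids this entirely by writing $\sqrt{\kn}(\hat T_n-T_n)=\int_0^{\hat m}e_n(x)\,\diff\hat H_{\kn}(x)$, so that only a functional CLT for the tail empirical process $e_n$ of the $U_s$ (jointly with $G_n$ resp.\ $G_n^{\slb}$) plus a \emph{law of large numbers} for the block-maxima distribution function $\hat H_{\kn}$ resp.\ $\hat H_{\kn}^{\slb}$ is required; this is why the sliding case goes through with the same $e_n$.

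Two further points you pass over too quickly. First, the claim that "only the diagonal term distinguishes the two schemes" presupposes that the cross-covariance between $e_n$ and the sliding oracle average equals the disjoint one, i.e.\ $h_{\slb}=h$; this is true but is a substantive computation (Lemmas~\ref{lemma:covEZ} and~\ref{lem:convvar2}), not bookkeeping. Second, the integral functional $w\mapsto\int_0^\infty w(\tau)\,\diff\tau$ is not continuous on the relevant path space, so invoking the continuous mapping theorem requires the truncation-and-Wichura scheme together with a proof that the tail $\int_\ell^{\hat m}$ is uniformly negligible; this is the content of Lemmas~\ref{lem:boundsupp} and~\ref{lem:boundsupp2}, which involve the coupling constructions under Conditions~\ref{cond:cond}\eqref{item:rate2} and \eqref{item:varbound} and constitute a large share of the technical work, not merely a consequence of Condition~\ref{cond:cond}\eqref{item:blockdiv}.
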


It is interesting to note that the asymptotic variance of the disjoint blocks estimator is substantially more complicated than if one would naively treat the $\hat Z_{ni}$ (or the $\hat Y_{ni}$) as an iid sample from the exponential distribution with parameter~$\theta$ (as is done in \citealp{Nor15}; the variance would then simply be $\theta^{2}$). A heuristic explanation can be found in Remark~\ref{rem:proofidea} below. A formal proof is given at the end of this section, with several auxiliary lemmas postponed to Section~\ref{sec:proofs} (for the disjoint blocks estimator) and to Section~\ref{sec:proofs2} (for the sliding blocks estimator). Explicit calculations are possible for instance for a max-autoregressive process, see Section~\ref{subsec:armax}, or for the iid case.

\begin{example} 
If the time series is serially independent,  a simple calculation shows that $\pi(i)=\ind(i=1)$ and $\pi_2^{\scs(\sigma)}(i,j)= (1-\sigma) \ind(i=1,j=0) + \sigma \ind(i=1,j=1)$. This implies
\[
\theta=1, \quad \Exp[\zeta_{1}^{\scs(\sigma)} \zeta_{2}^{\scs(\sigma)}] = \sigma , \quad \Exp[\zeta_{1}^{\scs(\sigma)} \ind(\zeta_{2}^{\scs(\sigma)}=0)]= 1- \sigma
\]
and therefore $\theta^4\sigma^2_{\djb}=1/2$ and $\theta^4 \sigma^2_{\slb}\approx 0.2726$. It is worthwhile to mention that these values are smaller than the variances of any of the disjoint and sliding blocks estimators considered in \cite{RobSegFer09}, respectively. 
Moreover, note that asymptotic variance of the oracle $\tilde \theta_n^{\djb}$ is equal to $\theta^2=1$, which is twice as large as when the marginal cdf is estimated.
Finally, it can be seen that the same formulas are valid whenever $\theta =1$: the fact that $\theta^{-1} \geq \sum_{i=1}^\infty i\pi(i)$ implies that $\pi(1) =1$. By \eqref{eq:xi2} in the supplementary material, we then obtain $\pi_2^{\scs(\sigma)}= (1-\sigma) \ind(i=1,j=0) + \sigma \ind(i=1,j=1)$. 
\end{example}

\begin{remark}[Main idea for the proof] \label{rem:proofidea}
Define $Z_{ni}=\bn(1-N_{ni})$ and
\begin{align} \label{eq:tntnhat}
 \hat T_n^{\djb} &= \textstyle  \frac{1}{\kn} \sum_{i=1}^{\kn} \hat Z_{ni}, \qquad&   T_n^{\djb} &= \textstyle  \frac{1}{\kn} \sum_{i=1}^{\kn} Z_{ni}, \\
\hat T_n^{\slb} &= \textstyle \frac{1}{n-\bn+1} \sum_{t=1}^{n-\bn+1}  \hat Z_{nt}^{\slb}, \qquad &  T_n^{\slb} &=\textstyle  \frac{1}{n-\bn+1} \sum_{t=1}^{n-\bn+1}  Z_{nt}^{\slb}. \label{eq:tntnhatsl}
\end{align}
In the following, we only consider the disjoint blocks estimator, the argumentation for the sliding blocks estimator is similar. For the ease of notation, we will skip the upper index and just write $\hat T_n$ instead of $\hat T_n^{\djb}$, etc. 
Asympotic normality of $\hat \theta_n$  may be deduced from the delta method and weak convergence of $\sqrt k(\hat T_n - \theta^{-1})$. 
The roadmap to handle  the latter is as follows: decompose
\begin{align} \label{eq:central}
\sqrt{\kn} (\hat T_n - \theta^{-1})  =  \sqrt{\kn} (\hat T_n - T_n) + \sqrt{\kn} (T_n - \theta^{-1}).
\end{align}
Using a big-block/small-block type argument,  the asymptotics of the second summand on the right-hand side can be deduced from a central limit theorem for rowwise independent triangular arrays. Depending on the choice of the block sizes, an asymptotic  bias term may appear, which we control by Condition~\ref{cond:cond}\eqref{item:bias}. The first summand is more involved, and also contributes to the limiting distribution: 
first, 
for $x \ge 0$, let
\begin{align} \label{eq:tailp}
\tailp(x) = \frac{1}{\sqrt {\kn} }  \sum_{s=1}^n \{  \ind (U_s > 1-x/\bn) - x/\bn \} 
\end{align}
denote the tail empirical process of $X_1, \dots, X_n$ and let
\begin{align} \label{eq:hkn}
\hat H_{\kn}(x) = \frac{1}{\kn} \sum_{i=1}^{\kn} \ind (Z_{ni} \le x) 
\end{align}
be the empirical distribution function of $Z_{n1}, \dots, Z_{n\kn}$.
Then
\begin{align}\label{eq:long2}
\sqrt{\kn} (\hat T_n - T_n) 
&= 
\frac{\bn}{\sqrt{\kn}} \sum_{i=1}^{\kn} (N_{ni} - \hat N_{ni})  \\
&=
\frac{\bn}{n \sqrt{\kn}}  \sum_{i=1}^{\kn}   \sum_{s=1}^n \{N_{ni} -  \ind(U_s \le N_{ni}) \}  \nonumber \\
&=
\frac{1}{\kn^{3/2}} \sum_{i=1}^{\kn}  \sum_{s=1}^n \{ \ind(U_s > 1- Z_{ni} / \bn) - Z_{ni} / \bn \}  \nonumber \\
&=
\frac1{\kn} \sum_{i=1}^{\kn}  \tailp(Z_{ni}) 
=
\int_0^{\max_{i=1}^{\kn} Z_{ni}} \tailp(x) \diff \hat H_{\kn}(x).  \nonumber
\end{align}
Since $Z_{ni}$ is approximately exponentially distributed with parameter $\theta$, one may expect that  $\hat H_{\kn}(x)$ converges to $H(x) = 1- \exp(-\theta x)$  in probability, for $n \to \infty$ and for any $x\ge 0$. Moreover, on an appropriate domain, $\tailp \dto \tailplim$ for some Gaussian process $\tailplim$ \citep{Dre00, Dre02, Roo09,Rob09,DreRoo10}, whence a candidate limit for the expression on the left-hand side of the previous display is given by
$
\int_0^\infty \tailplim(x) \theta  e^{-\theta x}\diff x.
$
The latter distribution is normal, and joint convergence of both terms on the right-hand side of \eqref{eq:central} will finally allow for the derivation of the asymptotic distribution of $\hat \theta_n$.
These heuristic arguments have to be made rigorous. 
\end{remark}

\begin{remark}[Disjoint blocks: alternative proof]
As pointed out by a referee, the asymptotic distribution of the disjoint blocks estimator may alternatively be derived by completely relying on results in \cite{Rob09}. The idea  is as follows. First, recall \eqref{eq:robeq},
where $\hat p _n^{\scs (\tau)}(0)= \kn^{-1} \sum_{j =1}^{\kn} \ind(\hat Z_{ni} > \tau)$ for $\tau>0$. 
Since $\hat F_n(x) < p$ if and only if $x<\hat F_n^\leftarrow(p)$, this expression coincides with the definition of $\hat p _n^{\scs (\tau)}(0)$ used in \cite{Rob09}, middle of page 275, up to a `$>$'-sign replaced by a `$\ge$'-sign in his definition of $\hat N_{r_n,j}^{\scs(\tau)}$.
Hence, by Theo\-rem~4.2 in that reference, assuming the latter replacement to be asymptotically negligible, we have
$
\sqrt{\kn} \big\{ \hat p_n^{\scs (\cdot)}(0)- p^{\scs (\cdot)}(0)\big \} \dto \hat e_0(\cdot)
$
in some appropriate metric space, where $\hat e_0$ is a Gaussian process. The continuous mapping theorem implies
\[
\textstyle \sqrt{\kn }\big \{ \int_0^\cdot p_n^{(\tau)}(0) \mathrm{d}\tau - \int_0^\cdot p^{(\tau)}(0) \mathrm{d}\tau\big \}  \dto \int_0^\cdot \hat e_0(\tau) \mathrm{d}\tau,
\]
again on some appropriate metric space. Some tedious, but straightforward calculations show that the random variable $\lim_{t \to \infty}\int_0^t \hat e_0(\tau) \mathrm d{\tau}$ has the same law as the limit that we obtained with the approach stated in Remark \ref{rem:proofidea}. 
We do not give any further details on this approach as it is limited to the case of disjoint blocks.
\end{remark}

\begin{remark}[On continuity of $F$] \label{rem:cont} In the introduction, we assumed for simplicity that $F$ is continuous. 
Some thoughts reveal that the main limit relations motivating the estimators, that is \eqref{eq:exp2} and \eqref{eq:exp3}, continue to hold under the weaker assumption that
\begin{align*}
\lim_{x\to x_F} \frac{1-F(x-)}{1-F(x)} = 1,
\end{align*}
where $x_F$ denotes the right endpoint of the support of $F$. By Theorem~1.7.13 in \cite{LeaLinRoo83}, this condition is also necessary for the extremal index to exist.
However, the proofs of our theoretical results do not easily generalize to this weaker assumption, the reason being that we heavily rely on the asymptotic equivalence of $e_n$ in  \eqref{eq:tailp} and $\bar e_n$ on page~281 in \cite{Rob09} (to apply his Theorem 4.1 on weak convergence of $\bar e_n$) and on centredness of $e_n$ on $[0,\eps\bn]$ (to show negligibility of certain terms in Lemma \ref{lem:boundsupp} and \ref{lem:boundsupp2}). A further discussion is beyond the scope of this~paper.
\end{remark}

\begin{proof}[Proof of Theorem~\ref{theo:main} (Disjoint blocks)]
Write $\hat T_n=\hat T_n^{\djb}$ and $T_n=T_n^{\djb}$. Recall the definitions of $\tailp$ and $\hat H_{\kn}$ in \eqref{eq:tailp} and \eqref{eq:hkn}, respectively. For $\ell \in \N$, let
\begin{multline*}
D_n 
= 
\int_0^{\hat m}  
\tailp(x) \diff \hat H_{\kn}(x),  \quad
D_{n,\ell} 
= 
 \int_0^{ \ell} 
\tailp(x) \diff \hat H_{\kn}(x),  \\
D_\ell 
=
 \int_0^{\ell} \tailplim(x) \theta e^{-\theta x} \diff x,
\end{multline*}
where $\hat m= \max Z_{ni}$.
Also, let $G_n = \sqrt{\kn} (T_n -  \Exp T_n)$ and let $G$ be defined as in Lemma~\ref{lem:fidirob}. Suppose we have shown that \smallskip
\begin{compactenum}
\item[(i)] 
For all $\delta>0$: $\lim_{\ell \to \infty} \limsup_{n\to\infty} \Prob( |D_{n,\ell} - D_n | > \delta) = 0$;  \smallskip
\item[(ii)]
For all $\ell \in \N$: $D_{n,\ell} + G_n \dto D_{\ell} + G$ as $n \to \infty$; \smallskip
\item[(iii)] $D_{\ell}  + G \dto D + G\sim \Nc(0, \sigma^2_{\djb})$ as $\ell \to \infty$. \smallskip
\end{compactenum}
It then follows from \eqref{eq:long2} and Wichura's theorem (\citealp{Bil79}, Theorem 25.5) that
\[
\sqrt n ( \hat T_n - \Exp T_n) = D_n + G_n \dto \Nc(0, \sigma^2_{\djb}), \qquad n \to \infty.
\]
By Condition~\ref{cond:cond}\eqref{item:bias}, we obtain that $\sqrt{\kn} (\hat T_n - \theta^{-1}) \dto \Nc(0,\sigma^2_{\djb}).$ The theorem then follows from the delta-method.

The assertion in (i) is proved in Lemma~\ref{lem:boundsupp}.
The assertion in (ii) is proved in Lemma~\ref{lem:weakdnlgn} (it is a consequence of the continuous mapping theorem and Lemmas~\ref{lem:appleb} and~\ref{lem:tailpweak}),
The assertion in (iii) follows from the fact that $D_\ell + G$ is normally distributed with variance $\sigma_\ell^2$ as specified in Lemma~\ref{lem:weakdnlgn}, and the fact that by Lemma \ref{lem:convvar} $\sigma_\ell^2 \to \sigma^2_{\djb}$ for $\ell \to \infty$. 
\end{proof}

\begin{proof}[Proof of Theorem~\ref{theo:main} (Sliding blocks)]
Let $\hat H_{\kn}^{\slb}$ denote the empirical distribution function of the $Z_{nt}^{\slb}$,  $\hat H_{\kn}^{\slb}(x) = \frac{1}{n-\bn+1} \sum_{t=1}^{n-\bn+1} \ind( Z_{nt}^{\slb} \leq x)$, and let 
\begin{multline*}
D_n^{\slb} 
= 
\int_0^{\hat m^{\slb}}  
\tailp(x) \diff \hat H_{\kn}^{\slb}(x),  \quad
D_{n,\ell}^{\slb}
= 
 \int_0^{ \ell} 
\tailp(x) \diff \hat H_{\kn}^{\slb}(x),  \\
D_\ell^{\slb} 
=
 \int_0^{\ell} \tailplim(x) \theta e^{-\theta x} \diff x,
\end{multline*}
where $\hat m^{\slb}=\max_t Z_{nt}^{\slb}$.
With this notation the proof  follows along the same lines as  for the disjoint blocks, with Lemma \ref{lem:boundsupp}, \ref{lem:appleb} and \ref{lem:fidirob} replaced by Lemma \ref{lem:boundsupp2}, \ref{lem:appleb2} and \ref{lem:fidirob2}, respectively.
\end{proof}

\section{Variance estimation} \label{sec:boot}

For statistical inference on $\theta$,  estimators for the asymptotic variance formulas in Theorem~\ref{theo:main} are needed. Unfortunately, the formulas themselves are too complicated to base such estimators on a simple plug-in principle. Rather than that, we rely on an asymptotic expansion of the disjoint blocks estimator resulting from a careful inspection of the proofs. Note that, since $\sigma^2_{\djb} = \sigma^2_{\slb} - \{3-4\log(2)\} / \theta^2$, an estimator for the variance of the disjoint blocks estimator can immediately be transferred into one for the sliding blocks estimator.   This is particularly useful since a straightforward extension of our proposed estimator for $\sigma^2_{\djb}$  to the sliding blocks estimator is not possible and would require the choice of an additional tuning parameter.

The proof of Theorem~\ref{theo:main}, in particular the central decomposition in \eqref{eq:central} and the calculations in \eqref{eq:long2}, allows to write
$\TT_n^{\djb} = \sqrt{\kn} (\hat T_n^{\djb} - \theta^{-1})$ as
\begin{align*}
\frac{1}{\sqrt {\kn}} \sum_{j=1}^{\kn} ( Z_{nj} - \theta^{-1} ) + \int_0^\infty e_n(x)\, \diff H(x)   + o_\Prob(1) 
=
\frac{1}{\sqrt {\kn}} \sum_{j=1}^{\kn} B_{nj} + o_\Prob(1),
\end{align*}
where 
\[
\textstyle B_{nj }=Z_{nj} - \theta^{-1}  +  \int_0^\infty  \sum_{s\in I_j} \big\{  \ind(U_s > 1- \tfrac{x}{\bn}) - \tfrac{x}{\bn} \big\} \diff H(x)
\]
and where $I_j=\{(j-1)\kn + 1, \dots, j\kn\}$ denotes the $j$th block of indices.   
The proof of Theorem~\ref{theo:main} shows that $B_{n1}, \dots, B_{n\kn}$ are asymptotically independent (big block/small block heuristics) and centred, and that their empirical mean multiplied by $\sqrt{\kn}$ converges to a centred normal distribution with variance  $\sigma_{\scs \djb}^{\scs2}$. Hence, their second empirical moment should be a consistent estimator for $\sigma_{\scs \djb}^{\scs2}$. As the sample $B_{n1}, \dots, B_{n\kn}$ depends on unknown quantities, we must replace these objects by empirical counterparts, leading us to define
\begin{align*}
\hat B_{nj} 
&= 
\textstyle\hat Z_{nj} - \hat T_n + \sum_{s\in I_j} \frac{1}{\kn} \sum_{i=1}^{\kn} \{  \ind(\hat U_s > 1- \tfrac{\hat Z_{ni}}{\bn}) - \tfrac{\hat Z_{ni}}{\bn} \}  \\
&= 
\textstyle\hat Z_{nj}  + \sum_{s\in I_j} \frac{1}{\kn} \sum_{i=1}^{\kn}  \ind(\hat U_s > 1- \tfrac{\hat Z_{ni}}{\bn}) - 2\cdot \hat T_n^{\djb},
\end{align*} 
where $\hat U_s= \hat F_n(X_s)$.
The following proposition shows that 
\[
\hat \sigma_{\djb}^2=  \frac{1}{\kn} \sum_{j=1}^{\kn} \hat B_{nj}^2, \qquad \hat \sigma_{\slb}^2=\hat \sigma_{\djb}^2- \{ 3-4\log(2)\} (\hat \theta_{n}^{\slb})^{-2}.
\]
are in fact consistent estimators for $\sigma_{\djb}^2$ and $\sigma_{\slb}^2$, respectively, provided that moments of order slightly larger than $4$ exist. To simplify the proofs, we assume beta-mixing of the times series, since it allows for stronger coupling results than alpha-mixing. We also impose a further growth condition on the block size, which allows for a further simplification within the proof (which is given in in the supplementary material).

\begin{proposition}[Consistency of variance estimators] \label{prop:boot}
Additionally to the assumptions imposed in Condition~\ref{cond:cond} suppose that $\bn=o(\kn^2)$ for $n\to\infty$ (hence, $\bn^{\scs 1/2} \ll \kn \ll \bn^2$), that Condition~\ref{cond:cond}\eqref{item:rate2} is met with the alpha-mixing coefficient $\alpha_{c_2}(\ell)$ replaced by the beta-mixing coefficient $\beta_1(\ell)$ (see the proof for a precise definition) and that Condition~\ref{cond:cond}\eqref{item:momn} and \eqref{item:moment} are met with $\delta>2$.
Then, as $n\to\infty,$
\[
\hat \sigma_{\djb}^2\pto \sigma_{\djb}^2 \quad \text{ and } \quad
\hat \sigma_{\slb}^2\pto \sigma_{\slb}^2. 
\]
\end{proposition}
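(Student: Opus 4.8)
The plan is to dispose of the sliding blocks statement immediately and then devote all the effort to $\hat\sigma_{\djb}^2$. Since Theorem~\ref{theo:main} entails $\hat\theta_n^{\slb}\pto\theta$ and, by that same theorem, $\sigma_{\slb}^2=\sigma_{\djb}^2-\{3-4\log(2)\}/\theta^2$, the defining relation $\hat\sigma_{\slb}^2=\hat\sigma_{\djb}^2-\{3-4\log(2)\}(\hat\theta_n^{\slb})^{-2}$ together with the continuous mapping theorem and Slutsky's lemma reduces everything to proving $\hat\sigma_{\djb}^2\pto\sigma_{\djb}^2$. The idea is to compare the estimator with the unobservable \emph{oracle} $\frac1\kn\sum_{j=1}^{\kn}B_{nj}^2$ built from the block-measurable summands $B_{nj}$ of the expansion $\TT_n^{\djb}=\kn^{-1/2}\sum_{j=1}^{\kn}B_{nj}+o_\Prob(1)$ underlying \eqref{eq:central} and \eqref{eq:long2}, and to establish separately
\begin{align}\label{eq:boottwoconv}
\frac1\kn\sum_{j=1}^{\kn}B_{nj}^2\pto\sigma_{\djb}^2 \qquad\text{and}\qquad \frac1\kn\sum_{j=1}^{\kn}(\hat B_{nj}-B_{nj})^2\pto 0 .
\end{align}
Granting \eqref{eq:boottwoconv} and the splitting $\hat B_{nj}+B_{nj}=(\hat B_{nj}-B_{nj})+2B_{nj}$, Minkowski's inequality in $\ell^2$ gives $\frac1\kn\sum(\hat B_{nj}+B_{nj})^2=O_\Prob(1)$, and the Cauchy--Schwarz bound $|\frac1\kn\sum(\hat B_{nj}^2-B_{nj}^2)|\le\{\frac1\kn\sum(\hat B_{nj}-B_{nj})^2\}^{1/2}\{\frac1\kn\sum(\hat B_{nj}+B_{nj})^2\}^{1/2}$ then yields $\hat\sigma_{\djb}^2=\frac1\kn\sum\hat B_{nj}^2\pto\sigma_{\djb}^2$.

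The first convergence in \eqref{eq:boottwoconv} is a weak law of large numbers for the triangular array $(B_{nj}^2)_j$. The decisive structural fact is that each $B_{nj}$ is a measurable function of $(U_s)_{s\in I_j}$ alone, so that under the imposed beta-mixing the blocks $B_{n1},\dots,B_{n\kn}$ can be coupled with row-wise independent copies up to a total-variation error controlled by $\kn\beta_1(\ell_n)=o(1)$. For the expectation I would invoke the second-moment bookkeeping already carried out in the proof of Theorem~\ref{theo:main}, which identifies $\sigma_{\djb}^2=\lim_n\Exp[B_{n1}^2]$ (the cross-block covariances being negligible by mixing, while $\Exp[B_{nj}]=\Exp[Z_{1:\bn}]-\theta^{-1}=o(\kn^{-1/2})$ by Condition~\ref{cond:cond}\eqref{item:bias}, the integral part of $B_{nj}$ being exactly centred since $\Prob(U_s>1-x/\bn)=x/\bn$); by stationarity this gives $\Exp[\frac1\kn\sum_j B_{nj}^2]=\Exp[B_{n1}^2]\to\sigma_{\djb}^2$. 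For the variance I would split $\Var(\frac1\kn\sum_j B_{nj}^2)$ into its diagonal part $\kn^{-1}\Var(B_{n1}^2)$ and off-diagonal covariances: the former vanishes once $\Exp[B_{n1}^4]=O(1)$, which is exactly where the strengthened requirement $\delta>2$ in Condition~\ref{cond:cond}\eqref{item:momn} and~\eqref{item:moment} is needed (it bounds $\Exp[Z_{1:\bn}^4]$ and the fourth moment of the centred indicator part), while the off-diagonal terms are summable by a covariance inequality for beta-mixing arrays and the polynomial decay of $\beta_1$.

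The second convergence in \eqref{eq:boottwoconv} is the crux. Expanding $\hat B_{nj}-B_{nj}$, the discrepancy splits into three groups of plug-in errors: (a) replacing $Z_{nj}$ by $\hat Z_{nj}$ in the leading term and the centring $\theta^{-1}$ by $\hat T_n^{\djb}$; (b) replacing the uniform variables $U_s$ by the ranks $\hat U_s=\hat F_n(X_s)$ inside the indicators $\ind(\hat U_s>1-\hat Z_{ni}/\bn)$; and (c) replacing integration $\int_0^\infty\{\cdots\}\diff H(x)$ against the true exponential limit law by averaging $\frac1\kn\sum_{i=1}^{\kn}\{\cdots\}$ against the empirical measure of the estimated block maxima $\hat Z_{ni}$. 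Group (a) is controlled through the identity $\hat Z_{nj}-Z_{nj}=\kn^{-1/2}e_n(Z_{nj})$ and $\hat T_n^{\djb}\pto\theta^{-1}$; group (c) through the uniform-in-threshold closeness of the empirical distribution of the $\hat Z_{ni}$ to $H$ combined with the weak convergence of $e_n$ on compacta already used for Theorem~\ref{theo:main}; and group (b) through the oscillation bound for $e_n$ furnished by Condition~\ref{cond:cond}\eqref{item:varbound}, which governs the effect of the $O_\Prob(\kn^{-1/2})$ perturbation of the thresholds caused by the rank transform. Each contribution must be shown negligible in the averaged squared sense $\frac1\kn\sum_j(\cdot)^2\pto0$, and it is precisely here that the extra growth condition $\bn=o(\kn^2)$ enters to render the resulting quadratic remainders asymptotically negligible.

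The main obstacle is this last step. Unlike $B_{nj}$, the empirical summand $\hat B_{nj}$ is \emph{not} block-measurable: through $\hat U_s$, $\hat Z_{ni}$ and $\hat T_n^{\djb}$ it depends on the entire sample, so the rank transformation couples all blocks and destroys the clean mixing structure exploited for the oracle law of large numbers. Controlling $\frac1\kn\sum_j(\hat B_{nj}-B_{nj})^2$ therefore demands simultaneous uniform control of the two nested empirical objects --- the tail empirical process evaluated at the ranks and the empirical law of the block maxima --- which I would organise as a chain of empirical-process and stochastic-equicontinuity estimates, using beta-mixing coupling to replace the dependent sample by blockwise-independent observations and the moment gain $\delta>2$ to keep fourth-moment remainders bounded. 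Once both convergences in \eqref{eq:boottwoconv} are in hand, the conclusion $\hat\sigma_{\djb}^2\pto\sigma_{\djb}^2$, and with it $\hat\sigma_{\slb}^2\pto\sigma_{\slb}^2$, follows as in the first paragraph.
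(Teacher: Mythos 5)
Your overall architecture (oracle sum of squares plus a negligible discrepancy, then Cauchy--Schwarz to pass from $\sum(\hat B_{nj}-B_{nj})^2\to 0$ to $\sum\hat B_{nj}^2\to\sigma^2_{\djb}$, and the trivial reduction of the sliding case) matches the paper, and your treatment of the oracle law of large numbers via beta-mixing coupling and fourth moments from $\delta>2$ is in the right spirit. But the step you yourself flag as ``the crux'' and ``the main obstacle'' is exactly where the proposal has a genuine gap, and the route you sketch for it would not go through as described. The paper's proof hinges on one exact identity that you miss: since $\hat U_s=\hat F_n(X_s)$ and $1-\hat Z_{ni}/\bn=\hat F_n(M_{ni})$ are images of $X_s$ and $M_{ni}$ under the \emph{same} monotone map, one has $\ind(\hat U_s>1-\hat Z_{ni}/\bn)=\ind(U_s>1-Z_{ni}/\bn)$ almost surely. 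Your ``group (b)'' error is therefore identically zero --- there is no rank-transform perturbation of the indicators to control at all --- and the entire discrepancy collapses to
\[
\hat B_{nj}-\bar B_{nj}=\hat Z_{nj}-Z_{nj}+\theta^{-1}-\hat T_n+\tfrac1{\kn}\textstyle\sum_i(Z_{ni}-\hat Z_{ni})=\kn^{-1/2}e_n(Z_{nj})+O_\Prob(\kn^{-1/2}),
\]
where $\bar B_{nj}$ is the version with true uniforms but the empirical average over the $Z_{ni}$ retained. Combined with $\max_j|e_n(Z_{nj})|=O_\Prob(\bn^{1/2})$ (from weak convergence of the ordinary empirical process under beta-mixing), this gives $\frac1\kn\sum_j(\hat B_{nj}-\bar B_{nj})^2=O_\Prob(\sqrt{\bn}/\kn)=o_\Prob(1)$, which is precisely where $\bn=o(\kn^2)$ enters. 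Without this cancellation, your plan to bound the indicator perturbation ``through the oscillation bound for $e_n$'' is also based on a miscalibration: a uniform $O_\Prob(n^{-1/2})$ error in $\hat F_n$ translates, in the $x=\bn(1-u)$ scale on which $e_n$ lives, into a threshold perturbation of order $\bn\,n^{-1/2}=\sqrt{\bn/\kn}$, not $O_\Prob(\kn^{-1/2})$; under the permitted regime $\kn\ll\bn^2$ this quantity can diverge, so a naive stochastic-equicontinuity argument for $e_n$ over such intervals fails.

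A secondary point: by choosing the oracle $B_{nj}$ with the deterministic integral $\int\cdot\,\diff H$ you make the oracle block-measurable (easing your first convergence) but push the replacement of $\diff H$ by the empirical measure of the $\hat Z_{ni}$ into the discrepancy term, where it must be controlled \emph{in the averaged squared sense} rather than merely for the sum --- you only gesture at this. The paper instead keeps the empirical average over the true $Z_{ni}$ inside the oracle $\bar B_{nj}$ and proves $\frac1\kn\sum\bar B_{nj}^2\to\sigma^2_{\djb}$ by splitting into three terms ($C_{n1},C_{n2},C_{n3}$), truncating the $Z_{ni}$ at a level $\ell$, and applying Berbee's coupling lemma repeatedly to decouple the block of indicators from the block maximum appearing in the same summand; this is a substantial computation, not a routine LLN, and your proposal does not supply an equivalent for it. As it stands, the argument is an accurate map of where the difficulties lie, but the decisive ideas that resolve them are absent.
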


\section{Bias reduction} \label{sec:bias}

While the previous sections were concerned with the $O(1/\sqrt{\kn})$-asymptotics, we will now have a heuristic look at the $O(1/\kn)$-asymptotics, in particular in terms of expectations. As a result, we will obtain a bias reduction scheme.
Let $(\hat T_n, T_n, \sigma^2) \in \{ (\hat T_n^{\djb},  T_n^{\djb}, \sigma_{\djb}^2), (\hat T_n^{\slb},T_n^{\slb}, \sigma_{\slb}^2)\}$ denote any of the quantities defined in \eqref{eq:tntnhat}, \eqref{eq:tntnhatsl} or Theorem~\ref{theo:main}. 
A Taylor expansion allows to  write 
\begin{align*}
\hat T_n^{-1} - \theta &= - \theta^2 (\hat T_n - T_n) -\theta^2 (T_n - \theta^{-1}) + \theta^3 (\hat T_n - \theta^{-1})^2 + O_\Prob(\kn^{-3/2}) \\
& \equiv a_{n1}+ a_{n2} + a_{n3} + O_\Prob(\kn^{-3/2}).
\end{align*}
Let $\mu_{nj}=\Exp[a_{nj}]$. The second component $\mu_{n2}$ is inherent to the time series $(X_s)_{s\in\N}$ itself. In many examples, it can be seen to be of the order $O(\bn^{-1})$, see for instance Section~\ref{sec:examples} or similar calculations made in \cite[Section 6]{RobSegFer09}.  Since $\sqrt{\kn}(\hat T_n - \theta^{-1}) \dto \Nc(0,\sigma^2)$, it seems plausible that the third component $\mu_{n3}$ satisfies $\mu_{n3} = \kn^{-1} \theta^3 \sigma^2 + o(\kn^{-1})$, though we will not give a precise proof. 
Finally, consider the first component $\mu_{n1}$, which  is essentially due to the use of the empirical distribution function in the definition of the estimator. 
The following lemma gives a first-order asymptotic expansion, which turns out to be the same for the disjoint and sliding blocks estimator.
\begin{lemma}\label{lem:bias}
Additionally to the conditions of Theorem \ref{theo:main} suppose that Condition~\ref{cond:cond}\eqref{item:rate2} is met with $c_2=1$. Then
\[
\lim_{n \to \infty} \kn \Exp [\hat T_n - T_n]  
=
- \frac{1}{ \theta}.
\]
where $(\hat T_n, T_n) \in \{ (\hat T_n^{\djb},  T_n^{\djb}), (\hat T_n^{\slb},T_n^{\slb})\}$ as defined in \eqref{eq:tntnhat} and  \eqref{eq:tntnhatsl}.
\end{lemma}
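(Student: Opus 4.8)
The plan is to compute the limit of $\kn \Exp[\hat T_n - T_n]$ starting from the exact identity in~\eqref{eq:long2}, namely
\[
\sqrt{\kn}(\hat T_n - T_n) = \frac{1}{\kn} \sum_{i=1}^{\kn} \tailp(Z_{ni}),
\]
so that
\[
\kn \Exp[\hat T_n - T_n] = \sqrt{\kn} \cdot \Exp\Big[ \frac{1}{\kn} \sum_{i=1}^{\kn} \tailp(Z_{ni}) \Big] = \Exp[\tailp(Z_{n1})]
\]
by stationarity, where $\tailp(x) = \kn^{-1/2} \sum_{s=1}^n \{\ind(U_s > 1-x/\bn) - x/\bn\}$. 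First I would substitute the definition of $\tailp$ and pull the expectation inside, writing the target as $\Exp[\tailp(Z_{n1})]$ with $Z_{n1} = \bn(1 - N_{n1})$ depending only on the block $I_1 = \{1,\dots,\bn\}$. The sum over $s$ then splits naturally into the indices $s \in I_1$ (inside the first block, hence dependent on $Z_{n1}$) and $s \notin I_1$ (outside, hence asymptotically independent). The ``outside'' contribution should vanish in the limit: by the centering, $\Exp[\ind(U_s > 1-x/\bn) - x/\bn]$ is itself small for each fixed $x$, and the $c_2 = 1$ version of Condition~\ref{cond:cond}\eqref{item:rate2} provides summable mixing so that the covariance between $\ind(U_s > 1-Z_{n1}/\bn)$ and the block variable $Z_{n1}$ is negligible after multiplying by the $\kn^{-1/2}$ normalization.

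The main term comes from $s \in I_1$. Here the key observation is the exact combinatorial identity: for $s$ in the same block as the maximum, $\ind(U_s > 1 - Z_{n1}/\bn) = \ind(U_s > N_{n1}) $, and since $N_{n1}$ is the maximum over the block, $\sum_{s \in I_1} \ind(U_s > N_{n1}) = 0$ almost surely (no observation in the block exceeds the block maximum, modulo the continuity of $F$ which is assumed). Thus the random-indicator part of the $I_1$-sum contributes exactly zero, and what survives is the deterministic centering part, $-\sum_{s \in I_1} Z_{n1}/\bn = -Z_{n1}$. Collecting the normalization, the in-block contribution to $\Exp[\tailp(Z_{n1})]$ is $\kn^{-1/2} \cdot \Exp[-Z_{n1}] \cdot$ (appropriate scaling)—I would track the constants carefully here, using $\Exp[Z_{1:\bn}] \to \theta^{-1}$ (which follows from Condition~\ref{cond:cond}\eqref{item:bias} and \eqref{item:moment}) to identify the limit as $-1/\theta$. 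Because this argument hinges only on the within-block maximum structure and not on the disjoint-versus-sliding distinction, the same computation applies verbatim to $T_n^{\slb}$ (the sliding-blocks case reduces to the same per-block identity by stationarity), explaining why the lemma gives a single answer for both.

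The hard part will be rigorously controlling the ``outside'' cross terms and justifying the interchange of expectation, limit, and the integral/sum structure. Specifically, one must show that $\kn^{-1/2} \sum_{s \notin I_1} \Exp[\{\ind(U_s > 1-Z_{n1}/\bn) - Z_{n1}/\bn\}]$ tends to zero; this requires a uniform integrability bound on $Z_{n1}$ (supplied by Condition~\ref{cond:cond}\eqref{item:moment} with $\delta>0$) together with the mixing decay, treating the dependence of the threshold $1 - Z_{n1}/\bn$ on the block $I_1$ as a nuisance. I would handle this by conditioning on $Z_{n1}$, bounding the conditional expectation of each outside indicator via $\alpha_1(\cdot)$-mixing plus the marginal rate $\Exp[\ind(U_s > 1-x/\bn)] = x/\bn$, and then summing; the variance bound in Condition~\ref{cond:cond}\eqref{item:varbound} and the growth condition $\kn = o(\bn^2)$ keep the error terms negligible. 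A secondary technical point is ensuring the boundary behaviour at the top of the block (ties / continuity of $F$) does not spoil the exact cancellation $\sum_{s \in I_1} \ind(U_s > N_{n1}) = 0$; under the continuity assumption on $F$ this is immediate, but I would flag it explicitly since the whole main term rests on it.
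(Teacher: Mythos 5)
Your proposal matches the paper's proof: the main term comes from the within-block indices, where the exact cancellation $\sum_{s\in I_1}\ind(U_s > N_{n1}) = 0$ leaves $-\Exp[Z_{n1}]\to-\theta^{-1}$, and the cross-block terms are shown to vanish via the mixing condition with $c_2=1$ (the paper makes your ``condition on $Z_{n1}$'' step rigorous with Bradley's coupling, replacing $Z_{n2}$ by an independent copy and using monotonicity). One caveat: for the sliding-blocks version the cancellation region is only a triangular set of index pairs $(s,t)$, so the paper obtains the limit as $-\tfrac{1}{2\theta}-\tfrac{1}{2\theta}$ from two such contributions rather than ``verbatim'' from the disjoint computation --- same answer, but the constant-tracking you flagged genuinely differs between the two cases.
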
 

The proof is given in Section~\ref{sec:addproofs}. As a consequence, we have $\mu_{n1}=  \kn^{-1}\theta+ o(\kn^{-1})$.
Now, plugging-in $\hat \theta_n$ and $\hat \sigma_n^2$ as a consistent estimator for $\theta$ and $\sigma^2$, we can estimate  $\mu_{n1}$  and $\mu_{n3}$ by 
$\hat {\mu}_{n1} = \kn^{-1} \hat \theta_n$ and $\hat \mu_{n3} = \kn^{-1} \hat \theta_n^3 \hat \sigma_n^2$, respectively.
Subtracting these expression from $\hat \theta_n$, we obtain the bias-reduced estimator
\[
\hat \theta_{n,bc} =\hat \theta_{n} - \kn^{-1} \hat \theta_n- \kn^{-1} \hat \theta_n^3 \hat \sigma_n^2.
\]
The $O(1/\sqrt{\kn})$-asymptotics will not be affected, but $\hat \theta_{n,bc}$ shows a better finite-sample performance and is therefore used in Section~\ref{sec:sim}.

Note that if we are additionally willing to assume that $\kn \mu_{n2}=\kn\Exp [T_n - \theta^{-1}] = \kn \Exp[Z_{1:\bn} - \theta^{-1}] = o(1)$ as $n\to\infty$ (cf.\ Condition~\ref{cond:cond}\eqref{item:bias}), we obtain that 
$\mu_{n1}$ and $\mu_{n3}$ are in fact the dominating bias-components. In common models, the assumption $\kn\Exp [T_n - \theta^{-1}] = o(1)$ is satisfied as soon as $\kn / \bn= o(1)$ (see Section~\ref{sec:examples}). In comparison to the assumption $\kn / \bn^2= o(1)$ in Condition~\ref{cond:cond}\eqref{item:rate2} larger block sizes are required. Similar assumptions have also been made for the bias reductions in \cite{RobSegFer09}.

Finally, note that the bias reduction based on $\hat \mu_{n1}$ can actually be alternatively motivated by the fact that $\hat \theta_{n}^{\djb} - \kn^{-1} \hat \theta_n^{\djb}$ is equal to $(\kn^{-1} \sum_{i=1}^{\kn} \tilde Z_{ni})^{-1}$, where $\tilde Z_{ni} = \bn(1-\hat F_{n,-i}(M_{ni}))$ with $\hat F_{n,-i}$ being the empirical cdf of $(X_s)_{s\notin I_i}$. The idea of using $\hat F_{n,-i}$ rather than $\hat F_n$ has been used in \cite{Nor15} as a bias reduction scheme.

\section{Examples} \label{sec:examples}

Two examples are worked out in this section. For the max-autoregressive processes, considered in Section~\ref{subsec:armax}, explicit calculations for the asymptotic variance formulas in Theorem~\ref{theo:main} are possible. These allow for a theoretical comparison with the blocks estimators from \cite{Rob09} and \cite{RobSegFer09}. All assumptions imposed in Condition~\ref{cond:cond} are shown to hold. In Section~\ref{subsec:stochdiff}, we consider solutions of stochastic difference equations such as ARCH-processes. Complementing results from \cite{Rob09} we show that Condition~\ref{cond:cond}\eqref{item:varbound} is satisfied.

\subsection{Max-autoregressive processes} \label{subsec:armax}
Consider the max-autoregressive process of order one, ARMAX(1) in short, defined by the recursion
\[
X_s = \max\{ \alpha X_{s-1}, (1-\alpha) Z_s \}, \qquad s \in \Z,
\]
where $\alpha \in [0,1)$ and where $(Z_s)_s$ denotes an i.i.d.\ sequence of standard Fr\'echet random variables. A stationary solution of this recursion is given by
\[
X_s = \max_{j \ge 0} (1-\alpha) \alpha^j Z_{s-j},
\]
which shows that the stationary distribution is standard Fr\'echet as well.
The sequence has extremal index $\theta = 1- \alpha$ and its cluster size distribution is geometric, i.e., $\pi(j) = \alpha^{j-1}(1-\alpha)$ for $j \ge 1$ (see, e.g., Chapter 10 in \citealp{BeiGoeSegTeu04}). Moreover, it follows from Proposition 5.3.7 in \cite{Hsi84} and some simple calculations that
\begin{align*}
\pi_2^{(\sigma)}(j_1, j_2) &= \alpha^{j_2-1} \Big\{ (\sigma-\alpha^{j_1-j_2+1} ) \ind( \alpha^{j_1-j_2+1} < \sigma \le \alpha^{j_1-j_2})  \\
&\hspace{3cm} + (\alpha^{j_1-j_2}-\alpha \sigma)  \ind( \alpha^{j_1-j_2} < \sigma \le \alpha^{j_1-j_2-1}) \Big\} \\
&=
\alpha^{j_2-1} \Big\{ (\sigma-\alpha^{z+1} ) \ind( j_1 = j_2 + z ) \\
&\hspace{3cm}+ (\alpha^{z+1}-\alpha \sigma)  \ind(  j_1 = j_2 + z + 1) \Big\}
\end{align*}
for $j_1 \ge j_2 >0$, where $z= \ip{\log \sigma / \log \alpha} \in \N_0$. The formula in Proposition 5.3.7 in \cite{Hsi84} is wrong for $j_2=0$, but can be corrected to
\begin{align*}
\pi_2^{(\sigma)}(j_1, 0) 
= 
(1-\alpha) \alpha^{j_1-1} \ind(  j_1 \le z) + (\alpha^z - \sigma) \ind( j_1=1+z)
\end{align*}
for $j_1 \ge 1$.
Based on these formulas, some straightforward calculations yield
\[
\Exp[\zeta_1^{\scs(\sigma)} \zeta_2^{\scs(\sigma}] = \frac{\alpha^{z+1}+\sigma\{1+z(1-\alpha)\}}{(1-\alpha)^2}
\]
and 
\[
\Exp[\zeta_1^{\scs(\sigma)}  \ind(\zeta_2^{\scs(\sigma)}=0)] = \frac{1-\alpha^{z+1}}{1-\alpha} -\sigma(z+1).
\]
Note that, for $\alpha \to 0$, we obtain $ \Exp[\zeta_{1}^{\scs(\sigma)} \zeta_{2}^{\scs(\sigma)}] \to \sigma$ and $\Exp[\zeta_{1}^{\scs(\sigma)} \ind(\zeta_{2}^{\scs(\sigma)}=0)] \to 1- \sigma$, which corresponds to the iid scenario. The latter two displays imply 
\[
\Exp[\zeta_1^{\scs(\sigma)} \zeta_2^{\scs(\sigma}] + \theta^{-1}\Exp[\zeta_1^{\scs(\sigma)}  \ind(\zeta_2^{\scs(\sigma)}=0)]  = \frac{1+\alpha \sigma}{(1-\alpha)^2}
\]
and hence
\[
\sigma_{\djb}^2 = \frac{1+\alpha}{2(1-\alpha)^2}, \qquad  \sigma_{\slb}^2 = \frac{8 \log 2 - 5+\alpha}{2(1-\alpha)^2}.
\]
Since $\theta=1-\alpha$, the asymptotic variances of  $\sqrt{\kn}(\hat \theta_n/\theta-1)$ simply reduce to the affine linear functions $(1+\alpha)/2$ and $(8 \log 2 - 5+\alpha)/2$ for the disjoint and the sliding blocks estimator, respectively.  These functions can be compared with the asymptotic variance formulas in \cite[Formula 5.1]{RobSegFer09} and in \cite[Page 285, variance of $\hat \theta_{\scs 1,n}^{\scs (\tau)}$]{Rob09}. Note that the variance of $\hat \theta_{\scs 1,n}^{\scs (\tau)}$ in \cite{Rob09} is exactly the same as the one of the disjoint blocks estimator in \cite{RobSegFer09}. The asymptotic variance formulas depend on an additional parameter $\tau>0$ to be chosen by the statistician. Assuming we would have access to the optimal value (which can be calculated numerically, but must be estimated in practice), we obtain the variance curves depicted in Figure~\ref{fig:asyvar}. We observe that, for the ARMAX-model, the PML-estimators analyzed in this paper have a smaller asymptotic variance than the (theoretically optimal) estimators in \cite{RobSegFer09} and \cite{Rob09}.

\begin{figure}
\vspace{-.7cm}
\begin{center}
\includegraphics[width=0.49\textwidth]{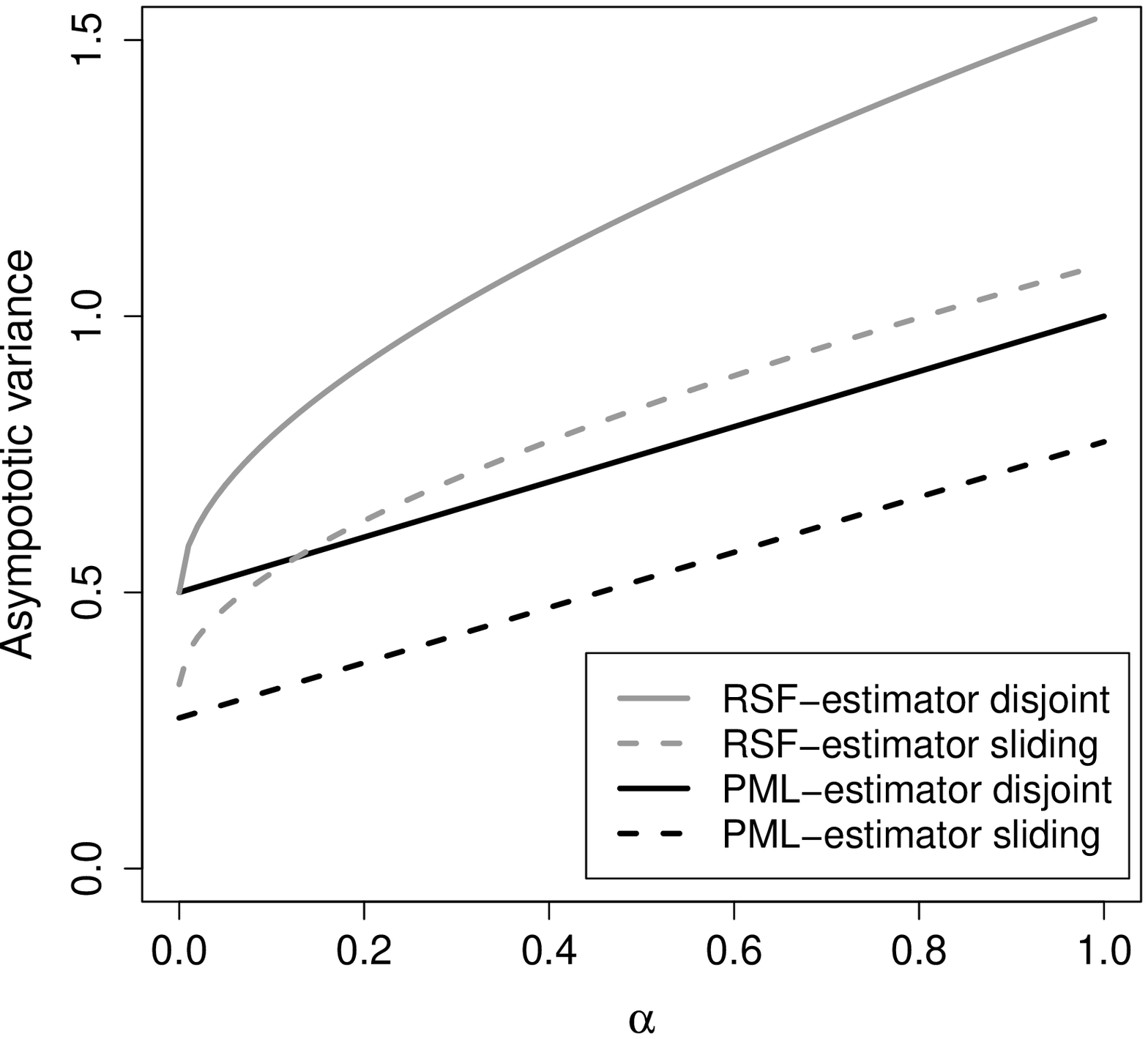}
\end{center}
\vspace{-.7cm}
\caption{\label{fig:asyvar}  Asymptotic variances of $\sqrt{\kn}(\hat \theta_n/\theta - 1)$ within the ARMAX($\alpha)$-Model for the sliding and disjoint blocks estimators analyzed in this paper (PML) and in \cite{RobSegFer09} (RSF). }
\vspace{-.7cm}
\end{figure}

Regarding the additional assumptions in Condition~\ref{cond:cond}, some tedious calculations show that Condition~\ref{cond:cond}\eqref{item:momn} is satisfied for $\delta=1$. $(X_s)_{s\in\Z}$ can further be shown to be a geometrically ergodic Markov chain, see Formula (3.5) in \cite{Bra05}. As a consequence of Theorem 3.7 in that reference,   $(X_s)_{s\in\Z}$ is geometrically $\beta$-mixing, whence Condition~\ref{cond:cond}\eqref{item:rate2} is satisfied (and also the condition on beta-mixing imposed in Proposition~\ref{prop:boot}). It can be further be shown that, with $U_s = \exp(-1/X_s)$, we have
$
\Var \{ \sum_{s=1}^n \ind(U_s > 1-y) \} \le n y \{ 1+2\alpha/(1-\alpha) \}
$
for all $y\in(0,1)$, that is, Condition~\ref{cond:cond}\eqref{item:varbound} is met. 
Moreover, a simple calculation shows that $\Prob( \min_{i=1}^{2\kn} N_{ni}' \le c)  
\le 2\kn \Prob(N_{n1}' \le c) 
= O(\kn c^{(1-\alpha)\bn/2}) = o(1),$ provided that $\kn=o(\bn^2)$.  Hence, Condition~\ref{cond:cond}\eqref{item:blockdiv}. Based on an explicit calculation of the distribution of $Z_{n1}$, it can also be seen that Condition~\ref{cond:cond}\eqref{item:moment} is satisfied for any $\delta >0$, and that $\Exp [Z_{1:\bn}] - \theta^{-1} = O(\bn^{-1})$. The latter implies that Condition~\ref{cond:cond}\eqref{item:bias} is satisfied if $ \kn = o(\bn^2) $, see \eqref{eq:rate1}. It can easily be seen that \eqref{eq:positive} is met.

\subsection{Stochastic Difference Equations}
\label{subsec:stochdiff}

Consider the equation 
\begin{align} \label{eq:diffeq}
X_s = A_s X_{s-1} + B_s, \qquad s \in \N,
\end{align}
where $(A_s, B_s)_s$ are i.i.d.\ $[0,\infty)^2$-valued random vectors. If $A_s = \alpha_1 Z_s^2$ and $B_s=\alpha_0 Z_s^2$ for some $\alpha_0,\alpha_1 >0$  and some i.i.d.\ real-valued sequence $(Z_s)_s$, the above equation defines the popular (squared) ARCH(1)-time series model. For simplicity, we assume that the distribution of $(A_1,B_1)$ is absolutely continuous. 

The existence of a stationary solution of \eqref{eq:diffeq} as well as the tail behavior of the stationary distribution $F$ of $X_s$ has been studied in  \cite{Kes73}, Theorem 5. More precisely, consider the condition
\begin{enumerate}
\item[(S)]
There exists some $\kappa>0$ such that 
\[
\hspace{-.6cm}\Exp \log  A_1 < 0, \quad \Exp[A_1^\kappa] = 1, \quad \Exp[A_1^\kappa \max(\log A_1, 0)]< \infty, \quad \Exp[B_1^\kappa] \in (0,\infty).
\]
\end{enumerate}
Under this assumption, there exists a unique stationary solution of \eqref{eq:diffeq} and the cdf~$F$ of $X_s$ satisfies $1-F(x) \sim cx^{-\kappa}$ as $x\to \infty$ for some constant $c>0$. Moreover, $F$ is continuous (\citealp{Ver79}, Theorem~3.2) and, in particular, in  the max-domain of attraction of $G_{1/\kappa}$, the generalized extreme value distribution with extreme-value index~$1/\kappa$.  

Explicit calculations for the (two-level) cluster size distribution have been carried out in \cite[Example 4.2]{Per94}. Unfortunately, the formulas are complicated and do not allow for simple expressions of the asymptotic variances in Theorem~\ref{theo:main}. 

Slight adaptations of Assumptions \eqref{item:generalpoint}--\eqref{item:rate2} of Condition~\ref{cond:cond} have been checked in \cite[Example 3.1]{Rob09}. We complement those results by showing that also \eqref{item:varbound} is satisfied.  The result is inspired by Section 4 in \cite{Dre00} and is in fact a modification of Lemma 4.1 in that paper to the present needs. Its proof is given in Section~\ref{sec:addproofs} in the supplement material.

\begin{lemma} \label{lem:archbound}
Suppose that Condition~(S) is met and let $(X_s)_s$ denote a stationary solution of \eqref{eq:diffeq}. Then Condition~\ref{cond:cond}\eqref{item:varbound} is met. \end{lemma}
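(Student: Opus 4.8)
The plan is to reduce the claim to a summable bound on the lag-covariances of the exceedance indicators, split into a short-range clustering part feeding the $ny$-term and a long-range part feeding the $n^2y^2$-term; this is essentially the strategy of Lemma 4.1 in \cite{Dre00} adapted to the present normalization. Write $v=v_y=\Fin(1-y)$ and $\xi_s=\ind(U_s>1-y)=\ind(X_s>v)$, so that $\Prob(\xi_s=1)=y$ by continuity of $F$ (which holds since $(A_1,B_1)$ is absolutely continuous). By stationarity,
\[
\Var\Big(\sum_{s=1}^n \xi_s\Big) = n\,y(1-y) + 2\sum_{h=1}^{n-1}(n-h)\,\Cov(\xi_0,\xi_h),
\]
and since $\Cov(\xi_0,\xi_h)\le \Prob(X_0>v,X_h>v)=:p_h$, it suffices to bound $\sum_{h=1}^{n-1}(n-h)p_h$ by a constant multiple of $ny+n^2y^2$ (the diagonal term is already at most $ny$).

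Next I would exploit the recursive structure. Iterating \eqref{eq:diffeq} gives $X_h=\Pi_{1,h}X_0+R_h$ with $\Pi_{1,h}=\prod_{i=1}^h A_i$ and $R_h=\sum_{j=1}^h\big(\prod_{i=j+1}^h A_i\big)B_j$, where $(\Pi_{1,h},R_h)$ is independent of $X_0$. The union bound $\{X_h>v\}\subseteq\{\Pi_{1,h}X_0>v/2\}\cup\{R_h>v/2\}$ yields $p_h\le p_h^{(1)}+p_h^{(2)}$ with $p_h^{(1)}=\Prob(X_0>v,\Pi_{1,h}X_0>v/2)$ and $p_h^{(2)}=\Prob(X_0>v,R_h>v/2)$. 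The term $p_h^{(2)}$ is the easy one: by independence $p_h^{(2)}=y\,\Prob(R_h>v/2)$, and since $R_h$ is the sum of the first $h$ terms of the a.s.\ convergent series representing the stationary solution $X_h$, one has $R_h\le X_h\eqd X_0$, so $\Prob(R_h>v/2)\le\bar F(v/2)$. Kesten's tail estimate $\bar F(t)\asymp t^{-\kappa}$ then gives $\bar F(v/2)\le C2^\kappa y$ for $y$ small, whence $p_h^{(2)}\le C'y^2$ uniformly in $h$ and $2\sum_{h=1}^{n-1}(n-h)p_h^{(2)}\le C'n^2y^2$. This is precisely the source of the $n^2y^2$-term, matching the interpretation given after Condition~\ref{cond:cond}\eqref{item:varbound}.

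The core of the proof is the short-range term $p_h^{(1)}$. Conditioning on $\Pi_{1,h}$ (independent of $X_0$) gives $p_h^{(1)}=\Exp\big[\bar F\big(\max(v,v/(2\Pi_{1,h}))\big)\big]$. Splitting on $\{\Pi_{1,h}\ge 1/2\}$ and $\{\Pi_{1,h}<1/2\}$, using $\bar F(v)=y$ on the first event and the upper tail bound $\bar F(v/(2\Pi_{1,h}))\le C(2\Pi_{1,h})^\kappa v^{-\kappa}\le C'\Pi_{1,h}^\kappa y$ on the second (the last step using the lower Kesten bound $y\ge c'v^{-\kappa}$), I obtain $p_h^{(1)}\le y\,d_h$ with
\[
d_h=\Prob(\Pi_{1,h}\ge 1/2)+C'\,\Exp\big[\Pi_{1,h}^\kappa\,\ind(\Pi_{1,h}<1/2)\big].
\]
It remains to show $\sum_{h\ge1}d_h<\infty$, which I expect to be the main obstacle. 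Writing $S_h=\sum_{i=1}^h\log A_i$ with $\Exp\log A_1<0$, the first summand equals $\Prob(S_h\ge-\log 2)$ and decays geometrically by a standard large-deviation bound, since $\Exp[A_1^s]<1$ for small $s>0$ (convexity of $s\mapsto\Exp[A_1^s]$ together with $\Exp[A_1^0]=\Exp[A_1^\kappa]=1$ and $\Exp\log A_1<0$). For the second summand, since $\Exp[\Pi_{1,h}^\kappa]=1$ I would tilt by the martingale $e^{\kappa S_h}$; under the tilted law the increments $\log A_i$ are i.i.d.\ with positive mean $\Exp[A_1^\kappa\log A_1]>0$ (the same convexity, evaluated as $\tfrac{d}{ds}\Exp[A_1^s]$ at $s=\kappa$), and $\Exp[\Pi_{1,h}^\kappa\ind(\Pi_{1,h}<1/2)]$ equals the tilted probability $\widetilde\Prob(S_h<-\log 2)$, a deviation below a positive drift, which again decays geometrically. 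Hence $\sum_h d_h<\infty$ and $2\sum_{h=1}^{n-1}(n-h)p_h^{(1)}\le 2n\,y\sum_{h\ge1}d_h=C''ny$. Combining the three contributions yields $\Var(\sum_{s=1}^n\xi_s)\le C_1(ny+n^2y^2)$ for all $y$ below a threshold $c_1$, as required. The delicate points are the strict positivity $\Exp[A_1^\kappa\log A_1]>0$ and the integrability needed to differentiate $\Exp[A_1^s]$ at $s=\kappa$, both supplied by Condition~(S); every tail estimate uses Kesten's theorem uniformly for $t\ge t_0$, which is what forces the restriction $y<c_1$.
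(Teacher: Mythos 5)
Your proof is correct and follows essentially the same route as the paper's: the same decomposition $X_h=\Pi_{1,h}X_0+R_h$ with a union bound, independence of the $R_h$-part from $X_0$ producing the $n^2y^2$-term, and geometric decay in $h$ of the $\Pi_{1,h}X_0$-contribution producing the $ny$-term. The only difference is technical: where you condition on $\Pi_{1,h}$, use the two-sided Kesten tail asymptotics and an exponential-tilting/Chernoff bound (equivalent to $\ind(\Pi_{1,h}<1/2)\le(2\Pi_{1,h})^{-t}$, hence $\Exp[\Pi_{1,h}^{\kappa}\ind(\Pi_{1,h}<1/2)]\le 2^{-t}\Exp[A_1^{\kappa-t}]^{h}$), the paper applies Markov's inequality with an exponent $\xi\in(0,\kappa)$ inside an integral against $F(\diff u)$ and controls the resulting integral via Potter bounds for $U\in RV_{1/\kappa}$ --- both reductions rest on the same fact that $\Exp[A_1^{\xi}]<1$ for $\xi\in(0,\kappa)$, supplied by Condition~(S) and convexity of $s\mapsto\Exp[A_1^{s}]$.
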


\section{Finite-sample performance}
\label{sec:sim}

A simulation study is performed to illustrate the finite-sample performance of the proposed estimators and methods.
Results are presented for four time series models:
\begin{itemize}
\item 
The \textbf{ARMAX-model} from Section~\ref{subsec:armax}: 
\[
X_s = \max\{ \alpha X_{s-1}, (1-\alpha) Z_s \}, \qquad s \in \Z,
\]
where $\alpha \in [0,1)$ and where $(Z_s)_s$ is an i.i.d.\ sequence of standard Fr\'echet random variables. We consider $\alpha=0,0.25,0.5,0.75$ resulting in $\theta=1,0.75,0.5,0.25$.
\item 
The \textbf{squared ARCH-model} from Section~\ref{subsec:stochdiff}:
\[
X_s = (2 \times 10^{-5} + \lambda X_{s-1}) Z_s^2,  \qquad s \in \Z,
\]
where $\lambda\in(0,1)$  and where $(Z_s)_s$ denotes an i.i.d.\ sequence of standard normal random variables. We consider $\lambda=0.1, 0.5, 0.9, 0.99$ which implies $\theta =0.997, 0.727, 0.460, 0.422$, respectively (Table 3.1 in \citealp{DehResRooVri89}).
\item 
The \textbf{ARCH-model}:
\[
X_s = (2 \times 10^{-5} + \lambda X_{s-1}^2) ^{1/2}Z_s,  \qquad s \in \Z,
\]
where $\lambda\in(0,1)$  and where $(Z_s)_s$ denotes an i.i.d.\ sequence of standard normal random variables. We consider $\lambda=0.1, 0.5, 0.7, 0.99$ which implies $\theta =0.999, 0.835, 0.721, 0.571$, respectively (Table 3.2 in \citealp{DehResRooVri89}).

\item 
The \textbf{Markovian Copula-model} \citep{DarNguOls92}:
\[
X_s =\Fin(U_s), \quad (U_s, U_{s-1}) \sim C_\vartheta, \qquad s \in \Z.
\]
Here, $\Fin$ is the left-continuous quantile function of some arbitrary continuous cdf $F$, 
$(U_s)_s$ is a stationary Markovian time series of order~1 and $C_\vartheta$ denotes the Survival Clayton Copula with parameter $\vartheta>0$. For this model, 
$
\theta = \Prob( \max_{t \ge 1} \prod_{s=1}^t A_s \le U),
$
where $U,A_1,A_2, \dots$ are independent, $U$ is standard uniform and $A_s$ has cdf $H_\vartheta(s) = 1- (1+s^{\vartheta})^{-(1+1/\vartheta)}$, $s \ge 0$, see \cite{Per94} or \cite{BeiGoeSegTeu04}, Section 10.4.2.
We consider choices $\vartheta=0.23, 0.41, 0.68, 1.06, 1.90$  such that (approximately) $\theta = 0.2, 0.4, 0.6, 0.8, 0.95$ and fix $F$ as the standard uniform cdf (the results are independent of this choice, as the estimators are rank-based). Algorithm 2 in \cite{RemPapSou12} allows to simulate from this model. 
\end{itemize}
 
Additional simulation results for the AR-model and the doubly stochastic process from \cite{SmiWei94} turned out to be quite similar to the ARMAX-model and are not presented for the sake of brevity.  In all scenarios under consideration, the sample size is fixed to $n=8,192=2^{13}$ and the block size $\bn$ for the blocks estimators is chosen from the set $2^{2}, 2^3, \dots, 2^{9}$.

\subsection{Comparison with other estimators for the extremal index}
We present results for six different estimators: the bias-reduced sliding blocks estimator $\hat \theta_n^{\Be}$, the sliding blocks estimator from \cite{Nor15} (i.e., $\hat \theta_n^{\No}$, but with $\hat F_n$ replaced by $\hat F_{n,-i}$ in the $i$th block), the bias-reduced sliding blocks estimator from \cite{RobSegFer09} (with a data-driven choice of the threshold as outlined in Section 7.1 of that paper), the integrated version of the blocks estimator from \cite{Rob09}, the intervals estimator from \cite{FerSeg03} and the ML-estimator from \cite{Suv07}. Results for other versions of these estimators (e.g., the disjoint blocks versions or the versions based on a fixed threshold) are not presented as their performance was dominated by the above versions in almost all scenarios under consideration.
The parameters $\sigma$ and $\phi$ for the Robert-estimator (last display on page 276 of \citealp{Rob09}) are chosen as $\sigma=0.7$ and $\phi=1.3$. The intervals estimator and the Süveges-estimator require the choice of a threshold $u$, which we choose as the $1-1/\bn$ empirical quantile of the observed data. All estimators  are constrained to the interval $[0,1]$, except for Table~\ref{tab:minmse} where we also report results for the unconstrained versions.

In Figure~\ref{fig:mse1} (ARCH), as well as in Section~\ref{sec:addsim} of the supplement material (ARMAX, squared ARCH and Markovian Copula), we depict the mean-squared error $\Exp[(\hat \theta-\theta)^2]$ as a function of the block size parameter $b$, estimated on the basis of $N=10,000$ simulation runs.
For most models and estimators, the MSE-curves are U-shaped, representing the usual bias-variance tradeoff in extreme value theory (an exception being the Süveges-estimator within the ARCH-model for $\theta=0.571$, a possible reason being its high bias due to fact that his central assumption $D^{(2)}$ is not satisfied in this model).  Explicit pictures of the squared bias and variance  can be found in Section~\ref{sec:addsim} of the supplement. For the blocks estimators considered in this paper, the bias is decreasing in $b$ (the asymptotics for the exponential distribution kick in), while the variance is increasing (the convergence rate of the estimators being $1/\sqrt{k_n}$).  In terms of the bias, $\hat \theta_n^{\No}$ is clearly superior to $\hat \theta_n^{\Be}$ for small block sizes. 

The minimal values of the curves in Figure~\ref{fig:mse1} are of particular interest, and are summarized in Table~\ref{tab:minmse}. We observe that the sliding blocks estimators $\hat \theta_{n}^{\Be}$ and $\hat\theta_{n}^{\No}$ outperform the other two blocks estimators in most scenarios. For the ARMAX-model, this is in agreement with the theoretical findings presented in Figure~\ref{fig:asyvar}. Comparing $\hat\theta_{n}^{\Be}$ and $\hat\theta_{n}^{\No}$, we see that $\hat\theta_{n}^{\No}$ seems to be preferable in most scenarios.
In general, there is no clear best estimator in terms of the MSE: $\hat\theta_n^{\No}$ wins six times, the S\"uveges-estimator six times, $\hat\theta_n^{\Be}$ four times, and the intervals estimator is best in one scenario.

\begin{figure}[p!]
\vspace{-.3cm}
\begin{center}
\includegraphics[width=0.46\textwidth]{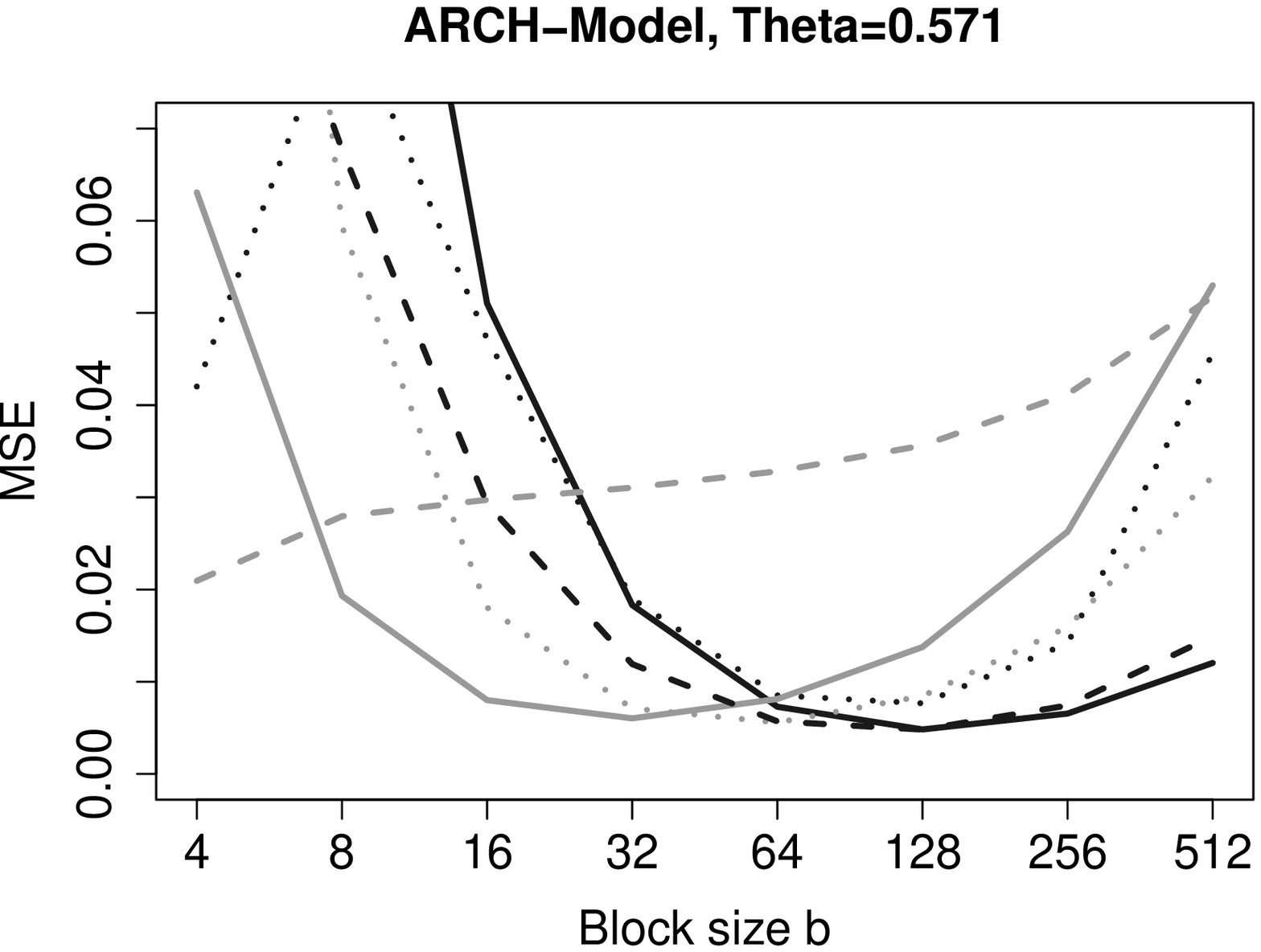}
\hspace{-.3cm}
\includegraphics[width=0.46\textwidth]{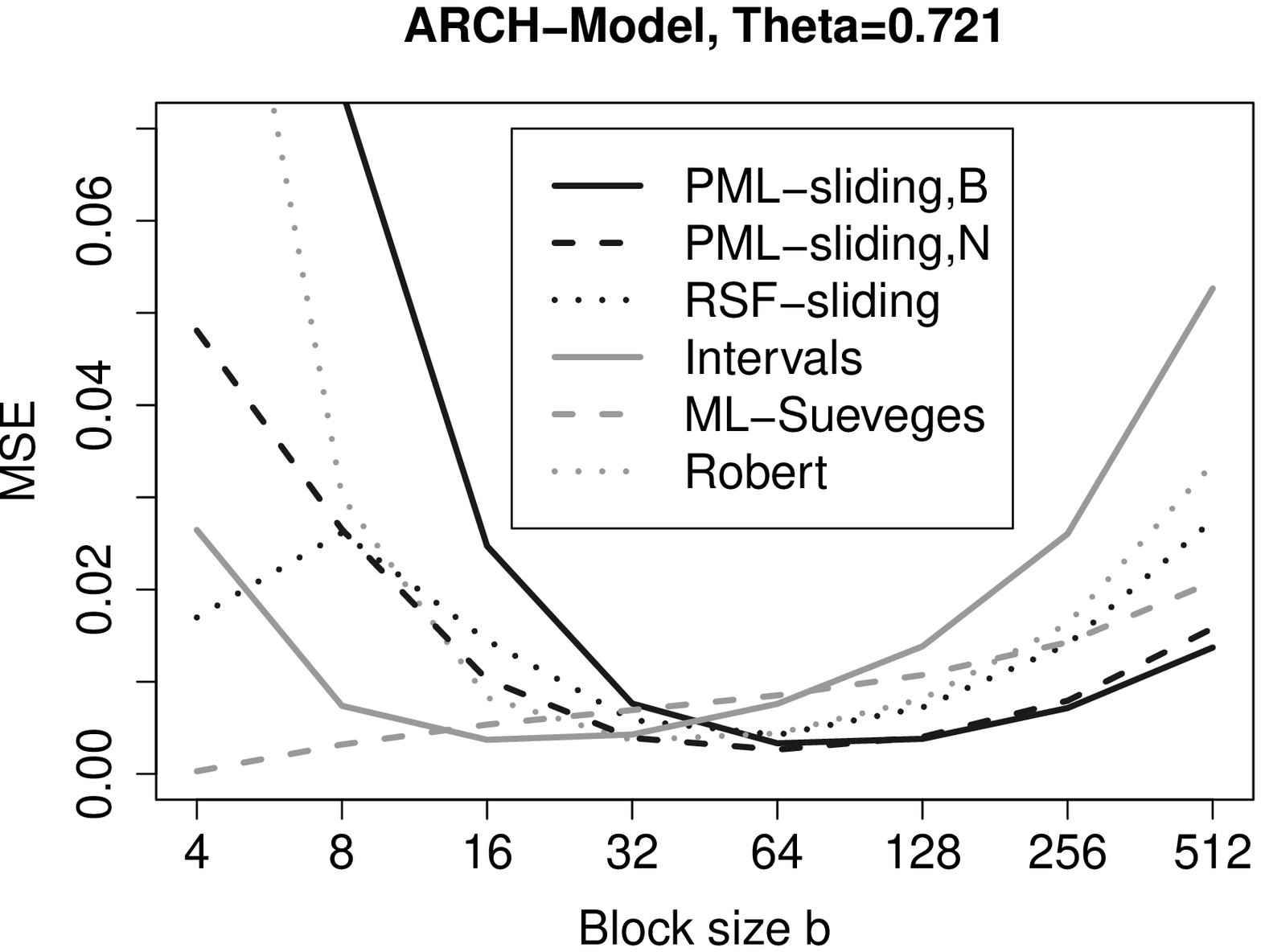}
\vspace{-.2cm}

\includegraphics[width=0.46\textwidth]{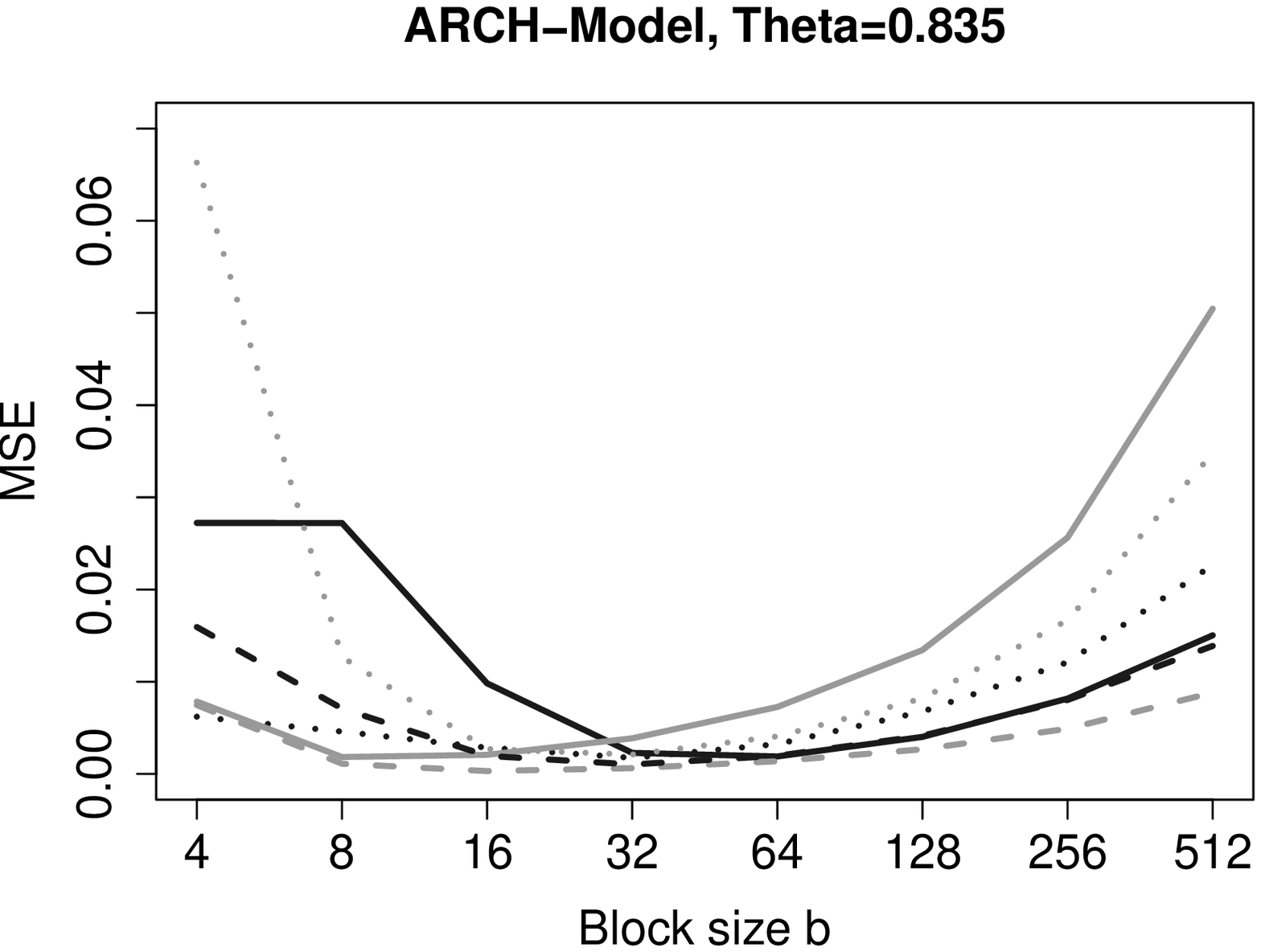}
\hspace{-.3cm}
\includegraphics[width=0.46\textwidth]{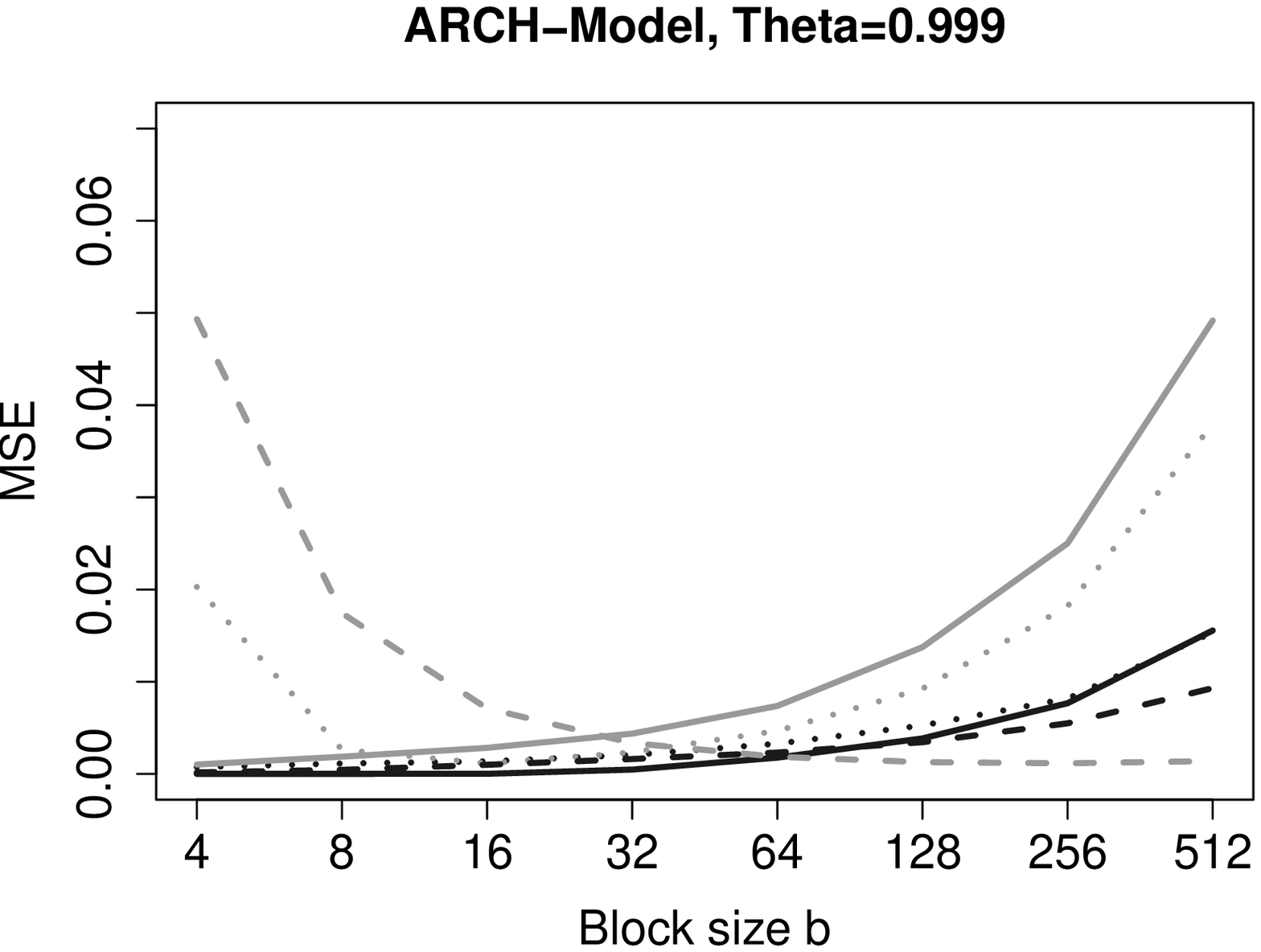}
\end{center}
\vspace{-.5cm}
\caption{\label{fig:mse1}  Mean squared error for the estimation of $\theta$ within the ARCH-model for four values of $\theta\in\{0.571,0.721,0.835,0.999\}$. 
}
\vspace*{\floatsep}

\begin{center}
\includegraphics[width=0.46\textwidth]{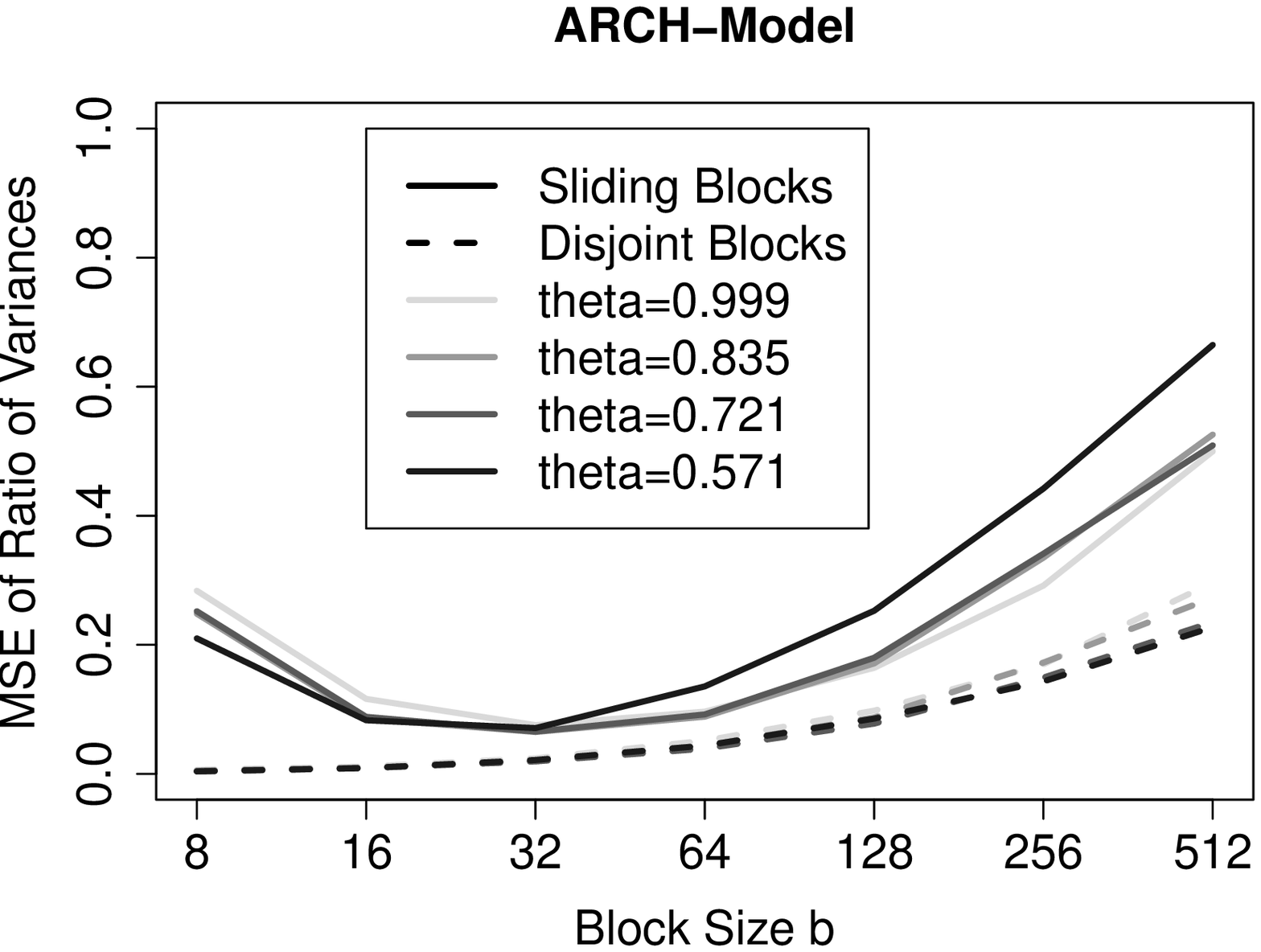}
\hspace{-.3cm}
\includegraphics[width=0.46\textwidth]{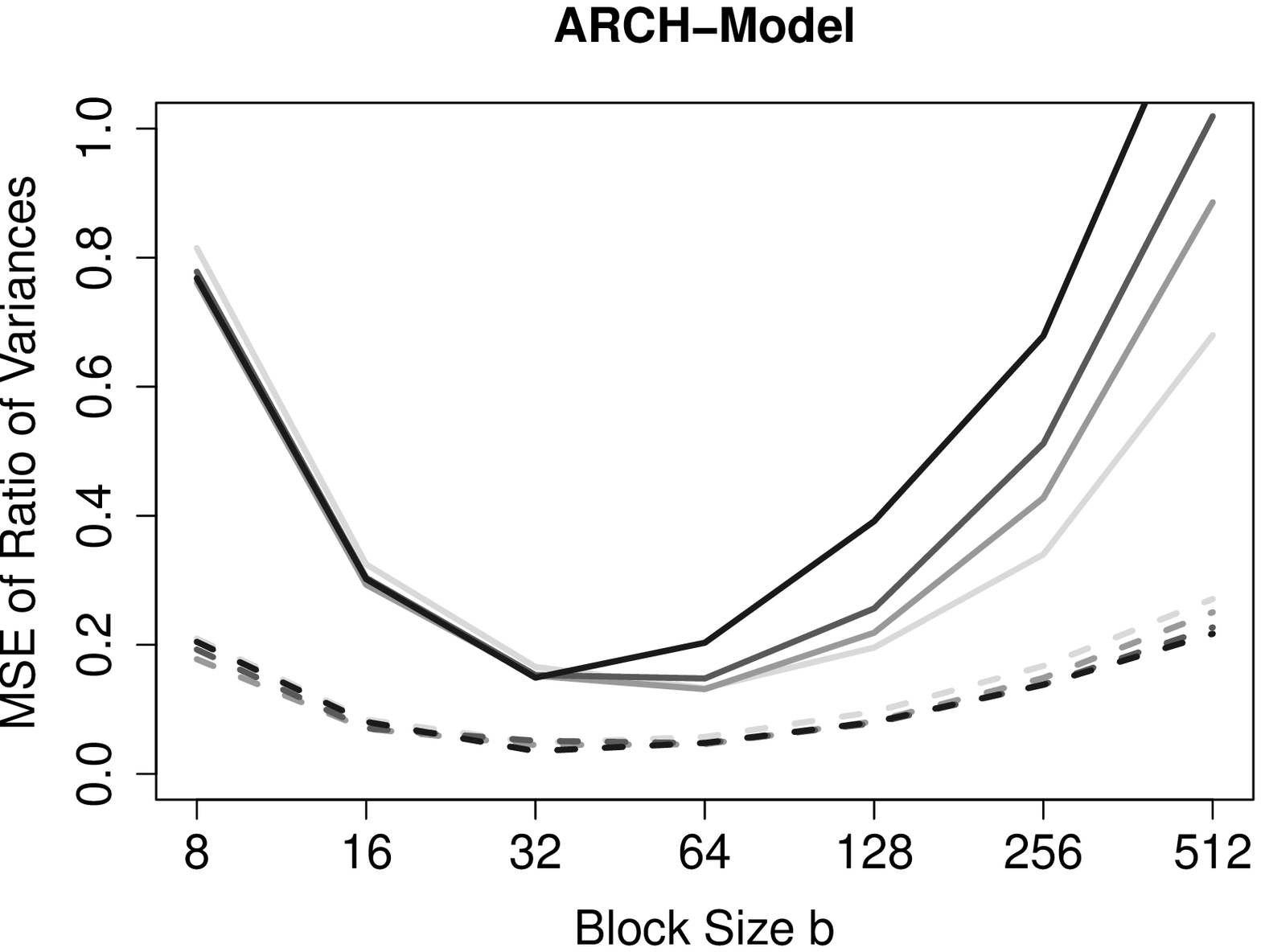}
\vspace{-.5cm}

\includegraphics[width=0.46\textwidth]{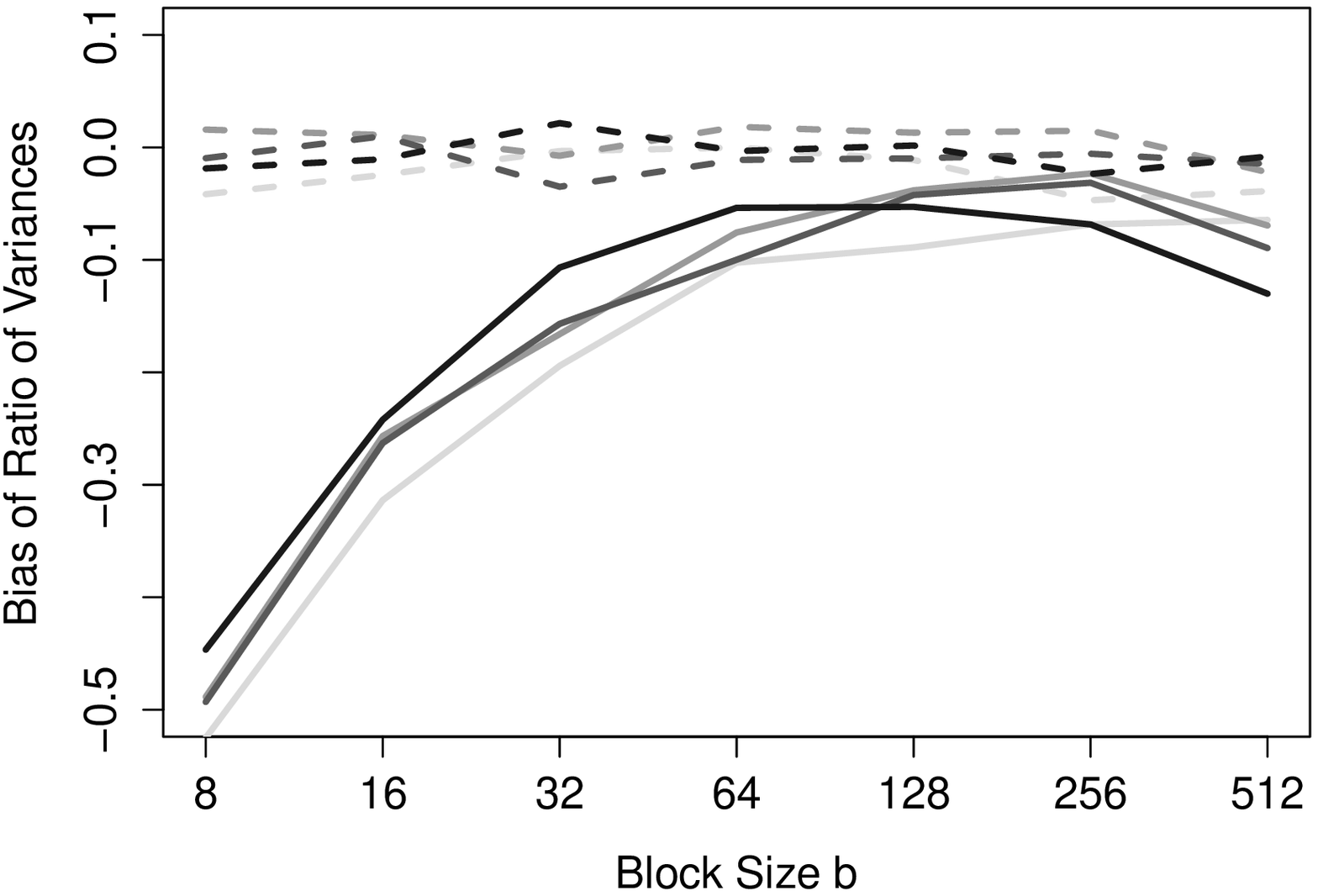}
\hspace{-.3cm}
\includegraphics[width=0.46\textwidth]{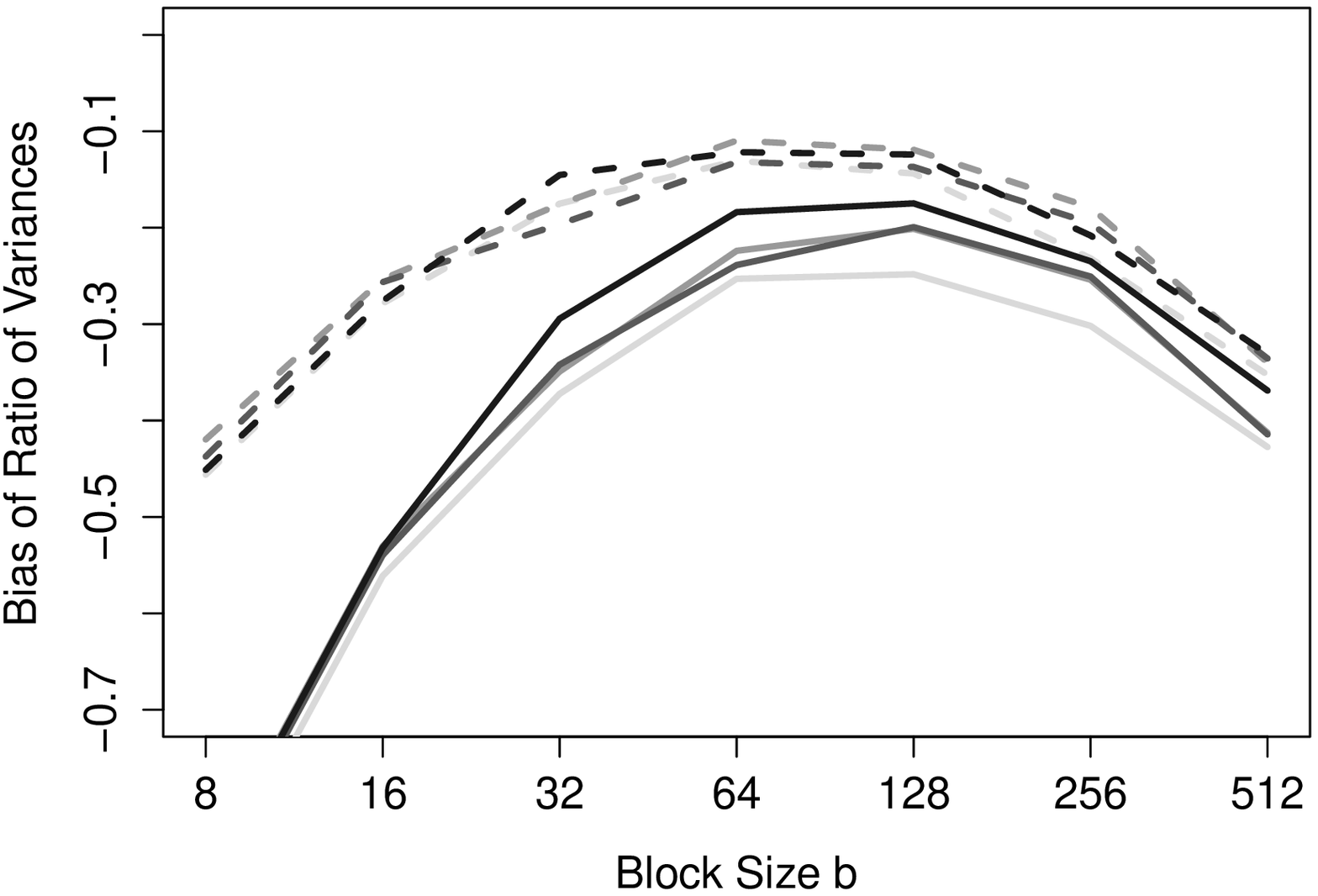}
\end{center}
\vspace{-.5cm} 
\caption{\label{fig:msevariance1}  Mean squared error $\Exp[(\hat \tau^2/ \Var (\hat \theta_n) - 1 )^2]$ and bias  $\Exp[\hat \tau^2/ \Var (\hat \theta_n) ]- 1 $ within the ARCH-model for the unconstrained estimators  $\hat \theta_n^{\Be}$ (left) and $\hat \theta_n^{\No}$ (right).
}
\end{figure}

\begin{table}[t!]
\centering
{\footnotesize
\begin{tabular}{lllllll}
  \hline
 $\theta$ & $\hat\theta_n^{\Be, \slb}$ & $\hat\theta_n^{\No, \slb}$ & RSF-sliding & Intervals & ML-S\"uveges & Robert \\ 
  \hline
0.25 & 0.91 & 0.51 & 1.35 & 0.53 &\bf  0.22 & 1.77 \\ 
  0.50 & 1.58 & 0.78 & 2.24 & 0.99 & \bf 0.63 & 2.07 \\ 
  0.75 & 2.03 & \bf  0.67 & 2.34 & 1.17 & 0.96 & 2.31 \\ 
  1.00 & \bf 0.00 (1.78) & 0.05 (0.11) & 0.10 (0.12) & 0.88 & 0.11 & 2.22 \\ 
  \hline
0.422 & 3.18 & 2.86 & 4.85 & \bf 2.53 & 3.19 & 4.00 \\ 
  0.460 & 3.53 & 2.98 & 5.45 & 2.71 & \bf  1.92 & 4.26 \\ 
  0.727 & 1.07 & \bf  0.46 & 1.46 & 1.08 & 1.44 & 1.19 \\ 
  0.997 & \bf  0.01 (0.50) & 1.56 & 1.31 (1.33) & 5.34 & 2.19 & 0.65 \\ 
  \hline
0.571 & 4.82 & \bf 4.81 & 7.65 & 6.02 & 20.94 & 5.58 \\ 
  0.721 & 3.32 & 2.63 & 4.22 & 3.70 & \bf 0.28 & 3.65 \\ 
  0.835 & 1.89 & 1.02 & 1.74 & 1.83 & \bf 0.31 & 2.09 \\ 
  0.999 &\bf 0.00 (0.98) & 0.16 (0.17)& 0.73 (0.76)& 1.01 & 1.15 & 1.13 \\ 
  \hline
    0.20 & 0.63 &\bf   0.52 & 1.72 & 0.63 & 15.14 & 1.56 \\ 
  0.40& 0.99 & \bf  0.68 & 1.61 & 0.79 & 3.80 & 1.29 \\ 
  0.60 & 1.65 & 0.92 & 1.72 & 4.77 &\bf   0.43 & 1.65 \\ 
  0.80 & 0.97 &\bf   0.18 & 0.72 & 13.00 & 2.53 & 0.63 \\ 
0.95 & \bf  0.82 (0.94) & 4.60 & 2.87 & 12.05 (12.50) & 4.32 & 1.65 (2.06) \\ 
   \hline
\end{tabular}
}

\caption{Minimal mean squared error multiplied with $10^3$ for the ARMAX-model (top 4 rows), the squared ARCH-model (upper middle 4 rows), the  ARCH-model (lower middle 4 rows) and the Markovian copula model (bottom 5 rows). The estimator with the (row-wise) smallest MSE is in boldface. Values in brackets refer to the unconstrained estimator. } \label{tab:minmse}

\vspace{-.5cm}

\end{table}

\subsection{Estimation of the asymptotic variance and coverage of confidence bands} We consider the ARMAX-, squared ARCH-, and ARCH-model as described above. We are interested in the performance of 
\[
\hat \tau^2_{\djb}= (\hat \theta_n^{\djb})^4 \hat \sigma_{\djb}^2 \quad \text{ and } \quad
\hat \tau^2_{\slb}= (\hat \theta_n^{\slb})^4 \hat \sigma_{\slb}^2
\]
as estimators for the variances of $\sqrt{\kn} \hat \theta_n^{x,\djb}$ and $\sqrt{\kn} \hat \theta_n^{x,\slb}$, respectively, where $x\in\{\Be, \No\}$. Results can be found in Figure~\ref{fig:msevariance1}  (as well as in Figures~\ref{fig:msevariance2} and~\ref{fig:msevariance3} of the supplement), where we depict the curves 
\[
\bn \mapsto 
\Exp\Big[ \Big(\frac{\hat \tau^2(\bn)}{\Var(\sqrt{\kn} \hat \theta_n(\bn))}-1\Big)^2 \Big],  \qquad 
\bn \mapsto 
\Exp\Big[ \frac{\hat \tau^2(\bn)}{\Var(\sqrt{\kn} \hat \theta_n(\bn))}-1 \Big], 
\] 
$(\hat \tau^2, \hat \theta_n) \in \{ (\hat \tau_{\scs \djb}^2, \hat \theta_n^{\scs \Be \djb}),(\hat \tau_{\scs \slb}^2, \hat \theta_n^{\scs\Be, \slb})
, (\hat \tau_{\scs \djb}^2, \hat \theta_n^{\scs \No, \djb}),(\hat \tau_{\scs \slb}^2, \hat \theta_n^{\scs \No,\slb})\}$, estimated on the basis of 10,000 simulation runs. 
Here, $\Var(\sqrt{\kn} \hat \theta_n(\bn))$ is approximated by the empirical variance of $\sqrt{\kn} \hat \theta_n(\bn)$ over additional 10,000 simulations. 
Qualitatively, we observe a similar behaviour as for the estimation of $\theta$ depicted in Figure~\ref{fig:mse1}: the curves are U-shaped and possess a minimum at some intermediate values of $\bn$. 
Due to the fact that estimator $\hat \tau^2_{\slb}$ is based on an additional estimation step (which is potentially biased, if $\bn$ is small), the approximation works better for the disjoint blocks estimator. Also, the approximation is far better for $\hat\theta_n^{\Be}$ than for $\hat \theta_{n}^{\No}$ (in particular for the bias), which may be explained by the fact that $\hat \tau^2_{\slb}$ is based on an explicit expansion for $\hat\theta_n^{\Be}$. In particular, the fact that the bias of $\hat\theta_n^{\No}$ is eventually increasing for larger block sizes may be explained by the $1/\sqrt{k_n}$-approximation of $\hat\theta_n^{\No}$ by $\hat\theta_n^{\Be}$ (Theorem~\ref{theo:Northrop}).

We are also interested in the coverage probabilities of the confidence sets
\[
{\rm CI}_{1-\alpha} = [ \hat \theta_n - \kn^{-1/2} \hat \tau u_{1-\alpha/2},  \hat \theta_n + \kn^{-1/2} \hat \tau u_{1-\alpha/2} ]
\]
for $\theta$,
where $u_{1-\alpha/2}$ denotes the $(1-\alpha/2)$-quantile of the standard normal distribution. Empirical coverage probabilities for $1-\alpha=0.95$ based on $N=10,000$ simulation runs are presented in Tables~\ref{tab:coverage1} ($\hat \theta_n^{\Be}$-versions) and \ref{tab:coverage2}  ($\hat \theta_n^{\No}$-versions), with coverage probabilities above $0.9$  in boldface. Since the variance approximation is worse for $\hat \theta_{n}^{\No}$, the coverage probabilities are worse as well.
Moreover, it can be seen that the probabilities strongly depend on the block size $\bn$, with, for $\hat \theta_n^{\Be}$, at least one reasonable choice  for every model, usually close to the MSE-minimal choice in Figure~\ref{fig:mse1} (and Figure \ref{fig:mse2} in the supplement). The larger width of the confidence sets for the disjoint blocks estimator (not presented here; it is due to the larger variance) results in a slightly better performance compared to the sliding blocks estimator.

 \begin{table}[t]
\centering
{\footnotesize
\begin{tabular}{rl ||  cccc || cccc }
\hline
\multicolumn{2}{c||}{} & \multicolumn{4}{c||}{ARMAX-model} & \multicolumn{4}{c}{ARCH-model} \\
  \hline
 &$\bn / \theta$& 0.25 & 0.5 & 0.75 & 1 & 0.571 & 0.721 & 0.835 & 0.999 \\ 
  \hline
disjoint &  16 & 0 & 0 & 0.13 & \bf 1.0&0.00 & 0.00 & 0.04 &\bf 1.00 \\ 
  &32 & 0.03 & 0.63 & 0.85 & \bf 0.99& 0.01 & 0.42 & 0.87 &\bf 0.97 \\ 
  &64 & 0.80 & \bf 0.93 &\bf  0.95 & \bf 0.98 &0.68 &\bf 0.91 &\bf 0.94 &\bf 0.93 \\ 
  &128 & \bf 0.94 & \bf 0.94 & \bf 0.94 & \bf 0.95  &\bf 0.93 &\bf 0.94 &\bf 0.92 &\bf 0.91\\ 
  &256 & \bf 0.93 &\bf  0.92 & \bf 0.91 &\bf  0.92&\bf  0.93 &\bf 0.92 &\bf 0.90 & 0.89  \\ 
  &512 &\bf  0.91 & \bf0.90 & 0.88 & 0.87&\bf 0.90 & 0.88 & 0.86 & 0.84 \\ 
\hline
  sliding &   16 & 0 & 0 & 0.02 & \bf 1.00& 0.00 & 0.00 & 0.00 &\bf 1.00 \\ 
  &32 & 0.01 & 0.46 & 0.75 &\bf 1.00 & 0.00 & 0.20 & 0.76 &\bf 0.95\\ 
  &64 & 0.71 &\bf  0.90 & \bf 0.93&\bf  0.96&0.53 & 0.86 &\bf 0.92 & 0.89 \\ 
  &128 & \bf0.92 &\bf 0.93 & \bf0.92 & \bf0.92&0.89 &\bf 0.92 & 0.88 & 0.85  \\ 
  &256 & \bf0.91 & 0.89 & 0.87 & 0.86&\bf 0.90 & 0.88 & 0.84 & 0.81 \\ 
  &512 & 0.88 & 0.85 & 0.81 & 0.76 &0.85 & 0.81 & 0.77 & 0.73  \\
   \hline
\end{tabular}
}


\caption{Empirical coverage probabilities of $95\%$-confidence bands of the constrained estimators $\hat \theta_n^{\Be}$. Values above $90\%$ are in boldface.} \label{tab:coverage1}
\vspace{-.2cm}
\end{table}

 \begin{table}[t]
\centering
{\footnotesize
\begin{tabular}{rl ||  cccc || cccc }
\hline
\multicolumn{2}{c||}{} & \multicolumn{4}{c||}{ARMAX-model} & \multicolumn{4}{c}{ARCH-model} \\
  \hline
 &$\bn / \theta$& 0.25 & 0.5 & 0.75 & 1 & 0.422 & 0.46 & 0.727 & 0.997 \\ 
  \hline
disjoint &    16 & 0.00 & 0.47 & 0.84 & \bf 0.95& 0.00 & 0.01 & 0.62 & 0.82  \\ 
  &32 & 0.50 & 0.87 &\bf  0.92 &\bf  0.96 & 0.07 & 0.64 &\bf 0.91 & 0.88  \\ 
  &64 &  0.88 &\bf  0.93 & \bf 0.93 &\bf  0.97&  0.69 & \bf 0.90 &\bf 0.92 &\bf 0.92\\ 
  &128 &\bf  0.91 & \bf 0.92 & \bf 0.92 &\bf  0.96 &0.89 &\bf 0.92 &\bf 0.92 &\bf 0.94\\ 
  &256 &\bf 0.90 & \bf 0.90 & \bf 0.91 & \bf 0.96&\bf 0.90 &\bf 0.91 &\bf 0.94 &\bf 0.94  \\ 
  &512 & 0.87 & 0.87 &\bf  0.91 & \bf 0.94 & 0.86 & 0.89 &\bf 0.92 &\bf 0.93\\ 
\hline
  sliding &   16&0.00 & 0.22 & 0.70 &\bf 0.91& 0.00 & 0.00 & 0.32 & 0.62\\ 
  &32 &0.31 & 0.80 & 0.88 &\bf 0.94 & 0.01 & 0.42 & 0.87 & 0.77 \\ 
  &64 &0.82 & 0.89 & \bf0.90 &\bf 0.94& 0.54 & 0.84 & 0.88 & 0.85\\ 
  &128 & 0.87 & 0.88 & 0.88 &\bf 0.92& 0.83 & 0.88 & 0.86 & 0.84 \\ 
  &256&0.85 & 0.84 & 0.81 & 0.86& 0.83 & 0.83 & 0.80 & 0.76\\ 
  &512 &0.75 & 0.72 & 0.69 & 0.76&0.73 & 0.69 & 0.67 & 0.62\\ 
   \hline
\end{tabular}
}


\caption{Empirical coverage probabilities of $95\%$-confidence bands of the constrained Northrop estimators $\hat \theta_n^{\No}$. Values above $90\%$ are in boldface.} \label{tab:coverage2}
\vspace{-.5cm}

\end{table}

\section{Case study} \label{sec:app}

The use of the PML-estimators and the corresponding confidence sets is illustrated on negative daily log returns of a variety of financial market indices and prices including equity (e.g., S\&P 500 Composite, MSCI World), commodities (e.g., TOPIX Oil \& Coal, Gold Bullion LBM, Raw Sugar) and U.S. treasury bonds between 04 January 1990 and 30 December 2015 ($n=6,780$ observations for each index). Clusters of large negative returns can be financially damaging and are hence of interest for risk management. 

\begin{figure}[t]
\begin{center}
\includegraphics[width=0.48\textwidth]{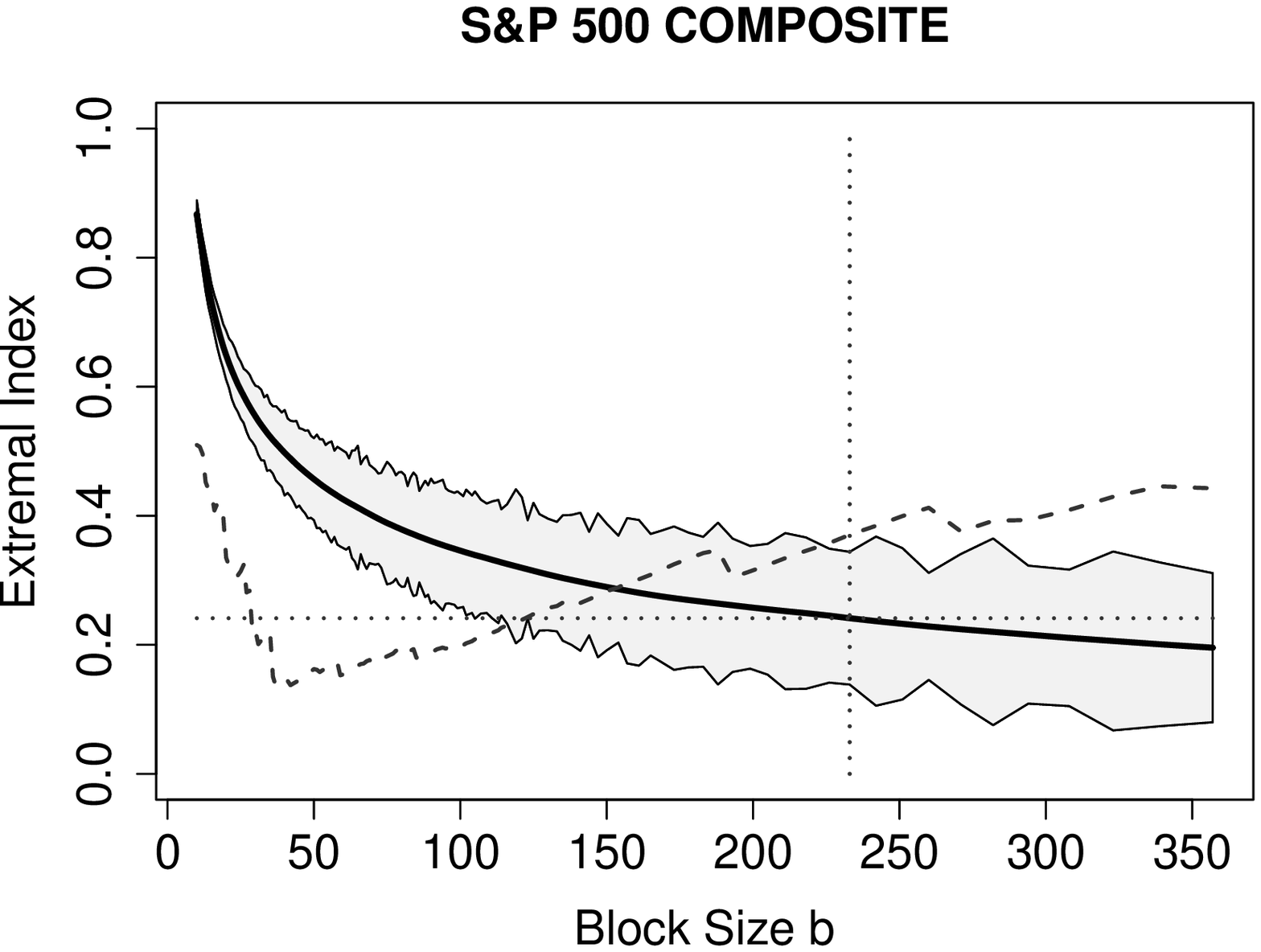}
\hspace{-.3cm}
\includegraphics[width=0.48\textwidth]{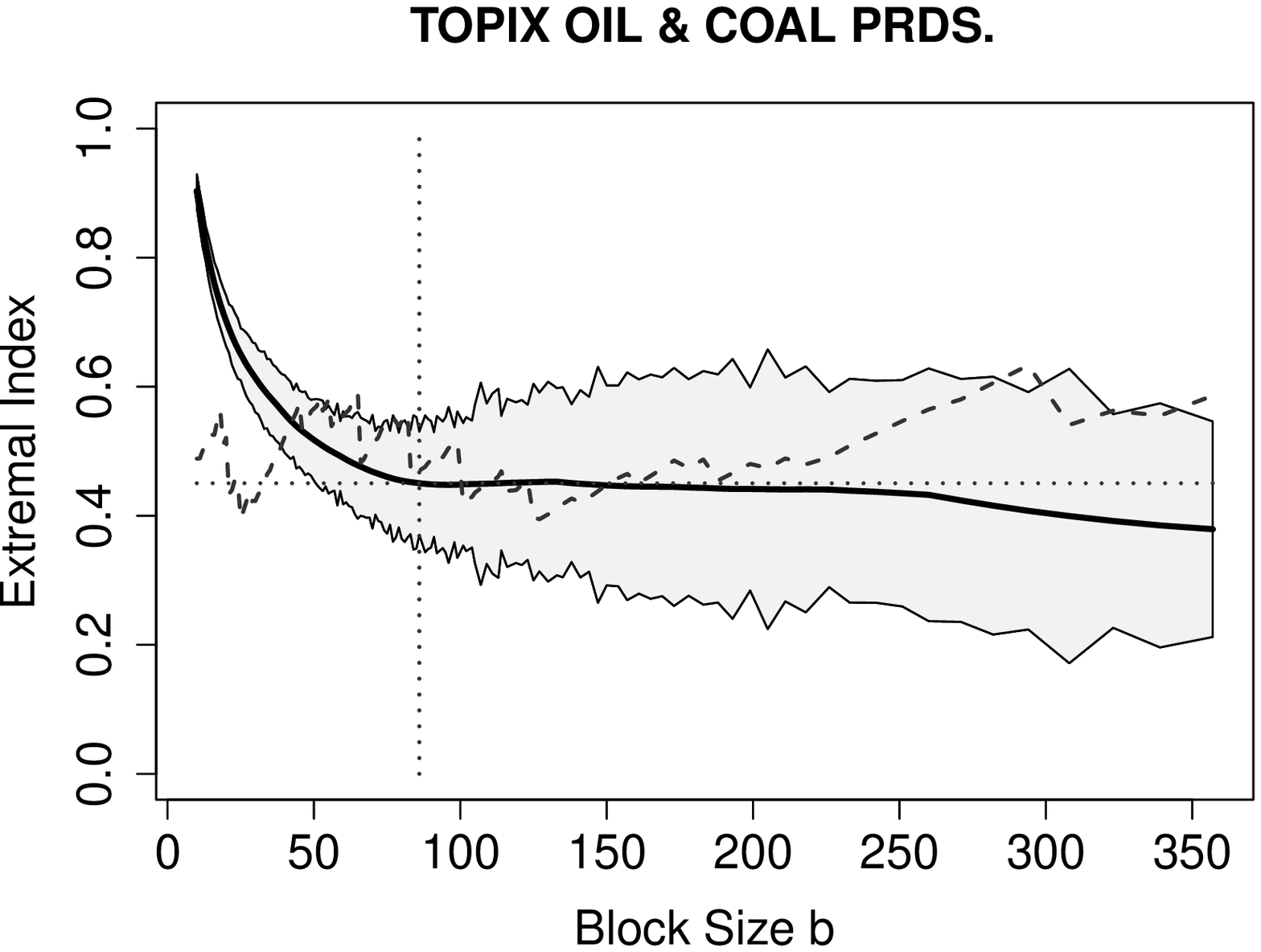}
\vspace{-.2cm}

\includegraphics[width=0.48\textwidth]{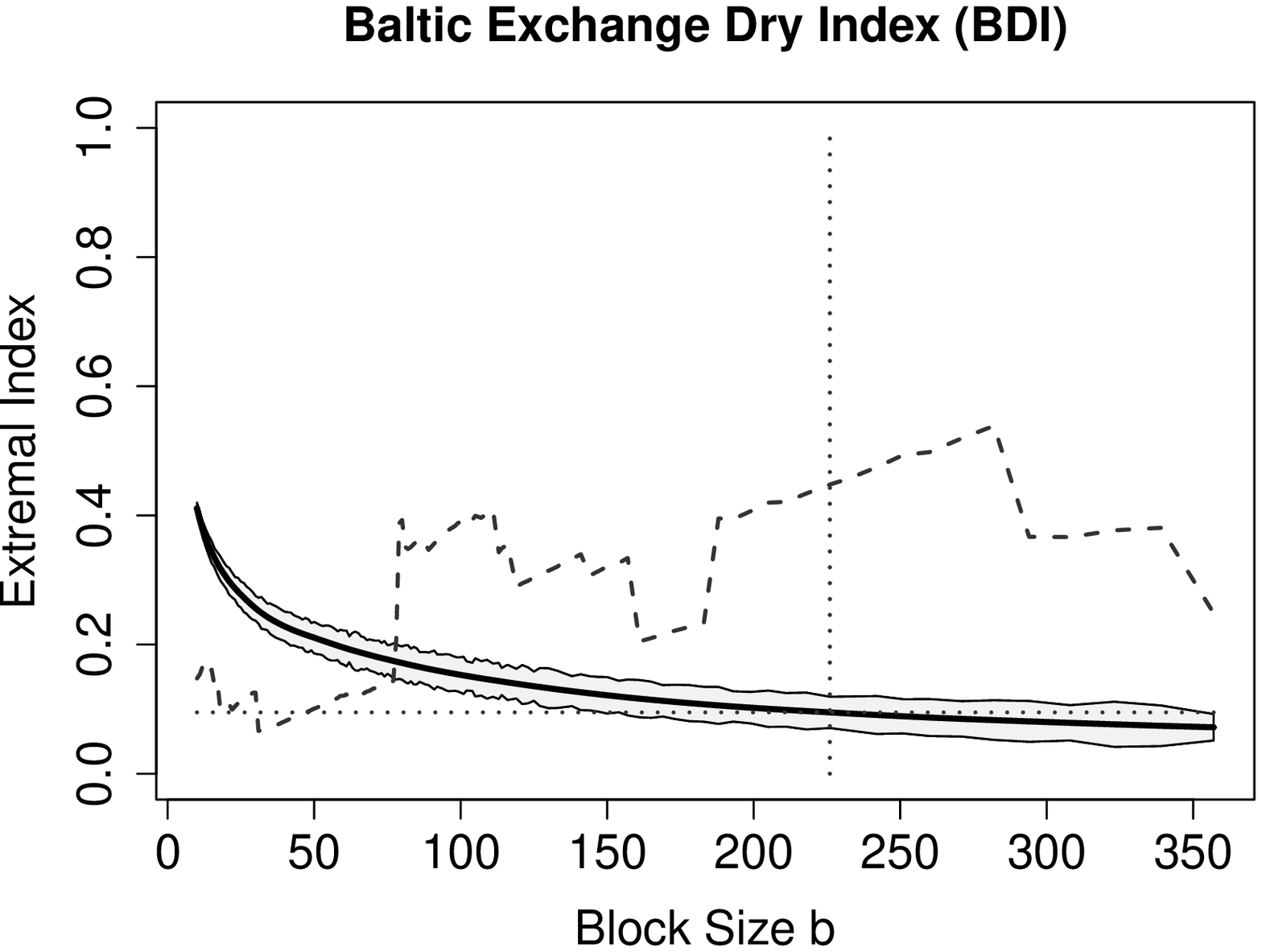}
\hspace{-.3cm}
\includegraphics[width=0.48\textwidth]{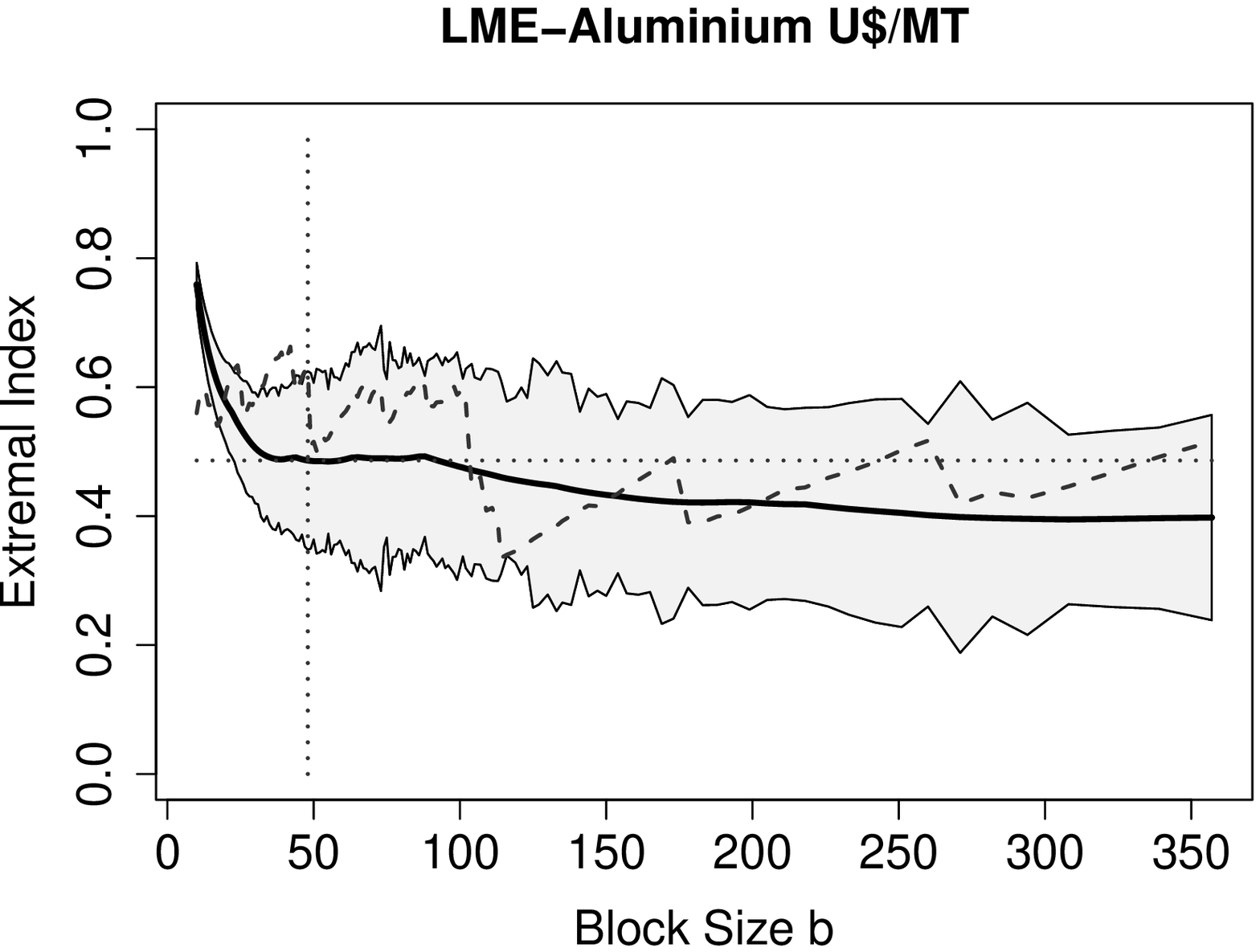}
\end{center}
\vspace{-.3cm}
\caption{\label{fig:app}  Extremal index estimates for four financial time series as a function of the block size. The solid line is the bias--reduced sliding blocks estimate, the shaded region is the pointwise $95\%$-confidence band. The dashed line is the intervals estimator. The dotted lines correspond to the selected block length based on visual inspection of the graphs.
}
\vspace{-.5cm}
\end{figure}

In Figure~\ref{fig:app}, we depict estimates of the extremal index for four typical time series as a function of the block length parameter, ranging from $b=10$ to $b=357$. The solid curves correspond to the bias corrected sliding blocks estimator $\hat \theta_n^{\Be, \slb}$, alongside  with a 95\%-confidence band based on the variance estimator from Section~\ref{sec:boot} and the normal approximation. Interestingly, the curves appear to be quite smooth, which is a typical and nice property of the sliding blocks estimator. For comparison, the (far rougher) dashed lines correspond to the intervals estimator from \cite{FerSeg03}. As highlighted by many other authors, there is no simple optimal solution for the choice of the \textit{best} block length parameter and a unique estimate for the extremal index. The dotted lines in Figure~\ref{fig:app} correspond to  case-by-case visual choices, trying to capture plateaus in the respective plots.

For the ease of comparison, this procedure has been repeated for all 20 time series under consideration (despite the fact that the entire curves provide a more detailed picture of the extremal dependence). In Table~\ref{tab:app}, we state the resulting estimates of the extremal index and the width of the corresponding confidence intervals. Interestingly, the estimates of the extremal index lie around 0.3 for most of the equity indexes (S\&P 500 Composite, MSCI World, etc.), while they are around  0.45 for many of the commodity prices (Coffee, Cotton, Aluminium). The smallest value of 0.12 is attained for the Baltic Exchange Dry Index, an index measuring  the price of moving the major raw materials by sea and usually regarded as an efficient economic indicator of future economic growth and production.

\begin{table}[t]
\centering
{\footnotesize
\begin{tabular}{l c c}
  \hline
 Index / Prices & Extremal Index & Width of C-Interval \\ 
  \hline
  Raw Sugar Cents/lb & 0.54 & 0.17 \\ 
  Coffee-Brazilian Cents/lb & 0.49 & 0.13 \\ 
  LME-Aluminium U\$/MT & 0.49 & 0.14 \\ 
  Palladium U\$/Troy Ounce & 0.46 & 0.11 \\
  TOPIX OIL \& COAL PRDS. & 0.45 & 0.08 \\  
  US T-Bill 10 YEAR & 0.44 & 0.12 \\ 
  Cotton Cents/lb & 0.42 & 0.12 \\ 
  S\&P GSCI Precious Metal & 0.42 & 0.12 \\ 
  MSCI WORLD EX US & 0.36 & 0.11 \\ 
  Crude Oil-Brent Cur. Month & 0.35 & 0.10 \\ 
  Gold Bullion LBM & 0.33 & 0.10 \\ 
  RUSSELL 2000 & 0.31 & 0.09 \\ 
  S\&P GSCI Commodity Total Return & 0.30 & 0.09 \\ 
  S\&P 500 COMPOSITE & 0.29 & 0.10 \\ 
  LMEX Index & 0.27 & 0.10 \\
  G12-DS Banks & 0.26 & 0.09 \\ 
  G7-DS Banks & 0.26 & 0.10 \\ 
  EU-DS Banks & 0.26 & 0.08 \\ 
  S\&P500 BANKS & 0.22 & 0.08 \\ 
  Baltic Exchange Dry Index (BDI) & 0.12 & 0.02 \\ 
   \hline
\end{tabular}
}
\smallskip

\caption{Sliding Blocks Estimates of the extremal index and width of corresponding confidence intervals for negative daily log returns of 20  financial market indices and prices. } \label{tab:app}

\vspace{-.5cm}

\end{table}

\section{Auxiliary Lemmas for proving Theorem~\ref{theo:main} (disjoint blocks)} \label{sec:proofs}

\begin{lemma}[Approximation by an integral with bounded support] \label{lem:boundsupp}
Under Condition~\ref{cond:cond}, for all $\delta>0$,
\[
\lim_{\ell \to \infty} \limsup_{n\to\infty} \Prob( |D_{n,\ell} - D_n | > \delta) = 0.
\]
\end{lemma}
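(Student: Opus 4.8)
The plan is to note that the two integrals differ only over the range $(\ell,\hat m]$, so that
\[
D_n - D_{n,\ell} = \int_{(\ell,\hat m]}\tailp(x)\,\diff \hat H_{\kn}(x) = \frac{1}{\kn}\sum_{i:\,Z_{ni}>\ell}\tailp(Z_{ni}),
\]
the sum being empty (hence the difference zero) when $\ell\ge\hat m$. The task is thus to show that this tail contribution is negligible in probability, uniformly in $n$, as $\ell\to\infty$. The governing mechanism is that few blocks satisfy $Z_{ni}>\ell$ (as $Z_{ni}$ is approximately $\mathrm{Exp}(\theta)$) and that on those blocks $\tailp(Z_{ni})$ cannot be too large. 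To decouple these two effects I would factor, with a weight $w(x)=(1+x)^p$ for some $1\le p<2+\delta$,
\[
|D_n - D_{n,\ell}| \le W_n\cdot V_{n,\ell}, \qquad W_n := \sup_{0\le x\le\hat m}\frac{|\tailp(x)|}{w(x)}, \quad V_{n,\ell}:=\frac{1}{\kn}\sum_{i:\,Z_{ni}>\ell}w(Z_{ni}).
\]
It then suffices to prove $W_n=O_\Prob(1)$ and $\lim_{\ell\to\infty}\limsup_{n\to\infty}\Prob(V_{n,\ell}>\eps)=0$ for every $\eps>0$: from $\Prob(|D_n-D_{n,\ell}|>\delta)\le\Prob(W_n>M)+\Prob(V_{n,\ell}>\delta/M)$, letting first $\ell\to\infty$ and then $M\to\infty$ gives the claim.

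The mass factor $V_{n,\ell}$ is the routine part. By stationarity $\Exp[V_{n,\ell}]=\Exp[w(Z_{n1})\ind(Z_{n1}>\ell)]$, and Condition~\ref{cond:cond}\eqref{item:moment}, i.e.\ the uniform boundedness of $\Exp[Z_{1:\bn}^{2+\delta}]$, yields uniform integrability of $(w(Z_{n1}))_n$ for $p<2+\delta$; hence this expectation tends to $0$ as $\ell\to\infty$ uniformly in $n$, and Markov's inequality gives the required statement for $V_{n,\ell}$. Note the moment condition is exactly what permits a weight $w$ strictly larger than $x$, should the supremum control below require it.

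The tightness of the weighted supremum $W_n$ is the main obstacle, and the point where the remaining conditions enter. I would split the supremum at a large fixed level $\ell_0$. On the compact range $0\le x\le\ell_0$, tightness of $\sup_{x\le\ell_0}|\tailp(x)|$ follows from the weak convergence of the tail empirical process used elsewhere in the proof (via the equivalence of $\tailp$ and Robert's $\bar e_n$ together with his Theorem~4.1; cf.\ Lemma~\ref{lem:tailpweak}). The delicate range is $\ell_0\le x\le\hat m$. Here I would first invoke Condition~\ref{cond:cond}\eqref{item:blockdiv}, which forces $\min_i N_{ni}\to1$ and hence $\hat m=\bn(1-\min_i N_{ni})=o_\Prob(\bn)$, so that with high probability this range lies in $[\ell_0,\eps\bn]$. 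On it one bounds $\sup_x|\tailp(x)|/w(x)$ by a maximal inequality over a dyadic grid of thresholds, using the monotonicity of $x\mapsto\sum_s\ind(U_s>1-x/\bn)$ to fill in the gaps and the variance bound of Condition~\ref{cond:cond}\eqref{item:varbound} to control $\tailp$ at the grid points; the growth condition~\eqref{eq:rate1}, in the equivalent form $\sqrt{\kn}=o(\bn)$, is what makes the contribution of the far range vanish as $\ell_0\to\infty$.

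The genuine difficulty, which I expect to be the technical heart of the lemma, is that the rank transformation couples $\tailp(Z_{ni})$ to \emph{all} blocks simultaneously, so that the naive triangle-inequality bound (which loses the centering) is useless and the effective fluctuation of the cross-block exceedance counts must be shown to be of order $\sqrt{Z_{ni}}$ rather than $Z_{ni}$. Concretely, the bound of Condition~\ref{cond:cond}\eqref{item:varbound} carries the slack $n^2y^2$-term, which on its own would over-estimate $\Var(\tailp(x))$; the real work is to disentangle block $i$ from the remaining $\kn-1$ blocks (a leave-one-block-out step, so that within block $i$ there are no exceedances of its own maximum) and to verify that the relevant centered count has fluctuations of the benign order, with the residual one-block correction absorbed through $\sqrt{\kn}=o(\bn)$. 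Once this effective variance of order $x$ is in place, the dyadic maximal bound gives $\sup_{\ell_0\le x\le\hat m}|\tailp(x)|/w(x)\lesssim \ell_0^{-1/2}$ with high probability, so that $W_n=O_\Prob(1)$ and, combined with $V_{n,\ell}\to0$, the lemma follows.
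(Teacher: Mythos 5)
Your reduction to the two factors $W_n=\sup_{0\le x\le \hat m}|\tailp(x)|/w(x)$ and $V_{n,\ell}=\kn^{-1}\sum_{i:Z_{ni}>\ell}w(Z_{ni})$ is clean, and the treatment of $V_{n,\ell}$ via stationarity and uniform integrability from Condition~\ref{cond:cond}\eqref{item:moment} is correct. The gap is that the claim $W_n=O_\Prob(1)$ is asserted rather than proved, and the route you indicate for it does not work. Condition~\ref{cond:cond}\eqref{item:varbound} gives only $\Var(\tailp(x))\le C\kn^{-1}(n\,x/\bn+n^2x^2/\bn^2)=C(x+\kn x^2)$, which \emph{diverges with $n$ even at a fixed point $x$}; the $n^2y^2$ slack is built into the condition precisely to allow non-negligible cross-block covariances, so no pointwise Chebyshev bound at grid points, and hence no dyadic maximal inequality built on it, can deliver tightness of the weighted supremum. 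You correctly identify that one must decouple the blocks to recover a benign order, but that decoupling \emph{is} the entire technical content of the lemma, and even after it one only obtains a first-moment bound of the form $\Exp|e_{1:j}(z)|\le C\{\sqrt{j/\kn}+\dots\}(1+z)$ at each fixed $z$ (standard deviation of order $1+z$, not $\sqrt z$); upgrading such pointwise bounds to a supremum over the $n$-dependent range $[\ell_0,\eps\bn]$ would require an additional chaining argument with increment control that neither your sketch nor the paper's hypotheses readily supply.

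The paper avoids the supremum altogether. It restricts to the event $\{\max_i Z_{ni}<\eps\bn/2\}$, splits the double sum over blocks $(i,j)$ according to $j=i$, $j=i\pm1$, $|j-i|\ge2$ (the diagonal term is the deterministic $-\kn^{-3/2}\sum_i Z_{ni}g_{n,\ell}(Z_{ni})$, which your absolute-value bound silently absorbs but which is not mean-zero and is best handled separately), and for the dominant term uses Bradley's coupling to replace $Z_{ni}$ by a copy $Z_{ni}^{*}$ independent of the earlier blocks. Conditioning on $Z_{ni}^{*}$ then reduces everything to the pointwise bound $\Exp|e_{1:i-2}(z)|\le C(\sqrt{i/\kn}+\dots)(1+z)$ (obtained by a second, even/odd-block coupling to an iid array, where Condition~\ref{cond:cond}\eqref{item:varbound} enters per block as $\|V_j\|_2\le C\sqrt{z+z^2}$), multiplied by $\Exp[(1+Z_{n1}^{*})\ind(Z_{n1}^{*}\ge\ell-q_n)]$, which is small for large $\ell$. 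This first-moment-after-coupling argument is what replaces your $W_n=O_\Prob(1)$; without it, or an honest proof of the weighted supremum bound, your proof is incomplete at its central step.
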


\begin{lemma}[Approximation by a Lebesgue integral] \label{lem:appleb}
Suppose that Condition~\ref{cond:cond} is met. Then, as $n\to\infty$,
\[
D_{n,\ell} = D_{n,\ell}' + o_\Prob(1), \qquad \text{where}\quad D_{n,\ell}' = \int_0^\ell \tailp(x) \theta e^{-\theta x} \, \diff x.
\]
\end{lemma}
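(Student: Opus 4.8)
The plan is to show that replacing the random empirical measure $\diff\hat H_{\kn}$ (see \eqref{eq:hkn}) by the deterministic exponential density $\theta e^{-\theta x}\diff x$ leaves the integral of the fixed integrand $\tailp$ asymptotically unchanged. Writing $H(x)=1-e^{-\theta x}$, so that $\diff H(x)=\theta e^{-\theta x}\diff x$, the claim is equivalent to
\[
D_{n,\ell}-D_{n,\ell}'=\int_0^\ell \tailp(x)\,\diff\big(\hat H_{\kn}-H\big)(x)\pto 0 .
\]
The two ingredients I would use are (a) uniform consistency $\sup_{0\le x\le \ell}|\hat H_{\kn}(x)-H(x)|\pto 0$, and (b) weak convergence $\tailp\dto\tailplim$ on $[0,\ell]$ (sup-norm topology), the latter being available from the theory of the tail empirical process referred to in Remark~\ref{rem:proofidea} (Lemma~\ref{lem:tailpweak}). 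Both are combined via the continuous mapping theorem applied to the functional $\Psi(f,G)=\int_0^\ell f\,\diff G-\int_0^\ell f\,\diff H$, which vanishes at every pair $(f,H)$ and, crucially, is continuous at every such pair with $f$ continuous.

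First I would establish the uniform consistency in (a). By stationarity, $\Exp[\hat H_{\kn}(x)]=\Prob(Z_{1:\bn}\le x)\to H(x)$ for each fixed $x\ge0$, which is exactly \eqref{eq:exp2} read at block length $\bn$. The variance $\Var(\hat H_{\kn}(x))=\kn^{-2}\Var(\sum_{i=1}^{\kn}\ind(Z_{ni}\le x))$ is of order $\kn^{-1}$ by the mixing assumption in Condition~\ref{cond:cond}\eqref{item:rate2} together with a big-block/small-block decomposition, so $\hat H_{\kn}(x)\pto H(x)$ pointwise. Since each $\hat H_{\kn}$ is nondecreasing and the limit $H$ is continuous, a stochastic version of P\'olya's theorem (pointwise in-probability convergence of monotone functions to a continuous limit is uniform) upgrades this to uniform consistency on the compact $[0,\ell]$; in particular the total masses satisfy $\hat H_{\kn}(\ell)\pto H(\ell)$.

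The core step is the continuity of $\Psi$ at a pair $(f,H)$ with $f\in C[0,\ell]$. Given $(f_n,G_n)\to(f,H)$ in sup-norm with $f$ continuous, I would decompose
\[
\Psi(f_n,G_n)=\int_0^\ell (f_n-f)\,\diff G_n-\int_0^\ell (f_n-f)\,\diff H+\Big(\int_0^\ell f\,\diff G_n-\int_0^\ell f\,\diff H\Big).
\]
The first two integrals are bounded in absolute value by $\|f_n-f\|_\infty\,G_n(\ell)$ and $\|f_n-f\|_\infty\,H(\ell)$, both tending to $0$ because $G_n(\ell)$ stays bounded. The point here is that I bound by the \emph{total mass} of the positive measures, not by the total variation of $f_n-f$; this is what makes the argument go through despite the unbounded variation of the Gaussian limit. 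The remaining bracket tends to $0$ by the Helly--Bray theorem, since $f$ is bounded continuous and $G_n\Rightarrow H$ weakly. Hence $\Psi$ is continuous at $(f,H)$ whenever $f$ is continuous, and the paths of $\tailplim$ are continuous with probability one.

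Finally I would assemble the pieces. Because $\hat H_{\kn}\pto H$ with $H$ deterministic, the joint weak convergence $(\tailp,\hat H_{\kn})\dto(\tailplim,H)$ follows from the marginal convergence $\tailp\dto\tailplim$ via the standard fact that convergence in distribution paired with convergence in probability to a constant yields joint convergence; the (substantial) correlation between $\tailp$ and $\hat H_{\kn}$ is immaterial precisely because the second coordinate concentrates. The extended continuous mapping theorem then gives $\Psi(\tailp,\hat H_{\kn})\dto\Psi(\tailplim,H)=0$, and convergence in distribution to the constant $0$ is convergence in probability, i.e.\ $D_{n,\ell}-D_{n,\ell}'\pto0$. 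I expect the main obstacle to be exactly the continuity of the integration functional at the Gaussian limit: one must avoid any integration-by-parts bound involving the (infinite) total variation of $\tailplim$, and instead exploit that $\hat H_{\kn}$ and $H$ have bounded total mass, reducing everything to sup-norm closeness of $\tailp$ to $\tailplim$ and the Helly--Bray convergence for the continuous limiting integrand.
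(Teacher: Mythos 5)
Your argument is correct and follows essentially the same route as the paper: both proofs reduce the claim to the uniform consistency $\sup_{x\in[0,\ell]}|\hat H_{\kn}(x)-H(x)|=o_\Prob(1)$ (obtained identically from pointwise convergence of the mean via \eqref{eq:exp2}, a variance bound of order $\kn^{-1}$, and a Glivenko--Cantelli/P\'olya monotonicity argument) and then to a generic statement about integrating a weakly convergent process against a uniformly convergent sequence of distribution functions. The only difference is that the paper outsources this last step to Lemma~C.8 of \cite{BerBuc17}, whereas you prove it directly via joint convergence of $(\tailp,\hat H_{\kn})$, the extended continuous mapping theorem, the total-mass (rather than total-variation) bound, and Helly--Bray; this is precisely the content of the cited lemma, so the two proofs coincide in substance.
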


\begin{lemma}[Joint convergence of fidis]\label{lem:fidirob}
Under Condition~\ref{cond:cond}, for any $x_1, \dots, x_m \in [0,\infty)$, as $n\to\infty$,
\[
\Big(\tailp(x_1), \dots,  \tailp(x_m) ,  G_n  \Big)' 
\dto
\Big(\tailplim(x_1), \dots,  \tailplim(x_m) ,  G  \Big)', 
\]
the random vector on the right-hand side  being $\Nc_{m+1} \left( \bm 0, \bm \Sigma^{\djb}(x_1, \dots, x_m) \right)$-distributed with
\[
\bm \Sigma^{\djb}(x_1, \dots, x_m)  
=
\matr{
r(x_1,x_1) & \dots & r(x_1, x_m) & h(x_1) \\ 
\vdots & \ddots & \vdots & \vdots \\ 
r(x_m,x_1) & \dots & r(x_m,x_m) & h(x_m) \\
h(x_1) & \dots & h(x_m) & \theta^{-2}
} .
\]
Here, $r(0,0)=h(0)=0$ and, for $x \ge y \ge 0$ with $x\ne 0$,
\begin{align*}
r(x,y) &= \theta x \sum_{i=1}^\infty \sum_{j=0}^i ij \pi_2^{(y/x)}(i,j), \qquad
h(x) =    \int_0^x  \sum_{i=1}^\infty i p_2^{(x,y)}(i,0)\, \diff y - x/\theta,
\end{align*}
where, for $i\ge j\ge 0, i\ge 1$,
\[
p_2^{(x,y)}(i,j) = \Prob\big\{ \bm N_E^{(x,y)}=(i,j)\big\}, \quad  \bm N_E^{(x,y)} = \sum_{i=1}^{\eta} (\zeta_{i1}^{(y/x)}, \zeta_{i2}^{(y/x)})
\]
with $\eta \sim \text{Poisson}(\theta x)$ independent of iid random vectors 
$(\zeta_{i1}^{\scs(y/x)}, \zeta_{i2}^{\scs(y/x)})\sim \pi_2^{\scs(y/x)}, i\in\N$.
\end{lemma}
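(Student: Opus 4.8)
The plan is to establish the joint convergence by the Cramér--Wold device, reducing it to a single central limit theorem for a triangular array of block sums. Writing $I_i=\{(i-1)\bn+1,\dots,i\bn\}$ for the $i$th block of indices and $\xi_{ni}^{(x)}=\sum_{s\in I_i}\ind(U_s>1-x/\bn)$ for the number of exceedances of the level $1-x/\bn$ within that block, the key observation is that both statistics are normalized sums over the $\kn$ blocks,
\[
\tailp(x)=\frac{1}{\sqrt{\kn}}\sum_{i=1}^{\kn}\{\xi_{ni}^{(x)}-x\},\qquad G_n=\frac{1}{\sqrt{\kn}}\sum_{i=1}^{\kn}\{Z_{ni}-\Exp Z_{ni}\},
\]
using $\Exp\xi_{ni}^{(x)}=\bn\cdot(x/\bn)=x$ and $\Exp T_n=\Exp Z_{n1}$ by stationarity. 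Fixing coefficients $a_1,\dots,a_m,c$, the linear combination $\sum_j a_j\tailp(x_j)+cG_n$ equals $\kn^{-1/2}\sum_{i=1}^{\kn}W_{ni}$ with the centred, stationary block functional $W_{ni}=\sum_j a_j(\xi_{ni}^{(x_j)}-x_j)+c(Z_{ni}-\Exp Z_{ni})$.

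Next I would decouple the blocks by the usual big-block/small-block construction: clip the last $\ell_n$ observations of each block and replace $W_{ni}$ by the analogous functional $\tilde W_{ni}$ computed over the first $\bn-\ell_n$ indices of $I_i$. Since consecutive retained blocks are then separated by gaps of length $\ell_n$, Condition~\ref{cond:cond}\eqref{item:rate2} (via $\kn\alpha_{c_2}(\ell_n)=o(1)$, applicable because $x_j/\bn<c_2$ eventually) lets me replace the joint characteristic function of the $\tilde W_{ni}$ by the product of the marginal ones up to an $o(1)$ error, reducing the problem to an array of independent summands with the same limit law. The negligibility of the clipped contribution rests on $\ell_n=o(\bn^{2/(2+\delta)})$ together with the moment bound~\ref{cond:cond}\eqref{item:momn} for the exceedance counts and~\ref{cond:cond}\eqref{item:blockdiv},\eqref{item:moment} for the $Z_{ni}$; this is exactly the mechanism underlying the one-dimensional tail empirical process limit in \cite{Rob09}, Theorem~4.1, which I would quote for the $\tailp$-coordinates. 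For the independent array the Lyapunov condition reduces to boundedness of $\Exp|\tilde W_{n1}|^{2+\delta}$ (the normalized $(2+\delta)$th absolute moment is then $O(\kn^{-\delta/2})\to0$), which holds by Condition~\ref{cond:cond}\eqref{item:momn} with $n$ replaced by $\bn$ and Condition~\ref{cond:cond}\eqref{item:moment}, so the classical triangular-array CLT applies.

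It then remains to identify the limit $\Var\big(\sum_j a_j\tailp(x_j)+cG_n\big)\to\sum_{j,k}a_ja_k\,r(x_j,x_k)+2c\sum_j a_j\,h(x_j)+c^2\theta^{-2}$, i.e.\ to compute the three blocks of $\bm\Sigma^{\djb}$ as limits of single-block covariances. For $r(x,y)$ with $x\ge y$, the pair $(\xi_{n1}^{(x)},\xi_{n1}^{(y)})$ converges to the two-level compound-Poisson limit $\bm N_E^{(x,y)}=\sum_{l=1}^{\eta}(\zeta_{l1}^{(y/x)},\zeta_{l2}^{(y/x)})$ with $\eta\sim\text{Poisson}(\theta x)$, and the compound-Poisson covariance identity gives $\Cov\to\theta x\,\Exp[\zeta_1^{(y/x)}\zeta_2^{(y/x)}]=\theta x\sum_{i,j}ij\,\pi_2^{(y/x)}(i,j)=r(x,y)$, the passage to the limit being justified by uniform integrability of the products from~\ref{cond:cond}\eqref{item:momn}. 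The variance of $G$ is $\lim\Var(Z_{n1})=\theta^{-2}$, since $Z_{1:\bn}\dto\text{Exp}(\theta)$ by \eqref{eq:exp2} and $Z_{1:\bn}^2$ is uniformly integrable by~\ref{cond:cond}\eqref{item:moment}.

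The cross term $h(x)=\lim\Cov(\xi_{n1}^{(x)},Z_{n1})$ is the genuinely new computation. Using the layer-cake identity $Z_{n1}=\int_0^\infty\ind(N_{n1}<1-y/\bn)\diff y=\int_0^\infty\ind(\xi_{n1}^{(y)}=0)\diff y$ and Fubini, I would write $\Cov(\xi_{n1}^{(x)},Z_{n1})=\int_0^\infty\{\Exp[\xi_{n1}^{(x)}\ind(\xi_{n1}^{(y)}=0)]-x\,\Prob(\xi_{n1}^{(y)}=0)\}\diff y$. For $y>x$ the event $\{\xi_{n1}^{(y)}=0\}$ forces $\xi_{n1}^{(x)}=0$, so the first term vanishes and the integrand tends to $-xe^{-\theta y}$; for $y\le x$ the two-level limit gives $\Exp[\xi_{n1}^{(x)}\ind(\xi_{n1}^{(y)}=0)]\to\sum_i i\,p_2^{(x,y)}(i,0)$ and $\Prob(\xi_{n1}^{(y)}=0)\to e^{-\theta y}$. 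Splitting the integral at $y=x$, the two $-xe^{-\theta y}$ pieces assemble to $-\int_0^\infty xe^{-\theta y}\diff y=-x/\theta$, leaving $h(x)=\int_0^x\sum_i i\,p_2^{(x,y)}(i,0)\diff y-x/\theta$; a dominated-convergence argument, with the $L^{2+\delta}$ bounds of~\ref{cond:cond}\eqref{item:momn} controlling the $y$-tail, justifies passing the limit under the integral. I expect the main obstacle to be the interplay between the decoupling step and this covariance bookkeeping: one must ensure that clipping and coupling errors are simultaneously negligible for the $\tailp$-coordinates and the block-maxima coordinate, and that the infinite $y$-integral defining $h(x)$ can be truncated and its limit taken uniformly --- both hinging on the smallest block maximum being controlled through Condition~\ref{cond:cond}\eqref{item:blockdiv}.
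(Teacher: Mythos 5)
Your proposal is correct and follows essentially the same route as the paper's proof: Cramér--Wold, a big-block/small-block clipping of $\ell_n$ observations whose negligibility is controlled via Conditions~\ref{cond:cond}\eqref{item:rate2}, \eqref{item:blockdiv} and \eqref{item:moment}, a characteristic-function decoupling to an independent triangular array, Lyapunov's CLT, and identification of the limiting covariances through the two-level point-process limit — including the layer-cake/Fubini computation of the cross term, which reproduces exactly the paper's derivation of $h(x)$ from $\Prob(Z_{1:n}>y,\,N_n^{(x)}(E)=i)=\Prob(N_n^{(y)}(E)=0,\,N_n^{(x)}(E)=i)\to p_2^{(x,y)}(i,0)$ for $x\ge y$ and $\to 0$ for $y>x$. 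The only place where the paper invests noticeably more work than your sketch is in proving that the clipped small-block contribution to $G_n$ is $o_\Prob(1)$ (truncation to $U_s^\eps$ on the event that all sub-block maxima exceed $1-\eps$, plus a covariance-series bound via the mixing coefficients), but your outline identifies the right ingredients for this step.
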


\begin{lemma} \label{lem:tailpweak}
Under Condition~\ref{cond:cond}, as $n\to\infty$,
\[
\Big\{\big(\tailp(x),  G_n  \big)' \Big\}_{x\in[0,\infty)}
\dto
\Big\{ \big(\tailplim(x),G\big)'\Big\}_{x \in [0,\infty)}
\]
in $D([0,\infty)) \times \R$, where $(\tailplim ,G)'$ is a centered Gaussian process with continuous sample paths and covariance functional as specified in Lemma~\ref{lem:fidirob}. Here, $D([0,\infty))$ is equipped with the metric $d(f,g) =  \int_0^\infty e^{-t} [d_t(f, g) \wedge 1]\diff t$ where $d_t$ denotes the $J_1$-metric applied to the restrictions of $f$ and $g$ to $[0,t]$.
\end{lemma}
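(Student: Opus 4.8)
The plan is to upgrade the finite-dimensional convergence of Lemma~\ref{lem:fidirob} to full process convergence by an asymptotic tightness argument. Two structural simplifications guide the approach. First, the second coordinate $G_n = \sqrt{\kn}(T_n - \Exp T_n)$ does not depend on $x$, so the only genuinely function-valued object is the tail empirical process $\tailp$, and the pair lives in the product space $D([0,\infty)) \times \R$. Second, the candidate limit $(\tailplim, G)$ is Gaussian with continuous sample paths, since its covariance $r$ from Lemma~\ref{lem:fidirob} is continuous in both arguments; consequently, weak convergence in the metric $d$ reduces to weak convergence of the restrictions to $[0,T]$ in $(D([0,T]), J_1)$ for every fixed $T>0$, and on a compact interval $J_1$-convergence to a continuous limit is equivalent to uniform convergence.

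First, I would establish asymptotic tightness of $\tailp$ alone. Here I would invoke Robert's Theorem~4.1 in \cite{Rob09}, which yields weak convergence of his tail empirical process $\bar e_n$; the required hypotheses follow from Condition~\ref{cond:cond}, as Assumptions~\eqref{item:generalpoint}--\eqref{item:rate2} were tailored to imply his (C1)--(C2). The crucial step is the uniform asymptotic equivalence
\[
\sup_{x \in [0,T]} \abs{ \tailp(x) - \bar e_n(x)} = o_\Prob(1), \qquad n \to \infty,
\]
for each fixed $T$, after which weak convergence of $\bar e_n$ transfers to $\tailp$ and, in particular, delivers asymptotic tightness of $\tailp$ on $[0,\infty)$ together with continuity of the limiting paths. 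This equivalence is precisely where the clipping of end-blocks of length $\ell_n$ and the centredness of $\tailp$ on $[0,\eps \bn]$ enter (cf.\ Remark~\ref{rem:cont}), and where the variance bound in Condition~\ref{cond:cond}\eqref{item:varbound} controls the fluctuations of the discarded contributions.

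Next, I would combine the two coordinates. Since the marginal limit $G$ exists (it is the last entry in Lemma~\ref{lem:fidirob}), the scalar sequence $(G_n)_n$ converges in law and is therefore tight in $\R$. As asymptotic tightness in a product of metric spaces follows from marginal asymptotic tightness of each factor, the pair $(\tailp, G_n)$ is asymptotically tight in $D([0,\infty)) \times \R$. Because the limit has continuous paths, the coordinate evaluations $(f,a) \mapsto (f(x_1), \dots, f(x_m), a)$ form a convergence-determining class for its law, so the joint fidi convergence of Lemma~\ref{lem:fidirob} together with the asymptotic tightness just established yields weak convergence of $(\tailp, G_n)$ to the stated centered Gaussian process, whose covariance functional is read off directly from Lemma~\ref{lem:fidirob}.

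The main obstacle will be the uniform asymptotic equivalence $\sup_{x \le T} \abs{\tailp(x) - \bar e_n(x)} = o_\Prob(1)$: it requires a careful matching of the block-based construction of $\bar e_n$ with the plain sum defining $\tailp$, controlling edge effects through the small-block sequence $\ell_n$ and exploiting Condition~\ref{cond:cond}\eqref{item:varbound} to bound the variance of the terms that are thereby neglected, as well as verifying in detail that the present assumptions reduce to the hypotheses of Robert's Theorem~4.1. Everything else amounts to a fairly mechanical assembly of finite-dimensional convergence and tightness in the product space.
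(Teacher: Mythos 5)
Your argument is correct, but it reaches the conclusion by a genuinely different route than the paper. The paper's proof is essentially a one-liner: Robert's Theorem~4.1 in \cite{Rob09} already establishes joint weak convergence of a \emph{vector} of processes whose last coordinate is (for a parameter $\sigma>0$ in his notation) an analogue of $G_n$, and the paper simply observes that his proof goes through with $\sigma=0$, so that joint tightness of the pair $(\tailp,G_n)$ is inherited wholesale from Robert's argument. You instead use Robert's theorem only for the first marginal (asymptotic tightness of $\tailp$ via the uniform equivalence with $\bar e_n$ on compacts), get tightness of $G_n$ for free from its scalar convergence, invoke the standard fact that marginal asymptotic tightness implies joint asymptotic tightness in a product of metric spaces, and identify the limit through the finite-dimensional convergence of Lemma~\ref{lem:fidirob}. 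Your route is more modular: it relies only on the \emph{statement} of Robert's theorem rather than on re-inspecting his proof with a parameter set to zero, and it makes explicit where the separately proved Lemma~\ref{lem:fidirob} enters; the price is that you must carry out the equivalence $\sup_{x\le T}|\tailp(x)-\bar e_n(x)|=o_\Prob(1)$ yourself, which the paper also leans on (cf.\ Remark~\ref{rem:cont}) but obtains from inside Robert's argument. One minor imprecision: continuity of the covariance $r$ yields only $L^2$-continuity of the limit, not sample-path continuity; this is harmless here because path continuity of the limiting tail empirical process is part of the conclusion of the theorem you cite.
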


\begin{lemma} \label{lem:weakdnlgn}
Under Condition~\ref{cond:cond}, for any $\ell\in\N$,
\[
D_{n,\ell} + G_n \dto \Nc(0, \sigma^2_\ell),
\]
as $n\to\infty$, where
\[
\sigma^2_\ell = \theta^2 \int_0^\ell\int_0^\ell r(x,y) e^{-\theta (x+y)}\, \diff x \diff y + 2 \theta \int_0^\ell h(x) e^{-\theta x} \, \diff x + \theta^{-2}
\]
\end{lemma}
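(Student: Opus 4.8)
The plan is to combine the Lebesgue-integral approximation of Lemma~\ref{lem:appleb} with the process-level convergence of Lemma~\ref{lem:tailpweak} through a single application of the continuous mapping theorem, and then to read off the limiting variance from the covariance structure recorded in Lemma~\ref{lem:fidirob}. First I would invoke Lemma~\ref{lem:appleb} to replace the stochastic integral by a deterministic one: since $D_{n,\ell}=D_{n,\ell}'+o_\Prob(1)$ with $D_{n,\ell}'=\int_0^\ell \tailp(x)\,\theta e^{-\theta x}\,\diff x$, Slutsky's lemma reduces the claim to showing $D_{n,\ell}'+G_n \dto \Nc(0,\sigma^2_\ell)$. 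I would then consider the functional $\Phi\colon D([0,\ell])\times\R \to \R$ defined by $\Phi(f,g)=\int_0^\ell f(x)\,\theta e^{-\theta x}\,\diff x + g$, so that $D_{n,\ell}'+G_n=\Phi(\tailp|_{[0,\ell]},G_n)$. By Lemma~\ref{lem:tailpweak} the pair $(\tailp,G_n)'$ converges weakly to $(\tailplim,G)'$ in $D([0,\infty))\times\R$, hence its restriction to $[0,\ell]$ converges in $D([0,\ell])\times\R$.

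The key step is that $\Phi$ is continuous at every pair $(f,g)$ with $f$ continuous; here the main (though mild) obstacle is that integration against a fixed weight is \emph{not} continuous on all of $D([0,\ell])$ under the $J_1$ metric. This is circumvented by noting that, since $\tailplim$ has continuous sample paths almost surely (Lemma~\ref{lem:tailpweak}), $J_1$-convergence to $\tailplim$ is equivalent to uniform convergence on $[0,\ell]$, against which the bounded linear functional $f\mapsto\int_0^\ell f(x)\,\theta e^{-\theta x}\,\diff x$ is plainly continuous. The continuous mapping theorem then yields
\[
D_{n,\ell}'+G_n \dto \Phi(\tailplim,G)=\int_0^\ell \tailplim(x)\,\theta e^{-\theta x}\,\diff x + G = D_\ell + G.
\]
Because $(\tailplim,G)'$ is a centred Gaussian process and $D_\ell+G$ is a measurable linear functional of it, the limit is centred normal, say $\Nc(0,\sigma^2_\ell)$ with $\sigma^2_\ell=\Var(D_\ell+G)$.

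It then remains to compute the variance, which decomposes as $\sigma^2_\ell=\Var(D_\ell)+2\Cov(D_\ell,G)+\Var(G)$. Using Fubini's theorem together with the covariance identities $\Exp[\tailplim(x)\tailplim(y)]=r(x,y)$, $\Cov(\tailplim(x),G)=h(x)$ and $\Var(G)=\theta^{-2}$ from Lemma~\ref{lem:fidirob}, I obtain $\Var(D_\ell)=\theta^2\int_0^\ell\int_0^\ell r(x,y)e^{-\theta(x+y)}\,\diff x\,\diff y$, $\Cov(D_\ell,G)=\theta\int_0^\ell h(x)e^{-\theta x}\,\diff x$ and $\Var(G)=\theta^{-2}$. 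Summing these three contributions reproduces exactly the stated expression for $\sigma^2_\ell$, which completes the argument. The only points requiring genuine care are the continuity argument above and the justification of the interchange of expectation and double integration (which follows from square-integrability of the Gaussian process on the compact interval $[0,\ell]$, via $\Exp|\tailplim(x)\tailplim(y)|\le\sqrt{r(x,x)r(y,y)}$); both are routine once the continuity of the limiting paths has been exploited.
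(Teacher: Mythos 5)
Your argument is correct and follows essentially the same route as the paper: Lemma~\ref{lem:appleb} to pass to the Lebesgue integral, Lemma~\ref{lem:tailpweak} plus the continuous mapping theorem for the weak limit, and the covariance structure of Lemma~\ref{lem:fidirob} to identify $\sigma^2_\ell$. Your extra care about continuity of the integration functional on $D([0,\ell])$ (via continuity of the limit paths) and about the Fubini interchange merely makes explicit what the paper leaves implicit.
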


\begin{lemma}\label{lem:convvar}
Under Condition~\ref{cond:cond}, as $\ell \to \infty$,
\[
\sigma_\ell^2 \to \sigma_{\djb}^2,
\]
where $\sigma_\ell^2$ and $\sigma_{\djb}^2$ are defined in Lemma \ref{lem:weakdnlgn} and Theorem \ref{theo:main}, respectively.
\end{lemma}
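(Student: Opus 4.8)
The plan is to pass to the limit $\ell\to\infty$ inside the three summands defining $\sigma_\ell^2$ and then evaluate the two resulting integrals over $[0,\infty)$ and $[0,\infty)^2$ in closed form. Writing $a(\sigma):=\Exp[\zeta_1^{\scs(\sigma)}\ind(\zeta_2^{\scs(\sigma)}=0)]$, the interchange is justified by dominated (resp.\ monotone) convergence: the first marginal of $\pi_2^{\scs(\sigma)}$ is the cluster-size law $\pi$, so $\Exp[\zeta_1^{\scs(\sigma)}]=\theta^{-1}$ (the Poisson mean $\theta x$ must reproduce the exceedance intensity $\Exp[N_1]=x$), and Condition~\ref{cond:cond}\eqref{item:momn}--\eqref{item:moment} supply a uniform-in-$\sigma$ bound on $\Exp[(\zeta_1^{\scs(\sigma)})^2]$, hence on $\Exp[\zeta_1^{\scs(\sigma)}\zeta_2^{\scs(\sigma)}]$ and on $a(\sigma)\le\theta^{-1}$. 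Consequently $r(x,y)e^{-\theta(x+y)}$ and $h(x)e^{-\theta x}$ are dominated by (polynomial)$\times$(exponential) integrable functions, and
\[
\lim_{\ell\to\infty}\sigma_\ell^2=\theta^2\int_0^\infty\!\!\int_0^\infty r(x,y)e^{-\theta(x+y)}\,\diff x\,\diff y+2\theta\int_0^\infty h(x)e^{-\theta x}\,\diff x+\theta^{-2}.
\]

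For the first integral I would exploit the symmetry $r(x,y)=r(y,x)$ (it is a covariance functional) to restrict to $x\ge y$ and double, insert $r(x,y)=\theta x\,\Exp[\zeta_1^{\scs(y/x)}\zeta_2^{\scs(y/x)}]$, and substitute $\sigma=y/x$ with $\diff y=x\,\diff\sigma$. After Fubini this yields
\[
2\theta^3\int_0^1\Exp[\zeta_1^{\scs(\sigma)}\zeta_2^{\scs(\sigma)}]\int_0^\infty x^2e^{-\theta(1+\sigma)x}\,\diff x\,\diff\sigma=4\int_0^1\frac{\Exp[\zeta_1^{\scs(\sigma)}\zeta_2^{\scs(\sigma)}]}{(1+\sigma)^3}\,\diff\sigma,
\]
using $\int_0^\infty x^2e^{-\theta(1+\sigma)x}\,\diff x=2\theta^{-3}(1+\sigma)^{-3}$. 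This reproduces the first term of $\sigma_{\djb}^2$.

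The heart of the argument is the second integral. Conditioning the compound Poisson vector $\bm N_E^{(x,y)}=(N_1,N_2)$ on $\eta=m$ and using that $N_2=0$ forces $\zeta_{k2}^{\scs(\sigma)}=0$ for every summand (all components being nonnegative), I would compute
\[
\sum_{i=1}^\infty i\,p_2^{(x,y)}(i,0)=\Exp[N_1\ind(N_2=0)]=a(\sigma)\,\theta x\,e^{-\theta x(1-q(\sigma))},\qquad q(\sigma):=\Prob(\zeta_2^{\scs(\sigma)}=0),
\]
with $\sigma=y/x$, the exponential factor coming from the Poisson probability generating function. The crucial identity is $1-q(\sigma)=\sigma$: the same pgf gives $\Prob(N_2=0)=e^{-\theta x(1-q(\sigma))}$, whereas the defining limit relation of the extremal index (cf.\ \eqref{eq:exp2}) forces $\Prob(N_2=0)=e^{-\theta y}=e^{-\theta\sigma x}$, so $1-q(\sigma)=\sigma$. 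Substituting $\sigma=y/x$ into the definition of $h$ then gives $h(x)=\theta x^2\int_0^1 a(\sigma)e^{-\theta\sigma x}\,\diff\sigma-x/\theta$; inserting this into $2\theta\int_0^\infty h(x)e^{-\theta x}\,\diff x$, applying Fubini and the same Gamma integral (the exponent now being $\theta(1+\sigma)x$, from $e^{-\theta\sigma x}\cdot e^{-\theta x}$), I obtain
\[
2\theta\int_0^\infty h(x)e^{-\theta x}\,\diff x=\frac{4}{\theta}\int_0^1\frac{a(\sigma)}{(1+\sigma)^3}\,\diff\sigma-\frac{2}{\theta^2}.
\]

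Adding the three contributions, the first integral supplies the $\Exp[\zeta_1^{\scs(\sigma)}\zeta_2^{\scs(\sigma)}]$-term, the second supplies the $a(\sigma)$-term together with $-2\theta^{-2}$, and the trailing $+\theta^{-2}$ from $\sigma_\ell^2$ combines to $-2\theta^{-2}+\theta^{-2}=-\theta^{-2}$, giving exactly $\sigma_{\djb}^2$. I expect the main obstacle to be twofold: first, rigorously pinning down the identity $q(\sigma)=1-\sigma$ from the point-process limit (and checking the compatibility of the Poisson mean $\theta x$ with the intensities, which is what yields $\Exp[\zeta_1^{\scs(\sigma)}]=\theta^{-1}$); and second, verifying the uniform-in-$\sigma$ moment bounds that dominate $r$ and $h$ and thereby legitimize both the passage $\ell\to\infty$ and the two applications of Fubini.
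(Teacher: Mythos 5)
Your proposal is correct and follows essentially the same route as the paper's proof: pass to $\ell\to\infty$, evaluate the double integral of $r$ by symmetry, the substitution $\sigma=y/x$ and the Gamma integral $\int_0^\infty x^2e^{-\theta(1+\sigma)x}\,\diff x=2\theta^{-3}(1+\sigma)^{-3}$, and evaluate the $h$-integral by conditioning the compound Poisson vector on $\eta$ and using that $N_2=0$ forces every $\zeta_{j2}^{\scs(\sigma)}$ to vanish; all resulting formulas agree with the paper's. The one genuinely different sub-step is your derivation of the key identity $\Prob(\zeta_{12}^{\scs(\sigma)}=0)=1-\sigma$ (equation \eqref{eq:xi2} in the paper): you obtain it by matching the void probability $e^{-\theta x(1-q(\sigma))}$ of the second compound-Poisson marginal against $e^{-\theta y}$ from \eqref{eq:exp2}, whereas the paper computes $\Prob(\zeta_{12}^{\scs(\sigma)}>0)$ directly as the conditional limit $\lim_n\Prob(N_{1:q_n}>1-\sigma x/n\mid N_{1:q_n}>1-x/n)$ and invokes Theorem~3.1 of \cite{Seg05} (via (10.21) in \cite{BeiGoeSegTeu04}), which is where assumption \eqref{eq:positive} enters. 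Your route is shorter and avoids Segers' theorem, but it silently uses that the second marginal of the two-level limit $\bm N_E^{(x,y)}$ coincides in law with the one-level limit at level $y$ — a consistency property of the point-process framework in Condition~\ref{cond:cond}\eqref{item:generalpoint} that you should state explicitly; the paper's detour through \eqref{eq:positive} establishes the identity without relying on that marginal consistency. Your additional care in dominating the integrands to justify $\ell\to\infty$ and Fubini (via $\Exp[\zeta_1^{\scs(\sigma)}]=\theta^{-1}$ and uniform second-moment bounds) is sound and in fact more than the paper provides, which simply asserts the limit $\sigma_\ell^2\to\sigma_\infty^2$.
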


\section{Auxiliary Lemmas for proving Theorem~\ref{theo:main} (sliding blocks)} \label{sec:proofs2}

\begin{lemma}[Approximation by an integral with bounded support -- sliding blocks] \label{lem:boundsupp2}
Under Condition~\ref{cond:cond}, for all $\delta>0$,
\[
\lim_{\ell \to \infty} \limsup_{n\to\infty} \Prob( |D_{n,\ell}^{\slb} - D_ n^{\slb} | > \delta) = 0.
\]
\end{lemma}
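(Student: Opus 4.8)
The plan is to mirror the proof of the disjoint-blocks statement in Lemma~\ref{lem:boundsupp}, exploiting that the tail empirical process $\tailp$ entering both $D_n^{\slb}$ and $D_{n,\ell}^{\slb}$ is \emph{exactly the same} object as in the disjoint case; only the integrating empirical measure changes from $\hat H_{\kn}$ to $\hat H_{\kn}^{\slb}$. Writing out the difference,
\[
D_n^{\slb} - D_{n,\ell}^{\slb} = \int_\ell^{\hat m^{\slb}} \tailp(x)\,\diff\hat H_{\kn}^{\slb}(x) = \frac{1}{n-\bn+1}\sum_{t:\,Z_{nt}^{\slb}>\ell}\tailp(Z_{nt}^{\slb}),
\]
so the task is to show that the weighted mass that $\hat H_{\kn}^{\slb}$ places on $(\ell,\hat m^{\slb}]$, integrated against $\tailp$, becomes negligible as first $n\to\infty$ and then $\ell\to\infty$.

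First I would introduce a weight, say $w(x)=1+x$, and bound
\[
\bigl| D_n^{\slb} - D_{n,\ell}^{\slb}\bigr| \le A_n\cdot B_n^{\slb}, \qquad A_n = \sup_{\ell\le x\le \hat m^{\slb}}\frac{|\tailp(x)|}{w(x)}, \quad B_n^{\slb} = \frac{1}{n-\bn+1}\sum_{t:\,Z_{nt}^{\slb}>\ell}w(Z_{nt}^{\slb}).
\]
The factor $B_n^{\slb}$ is the easy, genuinely sliding-specific term: by stationarity each summand has the same law as $w(Z_{1:\bn})\ind(Z_{1:\bn}>\ell)$, so $\Exp[B_n^{\slb}] = \Exp[\,w(Z_{1:\bn})\ind(Z_{1:\bn}>\ell)\,]$ irrespective of the heavy overlap between sliding blocks. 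By Condition~\ref{cond:cond}\eqref{item:moment} the family $(Z_{1:\bn})_n$ is bounded in $L^{2+\delta}$ and approximately $\mathrm{Exponential}(\theta)$, whence this expectation is bounded uniformly in $n$ and tends to $0$ as $\ell\to\infty$; Markov's inequality then yields $\limsup_n \Prob(B_n^{\slb}>\eta)\to0$ as $\ell\to\infty$ for every $\eta>0$.

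Then I would argue that $A_n = O_\Prob(1)$ uniformly in $n$. Since $\tailp$ does not depend on the blocking scheme, this is precisely the weighted-supremum estimate already carried out in the proof of Lemma~\ref{lem:boundsupp}: on the moderate range $[\ell,L]$ one uses the weak convergence of $\tailp$ from Lemma~\ref{lem:tailpweak} (tightness on compacts, with the limit $\tailplim$ satisfying $\Var(\tailplim(x))\propto x$, so that $\sup_{x\le L}|\tailplim(x)|/w(x)<\infty$ almost surely), while on the far range $[L,\hat m^{\slb}]$ one controls $\tailp$ through the variance bound of Condition~\ref{cond:cond}\eqref{item:varbound} together with the increment moment bound in \eqref{item:momn}. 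The only point at which the sliding structure enters here is the upper endpoint: one needs $\hat m^{\slb}=\max_t Z_{nt}^{\slb}=o_\Prob(\bn)$, which follows from Condition~\ref{cond:cond}\eqref{item:blockdiv} and the remark following it (the minimum over all sliding block maxima of size $\bn$ converges to $1$), exactly replacing the corresponding statement $\hat m=o_\Prob(\bn)$ used for disjoint blocks.

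Finally I would combine the two factors. Given $\delta,\varepsilon>0$, tightness of $A_n$ furnishes a constant $K$ with $\limsup_n\Prob(A_n>K)<\varepsilon/2$; choosing $\ell$ so large that $\limsup_n\Exp[B_n^{\slb}]<\delta\varepsilon/(2K)$ gives $\limsup_n\Prob(B_n^{\slb}>\delta/K)<\varepsilon/2$, and on the intersection $\{A_n\le K\}\cap\{B_n^{\slb}\le\delta/K\}$ one has $A_nB_n^{\slb}\le\delta$. Hence $\limsup_n\Prob(|D_n^{\slb}-D_{n,\ell}^{\slb}|>\delta)<\varepsilon$ for all large $\ell$, which is the claim. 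The main obstacle is the uniform-in-$n$ control of the weighted tail empirical process $A_n$ over the range $[\ell,\hat m^{\slb}]$ whose right endpoint grows with $n$: on compact sets weak convergence suffices, but the far tail genuinely requires the moment and variance bounds of Condition~\ref{cond:cond}. This part is, however, inherited from the disjoint-blocks argument, so relative to Lemma~\ref{lem:boundsupp} the only new inputs are the stationarity computation for $B_n^{\slb}$ and the sliding-blocks version of $\hat m^{\slb}=o_\Prob(\bn)$.
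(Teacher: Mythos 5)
Your factorization $|D_n^{\slb}-D_{n,\ell}^{\slb}|\le A_n B_n^{\slb}$ is algebraically fine, and the treatment of $B_n^{\slb}$ (stationarity plus Condition~\ref{cond:cond}\eqref{item:moment} and Markov) is correct and is indeed the easy, genuinely sliding-specific part. The gap is in $A_n$. You assert that the bound $A_n=\sup_{\ell\le x\le \hat m^{\slb}}|\tailp(x)|/(1+x)=O_\Prob(1)$ is ``precisely the weighted-supremum estimate already carried out in the proof of Lemma~\ref{lem:boundsupp}'' and hence inherited. It is not: the paper never establishes a (weighted) supremum bound for $\tailp$ over an expanding domain. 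Its proof deliberately avoids any such supremum by coupling each block maximum to an independent copy $Z_{ni}^{\eps/2*}$ (via Bradley's coupling lemma), conditioning on its value, and then using only the \emph{pointwise} first-moment bound $\Exp|e_{1:j}(z)|\le C\{\sqrt{j/\kn}+\dots\}(1+z)$, which is itself obtained by coupling the blocks to an iid array and applying Condition~\ref{cond:cond}\eqref{item:varbound} to a \emph{single block} of length $\bn$ (giving $\|V_j\|_2\le C\sqrt{z+z^2}$). Note that applying \eqref{item:varbound} to the full sample, as your ``far range'' step suggests, gives $\Var(\tailp(x))\le C(x+\kn x^2)$, which blows up with $\kn$ and yields nothing; and upgrading the pointwise moment bound to a uniform-in-$x$ bound over $[\ell,\eps\bn]$ would require a chaining/maximal-inequality argument that is nowhere in the paper and does not obviously follow from \eqref{item:momn} (whose constant $C_\ell'$ is only controlled on bounded ranges). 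So the central technical step of your proof is an unproven assertion, and it is exactly the step the paper's argument is engineered to avoid.

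For comparison, the paper's route writes $D_n^{\slb}-D_{n,\ell}^{\slb}$ on the event $A_n'=\{\min_t N_{nt}^{\slb}>1-\eps\}$ as a fourfold sum $R_{n,\ell}^{\slb}=\kn^{-3/2}\sum_{i}\sum_{j}\sum_{s\in I_j}\bn^{-1}\sum_{t\in I_i}f(U_s,Z_{nt}^{\slb})\ind(Z_{nt}^{\slb}\ge\ell)$, splits according to $|j-i|\le 2$ versus $|j-i|\ge 3$, kills the near-diagonal terms directly, and handles the far terms by the coupling-plus-blockwise-variance argument sketched above, which produces a bound of the form $C\,\Exp[(1+Z^{*})\ind(Z^{*}\ge\ell-q_n)]$ that vanishes as $\ell\to\infty$. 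If you want to repair your write-up, the cleanest fix is to abandon the supremum $A_n$ and instead insert the coupling step so that the argument of $\tailp$ becomes independent of the process, reducing everything to the pointwise moment bounds the conditions actually deliver.
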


\begin{lemma}[Approximation by a Lebesgue integral -- sliding blocks] \label{lem:appleb2}
Suppose Condition~\ref{cond:cond} is met. Then, as $n\to\infty$,
\[
D_{n,\ell}^{\slb} = D_{n,\ell}'^{\slb} + o_\Prob(1), \qquad \text{where}\quad D_{n,\ell}'^{\slb} = \int_0^\ell \tailp(x) \theta e^{-\theta x} \, \diff x.
\]
\end{lemma}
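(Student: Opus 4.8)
The plan is to mirror the disjoint-blocks proof of Lemma~\ref{lem:appleb}, the only genuinely new ingredient being control of the sliding-blocks empirical distribution function $\hat H_{\kn}^{\slb}$. Writing $H(x) = 1 - e^{-\theta x}$ for the limiting Exponential$(\theta)$-cdf, I would express the target difference as a single Stieltjes integral,
\[
D_{n,\ell}^{\slb} - D_{n,\ell}'^{\slb} = \int_0^\ell \tailp(x)\, \diff\{\hat H_{\kn}^{\slb}(x) - H(x)\},
\]
and show it is $o_\Prob(1)$ by combining the asymptotic equicontinuity of $\tailp$ (supplied by the weak convergence to a continuous limit in Lemma~\ref{lem:tailpweak}) with the consistency $\hat H_{\kn}^{\slb}(x) \pto H(x)$ at fixed $x$.

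For the grid argument, I would fix points $0 = t_0 < t_1 < \dots < t_K = \ell$ of mesh at most $\delta$ and decompose the integral over the subintervals $(t_{k-1}, t_k]$, replacing $\tailp(x)$ by its left-endpoint value $\tailp(t_{k-1})$ on each. The replacement error is bounded by $\omega_\delta(\tailp) \cdot \{\mathrm{TV}_{[0,\ell]}(\hat H_{\kn}^{\slb}) + \mathrm{TV}_{[0,\ell]}(H)\} \le 2\,\omega_\delta(\tailp)$, where $\omega_\delta$ is the modulus of continuity on $[0,\ell]$ and both functions, being cdfs, have total variation at most one. The asymptotic equicontinuity that accompanies convergence to a process with continuous paths makes $\omega_\delta(\tailp)$ uniformly small in probability as $\delta \to 0$. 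The remaining main term is the finite sum $\sum_{k=1}^K \tailp(t_{k-1})\,\bigl[\{\hat H_{\kn}^{\slb}(t_k) - H(t_k)\} - \{\hat H_{\kn}^{\slb}(t_{k-1}) - H(t_{k-1})\}\bigr]$, in which each factor $\tailp(t_{k-1})$ is $O_\Prob(1)$ by the marginal tightness from Lemma~\ref{lem:fidirob} and each bracketed increment is $o_\Prob(1)$; hence the sum vanishes in probability for every fixed grid. Letting $\delta \to 0$ after $n \to \infty$ then yields the claim, the starting point being clean since $\tailp(0)=0$ and $\hat H_{\kn}^{\slb}(0)=H(0)=0$ for continuous $F$.

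The hard part will be the pointwise consistency $\hat H_{\kn}^{\slb}(x) \pto H(x)$, which is where the sliding structure enters. For the mean, stationarity gives $\Exp[\hat H_{\kn}^{\slb}(x)] = \Prob(Z_{n1}^{\slb} \le x) = \Prob(Z_{n1} \le x) \to H(x)$ exactly as in the disjoint case, since a sliding block maximum of length $\bn$ has the same law as a disjoint one. The difficulty is the variance: the summands $\ind(Z_{ns}^{\slb} \le x)$ and $\ind(Z_{nt}^{\slb} \le x)$ are heavily correlated whenever the blocks overlap, that is for $|s-t| < \bn$. I would split the double covariance sum accordingly: the overlapping pairs number $O(n\bn)$ and contribute $O(n\bn)/(n-\bn+1)^2 = O(\bn/n) = O(\kn^{-1}) = o(1)$, while the non-overlapping pairs are handled by the mixing decay of Condition~\ref{cond:cond}\eqref{item:rate2}, the bound $\alpha_{c_2}(\ell) \le C_2 \ell^{-\eta}$ with $\eta>3$ rendering the mixing tail summable. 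This gives $\Var(\hat H_{\kn}^{\slb}(x)) = o(1)$ and hence the required consistency at the finitely many grid points, so no uniform law of large numbers is needed. Beyond this overlap/mixing variance bound, the remaining steps are routine transcriptions of the disjoint-blocks argument.
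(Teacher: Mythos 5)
Your proposal is correct and follows essentially the same route as the paper, which likewise reduces the claim to consistency of $\hat H_{\kn}^{\slb}$ (mean by stationarity of the sliding block maxima, variance by splitting overlapping from non-overlapping pairs and invoking the mixing decay, as in Proposition~3.1 of \cite{RobSegFer09}) combined with the regularity of $\tailp$ from its weak convergence. The only cosmetic difference is that the paper passes through uniform consistency of $\hat H_{\kn}^{\slb}$ on $[0,\ell]$ and cites a ready-made integral-convergence lemma, whereas you carry out the equivalent grid/equicontinuity argument by hand, needing only pointwise consistency at the grid points.
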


\begin{lemma}[Joint convergence of fidis -- sliding blocks]\label{lem:fidirob2} Let
\[
G_n^{\slb} =  \sqrt{\kn} (T_n^{\slb} -  \Exp T_n^{\slb}), \qquad 
T_n^{\slb} = \frac{1}{n-\bn+1} \sum_{t=1}^{n-\bn+1}  Z_{nt}^{\slb}.
\]
Under Condition \ref{cond:cond}, for any $x_1, \dots, x_m \in [0,\infty)$, as $n \to \infty$,
\[
\Big(\tailp(x_1), \dots,  \tailp(x_m) ,  G_n^{\slb}  \Big)' 
\dto
\Big(\tailplim(x_1), \dots,  \tailplim(x_m) ,  G^{\slb}  \Big)', 
\]
the random vector on the right-hand side  being $\Nc_{m+1} \left( \bm 0, \bm \Sigma^{\slb}(x_1, \dots, x_m) \right)$-distributed with
\[
\bm \Sigma^{\slb}(x_1, \dots, x_m)  
=
\matr{
r(x_1,x_1) & \dots & r(x_1, x_m) & h(x_1) \\ 
\vdots & \ddots & \vdots & \vdots \\ 
r(x_m,x_1) & \dots & r(x_m,x_m) & h(x_m) \\
h(x_1) & \dots & h(x_m) & \frac{2(\log(4)-1)}{\theta^2}
} 
\]
where $r$ and $h$ are defined in Lemma \ref{lem:fidirob}. 
\end{lemma}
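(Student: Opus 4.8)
The plan is to follow the same route as for the disjoint-blocks version in Lemma~\ref{lem:fidirob}, isolating the only genuinely new ingredient: the limiting variance of $G_n^{\slb}$. Inspecting the target matrix $\bm\Sigma^{\slb}$, the entries $r(x_i,x_j)$ and $h(x_i)$ coincide with those of $\bm\Sigma^{\djb}$, and only the bottom-right entry changes from $\theta^{-2}$ to $2(\log 4-1)/\theta^2$. Since $\tailp$ does not depend on the block scheme, the convergence of the upper-left $m\times m$ block and the limits $\Cov(\tailp(x_i),\tailp(x_j))\to r(x_i,x_j)$ are verbatim as in Lemma~\ref{lem:fidirob}. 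By the Cram\'er--Wold device it then suffices to prove that, for arbitrary $\lambda_1,\dots,\lambda_m,\mu\in\R$,
\[
W_n=\sum_{k=1}^m\lambda_k\tailp(x_k)+\mu\,G_n^{\slb}
\]
is asymptotically centred normal with the variance dictated by $\bm\Sigma^{\slb}$, so the whole content beyond Lemma~\ref{lem:fidirob} reduces to (a) identifying the cross-covariances $\Cov(\tailp(x_k),G_n^{\slb})$ and (b) computing $\lim_n\Var(G_n^{\slb})$.

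For (a), I would exploit that sliding over all shifts averages out the position of an exceedance within a block. Writing $m_n=n-\bn+1$ and using that $Z_{nt}^{\slb}$ depends only on $\{U_t,\dots,U_{t+\bn-1}\}$, one has, up to mixing-negligible cross-block terms,
\[
\Cov(\tailp(x_k),G_n^{\slb})=\frac{1}{m_n}\sum_{t=1}^{m_n}\ \sum_{s=t}^{t+\bn-1}\Cov\big(\ind(U_s>1-\tfrac{x_k}{\bn}),Z_{nt}^{\slb}\big)+o(1).
\]
By stationarity the inner sum equals $\sum_{s=1}^{\bn}\Cov(\ind(U_s>1-x_k/\bn),Z_{1:\bn})$ for every $t$, which is exactly the per-block quantity appearing in the disjoint case; hence the cross-covariance converges to the same $h(x_k)$, and no recomputation is needed.

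The crux is (b). Using $m_n\sim n=\bn\kn$, a standard variance expansion gives $\Var(G_n^{\slb})=\tfrac{1}{\bn}\sum_{|h|<\bn}\gamma_n(h)+o(1)$ with $\gamma_n(h)=\Cov(Z_{n1}^{\slb},Z_{n,1+h}^{\slb})$, contributions from $|h|\ge\bn$ vanishing by asymptotic independence. Writing $h=\ip{\bn s}$ and passing to a Riemann sum, this becomes $2\int_0^1\gamma(s)\,\diff s$ with $\gamma(s)=\lim_n\gamma_n(\ip{\bn s})$. To evaluate $\gamma(s)$, I would split the two overlapping blocks $\{1,\dots,\bn\}$ and $\{1+\ip{\bn s},\dots,\bn+\ip{\bn s}\}$ into three regions---only-first, shared and only-second, of relative lengths $s,1-s,s$---and set $M_A,M_B,M_C$ equal to the corresponding rescaled region quantities $\bn(1-\max U)$. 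Inserting a clipped gap of length $\ell_n$ between regions and invoking Condition~\ref{cond:cond}\eqref{item:rate2} together with the extremal-index/point-process convergence, $M_A,M_B,M_C$ become independent with $M_A\sim\mathrm{Exp}(\theta s)$, $M_B\sim\mathrm{Exp}(\theta(1-s))$ and $M_C\sim\mathrm{Exp}(\theta s)$, while $Z_{n1}^{\slb}\to\min(M_A,M_B)$ and $Z_{n,1+\ip{\bn s}}^{\slb}\to\min(M_B,M_C)$. Conditioning on $M_B=v$ makes $M_A,M_C$ conditionally independent, and with $\Exp[\min(M_A,v)]=(1-e^{-\theta s v})/(\theta s)$ (and the analogue for $M_C$) an elementary integration against the density of $M_B$ gives $\Exp[\min(M_A,M_B)\min(M_B,M_C)]=2/\{\theta^2(1+s)\}$, so that $\gamma(s)=(1-s)/\{\theta^2(1+s)\}$ and
\[
\lim_{n\to\infty}\Var(G_n^{\slb})=\frac{2}{\theta^2}\int_0^1\frac{1-s}{1+s}\,\diff s=\frac{2(2\log 2-1)}{\theta^2}=\frac{2(\log 4-1)}{\theta^2},
\]
as claimed. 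Uniform integrability from Condition~\ref{cond:cond}\eqref{item:moment} (with $2+\delta$ moments) justifies both the convergence of the covariances and the interchange of limit and integral.

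Finally, the joint central limit theorem for $W_n$ follows from the same big-block/small-block scheme as in Lemma~\ref{lem:fidirob}: partition the time axis into big blocks of length $\sim L_n\bn$ separated by small blocks of length $\ell_n$ that are clipped off, so that the big-block contributions are asymptotically independent by Condition~\ref{cond:cond}\eqref{item:rate2}, and apply the Lyapunov CLT for row-wise independent triangular arrays, verifying the Lyapunov ratio via the $(2+\delta)$-moment bounds of Condition~\ref{cond:cond}\eqref{item:momn} and \eqref{item:moment}. The main obstacle is that, in contrast to disjoint blocks, the summands $Z_{nt}^{\slb}$ overlap, so one cannot directly treat them as independent blocks; the technical work lies in grouping consecutive sliding blocks, controlling the $O(\ell_n/\bn)$ relative error from blocks straddling a big-block boundary, and rigorously justifying the decoupling of the three regions underlying the computation of $\gamma(s)$.
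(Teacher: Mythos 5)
Your proposal is correct in substance and follows the paper's overall architecture: Cram\'er--Wold, a big-block/small-block decoupling, a Lyapunov CLT, and an explicit computation of the new covariance entries. Your computation of $\lim_n\Var(G_n^{\slb})$ via the three-region split of two overlapping windows is the paper's Lemma~\ref{lemma:covZZ} in different clothing: the paper derives the joint survival function $\exp\{-\theta(\xi(x\wedge y)+x\vee y)\}$ of the two sliding block maxima, which is exactly your representation $(\min(M_A,M_B),\min(M_B,M_C))$ with independent exponentials, and both give $\Exp[X^{(\xi)}Y^{(\xi)}]=2/\{\theta^2(1+\xi)\}$ and hence $2(\log 4-1)/\theta^2$. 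Where you genuinely diverge is the cross-covariance. The paper computes $\lim_n\Cov(e_n(x),G_n^{\slb})$ from scratch as a double integral $h_{\slb}(x)$ over the one- and two-level cluster-size distributions (Lemma~\ref{lemma:covEZ}) and then needs a separate algebraic lemma (Lemma~\ref{lem:convvar2}) to verify $h_{\slb}=h$. Your Fubini/stationarity argument --- for each window $t$ the sum over $s$ inside the window equals $\Cov(N_{\bn}^{(x)}(E),Z_{1:\bn})$ by stationarity, which is precisely the per-block quantity already shown to converge to $h(x)$ at the end of the proof of Lemma~\ref{lem:fidirob} --- short-circuits both lemmas. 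This is a real economy, provided you actually justify the discarded off-window terms: that step needs the covariance inequality of Lemma~3.11 in \cite{DehPhi02} with exponents chosen so that $\|\ind(U_s>1-x/\bn)\|_r=(x/\bn)^{1/r}\to0$ while $\sum_{d\ge1}\alpha_{c_2}(d)^{1/t'}<\infty$ (which $\eta>3$ guarantees), together with the usual truncation to the $\Bc^{\eps}_{\cdot:\cdot}$-sigma-fields so that the mixing coefficients apply to $Z_{nt}^{\slb}$ at all.

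One step as written would fail: separating the big blocks by gaps of length $\ell_n=o(\bn)$ does not make their contributions to $G_n^{\slb}$ asymptotically independent, because a sliding window starting near the end of one big block reaches $\bn-1$ positions forward, across such a gap and into the next big block. The separating gaps must have length of order $\bn$; the paper discards two whole $I_i$-blocks, i.e.\ $2\bn$ observations, between consecutive big blocks of $\kn^*\bn$ observations, which costs nothing since $q_n^*\cdot2\bn=o(n)$. You do flag the overlap of the $Z_{nt}^{\slb}$ as the main obstacle, so this is an imprecision rather than a missing idea, but the remedy is to enlarge the gaps to order $\bn$, not to control an $O(\ell_n/\bn)$ boundary error.
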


\section*{Acknowledgments}
The authors would like to thank two anonymous referees and an Associate Editor for their constructive comments on an earlier version of this manuscript.
Moreover, they would like to thank Johan Segers (for providing the \texttt{R}-implementations of several estimators for the extremal index), Gregor Weiß (for providing the financial market data) and Daniel Ullmann and Peter Posch for fruitful discussions. 

This research has been supported by the Collaborative Research Center ``Statistical modeling of nonlinear dynamic processes'' (SFB 823) of the German Research Foundation, which is gratefully acknowledged. Parts of this paper were written when A.\ B\"ucher was a visiting professor at TU Dortmund University.

\bibliographystyle{chicago}
\bibliography{biblio}

\clearpage

\begin{center}
%
{\bfseries SUPPLEMENTARY MATERIAL ON  \\ [0mm] ``WEAK CONVERGENCE OF A PSEUDO MAXIMUM LIKELIHOOD  \\ [0mm] ESTIMATOR FOR THE EXTREMAL INDEX''}
\vspace{.5cm}

{\small BETINA BERGHAUS AND AXEL BÜCHER}

\end{center}

\begin{abstract}
Appendices~\ref{sec:disjoint} and \ref{sec:sliding} contain the proofs of the auxiliary lemmas in Section~\ref{sec:proofs} and \ref{sec:proofs2} from the main paper, respectively.  The proof of Theorem~\ref{theo:Northrop} is given in Appendix~\ref{sec:nor}, and additional results from the main paper are proven in Appendix~\ref{sec:addproofs}. Finally, additional simulation results are presented in Appendix~\ref{sec:addsim}.
\end{abstract}

\appendix

Throughout this supplement, $C$ and $C'$ denote generic constants whose values may change from line to line. The notation $o, o_\Prob, O, O_\Prob$ always refers to $n\to\infty$, if not  mentioned otherwise.  

\section{Remaining steps for the proof of Theorem~3.2 -- disjoint blocks} \label{sec:disjoint}

\begin{proof}[Proof of Lemma~\ref{lem:boundsupp}]
For some  $\eps\in(0,c_1 \wedge c_2)$, let $A_n=A_n(\eps)$ denote the event $\{\min_{i=1}^{\kn} N_{ni} > 1-\eps/2\} 
= \{ \max_{i=1}^{\kn} Z_{ni} < \eps \bn/2 \}$.  
By Condition~\ref{cond:cond}\eqref{item:blockdiv}, we have $\Prob(A_n) \to1$ as $n\to \infty$. 
We may write
\begin{align*}
D_n - D_{n,\ell} 
&=R_{n,\ell} \ind_{A_n} + o_\Prob(1)
\end{align*}
as $n \to \infty$, where, with  $I_{j}= \{(j-1)\bn +1, \dots, j\bn\}$ for $j=1, \dots, \kn$ (and $I_{j}=\varnothing$ else),
\begin{align*}
\textstyle R_{n,\ell} = \kn^{-3/2} \sum_{i=1}^{\kn} \sum_{j=1}^{\kn} \sum_{s\in I_{j}}f(U_s, Z_{ni}) g_{n,\ell}(Z_{ni})
\end{align*}
and 
\[
f(U_s, Z_{ni}) =  \ind (U_s > 1-\tfrac{Z_{ni}}{\bn}) - \tfrac{Z_{ni}}{\bn}, \qquad g_{n,\ell}(Z_{ni}) = \ind(\bn \eps/2 > Z_{ni}  \ge \ell).  
\]
Now, decompose $R_{n,\ell} = R_{n,\ell,0} + R_{n,\ell,1} + R_{n,\ell,-1}+R_{n,\ell,2}$ according to whether the second sum over $j$ is such that $j=i, j=i+1, j=i-1$ or $|j-i|\ge 2$, respectively. 
It suffices to show that $R_{n,\ell,0} \ind_{A_n}=o_\Prob(1)$ and $R_{n,\ell,\pm 1} \ind_{A_n}=o_\Prob(1)$ as $n\to \infty$, and that 
\begin{align} \label{eq:rn2}
\lim_{\ell \to \infty} \limsup_{n\to\infty} \Prob( | R_{n,\ell,2} \ind_{A_n} |> \delta) = 0
\end{align}
for all $\delta>0$.

First, since 
$
R_{n,\ell,0}=  \kn^{-3/2} \sum_{i=1}^{\kn} Z_{ni} \cdot g_{n,\ell}(Z_{ni}),
$
we obtain that
$\Exp|R_{n,\ell,0}| \le \kn^{\scs -1/2} \Exp|Z_{ni} | = o(1)$ as $n\to\infty$ by Condition~\ref{cond:cond}\eqref{item:moment}. 

Second,  we can write $R_{n,\ell,1}= \bar R_{n,\ell,1} - R_{n,\ell,0} = \bar R_{n,\ell,1} - o_\Prob(1)$, where
\[
\bar R_{n,\ell,1} = \kn^{-3/2} \sum_{i=1}^{\kn-1}  \sum_{s\in I_{i+1}} \ind(U_s > 1- \tfrac{Z_{ni}}{\bn}) g_{n,\ell}(Z_{ni}) 
\]
whence it suffices  to show that $\bar R_{n,\ell,1}\ind_{A_n}=o_\Prob(1)$.
For that purpose, define
\begin{align} \label{eq:useps}
U_s^\eps=U_s \ind(U_s>1-\eps)
, \qquad Z_{ni}^{\eps/2}=\bn(1-N_{ni}^{\eps/2}) = \bn (1- \max_{s\in I_i} U_s^{\eps/2}).
\end{align}
Note that $Z_{ni}^{\scs \eps/2}$ is $\Bc_{\scs \{(i-1)\bn+1\}:i\bn}^{\scs \eps/2}$ measurable, whence the mixing coefficients become available.
On the event $A_n$, we have $\bar R_{n,\ell,1} = \bar R_{n,\ell,1}^\eps$, where  $\bar R_{n,\ell,1}^\eps$ is defined exactly as $\bar R_{n,\ell,1}$, but with $U_s$ and $Z_{ni}$ replaced by $U_s^\eps$ and $Z_{ni}^{\scs \eps/2}$, respectively. 
By stationarity, we obtain
\[
\Exp|\bar R_{n,\ell,1}^\eps| = (\kn-1) \kn^{- 3/2} \sum_{s=1}^{\bn} \Exp\Big[\ind\big(U_{\bn+s}^\eps > 1- \tfrac{Z_{n1}^{\eps/2}}{\bn}\big) g_{n,\ell}(Z_{n1}^{\eps/2})\Big].
\]
Recall Theorem 3 in \cite{Bra83} (coupling for strongly mixing random variables): if $X$ and $Y$ are two random variables in some Borel space $S$ and $\mathbb{R}$, respectively, if $U$ is uniform on $[0,1]$ and independent of $(X,Y)$ and  if $q>0$  and $\gamma >0$ are such that $q \leq \Vert Y\Vert_\gamma=(\Exp|Y|^\gamma)^{1/\gamma}$, then there exists measurable function $f$ such that $Y^*=f(X,Y,U)$ has  the same distribution as $Y$, is independent of $X$ and satisfies 
\begin{align} \label{eq:bradley}
\Prob(\vert Y - Y^*\vert \ge q) \le  18 (\Vert Y \Vert_\gamma / q )^{\gamma/(2 \gamma+1)}\alpha(\sigma(X), \sigma(Y))^{2\gamma/(2\gamma+1)}.
\end{align}
Apply this theorem with $X=U_{\bn +s}^\eps$, $Y=Z_{n1}^{\eps/2}$, $\gamma = 2+ \delta$ and $q=q_n = \Vert Z_{n1}^{\eps/2} \Vert_{2+ \delta} $ to obtain that
\begin{multline*}
\Exp|\bar R_{n,\ell,1}^\eps| \le \kn^{-1/2} \sum_{s=1}^{\bn}\Big\{ \Exp[\ind(U_{\bn +s}^{\eps} > 1- \tfrac{Z_{n1}^{\eps/2*}+q_n}{\bn})] \\
+ 18  \cdot \alpha(\sigma(U_{\bn +s}^\eps), \sigma(Z_{n1}^{\eps/2})) ^{\tfrac{4+2\delta}{5+2\delta}}\Big\}
\end{multline*}
where $Z_{n1}^{\scs \eps/2*}$ is independent of $X$ and has the same distribution as $Z_{n1}^{\scs \eps/2}$. Note that $\alpha(\sigma(U_{\bn +s}^\eps), \sigma(Z_{n1}^{\scs \eps/2}))  \leq \alpha_{c_2}(s)$. Since $U_s^\eps\le U_s$, it follows that
\[
\Exp|\bar R_{n,\ell,1}^\eps| 
\le 
\kn^{-1/2} \bigg\{  \Exp[ Z_{n1}^{\eps/2*} ] +q_n +18 \times \sum_{s=1}^{\bn} \alpha_{c_2}(s) ^{\tfrac{4+2\delta}{5+2\delta}}\bigg\},
\]
which converges to $0$ by Conditions~\ref{cond:cond}\eqref{item:rate2} and \eqref{item:moment}.  To conclude, $R_{n,\ell,1}\ind_{A_n}=o_\Prob(1)$.

The sum $R_{n,\ell,-1}$ can be treated analogously so that it remains to show \eqref{eq:rn2}. Decompose
$R_{n,\ell,2}=\bar S_{n,\ell,1}+\bar S_{n,\ell,2}$ where
\begin{align*}
\bar S_{n,\ell,1}
&= 
\kn^{-3/2} \sum_{i=3}^{\kn}  \sum_{j=1}^{i-2} \sum_{s\in I_{j}} f(U_s, Z_{ni}) g_{n,\ell}(Z_{ni})
\end{align*}
and where $\bar S_{n,\ell,2}$ is defined analogously with the second sum ranging from $i+2$ to $\kn$. We will only treat $\bar S_{n,\ell,1}$ in the following, as $\bar S_{n,\ell,2}$ can be treated analogously. 
Recall \eqref{eq:useps} and note that, on the event $A_n$, we have $f(U_s, Z_{ni}) g_{n,\ell}(Z_{ni}) = f(U_s^\eps, Z_{ni}^{\scs \eps/2}) g_{n,\ell}(Z_{ni}^{\scs \eps/2})$.
Therefore, again on the event $A_n$,
\begin{align*}
\bar S_{n,\ell,1}
&= 
\kn^{-3/2} \sum_{i=3}^{\kn}  \sum_{j=1}^{i-2} \sum_{s\in I_{j}} f(U_s^\eps, Z_{ni}^{\eps/2}) g_{n,\ell}(Z_{ni}^{\eps/2})\\
&=
\frac{1}{\kn} \sum_{i=3}^{\kn}  e_{1:i-2}(Z_{ni}^{\eps/2}) g_{n,\ell}(Z_{ni}^{\eps/2}) 
=: 
\bar S_{n, \ell, 1}^\eps,
\end{align*}
where, for $p,q\in\{1, \dots, \kn\}$, $p<q$, and $x\ge 0$,
\[
e_{n, p:q} (x) =  \frac{1}{\sqrt{\kn}} \sum_{i=p}^q \sum_{s \in I_i} \{ \ind(U_s^\eps > 1-x/\bn) - x/\bn\}.
\]
We will show that \eqref{eq:rn2} is met with $R_{n,\ell,2}\ind_{A_n}$ replaced by $\bar S_{n, \ell, 1}^\eps$, and for that purpose we consider the first central moment of $\bar S_{n, \ell, 1}^\eps$.

Note that $|e_{1:j}(x) \ind(x \ge \ell) |\le j\bn/\sqrt \kn$ and that, for all $x,y\ge 0$ with $y-q \le x \le y+q$ for some $q>0$, we have
\[
|e_{1:j}(x) | \le |e_{1:j}(y+q)| \vee |e_{1:j}((y-q)\vee 0)| + 2 q  \sqrt{\kn},
\]
as can be shown by a case-by-case study and monotonicity arguments. The previous two inequalities, together with \eqref{eq:bradley} with $X=(U_1^\eps, \dots, U^\eps_{(i-1)\bn})$, $Y=Z_{ni}^{\scs \eps/2}$, $\gamma = 2+ \delta$ and $q =q_n = \Vert Z_{n1}^{\scs\eps/2} \Vert_{2+ \delta} /\sqrt{\kn }$,  imply that $\Exp[|\bar S_{n,\ell,1}^\eps|] $ is bounded above by
\begin{align*}
&\frac{1}{\kn} \sum_{i=3}^{\kn} \Exp\bigg[ \big\{  |e_{1:i-2}(Z_{ni}^{\scs \eps/2*} + q_n)| 
+ |e_{1:i-2}((Z_{ni}^{\scs \eps/2*} - q_n)\vee 0 )| + 2  \Vert Z_{n1}^{\scs \eps/2} \Vert_{2+ \delta}   \big\} \\ 
& \hspace{.5cm} \times \ind(\tfrac{\bn\eps}{2}+q_n>Z_{ni}^{\scs \eps/2*} \ge \ell - q_n)  \bigg]  
+ \frac{1}{\kn} 18\big ( \sqrt {\kn} \big ) ^{\tfrac{2+\delta}{5+2\delta}}\sum_{i=3}^{\kn} \frac{i \bn}{\sqrt {\kn}} \alpha_\eps(\bn)^{\tfrac{4+2\delta}{5+2\delta}},
\end{align*}
where $Z_{ni}^{\scs \eps/2*}$ is independent of $(U_1^\eps, \dots, U^\eps_{(i-1)\bn})$ and has the same distribution as $Z_{ni}^{\scs \eps/2}$.
The second sum on the right-hand side is of the order  (note that $\eta>3$)
\begin{align*}
O(\bn \kn^{1/2 + \tfrac{2+\delta}{10+4\delta}}\alpha_{c_2} (\bn)^{\tfrac{4+2\delta}{5+2\delta}})
&=
O( \kn^{ \tfrac{7+3\delta}{10+4\delta}}  \bn^{1-\eta {\tfrac{4+2\delta}{5+2\delta}}})  \\
&=
O( (\kn / \bn^{2})^{ \tfrac{7+3\delta}{10+4\delta}}  \bn^{-\tfrac{\delta}{5+2\delta}}) 
\end{align*} 
which converges to $0$ by Condition~\eqref{eq:rate1}. 

Since $\| Z_{n1}^{\scs \eps/2}\|_{2+\delta} \Prob(Z_{n1}^{\scs \eps/2} \ge \ell - q_n)$ converges to $0$ for $n\to \infty$ followed by $\ell \to \infty$,  it remains to consider the sums over 
\[
\Exp\left[ |e_{1:i-2}(Z_{ni}^{\scs \eps/2*} \pm q_n) \ind(\tfrac{\bn\eps}{2}+q_n>Z_{ni}^{\scs \eps/2*} \ge \ell - q_n) \right].
\]  
We only treat the sum involving the plus-sign. After conditioning on $Z_{ni}^{\scs \eps/2*}$ we are left with bounding $\Exp |e_{1:i-2}(z)|$ for $z\in[\ell,\eps\bn]$ (note that $\tfrac{\bn\eps}{2}+q_n>Z_{ni}^{\scs \eps/2*}$ implies that $Z_{ni}^{\scs \eps/2*}+ q_n \le \bn \eps/2 + 2q_n \le \bn \eps$ for sufficiently large $n$).
Decompose  $e_{1:i-2}=e_{1:i-2}^{\even} + e_{1:i-2}^{\odd}$ where $e_{1:i-2}^{\even}$ and $e_{1:i-2}^{\odd}$ denote the sum over the even and the odd blocks, respectively. It suffices to treat both sums separately, and we give the details for the sum over the even blocks.  
Let 
\[
V_j =V_j(z) =  \sum_{s\in I_{2j}} \{ \ind(U_s^{\eps} > 1-z/\bn) - z/\bn \}, 
\]
such that
$
e_{1:i-2}^{\even}(z) =\kn^{-\scs 1/2} \sum_{j=1}^{\scs \ip{i/2}-1}V_j.
$
Note that $\alpha(\sigma(V_j),\sigma(V_{j+1})) \le \alpha_{c_2}(\bn)$.
Repeatedly applying the coupling construction from \eqref{eq:bradley} above (with $\gamma=2$, $V_1^*=V_1$ and, in the $j$th step, $X=(V_1^*, \dots, V_j^*)$ and $Y=V_{j+1}$), together with Theorem 5.1 in \cite{Bra05}, we can inductively construct an iid sequence $(V_j^{*})_{j\ge1}$ such that $V_j^*$ has the same distribution as $V_j$ for any $j$ and such that
\[
\Prob(|V_j-V_j^*|\ge q_n') \le 18\cdot  \kn^{1/5} \alpha_{c_2}(\bn)^{4/5}, 
\]
where $q_n'=\| V_j \|_2 / \sqrt{\kn}$. Note that, since $z \le \eps\bn$, we have $\| V_j \|_2 \le C \sqrt{z+z^2}$ by Condition~\ref{cond:cond}\eqref{item:varbound}. Now
\begin{align*}
\Exp |e_{1:i-2}^{\even}(z)| 
\le 
\kn^{-1/2} \Exp\Big| \textstyle \sum_{j=1}^{ \ip{i/2}-1}V_j^*\Big| + i \kn^{-1/2} \Exp|V_j-V_j^*|. 
\end{align*}
Since $V_j^*$ is a centered iid sequence, we have the bound 
\[
 \Exp\Big| \textstyle \sum_{j=1}^{ \ip{i/2}-1}V_j^*\Big|
\le \bigg\{ \Var\Big( \textstyle \sum_{j=1}^{ \ip{i/2}-1}V_j^* \Big) \bigg\}^{1/2} 
\le
i^{1/2}\, \| V_j \|_2.
\]
By the Cauchy-Schwarz-inequality, we further have
\begin{align*}
 \Exp|V_j-V_j^*|  
 &\le 
q_n' + \Exp|V_j-V_j^*| \ind(|V_j-V_j^*|\ge q_n')   \\
&\le 
q_n' + 2 \| V_j \|_2\,\sqrt{18}\, \kn^{1/10} \alpha_{c_2}(\bn)^{2/5}.
\end{align*}
As a consequence, 
\[
\Exp |e_{1:i-2}^{\even}(z)| 
\le
\big\{ \sqrt{i/\kn}  + i \kn^{-1} + 9 \cdot i \kn^{-2/5} \alpha_{c_2}(\bn)^{2/5}  \big\} \| V_j\|_2
\]
for any $z\in[0,\eps \bn]$, where $\|V_j\|_2\le C \sqrt{z+z^2} \le C( 1 + z)$ by Condition~\ref{cond:cond}\eqref{item:varbound}. A similar bound for the sum over the odd blocks finally implies that
\begin{multline*}
\Exp\left[ |e_{1:i-2}(Z_{ni}^{\scs \eps/2*} + q_n) \ind(\tfrac{\bn\eps}{2}+q_n>Z_{ni}^{\scs \eps/2*} \ge \ell - q_n) \right]   \\
\le
C \big\{ \sqrt{i/\kn}  + i \kn^{-1} + 9 \cdot i \kn^{-2/5} \alpha_{c_2}(\bn)^{2/5}  \big\} \\
\times \Exp\left[ (1+ Z_{n1}^{\scs \eps/2*} + q_n) \ind( Z_{n1}^{\scs \eps/2*} \ge \ell -q_n) \right] 
\end{multline*}
after conditioning on $Z_{ni}^{\scs \eps/2*}$. Note that  the limes superior for $n\to \infty$ of the moment on the right-hand side can be made arbitrary small by increasing $\ell$. 
To finalize the treatment of $\Exp[|\bar S_{n,\ell,1}^\eps|]$ we are hence left with bounding the expression
\begin{align*}
\frac{1}{\kn} \sum_{i=3}^{\kn}  \big\{ \sqrt{i/\kn}  + i \kn^{-1} + 9 \cdot i \kn^{-2/5} \alpha_{c_2}(\bn)^{2/5} \big\} 
\le C + C' \cdot \kn^{3/5}\alpha_{c_2}(\bn)^{2/5}.
\end{align*}
Since $\alpha_{c_2}(\bn)^{2/5}=O(\bn^{-2\eta/5}) = O(\bn^{-6/5})$, we obtain that $\kn^{3/5}\alpha_{c_2}(\bn)^{2/5} = O((\kn/\bn^2)^{3/5})$, which converges to zero under the assumption that $\kn / \bn^2=o(1)$. 
\end{proof}

\begin{proof}[Proof of Lemma~\ref{lem:appleb}]  
Recall that $H(x)=1-\exp(-\theta x)$. We have to show that
\[
\int_0^\ell \tailp(x) \diff(\hat H_{\kn} - H)(x) = o_\Prob(1), \qquad n\to\infty,
\]
which follows from Lemma~C.8 in \cite{BerBuc17}, provided we can show that 
\[
\sup_{x\in[0,\ell]} | \hat H_{\kn}(x) - H(x) | = o_\Prob(1), \qquad n\to\infty.
\]
The last display in turn follows from pointwise convergence (in probability) of $\hat H_{\kn}$ to $H$ by a standard Gilvenko-Cantelli-type argument. For the pointwise convergence, note that $\Exp[\hat H_{\kn}(x)] = H_{\kn}(x) :=\Prob(Z_{n1} \le x) \to H(x)$ by \eqref{eq:exp2}. By similar arguments as in the proof of Proposition 3.1 in \cite{RobSegFer09} (but under slightly different assumptions) it can be shown that 
\begin{align*} 
\lim_{n\to\infty}\kn \Var\{ \hat H_{\kn}(x) \}  = e^{-\theta x}(1-e^{-\theta x}).
\end{align*}
This implies pointwise convergence in probability and hence the Lemma. 
\end{proof}

\begin{proof}[Proof of Lemma~\ref{lem:fidirob}]  
Note that weak convergence of the first $m$ components of the vector follows from Theorem~4.1 in \cite{Rob09}. Regarding joint convergence with the $(m+1)$st component, we only consider the case $m=1$ and set $x_1=x$; the general case can be treated analogously.

Recall the definition of $\ell_n$ in Condition~\ref{cond:cond}\eqref{item:rate2}. Decompose blocks $I_i = I_i^{+} \cup I_i^-$, where
\[
I_i^+ = \{(i-1)\bn + 1, \dots, i\bn - \ell_n\}, \qquad I_i^-=\{ i\bn -\ell_n + 1, \dots, i\bn\}.
\]
and let
\begin{align*}
\tailp^+(x) &= \kn^{-1/2} \sum_{i=1}^{\kn} \sum_{s \in I_i^+} \{ \ind(U_s > 1-x/\bn) - x/\bn\} \\
G_n^+ &= \kn^{-1/2} \sum_{i=1}^{\kn} Z_{ni}^+ - \Exp[Z_{ni}^+], \qquad Z_{ni}^+ = \bn(1-\max_{s\in I_i^+} U_s).
\end{align*}
As a consequence of Lemma 6.6 in \cite{Rob09}, $\tailp^-(x) = \tailp(x) - \tailp^+(x) = o_\Prob(1)$. Let us show the same for $G_n$. Denote $G_n^-=G_n-G_{n}^{+}$ and $Z_{ni}^- = Z_{ni}-Z_{ni}^+$. 
For $\eps\in(0,c_1\wedge c_2)$, let $A_n^+=\{\min_{i=1}^{\kn} N_{ni}^+ > 1-\eps\}$ and note that $\Prob(A_n^+) \to 1$ by Condition~\ref{cond:cond}\eqref{item:blockdiv}. It then suffices to show that $G_n^- \ind_{A_n^+} = o_\Prob(1)$.  We can write $G_n^- \ind_{A_n^+} = \tilde G_n^- \ind_{A_n^+} = \tilde G_n^- + o_\Prob(1)$, where
\[
\tilde G_n^- = \kn^{-1/2} \sum_{i=1}^{\kn}\{  Z_{ni}^- - \Exp[Z_{ni}^-]\} \ind(N_{ni}^+ > 1-\eps)  
\]
Now, $N_{ni}^+>1-\eps$ implies that $Z_{ni}^- = Z_{ni}^{\eps-}$, where the latter variable is defined in terms of the $U_i^\eps$ instead of the $U_i$. Hence, $\tilde G_n^- =  \kn^{\scs -1/2} \sum_{i=1}^{\kn} S_{ni}^\eps$, where
\[
S_{ni}^\eps = \{ Z_{ni}^{\eps-} - \Exp[Z_{ni}^-]\} \ind(N_{ni}^{\eps +} > 1-\eps)  
\]
is $\Bc_{\{(i-1)\bn+1\}:(i\bn)}^\eps$-measurable.  As a consequence, by stationarity 
\begin{align} \label{eq:b1}
\Var(\tilde G_n^- ) & = \Var(S_{n1}^{\eps} ) + \frac{2}{\kn} \sum_{i=1}^{\kn} (\kn - i) \Cov( S_{n1}^{\eps}  , S_{n,1+i}^{\eps} ) \nonumber \\
&\le 3 \Var(S_{n1}^{\eps} ) + \frac{2}{\kn} \sum_{i=2}^{\kn} (\kn - i) \Cov( S_{n1}^{\eps}  , S_{n,1+i}^{\eps} )
\end{align}
Let us first show that $\Var(S_{n1}^\eps)=o(1)$ as $n\to\infty$, which would follow, if we show that, for any $p\in(2,2+\delta)$, $|Z_{n1}^{\eps-} | \le  |Z_{n1}^{-}| \to 0$ in $L_p$ (the inequality follows by studying the cases $N_{ni}^+> 1-\eps$ and $\le 1-\eps$).  Since $\ell_n=o(\bn)$ we have, for any $y>0$,
\begin{align} \label{eq:zn0}
\Prob(Z_{n1}^{-}  \ne 0)  
&= 
\Prob\Big( \max_{s\in I_1} U_s > \max_{s\in I_1^+} U_s \Big) \\
&\le 
\Prob\Big( \max_{s=1}^{\bn - \ell_n} U_s \le 1-y/\bn \Big) + \Prob\Big( \max_{s=1}^{\ell_n} U_s> 1- y/\bn\Big) \nonumber \\
&\le \Prob \Big( Z_{1:\bn - \ell_n} \ge y (\bn-\ell_n)/\bn \Big) + \ell_n y/\bn  \nonumber  \\
&\to \exp(-\theta y), \nonumber
\end{align}
which can be made arbitrary small by increasing $y$. Hence, $Z_{n1}^- = o_\Prob(1)$. Since $\Exp|Z_{n1}^- |^p \le C \Exp | Z_{1:\bn - \ell_n} |^p < \infty$ for any $p\in(2,2+\delta)$ by Condition~\ref{cond:cond}\eqref{item:moment}, we can conclude that $Z_{n1}^-  \to 0$ in $L_p$.

It remains to treat the sum over the covariances on the right-hand side of \eqref{eq:b1}. By Lemma 3.11 in \cite{DehPhi02} (which is a slightly more general version of Lemma~6.3 in \citealp{Rob09}), for any $p\in(2,2+\delta)$,
\[
|\Cov(S_{n1}^\eps , S_{n,1+i}^{\eps} )| \le 10 (\Exp|S_{n1}^\eps |^p)^{2/p} 
\alpha_{c_2}( (i-1) \bn ) ^{1-2/p}
\]
(note that $S_{ni}^\eps$ is $\Bc_{\scs ( i\bn -\bn +1):(i\bn)}^\eps$-measurable). 
Now, for $i\ge 2$,  $\alpha_{c_2}((i-1)\bn ) \le \alpha_{c_2}(i-1) \le C (i-1)^{-\eta}$ by monotonicity of $\alpha_{c_2}(\ell)$. 
The sum over the covariances in \eqref{eq:b1} can thus be bounded by a multiple of 
\[
(\Exp|S_{n1}^\eps|^p)^{2/p} \sum_{i=2}^{\kn} \alpha_{c_2}((i-1)\bn)^{1-2/p}
\le
(\Exp|S_{n1}^\eps|^p)^{2/p} \sum_{i=1}^{\infty} i^{-\eta(1-2/p)}.
\]
The series converges and the moment converges to $0$ by arguments as given above.

Now, since $(\tailp^-(x),G_n^-)=o_\Prob(1)$ and $\Prob(A_n^+)\to1$,  it suffices to show that $(\tailp^+(x),G_n^+)\ind_{A_n^+} $ converges weakly to the claimed normal distribution. This in turn follows from the Cram\'er-Wold device, provided we show that for any $\lambda_1, \lambda_2\in\R$
\[
(\lambda_1 \tailp^+(x) + \lambda_2 G_n^+ )\ind_{A_n^+} 
\dto
\lambda_1 \tailplim(x) + \lambda_2 G.
\]
The left-hand side can be written as $(\kn^{-1/2} \sum_{i=1}^{\kn} \tilde f_{i,n} ) \ind_{A_n^+} = \kn^{-1/2} \sum_{i=1}^{\kn} \tilde f_{i,n} + o_\Prob(1)$, 
where $\tilde f_{i,n} = f_{i,n} \ind(Z_{ni}^+<\eps\bn)$ and
\[
f_{i,n} = \lambda_1 \sum\nolimits_{s \in I_{i}^+} \{ \ind(U_s>1-x/\bn) - x/\bn\} + \lambda_2 (Z_{ni}^+- \Exp[Z_{ni}^+]).
\]
Note that $\tilde f_{i,n}$ is $\Bc_{\scs \{ (i-1)\bn+1\}:\{i\bn-\ell_n\}}^\eps$-measurable. A standard argument based on characteristic functions (see, e.g., the proof of Lemma~6.7 in \citealp{Rob09}) shows that the weak limit of  $\kn^{\scs -1/2} \sum_{i=1}^{\kn} \tilde f_{i,n}$ is the same as if the $(\tilde f_{i,n})_{i=1,\dots, \kn}$ were considered as iid.
Now, 
\[
\frac{\sum_{i=1}^{\kn}  \Exp[|\tilde f_{i,n}|^{p}] }{ \big(\sum_{i=1}^{\kn}  \Exp[|\tilde f_{i,n}|^{2}] \big)^{p/2}}
=
\kn^{1-p/2} \frac{  \Exp[|\tilde f_{i,n}|^{p}] }{\Big( \Exp[|\tilde f_{i,n}|^{2}] \big)^{p/2}}. 
\]
By Minkowski's inequality, for any $p\in(2,2+\delta)$, $\sup_{n} \Exp[|\tilde f_{1,n}|^{p}] < \infty$ by Condition~\ref{cond:cond}\eqref{item:moment} and \eqref{item:momn}. 
As a consequence, provided $\lim_{n\to\infty} \Exp[\tilde f_{1,n}^2]$ exists, Ljapunov's condition is satisfied (\citealp{Bil79}, Theorem 27.3) and $\kn^{\scs-1/2} \sum_{i=1}^{\kn} \tilde f_{i,n}$ converges to a normal distribution with variance equal to $\lim_{n\to\infty} \Exp[\tilde f_{1,n}^2].$ 

The latter limit is equal to $\lim_{n\to\infty} \Exp[f_{1,n}^2]$, whence it remains to be shown that
\[
\lim_{n\to\infty} \Exp[f_{1,n}^2] = \lambda_1^2 r(x,x) +  2 \lambda_1 \lambda_2 h(x) + \lambda_2^2 /\theta^2,
\]
which in turn follows, 
observing the expressions for the limiting covariances $r(x,x)$ in Theorem~4.1 in \cite{Rob09}, from
\begin{align*}
&\textstyle \lim_{n\to\infty} \Cov \big\{ \sum_{s\in I_{1}^+} \ind(U_s>1-x/\bn), \bn(1-\max_{s \in I_{1}^+} U_s)  \big\} = h(x), \\
&\textstyle \lim_{n\to\infty} \Var\big\{  \bn(1-\max_{s \in I_{1}^+}U_s)  \big\} = \theta^{-2} .
\end{align*}
Repeating arguments from above, we may replace the set $I_{1}^+$ by $I_1$ in the preceding display, whence it is in fact sufficient to show that
\[
\lim_{n\to\infty} \Cov ( N_{n}^{(x)}(E),Z_{1:n}) = h(x), \qquad
\lim_{n\to\infty} \Var( Z_{1:n}) = \theta^{-2}. 
\]
By an application of Theorem~2.20 in \cite{Van98}, the second assertion follows directly from $Z_{1:n} \dto \exp(\theta)$ and Condition~\ref{cond:cond}\eqref{item:moment}. For the first convergence, abbreviate $N_n^{\scs (x)}=N_n^{\scs(x)}(E)$ and note that
\[
\Prob(N_{n}^{(x)}=i, Z_{1:n}>y ) = \Prob(N_n^{(y)} = 0, N_n^{(x)} = i) 
\to 
\begin{cases}
p_2^{(x,y)}(i,0) & x \ge y \ge 0 \\
0 & y > x \ge 0,
\end{cases}
\]
see \cite{Per94, Rob09}, that is, $(N_n^{(x)},Z_{1:n})$ converges jointly. By uniform integrability, we may deduce that
\begin{align*}
\Exp[N_n^{(x)}Z_{1:n}] 
&=
\sum_{i=1}^\infty i  \int_0^\infty \Prob(Z_{1:n}>y, N_{n}^{(x)}=i) \, \diff y  
\to
\sum_{i=1}^\infty i \int_0^x p_2^{(x,y)}(i,0) \, \diff y.
\end{align*}
The lemma finally follows from $\Exp[Z_{1:n}] \to \theta^{-1}$ and $\Exp[N_n^{\scs(x)}] \to x$.
\end{proof}

\begin{proof}[Proof of Lemma~\ref{lem:tailpweak}] 
This follows from a slight extension of Theorem~4.1 in \cite{Rob09}, with $\sigma=0$ in his notation. Indeed, a careful look at his proof shows that one may set $\sigma=0$ everywhere (whenever the last coordinate of his vector of processes $E_{m,n}$ is concerned).
\end{proof}

\begin{proof}[Proof of Lemma~\ref{lem:weakdnlgn}] 
As a consequence of Lemma~\ref{lem:appleb}, Lemma~\ref{lem:tailpweak} and the continuous mapping theorem, we have
\[
D_{n,\ell} + G_n 
= 
\theta \int_0^\ell \tailp(x) \, e^{-\theta x} \, \diff x + G_n + o_\Prob(1) 
\dto 
\theta \int_0^\ell \tailplim(x)  \, e^{-\theta x} \, \diff x + G.
\]
The right-hand side is normally distributed with variance $\sigma^2_\ell$.
\end{proof}

\begin{proof}[Proof of Lemma~\ref{lem:convvar}] 
Since
\begin{align} \label{eq:siginf}
\lim_{\ell \to \infty} \sigma_\ell^2 &= \sigma_\infty^2\\ &=  \theta^2 \int_0^\infty \int_0^\infty r(x,y) e^{-\theta (x+y)}\, \diff x \diff y + 2 \theta \int_0^\infty h(x) e^{-\theta x} \, \diff x + \theta^{-2},\nonumber
\end{align}
we only have to show, that $\sigma_\infty^2 = \sigma_{\djb}^2$.
First of all, note that, for $x> y $,
\[
r (x,y)= \theta x \sum_{i=1}^\infty \sum_{j=0}^i ij \pi_2^{\scs(y/x)}(i,j) = \theta x \Exp [\zeta_1^{ \scs (y/x)} \zeta_2^{\scs (y/x)}],
\]
where $(\zeta_1^{\scs (y/x)}, \zeta_2^{\scs (y/x)}) \sim \pi_2^{\scs(y/x)} $. Using this representation and substituting $\sigma =\frac{y}{x}$ we obtain
\begin{align*}
 & \theta^2 \int_0^\infty \int_0^\infty r(x,y) e^{-\theta (x+y)}\, \diff x \diff y   \\
& \hspace{1cm}=
 2 \theta^2 \int_0^\infty \int _0^x \theta x \Exp [\zeta_1^{\scs (y/x)} \zeta_2^{\scs (y/x)} ] e^{-\theta (x+y)}\, \diff x \diff y \\
& \hspace{1cm}=
 2 \theta^3 \int_0^1 \Exp [ \zeta_1^{\scs (\sigma)} \zeta_2^{\scs (\sigma)}] \int _0 ^\infty x^2 e^{-\theta (1+\sigma)x} \, \diff x \, \diff \sigma
=
4 \int_0^1 \frac{\Exp [ \zeta_1^{\scs(\sigma)} \zeta_2^{\scs (\sigma)}]}{(1+ \sigma)^3} \, \diff \sigma,
\end{align*}
which is exactly the first summand in $\sigma_{\djb}^2$.

Consider the second integral in $\sigma_\infty^2$. By the definition of $p_2^{\scs (x,y)} $ in Lemma \ref{lem:fidirob} we have
\begin{align*}
\sum_{i=1}^\infty i p_2^{(x,y)}(i,0) = \Exp \Big [ \sum_{j =1}^\eta \zeta_{j1}^{\scs (y/x) } \ind \Big ( \sum_{j =1}^\eta \zeta_{j2}^{\scs (y/x) }=0\Big ) \Big ],
\end{align*}
where $\eta \sim \text{Poisson}(\theta x)$ is independent of iid random vectors 
$(\zeta_{i1}^{\scs(y/x)}, \zeta_{i2}^{\scs(y/x)})\sim \pi_2^{\scs(y/x)}, i\in\N$. With the identity $\Prob (\zeta_{12}^{\scs (\sigma)} =0 ) = 1- \sigma$, which we will show later, the latter expectation can further be rewritten as
\begin{align} \label{eq:condexp}
&\hspace{-.4cm} \sum_{k =1}^\infty \Exp  \Big [ \sum_{j =1}^k \zeta_{j1}^{\scs (y/x) } \ind \Big ( \sum_{j =1}^k \zeta_{j2}^{\scs (y/x) }=0\Big ) \Big ] \Prob(\eta = k) 
\nonumber \\
&=
 \sum_{k =1}^\infty  k \Exp  \big[\zeta_{11}^{\scs (y/x) } \ind  (  \zeta_{12}^{\scs (y/x) }=0 ) \big]  \Prob (\zeta_2^{\scs(y/x)} =0)^{k-1}\Prob(\eta = k) 
 \nonumber\\
&=  
\sum_{k =1}^\infty  k \Exp  \big[\zeta_{11}^{\scs (y/x) } \ind  (  \zeta_{12}^{\scs (y/x) }=0 ) \big] (1-y/x)^{k-1} \frac{(\theta x)^k}{k !}  e^{- \theta x} 
\nonumber\\
&= 
\Exp  \big[\zeta_{11}^{\scs (y/x) } \ind  (  \zeta_{12}^{\scs (y/x) }=0 ) \big] \theta x e^{- \theta y }.
\end{align}
Hence, substituting $\sigma=y/x$,
\begin{align} \label{eq:hdjb}
h(x) = \theta x^2 \int _0^1 \Exp \big[ \zeta_1^{\scs (\sigma)} \ind (\zeta_2^{\scs (\sigma)} =0 )\big]  e^{- \theta \sigma x } \, \diff \sigma - \frac{x}{\theta}
 \end{align}
and therefore
\[
 2 \theta \int_0^\infty h(x) e^{-\theta x} \, \diff x +\theta^{-2} = 4 \theta^{-1} \int_0^1 \frac{\Exp [ \zeta_1^{\scs (\sigma)} \ind (\zeta_2^{\scs (\sigma)} =0 )] }{(1+ \sigma)^3} \, \diff \sigma - \theta^{-2},
\]
which corresponds to the remaining summands in $\sigma_{\djb}^2$.

It remains to be shown that  
\begin{align}
\Prob (\zeta_{12}^{\scs (\sigma)} =0 ) = 1- \sigma. \label{eq:xi2}
\end{align}
By the definition of $\pi_2^{(\sigma)}$ in Section~\ref{sec:pre}, we have
\begin{align*}
\Prob(\zeta_2^{\scs(\sigma)} =0 ) 
&= 1- \Prob(\zeta_2^{\scs(\sigma)} > 0 )  \\
&=
1-  \lim_{n \to \infty }  \Prob (N_n^{\scs ( \sigma x)}(B_n)>0 \vert N_n^{\scs ( x)}(B_n) >0 ) \\
&=
1-  \lim_{n \to \infty }  \Prob (N_{1:q_n}>1-\tfrac{\sigma x }{n} \vert N_{1:q_n}>1-\tfrac{x }{n} ) \\
&=
1-  \lim_{n \to \infty }  \Prob \Big  (\tfrac{N_{1:q_n}-(1-\tfrac{x}{n})}{\tfrac{x}{n}}>1-\sigma \big \vert N_{1:q_n}>1-\tfrac{x }{n} \Big ).
\end{align*}
Finally, by \eqref{eq:positive}, we can use identity (10.21) in \cite{BeiGoeSegTeu04}, which is an implication of  Theorem 3.1 in \cite{Seg05}, to deduce that, as $n \to \infty$,
\begin{multline*}
 \Prob \Big  (\tfrac{N_{1:q_n}-(1-\tfrac{x}{n})}{\tfrac{x}{n}}>1-\sigma \big \vert N_{1:q_n}>1-\tfrac{x }{n} \Big )\\
=
 \Prob \Big  (\tfrac{U_1-(1-\tfrac{x}{n})}{\tfrac{x}{n}}>1-\sigma \big \vert U_1>1-\tfrac{x }{n} \Big ) + o(1),
\end{multline*}
which converges to $\sigma$ as asserted.
\end{proof}

\section{Remaining steps for the proof of Theorem~3.2 -- sliding blocks} \label{sec:sliding}

\begin{proof}[Proof of Lemma~\ref{lem:boundsupp2}]  
The proof is similar to the proof of Lemma \ref{lem:boundsupp}, whence we only give a sketch proof. For some $\eps\in(c_1,c_2)$ let $A_n'=A_n'(\eps)$  denote the event $\{ \min_{t=1}^{n-\bn+1} N_{nt} >  1- \eps \}$. Note that $\Prob(A_n') \to 0$ by Condition~\ref{cond:cond}\eqref{item:blockdiv}. Recalling the definition of $f$ from the beginning of the proof of Lemma \ref{lem:boundsupp}, we may then write
$
D_{n}^{\slb} - D_ {n,\ell}^{\slb} = R_{n,\ell}^{\slb} \ind_{A_n'} + o_\Prob(1),
$
where
\[
R_{n,\ell}^{\slb}
= \kn^{-3/2} \sum_{i=1}^{\kn-1} \sum_{j=1}^{\kn} \sum_{s \in I_j} \bn^{-1 } \sum_{t \in I_i} f(U_s,Z_{nt}^{\slb})\ind(Z_{nt}^{\slb}\geq \ell).
\]
Now, decompose $R_{n,\ell}^{\slb} = R_{n,\ell,2}^{\slb} +R_{n,\ell,3}^{\slb}$ according to whether the second sum over $j$ is such that $|j-i|\le2$ or $|j-i|\ge 3$, respectively. 
Similar as in the proof of Lemma~\ref{lem:boundsupp}, it can be shown that $R_{n,\ell,2}^{\slb}\ind_{A_n'}=o_\Prob(1)$  and that $\lim_{\ell \to \infty} \limsup_{n\to\infty} \Prob( | R_{n,\ell,3}^{\slb} \ind_{A_n'} |> \delta) = 0$. 
\end{proof}

\begin{proof}[Proof of Lemma~\ref{lem:appleb2}]  
As in the proof of Lemma \ref{lem:appleb} the result follows if we can show that $\Var \{ \hat H_{\kn}^{\slb}(x) \} = o(1)$ for any $x \in [0,\ell]$. This in turn follows from similar arguments as in the proof of Proposition 3.1 in \cite{RobSegFer09}.
\end{proof}

\begin{proof}[Proof of Lemma~\ref{lem:fidirob2}]  
For notational convenience, we will only show the joint weak convergence of $(e_n(x), G_n^{\slb})$ for some fixed $x>0$; the general case can be shown analogously.  Let $A_n'=\{\min_{t=1}^{n-\bn +1} N_{nt}^{\slb} > 1-\eps\}$, where $\eps\in(0,c_1\wedge c_2)$ and note that $\Prob(A_n')\to1$ as $n\to\infty$. Due to the Cramér-Wold device it suffices to prove that, for any $\lambda_1,\lambda_2 \in \mathbb{R}$,
\[
\{ \lambda_1 e_n(x) + \lambda_2 G_n^{\slb}\} \ind_{A_n'} \dto \lambda_1 e(x) + \lambda_2 G^{\slb}.
\]
We may write
\begin{align*}
&\lambda_1 e_n(x) + \lambda_2 G_n^{\slb} \\
&\quad=
 \tfrac{\lambda_1}{ \kn^{1/2}} \sum_{ s=1}^n  \{ \ind(U_s > 1-\tfrac{x}{\bn}) - \tfrac{x}{\bn} \} 
 +\tfrac{ \lambda_2 \kn^{1/2}}{n-\bn+1} \sum_{s=1}^{n- \bn+1}  \{ Z_{ns}^{\slb} - \Exp[Z_{n1}^{\slb}]\} \\
&\quad= 
\sum_{j=1}^{\kn-1} \sum_{s\in I_j} \Big [   \tfrac{\lambda_1}{ \kn^{1/2}}  \{ \ind(U_s > 1-\tfrac{x}{\bn}) - \tfrac{x}{\bn} \}
+ \tfrac{ \lambda_2 \kn^{1/2}}{n-\bn+1}  \{ Z_{ns}^{\slb} - \Exp[Z_{n1}^{\slb}] \} \Big ]+ o_\Prob(1),
\end{align*}
where the $o_\Prob$ is due to omitting summands from the last block.
Choose some integer sequence $\kn^*<\kn$ such that $\kn^* \to \infty $ and $\kn^* =o(\kn^{\scs \delta/\{2(1+\delta)\}})$ as $n \to \infty$, where $\delta$  is defined in Condition \ref{cond:cond}\eqref{item:momn}. Moreover, set $q_n^* = \lfloor \kn/(\kn^*+2) \rfloor$. For $j =1, \dots , q_n^*$, define 
\[
J_j^+ = \textstyle\bigcup_{i = (j-1)(k^*_n +2) +1}^{j(k^*_n+2)-2}I_i 
\qquad \text{ and }\qquad 
J_j^- = I_{j(k^*_n +2)-1} \cup I_{j(k^*_n+2)},
\]
i.e., we combine $\kn^*$ consecutive $I_i$-blocks in one big block $J_j^{\scs+}$ of size $\kn^*\bn$ and each of the big blocks is separated by a small block $J_j^-$ of size $2\bn$, formed by merging two consecutive $I_i$-blocks. With this notation we obtain  
\begin{align*} 
\lambda_1e_n(x) + \lambda_2 G_n^{\slb}
 =
 H_n^+ + H_n^- + o_\Prob(1), \qquad H_n^\pm =  \frac{1}{\sqrt {q_n^*}} \sum_{j=1}^{q_n^*} S_{nj}^\pm,
\end{align*}
where, for $j=1,\dots , q_n^*$,
\begin{multline*}
S_{nj}^{\pm} =\sqrt {\frac{q_n^*}{\kn}}  \sum\nolimits_{s \in J_j^{\pm}}\Big[  \lambda_1 \{ \ind(U_s > 1-\tfrac{x}{\bn}) - \tfrac{x}{\bn} \} \\
+ \frac{\lambda_2 n}{n-\bn+1}  \frac1{\bn}  \{ Z_{ns}^{\slb} - \Exp[Z_{n1}^{\slb}] \} \Big] .
\end{multline*}

First, we will show that $H_n^- \ind_{A_n'}=o_\Prob(1)$. 
As in the proof of Lemma~\ref{lem:fidirob} we have $H_n^- \ind_{A_n'}= \tilde H_n^- \ind_{A_n'}  + o_\Prob(1)=  \tilde H_n^- + o_\Prob(1)$, where $\tilde H_n^-$ is defined exactly as $H_n^-$, but  with $S_{nj}^-$ replaced by
\begin{multline*}
S_{nj}^{\eps-} =\sqrt {\frac{q_n^*}{\kn}}  \sum\nolimits_{s \in J_j^{\pm}}\Big[  \lambda_1 \{ \ind(U_s^\eps > 1-\tfrac{x}{\bn}) - \tfrac{x}{\bn} \} \\
+ \frac{\lambda_2 n}{n-\bn+1}  \frac1{\bn}  \{ Z_{ns}^{\eps, \slb} - \Exp[Z_{n1}^{\slb}] \} \Big],
\end{multline*}
with $Z_{ns}^{\eps, \slb} = \bn (1-\max_{s=t}^{t+\bn-1} U_s^\eps)$. By an inequality similar to \eqref{eq:b1} and the argumentation subsequent to that inequality, it suffices to show that $\|S_{n1}^{\eps-}\|_p=o(1)$ for some $p\in(2,2+\delta)$ and that 
$
\sum_{j=2}^{q_n^*} | \Cov( S_{nj}^{\eps-}  , S_{n,1+j}^{\eps-} ) | =o(1).
$
The first assertion follows from
\begin{align*}
\Vert S_{1j}^{\eps-} \Vert_{p} 
&\le 2\sqrt {\frac{q_n^*}{\kn}}  \Big \{ \lambda_1 \Vert N_{\bn \kn^*}^{(x)}(E)\Vert_{p}+ \lambda_2 \Vert Z_{n1}^{\eps,\slb} - \Exp[Z_{n1}^{\slb}] \Vert_{p} \Big  \} \\
&= O(1/\sqrt{\kn^*})=o(1),
\end{align*}
by Condition \ref{cond:cond}\eqref{item:momn} and \eqref{item:moment} and the definition of $q_n^*$. For the second assertion,
note that $S_{nj}^{\eps-}$ is $\Bc_{\{ (j \kn^* + 2j -2)\bn +1 \}:\{ j(\kn^*+2)\bn\}}^\eps$-measurable, whence
\[
| \Cov( S_{nj}^{\eps-}  , S_{n,1+j}^{\eps-} ) |  \le 10\Vert S_{n1}^{\eps-} \Vert_{p}^2.
\alpha_{c_2}( j \kn^* \bn ) ^{1-2/p}
\]
By Condition \ref{cond:cond}\eqref{item:rate2} the sum $\sum_{j=2}^{q_n^*} \alpha_{c_2}( j \kn^* \bn ) ^{1-2/p}$ converges to 0, which implies the assertion.

It remains to be shown $H_n^+\ind_{A_n'}$ converges to a normal distribution with the claimed covariance. As in the proof of Lemma~\ref{lem:fidirob}, we can write
\[
H_n^+\ind_{A_n'} = \frac1{\sqrt{q_n^*}} \sum_{j=1}^{q_n^*} \tilde S_{nj}^{+} + o_\Prob(1), \qquad \tilde S_{nj}^{+} = S_{nj}^{+} \ind(\max\nolimits_{s\in J_j^{+}} Z_{ns}^{\slb}< \eps\bn).
\]
For $i\ne j$, the observations $\tilde S_{nj}^{+}$ and $\tilde S_{ni}^{+}$ are separated by at least one block of size $\bn$ and measurable with respect to the $\Bc_{\cdot:\cdot}^\eps$-sigma fields. Further, by Condition~\ref{cond:cond}\eqref{item:rate2}, $q_n^*\alpha_{c_2} (\bn) \le \kn \alpha_{c_2}(\bn) =o(1)$. A standard argument for the characteristic function then shows that the weak limit of $(q_n^*)^{-1/2}\sum_{j=1}^{q_n^*} \tilde S_{nj}^{\scs+}$ is the same  as if the sample $(\tilde S_{nj}^{\scs+})_{j=1, \dots, q_n^*}$ was independent, which we will assume subsequently. By arguments as before, we can then pass back to an independent sample $(S_{nj}^{\scs+})_{j=1, \dots, q_n^*}$, and weak convergence follows from the classical central limit theorem for rowwise iid triangular arrays.

By Condition~\ref{cond:cond}\eqref{item:momn} and  \eqref{item:moment} and Minkowski's inequality, we have that $\Exp[|S_{nj}^+|^{2+\delta}] = O({\kn^*}^{(2+\delta)/2})$. 
Hence,
\begin{align*}
\frac{\sum_{j=1}^{q_n^*} \Exp[|S_{nj}^+|^{2+\delta}] }{ \big(\sum_{j=1}^{q_n^*}   \Exp[||S_{nj}^+|^2] \big)^{\frac{2+\delta}{2}}}
&=
{q_n^*} ^{-\delta /2} \frac{  \Exp[|S_{nj}^+|^{2+\delta}]  }{\Exp[|S_{nj}^+|^2] ^{\frac{2+\delta}{2}}}  \\
&=
O(\kn^{-\delta/2}{\kn^*}^{1+\delta}) 0 =o(\kn^{-\delta/2+\delta/2}) = o(1), 
\end{align*}
by the definition of $\kn^*$, provided that $\lim_{n\to\infty} \Exp[(S_{n1}^+)^2]$ exists (which we will show below).
 Therefore, Ljapunov's condition is satisfied and $\lambda_1e_n(x) + \lambda_2 G_n^{\slb}$ converges weakly to a normal distribution with variance $\lim_{n\to\infty} \Exp[(S_{n1}^+)^2]$. Hence, it remains to be shown that 
\[ 
\lim_{n\to\infty} \Exp[(S_{n1}^+)^2]= \lambda_1^2 r(x,x) + 2\lambda_1\lambda_2 h(x) +\lambda_2^2  \tfrac{2(\log(4)-1)}{\theta^2}
\]
and this in turn follows from the proof of Theorem 4.1 in \cite{Rob09} (for the first summand in the latter display) and Lemma \ref{lemma:covEZ}, \ref{lemma:covZZ} and \ref{lem:convvar2} below (note that,  with $n^*=\kn^* \bn$, we can write $S_{n1}^+=\lambda_1 e_{n^*} + \lambda_2 G_{n^*}^{\slb}+o_\Prob(1)$  and that all assumptions in Condition~\ref{cond:cond} are satisfied if $n$ and $\kn$ are replaced by $n^*$ and $\kn^*$).
\end{proof}

\begin{lemma}\label{lemma:covEZ}
Suppose Conditions \ref{cond:cond}\eqref{item:momn}, \eqref{item:rate2} and \eqref{item:moment} are met. Then, for any $x \in [0,\infty)$, as $n \to \infty$, 
\[
\Cov(e_n(x) , G_n^{\slb}) \to h_ {sl}(x),
\]
where $h_{\slb}(0)=0$ and, for $x\ne 0$,
\begin{multline*}
h_{\slb}(x) 
=
 \frac2\theta \bigg [ \sum_{i=1}^\infty i \int_0^1 \bigg\{  \theta \int_0^x \sum_{l = 0}^i    p^{(\xi x)}(l) p_2^{((1-\xi)x,(1-\xi)y)}(i-l,0) e^{- \theta \xi y} \, \diff y \\ 
 +  p^{(\xi x)}(i) e^{-\theta x} \bigg \}\, \diff \xi  - x \bigg ], 
\end{multline*}
where $p_2$ is defined in Lemma \ref{lem:fidirob} and where, for $x>0$,
\[
p^{(x)}(i) = \Prob\big( N_E^{(x)}=i\big), \quad  N_E^{(x)} = \sum_{i=1}^{\eta} \xi_{i}
\]
with $\eta \sim \text{Poisson}(\theta x)$ independent of iid random variables
$\xi_{i} \sim \pi , i\in\N$.
\end{lemma}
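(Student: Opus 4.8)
The plan is to compute $\lim_n\Cov(e_n(x),G_n^{\slb})$ directly from the definition, first collapsing the doubly-indexed covariance onto a single block and then resolving the one remaining covariance according to the position and the height of the conditioning exceedance; the position variable is what generates the outer integral $\int_0^1\diff\xi$ in the stated formula, while the two-level cluster structure at the exceedance produces the one-level/two-level splitting.

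First, $h_{\slb}(0)=0$ is immediate because $e_n(0)=0$. For $x>0$, bilinearity of the covariance and the cancellation of the factors $\kn^{\pm1/2}$ give
\[
\Cov(e_n(x),G_n^{\slb})=\frac{1}{n-\bn+1}\sum_{t=1}^{n-\bn+1}\Cov\Big(\sum_{s=1}^{n}\ind(U_s>1-\tfrac{x}{\bn}),\,Z_{nt}^{\slb}\Big).
\]
Since $Z_{nt}^{\slb}$ is measurable with respect to the block $\{t,\dots,t+\bn-1\}$, a coupling argument based on Condition~\ref{cond:cond}\eqref{item:rate2}, entirely parallel to the one in the proof of Lemma~\ref{lem:fidirob}, shows that exceedance indices $s$ outside this block contribute only $o(1)$. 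By stationarity the inner covariance is then, up to $o(1)$, equal to $\Cov(\tilde N^{(x)},Z_{1:\bn})$ with $\tilde N^{(x)}=\sum_{s=1}^{\bn}\ind(U_s>1-x/\bn)$, and averaging over $t$ yields $\Cov(e_n(x),G_n^{\slb})=\Cov(\tilde N^{(x)},Z_{1:\bn})+o(1)$. This is precisely the single-block covariance that controls the off-diagonal entry in the disjoint case, so its limit must equal $h(x)$ by the computation in Lemma~\ref{lem:fidirob}; the purpose of the present lemma is to produce the alternative, position-resolved representation $h_{\slb}(x)$ of this same limit.

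To obtain that representation I would expand $\tilde N^{(x)}=\sum_{s=1}^{\bn}\ind(U_s>1-\tfrac{x}{\bn})$ and condition each term on the height of the exceedance: writing $U_s=1-v/\bn$ with $v\in(0,x)$ and using $Z_{1:\bn}=\min\{v,\,\bn(1-\max_{j\neq s}U_j)\}$ together with $\min\{v,w\}=\int_0^v\ind(w>y)\,\diff y$, one gets
\[
\bn\,\Exp\big[\ind(U_s>1-\tfrac{x}{\bn})Z_{1:\bn}\big]=\int_0^x\!\!\int_0^v\Prob\big(\max\nolimits_{j\neq s}U_j<1-\tfrac{y}{\bn}\,\big|\,U_s=1-\tfrac{v}{\bn}\big)\,\diff y\,\diff v.
\]
Setting $\xi=s/\bn$, the sum $\sum_{s}=\bn\cdot\tfrac1\bn\sum_s$ becomes a Riemann sum in $\xi$, so that $\lim_n\Exp[\tilde N^{(x)}Z_{1:\bn}]=\int_0^1\int_0^x\int_0^v P(\xi,v,y)\,\diff y\,\diff v\,\diff\xi$, where $P(\xi,v,y)$ is the limit of the displayed conditional probability. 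The interchange of the $s$-sum with the limit, and of the $v$- and $y$-integrals, is justified by the uniform integrability provided by Condition~\ref{cond:cond}\eqref{item:momn} and \eqref{item:moment}, which dominate both large $y$ and atypically large exceedance counts. Evaluating $P(\xi,v,y)$ is where the two-level point-process convergence of Condition~\ref{cond:cond}\eqref{item:generalpoint} enters: splitting the block into the portion before $s$ (asymptotic length fraction $\xi$) and after $s$ (fraction $1-\xi$), the cluster carrying the conditioning exceedance ties the two portions together, and one expresses the limiting no-exceedance probabilities through the compound-Poisson counts $p^{(\cdot)}$ and $p_2^{(\cdot,\cdot)}$ of Lemma~\ref{lem:fidirob}. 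After carrying out the $v$-integration by Fubini and subtracting the mean product $\Exp[\tilde N^{(x)}]\,\Exp[Z_{1:\bn}]=x\,\Exp[Z_{1:\bn}]\to x/\theta$, the bookkeeping of the split $x$-count $i=l+(i-l)$ and the no-$y$-exceedance factor $e^{-\theta\xi y}=\Prob(N_E^{(\xi y)}=0)$ should assemble into the asserted formula for $h_{\slb}(x)$.

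The main obstacle is this last step: the Palm-type computation for the two-level point process that determines $P(\xi,v,y)$, in particular the proof that the portions before and after the conditioning exceedance decouple in the limit up to the cluster straddling the cut, and the careful bookkeeping that separates the one-level contribution $p^{(\xi x)}(l)\,e^{-\theta\xi y}$ of the first portion from the two-level contribution $p_2^{((1-\xi)x,(1-\xi)y)}(i-l,0)$ of the second. The accompanying technical burden is the simultaneous justification of the three limit interchanges (the sum over $s$ and the integrations in $v$ and $y$), which is exactly what the moment conditions \eqref{item:momn} and \eqref{item:moment} together with the mixing rate \eqref{item:rate2} are designed to control.
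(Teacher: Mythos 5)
Your opening reduction is a genuinely different route from the paper's, and it is essentially sound: collapsing $\Cov(e_n(x),G_n^{\slb})$ onto the single window of each $Z_{nt}^{\slb}$ (the out-of-window terms being controlled by the covariance inequality, each bounded by $(x/\bn)^{1/r}\alpha_{c_2}(d)^{1/t'}$ with a summable tail, modulo the usual truncation to the $\Bc^\eps_{\cdot:\cdot}$-fields) gives $\Cov(e_n(x),G_n^{\slb})=\Cov(N_{\bn}^{(x)}(E),Z_{1:\bn})+o(1)\to h(x)$ by the computation at the end of Lemma~\ref{lem:fidirob}. The paper does \emph{not} do this: it keeps the exceedance count over a fixed disjoint block $I_2$ and lets the sliding window move relative to it, so that the relative offset $\xi=(t-\bn)/\bn$ of the two windows is what generates the $\int_0^1\diff\xi$ and the split $p^{(\xi x)}(l)\,p_2^{((1-\xi)x,(1-\xi)y)}(i-l,0)\,e^{-\theta\xi y}$, obtained by applying the two-level point-process convergence to the \emph{deterministic} sub-blocks created by the overlap. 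Your observation that the sliding and disjoint cross-covariances agree asymptotically is correct and, combined with the purely algebraic identity $h_{\slb}=h$ of Lemma~\ref{lem:convvar2} (which is independent of the present lemma), would already yield the assertion; that would be a legitimately shorter proof than the paper's.

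As written, however, your proposal does not take that exit: you instead try to reach the stated formula for $h_{\slb}(x)$ directly through a Palm-type disintegration, conditioning on $\{U_s=1-v/\bn\}$ and letting $\xi=s/\bn$ be the position of the conditioning exceedance inside a single block. The algebra up to the triple integral $\int_0^1\int_0^x\int_0^v P_n(\xi,v,y)\,\diff y\,\diff v\,\diff\xi$ is fine, but the identification of $\lim_n P_n(\xi,v,y)$ and its assembly into $\sum_i i\sum_l p^{(\xi x)}(l)p_2^{((1-\xi)x,(1-\xi)y)}(i-l,0)e^{-\theta\xi y}$ is exactly the content of the lemma, and you leave it as an acknowledged obstacle. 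This is a genuine gap: the conditional law given an exceedance of prescribed height at a prescribed location is a tail-chain object (it requires \eqref{eq:positive} and Theorem~3.1 of \cite{Seg05}, as used in the proof of \eqref{eq:xi2}, not merely the two-level point-process convergence of Condition~\ref{cond:cond}\eqref{item:generalpoint}), and the decoupling of the two halves of the block across the conditioning cluster is precisely the delicate step you have not carried out. Either complete that Palm computation, or — more economically — replace it by the paper's deterministic-sub-block argument (the limits of $f_n(\xi),g_n(\xi)$ via Theorem~4.1 of \cite{Rob09} applied to the three pieces $[\bn+1,\lfloor(1+\xi)\bn\rfloor]$, $[\lfloor(1+\xi)\bn\rfloor+1,2\bn]$, $[2\bn+1,\lfloor(2+\xi)\bn\rfloor]$), or simply quote Lemma~\ref{lem:convvar2} after your reduction to $h(x)$.
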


\begin{proof}[Proof of Lemma~\ref{lemma:covEZ}]   For the sake of a clear exposition, we will assume that both $U_s$ and $Z_{nt}^{\slb}$ are measurable with respect to the $\Bc_{\cdot:\cdot}^\eps$-sigma fields; the general case follows by multiplication with suitable indicator functions as in the previous proofs. Introduce the notation $A_{j} = \sum_{s\in I_j} \ind(U_s > 1-x/\bn)$ and $B_j= \sum_{s\in I_j}  Z_{nt}^{\slb}$. We can write
\begin{multline*}
\Cov(e_n(x), G_n^{\slb})  
=
\frac{1}{n-\bn+1} \sum_{i=1}^{\kn}  \sum_{j=1}^{\kn-1} \Cov (A_i, B_j ) \\ 
+ \frac{1}{n-\bn+1} \sum_{i=1}^{\kn}  \Cov(A_i, Z_{n,n-\bn+1}^{\slb}).
\end{multline*}
The second sum on the right hand-side is negligible, since both $\|A_j\|_2 = \|N_{\bn}^{(x)}(E) \|_2 =O(1)$ and $\|Z_{n,n-\bn+1}^{\slb}\|_2 = O(1)$ by Condition \ref{cond:cond}\eqref{item:momn} and \eqref{item:moment}.  Regarding the first sum, by stationarity, we can write
\begin{align*}
& \frac{1}{n} \sum_{i=1}^{\kn}  \sum_{j=1}^{\kn-1} \Cov (A_i, B_j )  \\
 = &\,
\frac{1}{n} \sum_{i=1}^{\kn-1}  \sum_{j=1}^{\kn-1} \Cov (A_i, B_j ) + O(\bn/n) \\
= &\,
\frac{\kn-1}{n} \Cov(A_1,B_1)  
+\sum_{h=2}^{\kn-1}  \frac{\kn-h}{n}  \big\{ \Cov (A_1, B_{h} ) + \Cov(A_{h}, B_1) \big\} + o(1).
\end{align*}
Split  the right-hand side according to whether $\Cov(A_i,B_j)$ is such that either $i-j \in\{0,1\}$, or $i-j \in\{-1,2\}$ or $i-j \in\{-\kn+2, \dots, \kn-2\} \setminus\{-1,0,1,2\}$. Up to negligible terms, this allows to write the right-hand side of the previous display as $R_{n1} + R_{n2} +R_{n3}$, where $R_{n1} = \bn^{-1} \Cov (A_2, B_{1}+B_2 )$, $R_{n2} =  \bn^{-1} \Cov (A_3, B_{1}+B_4 )$ and 
\begin{align*}
R_{n3} &= \sum_{h=3}^{\kn-1}  \frac{\kn-h}{n}   \Cov (A_1, B_{h} ) + \sum_{h=4}^{\kn-1}  \frac{\kn-h}{n}   \Cov(A_{h}, B_1).
\end{align*}

Both sums in $R_{n3}$ converge to $0$: first, $\|A_j\|_{2+\delta}=O(1)$ and $\|B_j \|_{2+\delta} =O(\bn)$. Second, the variables defining $A_1$ and $B_h$ are at least $(h-1)\bn$-observations apart, while the variables defining $A_1$ and $B_h$ are at least $(h-2)\bn$-observations apart. As a consequence, by Lemma~3.11 in \cite{DehPhi02},
\[
|R_{n3}| \le C \sum_{h=1}^{\kn} \alpha_{c_2}^{\delta/(2+\delta)} (h \bn ) \le C \bn^{-\eta} \sum_{h=1}^{\infty} h^{-\eta} =o(1).
\]

The term $R_{n2}$ is also negligible: we have
\begin{align*}
\bn^{-1}\Cov (A_3, B_{4} )
&=
\bn^{-1}\Cov (A_1, B_{2} ) \\
&=
\bn^{-1} \sum_{t=\bn+1}^{2\bn} \Cov\{ \textstyle \sum_{s=1}^{\bn} \ind(U_s > 1-x/\bn), Z_{nt}^{\slb}\}. 
\end{align*}
The covariance on the right-hand side can be bounded by a multiple of $\alpha_{c_2}(t-\bn)^{\delta/(2+\delta)}$.
The remaining sum over the mixing-coefficients converges, such that $\bn^{-1}\Cov (A_3, B_{4} )=O(\bn^{-1})$. The covariance $\bn^{-1}\Cov (A_3, B_{1} )$ can be treated similarly.

It remains to be shown that 
\[
R_{n1} =  \frac{1}{\bn} \Cov (A_2, B_{1}+B_2 )= \frac{1}{\bn} \sum_{t=1}^{2\bn} \Cov \Big \{ \sum_{s \in I_2} \ind(U_s > 1- \tfrac{x}{\bn}) , Z_{nt}^{\slb} \Big \}
\]
converges to $h_{\slb}(x )$. To this end, define functions $f_n,g_n:[0,1] \to \mathbb{R}$ by
\begin{align*}
f_n(\xi) 
&= 
\sum_{t=1}^{\bn} \Exp\Big [ \sum_{s\in I_2} \ind(U_s > 1- \tfrac{x}{\bn})  Z_{nt}^{\slb} \Big ] \ind \{ \xi \in[ \tfrac{t-1}{\bn}, \tfrac{t}{\bn} ) \}, \\
g_n(\xi) 
&= 
\sum_{t=\bn+1}^{2\bn} \Exp\Big [ \sum_{s \in I_2} \ind(U_s > 1- \tfrac{x}{\bn})  Z_{nt}^{\slb} \Big ] \ind \{ \xi \in[ \tfrac{t-\bn-1}{\bn}, \tfrac{t-\bn}{\bn} ) \}.
\end{align*}
With this notation, we obtain 
\[
\Cov(e_n(x), G_n^{\slb}) = \int_0^1 \{ f_n(\xi)  + g_n(\xi) \} \, \mathrm{d} \xi - 2x \Exp[Z_{n1}^{\slb}] + o(1).
\]
By uniform integrability of $Z_{n1}^{\slb}$ we have  $\Exp[Z_{n1}^{\slb}] \to \theta ^{-1}$, as $n \to \infty$ . Furthermore, for any $n$, $f_n$ and $g_n$ are uniformly bounded by $\Vert \sum_{s \in I_1} \ind(U_s > 1- \tfrac{x}{\bn})  \Vert_2 \times \Vert  Z_{n1}^{\slb} \Vert_{2}$, which again is uniformly bounded in $n$ by Condition \ref{cond:cond}\eqref{item:momn} and \eqref{item:moment}, i.e.,  $\sup_n( \Vert f_n\Vert_\infty +\Vert g_n\Vert_\infty)< \infty$. Hence, by dominated convergence, the lemma follows if we show that, for any $\xi \in (0,1)$,
\begin{multline} \label{eq:limitmean}
\lim_{n \to \infty} f_n(1-\xi) = \lim_{n \to \infty} g_n (\xi)\\
 =   \sum_{i=1}^\infty i  \int_0^x \sum_{l = 0}^i   p^{(\xi x)}(l) p_2^{((1-\xi)x,(1-\xi)y)}(i-l,0) e^{- \theta \xi y} \, \diff y + \theta^{-1} p^{(\xi x)}(i) e^{-\theta x}.
\end{multline}

We only do this for $g_n$, as $f_n$ can be treated similarly. 
Fix  $\xi \in (0,1)$ and note that
\begin{align*}
g_n(\xi) &= \Exp\Big[\sum_{s \in I_2} \ind(U_s > 1- \tfrac{x}{\bn} )Z_{n,( \lfloor (1+\xi ) \bn \rfloor+1)}^{\slb}\Big].
\end{align*}
Let us first show joint weak convergence of the two variables inside this expectation, and for that purpose consider 
\begin{align*} \nonumber
F_n(i,y) :=&\ \Prob \Big (\textstyle \sum_{s =\bn+1}^{2\bn} \ind(U_s > 1- \tfrac{x}{\bn}) = i , Z_{n , (\lfloor (1+\xi ) \bn \rfloor+1)}^{\slb} \geq y  \Big ) \\
=&\ \Prob \Big  (\textstyle\sum_{ s=\bn +1 }^{\scs 2\bn} \ind (U_s> 1-\tfrac{x}{\bn} )=i ,\\ 
& \hspace{4.2cm} \textstyle \sum_{ s =  \lfloor (1+\xi ) \bn \rfloor+1}^{  \lfloor (1+\xi ) \bn \rfloor+\bn} \ind(U_s> 1- \tfrac{y}{\bn})=0\Big )   
\end{align*}
For $y\in(0,x]$, we can write
$
F_n(i,y) =  \sum_{l =0}^i A_n(l,i),
$
where
\begin{align*}
A_n({l,i})= &\Prob\Big  (\textstyle\sum_{\scs s= \bn+1}^{\scs \lfloor (1+\xi ) \bn \rfloor }\ind(U_s > 1- \tfrac{x}{\bn}) = l  , \\
 &\hspace{1.2cm} \textstyle  \sum_{\scs s = \lfloor (1+\xi ) \bn \rfloor+1}^{\scs 2\bn}\ind(U_s> 1- \tfrac{x}{\bn}) =i- l, \nonumber\\
&\hspace{2.4cm} \textstyle \sum_{\scs s = \lfloor (1+\xi ) \bn \rfloor+1}^{\scs 2\bn}\ind(U_s > 1- \tfrac{y}{\bn}) =0,  \\ 
&\hspace{3.6cm} \textstyle\sum_{s =2\bn+1}^{  \lfloor (2+\xi ) \bn \rfloor}\ind(U_s > 1- \tfrac{y}{\bn}) =0 \Big ).
\end{align*}
Let us show that we can manipulate any sum inside this probability by adding or subtracting $r_n$ summands, where $r_n$ is some integer sequence with $r_n=o(\bn)$. 
Indeed, for any fixed $x>0$ and sufficiently large $n$:
\[
\Prob\big (\textstyle \sum_{s=1}^{r_n} \ind (U_s > 1- \tfrac{x}{\bn}) =0 \big ) \geq 1- r_n \Prob(U_1 > 1- \tfrac{x}{\bn} ) = 1-\tfrac{x r_n}{\bn} \to 1, \qquad   n \to \infty.
\]
Now, by omitting the last $r_n$ summands of the first sum inside the probability defining $A_n(l,i)$, this sum becomes asymptotically independent of the remaining sums in the probability (at the cost of an additive $\alpha_{c_2}(r_n)$-error). The same can be done for the last sum and we  obtain
\begin{align*}
A_n({l,i}) &=
\Prob\Big  (\textstyle\sum_{s= \bn+1}^{ \lfloor (1+\xi ) \bn \rfloor  }\ind(U_s > 1- \tfrac{x}{\bn}) = l \Big ) \\
&\quad \times 
\Prob \Big ( \sum_{s =2\bn+1}^{ \lfloor (2+\xi ) \bn \rfloor}\ind(U_s > 1- \tfrac{y}{\bn}) =0 \Big ) \\
&\quad \times \Prob \Big (  \textstyle \sum_{ s= \lfloor (1+\xi ) \bn \rfloor+1}^{2\bn}\ind(U_s > 1- \tfrac{x}{\bn}) =i- l, \\
& \hspace{4cm} \sum_{ s = \lfloor (1+\xi ) \bn \rfloor+1}^{2\bn}\ind(U_s > 1- \tfrac{y}{\bn}) =0 \Big ) \\
&\quad+
 O(\alpha_{c_2}(r_n)) + O(r_n/\bn).
\end{align*}
This expression converges to $p^{(\xi x)}(l) p^{(\xi y)}(0) p_2^{((1-\xi)x,(1-\xi)y)}(i-l,0)$ by Theorem 4.1 in \cite{Rob09}. As a consequence,
\[
F_n(i,y) 
\to
\sum_{l=0}^i p^{(\xi x)}(l) p_2^{((1-\xi)x,(1-\xi)y)}(i-l,0) p^{(\xi y)}(0).
\]
In the case $y>x$ similar arguments imply that
\[
F_n(i,y)  \to p^{(\xi x)}(i) p^{(y)}(0) =  p^{(\xi x)}(i) e^{-\theta y}.
\]
Since both $\sum_{s \in I_2} \ind(U_s > 1- \tfrac{x}{\bn} )$ and $Z_{n (\lfloor (1+\xi ) \bn \rfloor+1)}^{\slb}$ are in $L_{2+\delta}(\Prob)$, weak convergence implies convergence of moments, whence
\begin{align*}
g_n(\xi)
& =
 \sum_{i=1}^\infty i \int_0^\infty \Prob \Big (\sum_ {s=\bn+1}^{2\bn} \ind(U_s > 1- \tfrac{x}{\bn}) = i , Z_{n( \lfloor (1+\xi ) \bn \rfloor+1)}^{\slb} \geq y  \Big ) \mathrm{d}y\\
& \to  \sum_{i=1}^\infty i \int_0^x \sum_{l=0}^i p^{(\xi x)}(l) p_2^{((1-\xi)x,(1-\xi)y)}(i-l,0) e^{-\theta \xi y}\, \diff y  \\
& \hspace{7cm} + \int_x^\infty  p^{(\xi x)}(i)e^{-\theta y} \,\diff y.
\end{align*}
Calculating the integral on the right-hand side explicitly yields \eqref{eq:limitmean}.
\end{proof}

\begin{lemma}\label{lemma:covZZ}
Suppose Conditions \ref{cond:cond}\eqref{item:rate2} and \eqref{item:moment} are met, then, as $n \to \infty$, 
\[
\Var(G_n^{\slb}) \to  \frac{2(\log(4)-1)}{\theta^2} .
\]
\end{lemma}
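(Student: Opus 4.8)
The plan is to expand the variance into a sum of covariances of the sliding blocks and to isolate the asymptotically relevant contribution, which comes only from pairs of blocks that overlap. Write $m=n-\bn+1$ and set $\gamma_n(h)=\Cov(Z_{n1}^{\slb},Z_{n,1+h}^{\slb})$, so that $\gamma_n(h)=\gamma_n(-h)$ by stationarity. Expanding the variance of the average $T_n^{\slb}$ gives
\[
\Var(G_n^{\slb})=\kn\Var(T_n^{\slb})=\frac{\kn}{m}\,\gamma_n(0)+\frac{2\kn}{m^2}\sum_{h=1}^{m-1}(m-h)\,\gamma_n(h).
\]
Since $Z_{n1}^{\slb}=Z_{1:\bn}$ and $\Exp[Z_{1:\bn}^{2}]$ is bounded by Condition~\ref{cond:cond}\eqref{item:moment}, Cauchy--Schwarz yields $|\gamma_n(h)|\le\|Z_{1:\bn}\|_2^{2}\le C$, and because $\kn/m\sim\bn^{-1}$ the diagonal term $\frac{\kn}{m}\gamma_n(0)$ vanishes. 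I then split the double sum according to whether $h<\bn$ (overlapping blocks) or $h\ge\bn$ (disjoint blocks).

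For $h\ge\bn$ the two blocks are separated by $h-\bn$ indices; after the usual passage to the $\Bc^{\eps}_{\cdot:\cdot}$-measurable truncations used in the previous proofs, Lemma~3.11 in \cite{DehPhi02} bounds $|\gamma_n(h)|$ by a multiple of $\|Z_{n1}^{\slb}\|_{2+\delta}^{2}\,\alpha_{c_2}(h-\bn+1)^{\delta/(2+\delta)}$, so this part is $O\big(\tfrac{\kn}{m}\sum_{j\ge1}\alpha_{c_2}(j)^{\delta/(2+\delta)}\big)=O(\bn^{-1})\to0$, the mixing series being summable since $\eta\,\delta/(2+\delta)>1$ under Condition~\ref{cond:cond}\eqref{item:rate2}. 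For $h<\bn$ one has $(m-h)/m\to1$ uniformly and $\kn/m\to\bn^{-1}$, so this block reduces to $\frac{2}{\bn}\sum_{h=1}^{\bn-1}\gamma_n(h)+o(1)$, which I read as a Riemann sum: writing $h=\lfloor\xi\bn\rfloor$ I expect $\gamma_n(\lfloor\xi\bn\rfloor)\to\gamma(\xi)$ pointwise, and, the $\gamma_n(h)$ being uniformly bounded, bounded convergence gives $\frac{2}{\bn}\sum_{h=1}^{\bn-1}\gamma_n(h)\to 2\int_0^1\gamma(\xi)\,\diff\xi$.

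The heart of the proof is the identification of $\gamma(\xi)$. Fixing $\xi\in(0,1)$ and $h=\lfloor\xi\bn\rfloor$, the blocks $[1,\bn]$ and $[1+h,\bn+h]$ split the index set into a left part of length $\sim\xi\bn$ (in block~$1$ only), an overlap of length $\sim(1-\xi)\bn$, and a right part of length $\sim\xi\bn$ (in block~$2$ only). The event $\{Z_{n1}^{\slb}>x,\,Z_{n,1+h}^{\slb}>y\}$ is exactly no exceedance of $1-x/\bn$ on the left, no exceedance of $1-(x\vee y)/\bn$ on the overlap, and no exceedance of $1-y/\bn$ on the right. Clipping $r_n=o(\bn)$ indices between the three regions makes them asymptotically independent (at cost $O(r_n/\bn)+O(\alpha_{c_2}(r_n))$), exactly as in the treatment of $A_n(l,i)$ in the proof of Lemma~\ref{lemma:covEZ}, and the point-process convergence in Condition~\ref{cond:cond}\eqref{item:generalpoint} gives no-exceedance probabilities $e^{-\theta\xi x}$, $e^{-\theta(1-\xi)(x\vee y)}$, $e^{-\theta\xi y}$ for the three regions. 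Hence
\[
\Prob(Z_{n1}^{\slb}>x,\,Z_{n,1+h}^{\slb}>y)\ \longrightarrow\ \exp\big\{-\theta[\xi(x+y)+(1-\xi)(x\vee y)]\big\},
\]
and uniform integrability from the $(2+\delta)$-moment bound in Condition~\ref{cond:cond}\eqref{item:moment} upgrades this joint weak convergence to convergence of the product moment. Using $\Exp[\tilde Z_1\tilde Z_2]=\int_0^\infty\!\int_0^\infty\Prob(\tilde Z_1>x,\tilde Z_2>y)\,\diff x\,\diff y$ together with the marginal limits $\Exp[Z_{n1}^{\slb}]\to\theta^{-1}$, a direct integration gives $\Exp[\tilde Z_1\tilde Z_2]=2/(\theta^2(1+\xi))$ and therefore
\[
\gamma(\xi)=\frac{2}{\theta^2(1+\xi)}-\frac{1}{\theta^2}=\frac{1-\xi}{\theta^2(1+\xi)}.
\]
Finally,
\[
2\int_0^1\gamma(\xi)\,\diff\xi=\frac{2}{\theta^2}\int_0^1\Big(\frac{2}{1+\xi}-1\Big)\,\diff\xi=\frac{2(2\log 2-1)}{\theta^2}=\frac{2(\log 4-1)}{\theta^2},
\]
as claimed.

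The main obstacle is the joint weak convergence together with the region-by-region independence argument: making the clipping rigorous and checking that the point-process no-exceedance limits for sub-blocks of lengths $\xi\bn$ and $(1-\xi)\bn$ at the two levels $1-x/\bn$ and $1-y/\bn$ combine into the stated survival function. Once this convergence and the uniform integrability furnished by Condition~\ref{cond:cond}\eqref{item:moment} are in place, the covariance bookkeeping, the mixing estimate for $h\ge\bn$, the Riemann-sum passage, and the final integral are all routine.
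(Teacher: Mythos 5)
Your proposal is correct and follows essentially the same route as the paper's proof: both reduce $\Var(G_n^{\slb})$ to $\tfrac{2}{\bn}\sum_{h}\Cov(Z_{n1}^{\slb},Z_{n,1+h}^{\slb})$ plus negligible terms, identify the limiting joint survival function $\exp\{-\theta(\xi(x\wedge y)+x\vee y)\}$ of two overlapping sliding block maxima via the same clipping/asymptotic-independence argument, upgrade to convergence of the product moment by the $(2+\delta)$-moment bound, and evaluate the same integral $2\int_0^1\{2/(\theta^2(1+\xi))\}\diff\xi-2/\theta^2$. The only cosmetic difference is that you write out the covariance bookkeeping (diagonal term, non-overlapping mixing bound, Riemann sum) explicitly where the paper delegates it to the analogous steps in the proof of Lemma~\ref{lemma:covEZ}.
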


\begin{proof}[Proof of Lemma~\ref{lemma:covZZ}] 
As in  proof of Lemma \ref{lemma:covEZ} we will assume that the $Z_{nt}^{\slb}$ are measurable with respect to the $\Bc_{\cdot:\cdot}^\eps$-sigma fields.
Similar as in the beginning of the proof of Lemma \ref{lemma:covEZ}, one can show that
\begin{align*}
\Var (G_n^{\slb}) 
&= \frac{2}{\bn} \sum_{t=1}^{\bn} \Cov( Z_{n1}^{\slb}Z_{n,(1+t)}^{\slb})  + o(1)  \\
&= 
2 \int_0^1 h_n(\xi)\, \diff \xi  - 2\Exp[Z_{n1}^{\slb}]^2 + o(1) ,
\end{align*}
where $h_n:[0,1] \to \mathbb{R}$ is defined as
\[
h_n(\xi) = \sum_{t=1}^{\bn} \Exp[Z_{n1}^{\slb} Z_{n,(1+t)}^{\slb}] \ind\{ \xi \in [ \tfrac{t-1}{\bn},\tfrac{t}{\bn}) \} =\Exp[Z_{n1}^{\slb} Z_{n, (\lfloor \bn \xi \rfloor+1)}^{\slb}].
\]
Condition \ref{cond:cond}\eqref{item:moment} implies $\Exp[Z_{n1}^{\slb}]\to \theta^{-1}$. The limit of the integral over $h_n$ can deduced from pointwise convergence and the dominated convergence theorem. To see this, note that $\sup_n\Vert h_n \Vert_\infty \leq \sup_n \Exp[{Z_{n1}^{\slb}}^2] < \infty$, due to Condition \ref{cond:cond}\eqref{item:moment}. 
Regarding the pointwise convergence, suppose we have shown that, for any $\xi \in (0,1)$, there exists some random vector $(X^{(\xi)} , Y^{(\xi)})$ with dirtybution  function depending on $\xi$, such that
\begin{align} \label{eq:weak23}
(Z_{n1}^{\slb}, Z_{n,(\lfloor \bn \xi \rfloor+1)}^{\slb} ) \dto (X^{(\xi)} , Y^{(\xi)}).
\end{align}
In that case, $h_n(\xi) = \Exp[Z_{\scs n1}^{\slb} Z_{\scs n,(\lfloor \bn \xi \rfloor+1)}^{\slb}]$ converges to $\Exp[X^{(\xi)} Y ^{(\xi)}]$ by Condition~\ref{cond:cond}\eqref{item:moment}. Let us show \eqref{eq:weak23}. Fix $x,y \in \mathbb{R}^+$ and write
\begin{align*}
&\bar F_n(x,y) \\
=& \,\Prob (Z_{n1}^{\slb}>x , Z_{n,(\lfloor \bn \xi \rfloor+1)}^{\slb}>y) \\
=&\,
\Prob(N_{ 1:\lfloor \bn \xi \rfloor}< 1- \tfrac{x}{\bn}, N_{(\lfloor \bn \xi \rfloor +1):\bn} < 1 - \tfrac{x \vee y }{\bn}, N_{ (\bn+1):\lfloor \bn (\xi+1) \rfloor}< 1- \tfrac{y}{\bn}).
\end{align*}
Now, if $r_n$ is an integer sequence such that $r_n=o(\bn)$, then, for sufficiently large $n$,
\[
\Prob(N_{1:r_n} > 1- \tfrac{x}{\bn}) \leq \tfrac{x r_n}{\bn} \to 0 , \qquad n \to \infty,
\] 
which is why we can omit or add $r_n$ observations in the maximum without changing the limit of its distribution. Similar as in the proof of Lemma~\ref{lemma:covEZ} this  gives
\begin{multline*}
\bar F_n(x,y)
=
\Prob(N_{\scs 1:\lfloor \bn \xi \rfloor}< 1- \tfrac{x}{\bn}) 
\times
\Prob( N_{\scs (\lfloor \bn \xi \rfloor+1 ) :\bn} < 1 - \tfrac{x \vee y }{\bn}) \\
\times 
\Prob( N_{\scs (\bn+1): \lfloor \bn(\xi+1) \rfloor}< 1- \tfrac{y}{\bn})
+
O(\alpha_{c_2}(r_n))+O(\tfrac{r_n x\vee y}{\bn}),
\end{multline*}
which, by \eqref{eq:exp2}, converges to
\begin{align*}
 \bar F_\xi(x,y) &= \exp(-\theta \xi x )\exp(-\theta(1-\xi)(x \vee y)) \exp(- \theta \xi y) \\
 &=\exp\{-\theta(\xi (x \wedge y)+ x \vee y ) \}.
\end{align*}
This implies \eqref{eq:weak23}, with $(X^{(\xi)} , Y^{(\xi)})$ being defined by its joint survival function $\bar F_\xi:[0,\infty)^2 \to [0,1]$.
 Now, it is easy to see that
\[
\lim_{n \to \infty} h_n(\xi) = \Exp[X^{(\xi)} Y ^{(\xi)}] = \int _{\mathbb{R}^+} \int_{\mathbb{R}^+} \bar F_\xi(x,y) \mathrm{d}x \mathrm{d}y =\frac{2}{\theta^2(1+\xi)}.
\]
Finally, putting everything together, we obtain
\begin{align*}
\lim_{n \to \infty} \Var (G_n^{\slb}) &= 2 \int _0^1 \lim_{n \to \infty} h_n(\xi)  \mathrm{d} \xi - \frac{2}{\theta^2}= \frac{2}{\theta^2} \Big ( \int_0^1 \frac{2}{1+\xi} \mathrm{d} \xi -1 \Big) \\
&= \frac{2\{\log(4)- 1\}}{\theta^2}
\end{align*}
as asserted.
\end{proof}

\begin{lemma}\label{lem:convvar2}
Under the above conditions, $h_{\slb} = h$, where $h_{\slb}$ and $h$ are defined in Lemma \ref{lemma:covEZ} and Lemma \ref{lem:fidirob}, respectively.
\end{lemma}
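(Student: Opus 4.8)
The plan is to collapse the triple sum defining $h_{\slb}$ into expectations over the independent compound-Poisson limit objects introduced in Lemma~\ref{lem:fidirob}, and then to match the result against the closed form for $h$ recorded in \eqref{eq:hdjb}. Fix $\xi\in(0,1)$ and let $N_1:=N_E^{(\xi x)}$ be independent of $(M_1,M_2):=\bm N_E^{((1-\xi)x,(1-\xi)y)}$; note that the ratio parameter governing $(M_1,M_2)$ is $\tfrac{(1-\xi)y}{(1-\xi)x}=y/x$. Since $\sum_{l=0}^i p^{(\xi x)}(l)\,p_2^{((1-\xi)x,(1-\xi)y)}(i-l,0)=\Prob(N_1+M_1=i,\,M_2=0)$, the inner double sum equals
\[
\sum_{i=1}^\infty i\sum_{l=0}^i p^{(\xi x)}(l)\,p_2^{((1-\xi)x,(1-\xi)y)}(i-l,0)=\Exp\big[(N_1+M_1)\ind(M_2=0)\big],
\]
which by independence splits as $\Exp[N_1]\,\Prob(M_2=0)+\Exp[M_1\ind(M_2=0)]$.

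Next I would evaluate the three factors. By Wald's identity, $\Exp[N_1]=\Exp[N_E^{(\xi x)}]=\xi x$. Using $\Prob(\zeta_2^{(\sigma)}=0)=1-\sigma$ from \eqref{eq:xi2} with $\sigma=y/x$ and the Poisson mean $\theta(1-\xi)x$ governing $M_2$ gives $\Prob(M_2=0)=\exp\{-\theta(1-\xi)x\cdot(y/x)\}=e^{-\theta(1-\xi)y}$. Applying \eqref{eq:condexp} with $x,y$ replaced by $(1-\xi)x,(1-\xi)y$ yields $\Exp[M_1\ind(M_2=0)]=\Exp[\zeta_1^{(y/x)}\ind(\zeta_2^{(y/x)}=0)]\,\theta(1-\xi)x\,e^{-\theta(1-\xi)y}$. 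The crucial simplification is that when these are inserted under the $y$-integral of $h_{\slb}$, the factor $e^{-\theta(1-\xi)y}$ multiplies the $e^{-\theta\xi y}$ already present to give $e^{-\theta y}$, fully decoupling $\xi$ from $y$ in the exponent.

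Finally I would perform the $\xi$-integration---every $\xi$- and $(1-\xi)$-prefactor integrates to $\tfrac12$ over $[0,1]$---and recognize the surviving $y$-integral through \eqref{eq:hdjb}: after the substitution $\sigma=y/x$ there, $\theta x\int_0^x \Exp[\zeta_1^{(y/x)}\ind(\zeta_2^{(y/x)}=0)]e^{-\theta y}\,\diff y=h(x)+x/\theta$. Collecting terms, $h_{\slb}(x)$ becomes $\tfrac2\theta$ times a bracket in which every contribution not proportional to $h(x)$---those from $\Exp[N_1]\Prob(M_2=0)$, from $\sum_{i}i\,p^{(\xi x)}(i)e^{-\theta x}$, from the $x/\theta$ term in \eqref{eq:hdjb}, and from the outer $-x$---cancels identically (the $e^{-\theta x}$-pieces cancel and the bare-$x$ pieces sum to zero), leaving exactly $\tfrac12\theta\,h(x)$. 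Hence $h_{\slb}(x)=h(x)$. The main obstacle is the bookkeeping in the first two paragraphs: correctly reading the convolution as $\Prob(N_1+M_1=i,M_2=0)$ and invoking \eqref{eq:condexp} and \eqref{eq:xi2} with the shifted parameters $(1-\xi)x,(1-\xi)y$; once that is in place the cancellation is a short and routine computation.
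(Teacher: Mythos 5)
Your proposal is correct and follows essentially the same route as the paper's proof: both read the convolution $\sum_{l}p^{(\xi x)}(l)p_2^{((1-\xi)x,(1-\xi)y)}(i-l,0)$ as a probability for independent compound-Poisson counts, split the resulting expectation by independence, evaluate the pieces via Wald's identity, \eqref{eq:xi2} and \eqref{eq:condexp} with the parameters $(1-\xi)x,(1-\xi)y$, and then integrate out $\xi$ to recover \eqref{eq:hdjb}. The bookkeeping you describe, including the final cancellation leaving $\tfrac{\theta}{2}h(x)$ inside the bracket, matches the paper's computation exactly.
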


\begin{proof}[Proof of Lemma~\ref{lem:convvar2}] 
By the definition of $p^{\scs (x)}$ and $p_2^{\scs (x,y)}$ in Lemma \ref{lemma:covEZ} and Lemma \ref{lem:fidirob} we obtain that
\begin{multline*}
 \sum_{l = 0}^i    p^{\scs (\xi x)}(l) p_2^{\scs((1-\xi)x,(1-\xi)y)}(i-l,0) 
=
 \Prob\Big (\sum_{j=1}^{\eta_1} \zeta_{j} + \sum_{j=1}^{\eta_2} \zeta_{j1}^{\scs (y/x)}=i  ,  \sum_{j=1}^{\eta_2} \zeta_{j2}^{\scs (y/x)}=0\Big ),
\end{multline*}
with independent random variables $\eta_1 \sim \text{Poisson}(\xi \theta x)$, $\eta_2 \sim \text{Poisson}((1-\xi) \theta x)$,
$\zeta_{i} \sim \pi , i\in\N,$ and $(\zeta_{i1}^{\scs(y/x)},\zeta_{i2}^{\scs(y/x)}) \sim \pi_2^{\scs(y/x)} , i\in\N$. For this reason, we can write 
\begin{align*}
\sum_{i=1}^\infty i &\, \sum_{l = 0}^i    p^{\scs (\xi x)}(l) p_2^{\scs ((1-\xi)x,(1-\xi)y)}(i-l,0)  \\
&=
\Exp \bigg [ \Big \{ \sum_{j=1}^{\eta_1} \zeta_{j} + \sum_{j=1}^{\eta_2} \zeta_{j1}^{\scs (y/x)} \Big \}\ind \Big( \sum_{j=1}^{\eta_2} \zeta_{j2}^{\scs (y/x)}=0 \Big) \bigg ]\\
&= \Exp \Big [\sum_{j=1}^{\eta_1} \zeta_{j} \Big ] \Prob  \Big( \sum_{j=1}^{\eta_2} \zeta_{j2}^{\scs (y/x)}=0 \Big) 
+
 \Exp \bigg [ \sum_{j=1}^{\eta_2} \zeta_{j1}^{\scs (y/x)} \ind \Big( \sum_{j=1}^{\eta_2} \zeta_{j2}^{\scs (y/x)}=0 \Big) \bigg ].
\end{align*}
By Wald's identity, we have $ \Exp \big [\sum_{j=1}^{\eta_1} \zeta_{j} \big ]= \xi x$. Independence of $\eta_2$ and $\zeta_{j2}^{\scs (y/x)},j \in \N$, further implies
\[
\Prob  \Big( \sum_{j=1}^{\eta_2} \zeta_{j2}^{\scs (y/x)}=0 \Big) = \sum_{k=0}^\infty \Prob (\zeta_{12}^{\scs(y/x)} =0)^k \Prob (\eta_2=k) = e^{-\theta(1-\xi)y},
\]
where we used that $\Prob (\zeta_{12}^{\scs(y/x)} =0) = 1-y/x$, see \eqref{eq:xi2}. Finally, \eqref{eq:condexp} implies that 
\[
 \Exp \bigg [ \sum_{j=1}^{\eta_2} \zeta_{j1}^{\scs (y/x)} \ind \Big( \sum_{j=1}^{\eta_2} \zeta_{j2}^{\scs (y/x)}=0 \Big) \bigg ]
=
\Exp \big[\zeta_{11}^{\scs(y/x)} \ind( \zeta_{12}^{\scs (y/x)}=0) \big] \theta (1- \xi) x e^{- (1-\xi)\theta y} .
\]
Altogether, we obtain 
\begin{multline*}
\sum_{i=1}^\infty i \sum_{l = 0}^i    p^{\scs (\xi x)}(l) p_2^{\scs ((1-\xi)x,(1-\xi)y)}(i-l,0) \\
=
\xi x e^{- \theta(1-\xi)y} + \Exp [\zeta_{11}^{\scs(y/x)} \ind( \zeta_{12}^{\scs (y/x)}=0) ] \theta (1- \xi) x e^{- (1-\xi)\theta y}.
\end{multline*}
Now, noting that $\sum_{i=1}^\infty i p^{\scs(\xi x)}(i)=  \Exp \big [\sum_{j=1}^{\eta_1} \zeta_{j} \big ]= \xi x$, we can rewrite $h_{\slb}$ as follows
{\small
\begin{align*}
h_{\slb}(x) 
&= 
 \frac2\theta \bigg [ \sum_{i=1}^\infty i \int_0^1 \bigg\{  \theta \int_0^x \sum_{l = 0}^i    p^{(\xi x)}(l) p_2^{((1-\xi)x,(1-\xi)y)}(i-l,0) e^{- \theta \xi y} \, \diff y +  \\ 
 & \hspace{2cm} p^{(\xi x)}(i) e^{-\theta x} \bigg \}\, \diff \xi  - x \bigg ]\\
&=  
2 \int_0^x \int_0^1 \xi x e^{- \theta y } \, \diff \xi \, \diff y + 2 \int_0^x \int_0^1  \Exp [\zeta_{11}^{\scs(y/x)} \ind( \zeta_{12}^{\scs (y/x)}=0) ] \theta (1- \xi) x e^{- \theta y} \, \diff \xi \, \diff y  \\
& \hspace{2cm} +
 \frac 2\theta \int_0^1 \xi x e^{-\theta x}  \, \diff \xi  - \frac{2x}{\theta}\\
&= 
\int_0^x x e^{-\theta y} \, \diff y + \int_0^x \Exp [\zeta_{11}^{\scs(y/x)} \ind( \zeta_{12}^{\scs (y/x)}=0) ] \theta x e^{- \theta y } \, \diff y + \frac x\theta e^{-\theta x }- \frac{2x}{\theta}\\
&=  
\int_0^1  \Exp [\zeta_{11}^{\scs(\sigma)} \ind( \zeta_{12}^{\scs (\sigma)}=0) ] \theta x^2 \exp^{-\theta\sigma x} \, \diff \sigma - \frac x\theta.
\end{align*}
}
From \eqref{eq:hdjb} we finally obtain that $h_{\slb}(x)=h(x )$.
\end{proof}

\section{Equivalence of estimators -- Proof of Theorem~3.1}
\label{sec:nor}

\begin{proof}[Proof of Theorem~\ref{theo:Northrop}] We will only give the proof for the disjoint blocks version of the theorem as the sliding blocks can be treated analogously. For notational reasons we will omit the upper index $\djb$. Define
\[
\hat T_n^{\Be} = \frac1{\kn} \sum_{i=1}^{\kn} \hat Z_{ni}, \quad T_n^{\Be} = \frac1{\kn}  \sum_{i=1}^{\kn} Z_{ni},  \quad
\hat T_n^{\No} = \frac1{\kn}  \sum_{i=1}^{\kn} \hat Y_{ni}, \quad T_n^{\No} = \frac1{\kn}  \sum_{i=1}^{\kn} Y_{ni}
\]
and note that
\begin{align*}
\sqrt{\kn} (\hat \theta_n^{\No}- \hat \theta_n^{\Be}) = \frac{1}{\hat T_n^{\Be} \hat T_n^{\No}} \times \sqrt{\kn}( \hat T_n^{\No} - \hat T_n^{\Be} ).
\end{align*}
The fraction on the right-hand side is $O_\Prob(1)$. Indeed, the elementary inequality $\log(x) \le x-1$ for $x>0$ implies that $\hat T_n^{\No} \ge \hat T_n^{\Be}$, and  $\hat T_n^{\Be}$ converges to $\theta^{-1}$ in probability by Theorem~\ref{theo:main}.
Now, we further decompose
\begin{align*}
\sqrt{\kn} ( \hat T_n^{\No} - \hat T_n^{\Be} )
&=
\sqrt{\kn} ( \hat T_n^{\No} - T_n^{\No} ) + \sqrt{\kn} ( T_n^{\No} - T_n^{\Be} ) + \sqrt{\kn} ( \hat T_n^{\Be} - T_n^{\Be} ) \\
&\equiv S_{n1} + S_{n2} + S_{n3} .
\end{align*}
By Lemma~\ref{lem:log2}, we immediately obtain $S_{n2}=o_\Prob(1)$. Furthermore, from Lemma~\ref{lem:log1} and \eqref{eq:long2}, we have, for any $\ell >0$,
\[
S_{n1} + S_{n3} = \int_0^{\max Z_{ni}} \frac{-x}{\bn -x} e_n(x) \mathrm{d}\hat H_{\kn} (x) + o_\Prob(1)= I_{n,\ell}+R_{n,\ell}+o_\Prob(1),
\]
with $I_{n,\ell}= \int_0^{\ell} \frac{-x}{\bn -x} e_n(x) \mathrm{d}\hat H_{\kn} (x)$ and $R_{n,\ell}=\int_\ell^{\max Z_{ni}} \frac{-x}{\bn -x} e_n(x) \mathrm{d}\hat H_{\kn} (x)$. Hence, $S_{n1} + S_{n3}$ is $o_\Prob(1)$ 
if we can show that, for any $\delta>0$, we have $\limsup_{ \ell \to \infty} \lim_{n \to \infty} \Prob(R_{n,\ell}> \delta)=0$ and that $I_{n,\ell}=o_\Prob(1)$, for any fixed $\ell>0$. The first part can be done by similar arguments as in the proof of Lemma~\ref{lem:boundsupp}. To see this, note that $\vert \frac{-x}{\bn-x}\vert \leq 1$ for $x \leq \bn/2$ and that $\Prob(\max Z_{ni}\leq \bn/2) \to 1$ as $n \to \infty$ by Condition~\ref{cond:cond}\eqref{item:blockdiv}. Furthermore, $I_{n,\ell}=o_\Prob(1)$ by Proposition 7.27 in \cite{Kos08}. \end{proof}

\begin{lemma}[Getting rid of the Logarithm I]\label{lem:log1}
Under Conditions~\ref{cond:cond}\eqref{item:rate2} and \eqref{item:blockdiv}, as $n\to\infty$,
\[
\frac{\bn}{\sqrt{\kn}} \sum_{i=1}^{\kn} (\log \hat N_{ni}^{\djb} - \log N_{ni}^{\djb}) =  - \tilde D^{\djb}_n + o_\Prob(1),
\] 
 and 
\[
\frac{\bn \sqrt{\kn}}{n-\bn+1} \sum_{t=1}^{n - \bn+1} (\log \hat N_{nt}^{\slb} - \log N_{nt}^{ \slb}) =  - \tilde D^{\slb}_n + o_\Prob(1),
\]
where
\[
\tilde D_n^{\djb}= 
\int_0^{\max Z_{ni}}  
\frac{\bn}{\bn-x} \tailp(x)\diff \hat H_{\kn}^{\djb}(x),
\quad
\tilde D_n^{\slb}= 
\int_0^{\max Z_{ni}^{\slb}}  
\frac{\bn}{\bn-x} \tailp(x)\diff \hat H_{\kn}^{\slb}(x).
\]
\end{lemma}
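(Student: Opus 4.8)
The plan is to reduce the logarithmic differences to the linear differences $N_{ni}-\hat N_{ni}$ already analysed in \eqref{eq:long2}, via a first-order Taylor expansion of $\log$ around the true block maximum, and then to absorb the quadratic remainder using the fact that all block maxima concentrate near $1$.

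First I would recall the exact identity underlying \eqref{eq:long2}: since $\hat N_{ni}=\hat F_n(M_{ni})=n^{-1}\sum_{s=1}^n \ind(U_s\le N_{ni})$ and $n=\bn\kn$, one has $\frac{\bn}{\sqrt{\kn}}(N_{ni}-\hat N_{ni})=\frac{1}{\kn}\tailp(Z_{ni})$, where $Z_{ni}=\bn(1-N_{ni})$ and $N_{ni}=(\bn-Z_{ni})/\bn$. Writing $\log\hat N_{ni}-\log N_{ni}=(\hat N_{ni}-N_{ni})/N_{ni}+r_{ni}$ with $r_{ni}$ the Lagrange remainder, the linear part equals $-\tilde D_n^{\djb}$ \emph{exactly}, because $1/N_{ni}=\bn/(\bn-Z_{ni})$ and hence
\[
\frac{\bn}{\sqrt{\kn}}\sum_{i=1}^{\kn}\frac{\hat N_{ni}-N_{ni}}{N_{ni}}
=-\frac{1}{\kn}\sum_{i=1}^{\kn}\frac{\bn}{\bn-Z_{ni}}\,\tailp(Z_{ni})
=-\int_0^{\max Z_{ni}}\frac{\bn}{\bn-x}\,\tailp(x)\,\diff\hat H_{\kn}(x)=-\tilde D_n^{\djb}.
\]
It then remains to show that $\frac{\bn}{\sqrt{\kn}}\sum_i r_{ni}=o_\Prob(1)$.

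For the remainder I would work on the event $B_n=\{\min_{i}N_{ni}>1-\eps\}$, which by Condition~\ref{cond:cond}\eqref{item:blockdiv} has probability tending to $1$ for every fixed $\eps\in(0,1)$, and off which any quantity multiplied by $\ind_{B_n^c}$ is automatically $o_\Prob(1)$. On $B_n$ one has $1-N_{ni}\le\eps$, while $1-\hat N_{ni}=n^{-1}\#\{s:U_s>N_{ni}\}\le n^{-1}\#\{s:U_s>1-\eps\}$; since the latter exceedance count has mean $n\eps$, Markov's inequality gives $\max_i(1-\hat N_{ni})\le\eps\,\zeta_n$ with $\zeta_n=O_\Prob(1)$ uniformly in $\eps$. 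Thus $\max_i|\hat N_{ni}-N_{ni}|\le\eps(1\vee\zeta_n)$, the Lagrange point stays bounded away from $0$, and $|r_{ni}|\le C(\hat N_{ni}-N_{ni})^2\le C\eps(1\vee\zeta_n)|\hat N_{ni}-N_{ni}|$. Summing and invoking the identity above yields, on $B_n$,
\[
\Big|\frac{\bn}{\sqrt{\kn}}\sum_i r_{ni}\Big|\le C\eps(1\vee\zeta_n)\,\frac{1}{\kn}\sum_i|\tailp(Z_{ni})|=C\eps(1\vee\zeta_n)\int_0^{\max Z_{ni}}|\tailp(x)|\,\diff\hat H_{\kn}(x).
\]
The integral is $O_\Prob(1)$: on $[0,\ell]$ it is dominated by $\sup_{[0,\ell]}|\tailp|=O_\Prob(1)$ from Lemma~\ref{lem:tailpweak}, and its contribution over $[\ell,\max Z_{ni}]$ is controlled by the absolute-value estimates already produced in the proof of Lemma~\ref{lem:boundsupp}. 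Letting first $n\to\infty$ and then $\eps\to0$ gives $\frac{\bn}{\sqrt{\kn}}\sum_i r_{ni}=o_\Prob(1)$, which finishes the disjoint-blocks case.

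The main obstacle is exactly this remainder control: one must simultaneously establish the uniform closeness of the \emph{empirical} block maxima $\hat N_{ni}$ to $1$ and the tightness of $\int_0^{\max}|\tailp|\,\diff\hat H_{\kn}$, the latter being where Condition~\ref{cond:cond}\eqref{item:rate2} (in particular the growth restriction $\kn=o(\bn^2)$ of \eqref{eq:rate1}) enters through Lemma~\ref{lem:boundsupp}. The sliding-blocks statement is then proved along identical lines: the same Taylor expansion produces the linear part $-\tilde D_n^{\slb}$ exactly, now via the sliding analogue of \eqref{eq:long2}; the good event is $\{\min_t N_{nt}^{\slb}>1-\eps\}$, again of probability tending to $1$ by Condition~\ref{cond:cond}\eqref{item:blockdiv}; and the remainder is dominated by $C\eps(1\vee\zeta_n)\int_0^{\max}|\tailp|\,\diff\hat H_{\kn}^{\slb}$, whose tail is handled by Lemma~\ref{lem:boundsupp2} in place of Lemma~\ref{lem:boundsupp}.
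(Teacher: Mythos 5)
Your expansion and the identification of the linear term with $-\tilde D_n^{\djb}$ (resp.\ $-\tilde D_n^{\slb}$) via the per-block identity $\frac{\bn}{\sqrt{\kn}}(N_{ni}-\hat N_{ni})=\kn^{-1}\tailp(Z_{ni})$ is exactly what the paper does; the difference lies entirely in how the quadratic remainder is killed. The paper's route is more direct: on $\{\min_i N_{ni}>1-\eps\}$ it bounds $\max_i|\hat N_{ni}-N_{ni}|$ by the sup of the empirical process of the truncated variables $U_t\ind(U_t>1-\eps)$ over $(1-\eps,1)$, which is $O_\Prob(n^{-1/2})$ by weak convergence of that process (this is where Condition~\ref{cond:cond}\eqref{item:rate2} enters), so the remainder is immediately $O_\Prob(\bn\kn\cdot n^{-1}/\sqrt{\kn})=O_\Prob(\kn^{-1/2})$. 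You instead use only the crude bound $\max_i|\hat N_{ni}-N_{ni}|\le\eps(1\vee\zeta_n)$ and pay for it by needing $\int_0^{\max Z_{ni}}|\tailp(x)|\,\diff\hat H_{\kn}(x)=O_\Prob(1)$ followed by an $n\to\infty$, $\eps\to0$ limit; that tightness is not the literal statement of Lemma~\ref{lem:boundsupp} (which controls the \emph{signed} integral), so you must re-open its proof and check that the diagonal and adjacent-block pieces are single-signed and that the distant-block piece is in fact bounded with the absolute value inside the sum over $i$ --- this does go through, but it is an extra verification your write-up glosses over. Two further consequences of your route are worth noting: it requires the \emph{full} Condition~\ref{cond:cond} (through Lemmas~\ref{lem:boundsupp}, \ref{lem:boundsupp2} and \ref{lem:tailpweak}), whereas the lemma is stated, and the paper's argument works, under only parts \eqref{item:rate2} and \eqref{item:blockdiv}; on the other hand it avoids invoking a uniform CLT for the truncated empirical process, which some readers may find the more delicate ingredient. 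For the application inside Theorem~\ref{theo:Northrop} the stronger hypotheses are available anyway, so your proof is acceptable there, but as a proof of the lemma as stated it proves slightly less.
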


\begin{proof}
We will only give the proof for the disjoint blocks version of the theorem as the sliding blocks can be treated analogously. For notational reasons we will omit the upper index $\djb$.
By a Taylor expansion and a similar calculation as in \eqref{eq:long2}, we have
\begin{align*}
\frac{\bn}{\sqrt{\kn}} \sum_{i=1}^{\kn} (\log \hat N_{ni} - \log N_{ni} )  
=
- \tilde D_n  - \frac12 R_n,
\end{align*}
where, for some $s_i\in(0,1)$, 
\[
R_n = \frac{\bn}{\sqrt{\kn}} \sum_{i=1}^{\kn} \frac{(\hat N_{ni} - N_{ni} )^2}{\{N_{ni}^*(s_i)\}^2}  
\]
and where $N_{ni}^*(s) = N_{ni} + s_i (\hat N_{ni} - N_{ni})$.  Let $\eps \in (0,c_1)$. 
By Condition~\ref{cond:cond}\eqref{item:blockdiv}, we have $R_n = R_n \cdot \ind(\min_{i=1}^{\kn} N_{ni} > 1-\eps) + o_\Prob(1)$. 
Note that, by Condition~\ref{cond:cond}\eqref{item:rate2}, the sequence $(U_t\ind(U_t> 1-\eps))_t$ is $\alpha$-mixing with polynomial mixing rate and with stationary cdf $F_\eps$ satisfying $F_\eps(u)=u$ for $u \in (1-\eps,1)$. For this reason, its empirical process converges weakly in $\ell^\infty((1-\eps,1)), \Vert \cdot \Vert_\infty)$  and hence we obtain that
\begin{multline*}
\max_{i=1}^{\kn} \vert \hat N_{ni} - N_{ni} \vert \ind (N_{ni}> 1-\eps) \\
\leq \sup_{u \in (1-\eps,1)} \Big \vert \frac{1}{n} \sum_{i=1}^n \ind(U_i \ind(U_i > 1-\eps) \leq u ) - u \Big \vert
\end{multline*} 
is of the order $O_\Prob(n^{-1/2})$.
Thus, for sufficiently large $n$,
\[
\max_{i=1}^{\kn} \{ |N_{ni}^*(s_i)|^{-2}  \ind(N_{ni} > 1-\eps) \} \le \{ 1-\eps - O_\Prob(n^{-1/2}) \}^{-2} = O_\Prob(1)
\]
as $n\to\infty$.
As a consequence, 
\[
R_n =O_\Prob\{ (\bn \kn) / (\sqrt{\kn} n) \}  + o_\Prob(1) = O_\Prob(\kn^{-1/2}) + o_\Prob(1)=  o_\Prob(1)
\] 
as $n\to \infty$.
\end{proof}

\begin{lemma}[Getting rid of the Logarithm II]\label{lem:log2}
Under Conditions~\ref{cond:cond}\eqref{item:rate2}, \eqref{item:blockdiv} and \eqref{item:moment}, we have, as $n \to \infty$, 
\begin{align*}
\frac{1}{\sqrt{\kn}} \sum_{i=1}^{\kn} \{ - \bn \log (N_{ni}^{\djb}) - Z_{ni}^{\djb}\} &= o_\Prob(1), \\
\frac{\sqrt{\kn}}{n-\bn +1} \sum_{t=1}^{n-\bn+1} \{ - \bn \log (N_{nt}^{\slb}) - Z_{nt}^{\slb}\} &= o_\Prob(1).
\end{align*}
\end{lemma}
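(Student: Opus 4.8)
The plan is to observe that the summands are nonnegative and dominated by a quadratic term, after which the claim reduces to a first-moment estimate. Writing $-\bn\log N_{ni}-Z_{ni}=Y_{ni}-Z_{ni}=\bn\,g(N_{ni})$ with $g(t)=(t-1)-\log t$, note that $g(1)=g'(1)=0$ and $g''(t)=t^{-2}>0$, so $g$ is nonnegative and convex and every summand is $\ge 0$. A second-order Taylor expansion around $t=1$ gives $g(t)=\tfrac{1}{2}\xi^{-2}(t-1)^2$ for some $\xi$ between $t$ and $1$, whence $0\le g(t)\le 2(1-t)^2$ for all $t\ge 1/2$. Consequently, on the event $A_n=\{\min_{i=1}^{\kn}N_{ni}>1/2\}$ we obtain the uniform bound $0\le Y_{ni}-Z_{ni}=\bn g(N_{ni})\le 2\bn(1-N_{ni})^2=2Z_{ni}^2/\bn$ for every $i$.

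First I would argue that $\Prob(A_n)\to 1$. Each block of length $\bn$ fully contains a half-block of length $\bn/2$, so $\min_{i=1}^{\kn}N_{ni}\ge \min_{i=1}^{2\kn}N_{ni}'$ in the notation of Condition~\ref{cond:cond}\eqref{item:blockdiv}; applying that condition with $c=1/2$ gives $\Prob(A_n^c)\to 0$. Splitting the target as $\frac{1}{\sqrt{\kn}}\sum_{i=1}^{\kn}(Y_{ni}-Z_{ni})(\ind_{A_n}+\ind_{A_n^c})$, the part carrying $\ind_{A_n^c}$ is $o_\Prob(1)$, since (the sum being a.s.\ finite) for any $\delta>0$ the event that its absolute value exceeds $\delta$ is contained in $A_n^c$, whose probability vanishes.

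It then remains to control the part carrying $\ind_{A_n}$. Using the quadratic bound and nonnegativity, $0\le \frac{1}{\sqrt{\kn}}\sum_{i=1}^{\kn}(Y_{ni}-Z_{ni})\ind_{A_n}\le \frac{2}{\bn\sqrt{\kn}}\sum_{i=1}^{\kn}Z_{ni}^2$. By stationarity $Z_{ni}\eqd Z_{1:\bn}$, and Condition~\ref{cond:cond}\eqref{item:moment} (applied with $n=\bn\to\infty$) together with Jensen's inequality yields $\Exp[Z_{1:\bn}^2]\le(\Exp[Z_{1:\bn}^{2+\delta}])^{2/(2+\delta)}=O(1)$. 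Hence the expectation of the upper bound equals $\frac{2\sqrt{\kn}}{\bn}\Exp[Z_{1:\bn}^2]=O(\sqrt{\kn}/\bn)=o(1)$, where the last step uses $\kn=o(\bn^2)$ from Condition~\ref{cond:cond}\eqref{item:rate2} (see \eqref{eq:rate1}). Markov's inequality and the squeeze then give $o_\Prob(1)$, completing the disjoint-blocks statement.

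For the sliding-blocks statement I would proceed identically, replacing $A_n$ by $A_n'=\{\min_{t}N_{nt}^{\slb}>1/2\}$. Since any window of length $\bn$ fully contains at least one half-block of length $\bn/2$, we again have $\min_tN_{nt}^{\slb}\ge\min_{i=1}^{2\kn}N_{ni}'$, so $\Prob(A_n')\to1$ by Condition~\ref{cond:cond}\eqref{item:blockdiv}; this is precisely why the condition is imposed at the half-block scale. On $A_n'$ the same quadratic bound gives $0\le \frac{\sqrt{\kn}}{n-\bn+1}\sum_t(Y_{nt}^{\slb}-Z_{nt}^{\slb})\le \frac{2\sqrt{\kn}}{(n-\bn+1)\bn}\sum_t(Z_{nt}^{\slb})^2$, whose expectation is $\frac{2\sqrt{\kn}}{\bn}\Exp[Z_{1:\bn}^2]=o(1)$, since there are $n-\bn+1$ terms, each with $Z_{nt}^{\slb}\eqd Z_{1:\bn}$. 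The only mildly delicate point is the uniform control over all blocks that licenses the quadratic bound: this is exactly where Condition~\ref{cond:cond}\eqref{item:blockdiv} enters, through the comparison of block- and sliding-block minima to half-block minima, while everything else is an elementary moment estimate driven by $\kn=o(\bn^2)$.
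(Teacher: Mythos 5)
Your proof is correct and follows essentially the same route as the paper's: restrict to the event that all (half-)block maxima exceed a fixed level via Condition~\ref{cond:cond}\eqref{item:blockdiv}, bound the summands by $C Z_{ni}^2/\bn$ through a second-order Taylor expansion of the logarithm, and conclude from the first-moment bound $O(\sqrt{\kn}/\bn)=o(1)$ using $\kn=o(\bn^2)$ and Condition~\ref{cond:cond}\eqref{item:moment}. The only cosmetic difference is that you exploit nonnegativity of $(t-1)-\log t$ to squeeze via Markov's inequality, where the paper argues $L_1$-convergence of the indicator-restricted sum directly.
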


\begin{proof} 
We will only give the proof for the disjoint blocks version of the theorem as the sliding blocks can be treated analogously. For notational reasons we will omit the upper index $\djb$. By Condition~\ref{cond:cond}\eqref{item:blockdiv} and since we are only concerned with convergence in probability, it suffices to work on the event $\{\min_{i=1}^{\kn} N_{ni} > 1-\eps\}$, where $\eps>0$.  It then suffices to show convergence in $L_1$, and for that purpose note that
\begin{multline*}
\mathbb{E}\Big |\frac{1}{\sqrt{\kn}} \sum_{i=1}^{\kn} \{ - \bn \log (N_{ni}) - Z_{ni}\}  \ind ( \min_{i=1}^{\kn} N_{ni} > 1-\eps)\Big|  \\
\leq 
 \sqrt{\kn} \mathbb{E} [ \vert  - \bn \log (N_{ni}) - Z_{ni}\vert   \ind ( \min_{i=1}^{\kn} N_{ni} > 1-\eps) ].
\end{multline*}
By a Taylor expansion, we have
\begin{multline*}
\Big|- \bn \log (N_{n1}) - Z_{n1}\Big| \ind ( \min_{i=1}^{\kn} N_{ni} > 1-\eps)
 \\
 \le\frac12 \cdot \frac{1}{\bn N_{n1}^2}  \cdot Z_{n1}^2 \ind ( \min_{i=1}^{\kn} N_{ni} > 1-\eps)
\le
\frac{1}{2 b_n (1-\eps)^2}   Z_{n1}^2.
\end{multline*}
Hence, by Condition~\ref{cond:cond}\eqref{item:moment}, we immediately obtain
\[
\mathbb{E}\Big [\frac{1}{\sqrt{\kn}} \sum_{i=1}^{\kn} \{ - \bn \log (N_{ni}) - Z_{ni}\}  \ind ( \min_{i=1}^{\kn} N_{ni} > 1-\eps)\Big] =O(\sqrt{\kn}\bn^{-1})=o(1)
\] 
and the proof is finished.
\end{proof}

\section{Additional proofs} \label{sec:addproofs}

\begin{proof}[Proof of Proposition~\ref{prop:boot}]
Let
\[
\beta_{\eps}(\ell) =  \sup_{k \in \N} \beta(\Bc_{1:k}^\eps, \Bc_{k+\ell:\infty}^\eps) = \sup_{k\in\N} \frac12 \sup \sum_{i \in I} \sum_{j \in J} |\Prob(A_i\cap B_j)-\Prob(A_i)\Prob(B_j)|,
\]
where the last supremum is over all finite partitions $(A_i)_{i\in I} \subset \Bc_{1:k}^\eps$ and $(B_j)_{j\in J} \subset \Bc_{k+\ell:\infty}^\eps$ of $\Omega$. 
Decompose 
\[
\hat \sigma_{\djb}^2  = \frac{1}{\kn} \sum_{j=1}^{\kn} \hat B_{nj}^2 = A_{n1} + 2 A_{n2} + A_{n3},
\]
where 
\[
A_{n1} = \frac{1}{\kn} \sum_{j=1}^{\kn} \bar B_{nj}^2, 
\quad
A_{n2} =  \frac{1}{\kn} \sum_{j=1}^{\kn} (\hat B_{nj} - \bar B_{nj}) \bar B_{nj},
\quad
A_{n3} =  \frac{1}{\kn} \sum_{j=1}^{\kn} (\hat B_{nj} - \bar  B_{nj})^2,
\]
and where 
\[
\bar B_{nj }=
Z_{nj} - \theta^{-1}  +  \textstyle \sum_{s\in I_j}  \frac{1}{\kn} \sum_{i=1}^{\kn} \{  \ind(U_s > 1- \tfrac{Z_{ni}}{\bn}) - \tfrac{Z_{ni}}{\bn} \}, 
\]
By the Cauchy-Schwarz inequality, it suffices to show that $A_{n3}=o_\Prob(1)$ and that $A_{n1}=\sigma_{\djb}^2 + o_\Prob(1)$.

Let us first show that $A_{n3}=o_\Prob(1)$. Note that $U_s > 1-Z_{nj}/\bn$ iff $\hat U_s > 1- \hat Z_{nj}/\bn$, almost surely.  As a consequence, by a similar calculation as in \eqref{eq:long2}, we can write
\begin{align*}
\hat B_{nj}  - \bar B_{nj}
&= \hat Z_{nj} -  Z_{nj}  + \frac{1}\theta - \hat T_n +  \frac1{\kn} \sum_{i=1}^{\kn} (Z_{ni}-\hat Z_{ni}) \\
&= \frac{e_n(Z_{nj})}{\sqrt{\kn}} + \frac{1}{\theta} - \hat T_n - \frac{1}{\sqrt {\kn}} \sqrt{\kn}(\hat T_n - T_n)
=\frac{e_n(Z_{nj})}{\sqrt{\kn}}  + O_\Prob(\kn^{-1/2})
\end{align*}
almost surely, where the $O_\Prob$-term is uniformly in $j=1, \dots, n$.  We may further write
\[
e_n(Z_{nj}) = - \sqrt{n/\kn} \cdot \FF_n(1- Z_{nj}/\bn),
\]
where $\FF_n(u) = n^{-1/2} \sum_{s=1}^n \{\ind(U_s \le u) - u\}$ denotes the usual empirical process. By weak convergence of that process (a consequence of the assumption on beta-mixing) we can conclude that $\max_{j=1}^n | e_{n}(Z_{nj}) | = O_\Prob(\bn^{\scs 1/2})$. Hence,
\begin{align*}
A_{n3} &=  \frac{1}{\kn^2} \sum_{j=1}^{\kn} \big\{ e_n(Z_{nj})   + O_\Prob(1) \big\}^2 
= \Big\{ \frac{1}{\kn^2} \sum_{j=1}^{\kn} e_n^2(Z_{nj})  \Big\} + O_\Prob(\bn^{1/2}\kn^{-1}+\kn^{-1}) \\
& \le 
\frac{1}{\kn} \max_{j=1}^n | e_{n}(Z_{nj}) | \int_0^\infty |e_n(z)| \, \diff \hat H_{\kn}(z) + o_\Prob(1).
\end{align*}
Repeating arguments from the proof of Theorem~\ref{theo:main} (Wichura's theorem), it can be seen that the dominating term on the right-hand side of this display is of the order $O_\Prob(\sqrt{\bn}/\kn),$ which converges to $0$ by assumption.

It remains to be shown that $A_{n1}=\sigma_{\djb}^2 + o_\Prob(1)$. For that purpose, write $A_{n1}=C_{n1}+2 C_{n2}+ C_{n3}$, where
\begin{align*}
C_{n1} &= \frac{1}{\kn} \sum_{j=1}^{\kn} (Z_{nj}-\theta^{-1})^2 , \\
C_{n2} &= \frac{1}{\kn} \sum_{j=1}^{\kn} (Z_{nj}- \theta^{-1}) \Big\{ \textstyle \sum_{s \in I_j} \tfrac{1}{\kn}\sum_{i=1}^{\kn} \big\{ \ind(U_s>1-\tfrac{Z_{ni}}{\bn}) - \tfrac{Z_{ni}}{\bn}  \big\} \Big\}, \\
C_{n3} &= \frac{1}{\kn} \sum_{j=1}^{\kn} \Big\{ \textstyle \sum_{s \in I_j} \tfrac{1}{\kn}\sum_{i=1}^{\kn}\big\{ \ind(U_s>1-\tfrac{Z_{ni}}{\bn}) - \tfrac{Z_{ni}}{\bn} \big\}  \Big\}^2.
\end{align*}
From the proof of Lemma~\ref{lem:convvar} we know that $\sigma_{\djb}^2=\sigma_\infty^2$, where $\sigma_\infty^2$ is defined in \eqref{eq:siginf}. Therefore, it suffices to show  that
\begin{multline*}
C_{n1} \pto \theta^{-2}, \quad  
C_{n2} \pto \theta \int_0^\infty h(x) e^{-\theta x} \, \diff x, \quad \\
C_{n3} \pto \theta^2 \int_0^\infty\int_0^\infty r(x,y) e^{-\theta (x+y)} \, \diff x \diff y.
\end{multline*}
The first convergence  can be shown by considering expectations and variances:
first, $\Exp[C_{n1}] = \Exp[(Z_{n1}-\theta^{-1})^2] \to \theta^{-2}$ by Condition~\ref{cond:cond}\eqref{item:moment} and weak convergence of $Z_{n1}$. Second,
\begin{multline*}
\Var(C_{n1}) = \frac{1}{\kn} \Var \big\{ (Z_{n1}-\theta^{-1})^2 \big\} \\ 
+ \frac{1}{\kn}\sum_{\ell=1}^{\kn} \frac{\kn-\ell}{\kn} \Cov\{ (Z_{n1}-\theta^{-1})^2,(Z_{n,1+\ell}-\theta^{-1})^2  \}
\end{multline*}
which is of the order $O(\kn^{-1})$ by a standard inequality for covariances of strongly mixing time series and by finiteness of moments of $Z_{nj}$ of order larger  than 4.

Consider $C_{n2}$.  For integer $\ell\ge1$,  let
\[
C_{n2}(\ell) = \frac{1}{\kn^2} \sum_{j,i \in \{1, \dots, \kn\} \atop |j-i| \ge 2} \Big\{ (Z_{nj}-\theta^{-1})\textstyle   \sum_{s\in I_j} f(U_s, Z_{ni} ) \Big\} \ind(Z_{ni}\le \ell),
\]
where $f(u,z) = \ind(u>1-z/\bn) - z/\bn$.
Using similar arguments as in the proof of Lemma~\ref{lem:boundsupp} it can be shown that, for any $\delta>0$, $\limsup_{n\to\infty} \Prob(|C_{n2}(\ell) -C_{n2}| > \delta)$ converges to 0 for $\ell\to\infty$. Therefore, by Wichura's theorem (\citealp{Bil79}, Theorem 25.5), it is sufficient to show that
\[
C_{n2}(\ell) \to C_2(\ell) =  \theta \int_0^\ell h(x) e^{-\theta x} \, \diff x, \qquad n \to \infty,
\]
holds for any $\ell \in \N$. For that purpose, we will show that $\Exp[C_{n2}(\ell)] \to C_2(\ell)$ and that $\Var(C_{n2}(\ell)) \to 0$ as $n\to\infty$. 

Recall Berbee’s coupling Lemma (\citealp{Ber79}): if $X$ and $Y$ are two random variables in some Borel spaces $S_1$ and $S_2$, respectively, then there exists a random variable  $U$  independent of $(X,Y)$ and a measurable function $f$ such that $Y^*=f(X,Y,U)$ has  the same distribution as $Y$, is independent of $X$ and satisfies $\Prob(Y \ne Y^*) = \beta(\sigma(X), \sigma(Y))$. 
Apply this lemma with $X=(U_s)_{s\in I_j}$ and $Y=Z_{ni}$ (with $|i-j|\ge 2$) to construct a random variable $Z_{ni}^* \sim H_{\kn}$ ($H_{\kn}$ denoting the cdf of $Z_{n1}$) independent of $(U_s)_{s\in I_j}$ satisfying $\Prob(Z_{ni} \ne Z_{ni}^*) \le \beta(\bn)$. Write
\begin{align} \label{eq:cn2}
 &\, \Exp\big[ (Z_{nj}-\tfrac1\theta)\textstyle   \sum_{s\in I_j} f(U_s, Z_{ni} )  \ind(Z_{ni}\le \ell)\big]   \\
  = &\, 
\Exp\big[ (Z_{nj}-\tfrac1\theta)\textstyle   \sum_{s\in I_j} f(U_s, Z_{ni}^* ) \ind(Z_{ni}^*\le \ell)\big] \nonumber \\
 & \hspace{1.5cm} +
\Exp\Big[ (Z_{nj}-\tfrac1\theta)\textstyle   \sum_{s\in I_j} \big\{ f(U_s, Z_{ni} ) \ind(Z_{ni}\le \ell)\nonumber \\ 
& \hspace{3cm} -  f(U_s, Z_{ni}^* ) \ind(Z_{ni}^*\le \ell) \big\}  \ind(Z_{ni} \ne Z_{ni}^*) \Big] \nonumber
\end{align}
By H\"older's and Minkowski's inequality, the second expectation on the right-hand side of this display can be bounded in absolute value by
\[
\|Z_{nj}-\tfrac1\theta \|_{3}   \textstyle
\sum_{s\in I_j}\big\{  \| f(U_s, Z_{ni} ) \ind(Z_{ni}\le \ell)\|_3 +\|   f(U_s, Z_{ni}^* ) \ind(Z_{ni}^*\le \ell)\|_3 \big\} \beta(\bn)^{1/3} .
\]
This bound converges to $0$, since $\vert f(U_s,Z_{ni}) \vert \leq 1$ and  since the assumptions imply that $\limsup_{n\to\infty} \|Z_{n1}-\tfrac1\theta \|_{3} \le C$ and that $\bn \beta(\bn)^{1/3} = o(1)$.

As a consequence, rewriting the first summand on the right-hand side of \eqref{eq:cn2}, we obtain that
\[
\Exp[C_{n2}(\ell)] 
=\Exp [h_n(Z_{n1}^*) \ind(Z_{n1}^*\le \ell)]+ o(1),
\]
where $h_n(x) = \Exp\big[ (Z_{n1}-\theta^{-1}) \textstyle \sum_{s\in I_1} f(U_{s}, x) \big]$. By Condition~\ref{cond:cond}\eqref{item:momn} and \eqref{item:moment} $h_n(Z_{n1}^*)$ is uniformly integrable. Hence, to obtain that $\Exp[C_{n2}(\ell)] \to C_2(\ell)$ we only have to show that $h_n(Z_{n1}^*) \ind(Z_{n1}^*\le \ell)\dto h(Z) \ind(Z\le \ell)$ with $Z$ being exponentially distributed with parameter $\theta$. This in turn follows from the extended continuous mapping theorem, since $Z_{n1}^*\dto Z$ and $h_n(x_n)\ind(x_n \le \ell) \to h(x)\ind(x \le \ell)$ for any sequence $x_n \to x\neq \ell$. To see the latter, note that, for $x < \ell$ and $n$ large enough, Minkowski's inequality and Condition~\ref{cond:cond}\eqref{item:momn} and \eqref{item:moment} imply that
\[
\vert h_n(x_n) - h_n(x) \vert  = \big | \Exp\big[ (Z_{n1}-\theta^{-1})\{N_{\bn}^{\scs(x_n)} (E) - N_{\bn}^{\scs(x)} (E) \}\big] \big|\leq C \times \vert x_n-x \vert^{1/(2+\delta)}.
\]

Consider the variance of $C_{n2}(\ell)$. By the Cauchy-Schwarz inequality, up to negligible terms, it can be written as
\begin{multline} \label{eq:4}
\kn^{-4} \sum_{(i,i',j,j')\in J} \Cov \Big( (Z_{nj}-\theta^{-1}) \textstyle  \sum_{s \in I_j} f(U_s,Z_{ni})   \ind(Z_{ni} \le \ell),  \\
(Z_{nj'}-\theta^{-1})\textstyle \sum_{s' \in I_{j'}} f(U_{s'},Z_{ni'})   \ind(Z_{ni'} \le \ell) \Big)
\end{multline}
where $J$ denote the set of all $(i,i',j,j')\in \{1, \dots, \kn\}^4$ such that any two of the indexes are at distance larger than 2. We have to show that all covariances in this sum converge to $0$, uniformly in the indexes. 

First, consider the case where either  $i \vee j <i' \wedge j'$ or  $i'\vee j' < i \wedge j$. Recall Lemma 3.11 in \cite{DehPhi02}: for real-valued random variables $X,Y$ and real numbers $r,s,t>1$ such that $1/r+1/s+1/t=1$, we have
\begin{align} \label{eq:deh}
\big| \Exp[XY] - \Exp[X] \Exp[Y] \big| \le 10 \| X \|_r \| Y \|_s \alpha(\sigma(X), \sigma(Y) ) ^{1/t}.
\end{align}
Therefore, for some $\eps\in(0,\delta)$, the covariances inside the sum in \eqref{eq:4} are bounded by
\[
\| (Z_{nj}-\theta^{-1}) \textstyle  \sum_{s \in I_j} f(U_s,Z_{ni})   \ind(Z_{ni} \le \ell) \|_{2+\eps}^2 \{ \alpha_1(\bn) \}^{\eps/(2+\eps)},
\]
which can be seen to be $o(1)$ by Minkowski's inequality and the Cauchy-Schwarz inequality. 

The other cases are slightly more difficult. Consider the case $i<j'<j<i'$. Apply Berbee's coupling Lemma with $X=(U_s)_{s\in I_{j'}\cup I_{j} \cup I_{i'}}$ and $Y=(U_s)_{s\in U_i}$. Then the mixed moment inside the covariance can be written as
\begin{align*}
&\, \Exp\Big[  (Z_{nj}-\theta^{-1}) \textstyle  \sum_{s \in I_j} f(U_s,Z_{ni})   \ind(Z_{ni} \le \ell) \\
&\hspace{2.2cm} \times (Z_{nj'}-\theta^{-1})\textstyle \sum_{s' \in I_{j'}} f(U_{s'},Z_{ni'})   \ind(Z_{ni'} \le \ell) \Big] \\
 =  &\,
\Exp\Big[  (Z_{nj}-\theta^{-1}) \textstyle  \sum_{s \in I_j} f(U_s,Z_{ni}^*)\ind(Z_{ni}^* \le \ell)  \\ 
& \hspace{2.2cm} \times (Z_{nj'}-\theta^{-1})\textstyle \sum_{s' \in I_{j'}} f(U_{s'},Z_{ni'})   \ind(Z_{ni'} \le \ell) \Big] + o(1),
\end{align*}
where the remainder term has been handled by H\"older's and Minkowski's inequality just as in \eqref{eq:cn2}. A second application of Berbee's coupling Lemma (with $X=((U_s^*)_{s\in I_i} , (U_s)_{s\in I_{j'}\cup I_{j}})$ and $Y=(U_s)_{s\in I_{i'}}$) allows to rewrite the dominating term in the last display as
\begin{align*}
&\, \Exp\Big[  (Z_{nj}-\theta^{-1}) \textstyle  \sum_{s \in I_j} f(U_s,Z_{ni}^*)  \ind(Z_{ni}^* \le \ell) \\
& \hspace{3cm} \times (Z_{nj'}-\theta^{-1})\textstyle \sum_{s' \in I_{j'}} f(U_{s'},Z_{ni'}^*)   \ind(Z_{ni'}^* \le \ell) \Big] + o(1) \\
= &\, \Exp\Big[  (Z_{nj}-\theta^{-1}) \textstyle  \sum_{s \in I_j} f(U_s,Z_{ni}^*)  \ind(Z_{ni}^* \le \ell) \Big] \\
& \hspace{3cm} \times  \Exp[(Z_{nj'}-\theta^{-1})\textstyle \sum_{s' \in I_{j'}} f(U_{s'},Z_{ni'}^*)   \ind(Z_{ni'}^* \le \ell) \Big] + o(1),
\end{align*}
where the latter equality follows from \eqref{eq:deh}. Since 
\begin{multline*}
\Exp\Big[  (Z_{nj}-\theta^{-1}) \textstyle  \sum_{s \in I_j} f(U_s,Z_{ni}^*)  \ind(Z_{ni}^* \le \ell) \Big] \\
=
\Exp\Big[  (Z_{nj}-\theta^{-1}) \textstyle  \sum_{s \in I_j} f(U_s,Z_{ni})  \ind(Z_{ni} \le \ell) \Big] +o(1)
\end{multline*}
we finally obtain that
\begin{multline*}
\Cov\Big( (Z_{nj}-\theta^{-1}) \textstyle  \sum_{s \in I_j} f(U_s,Z_{ni}^*)   \ind(Z_{ni}^* \le \ell) , \\
(Z_{nj'}-\theta^{-1})\textstyle \sum_{s' \in I_{j'}} f(U_{s'},Z_{ni'}^*)   \ind(Z_{ni'}^* \le \ell) \Big) = o(1)
\end{multline*}

All other cases can be treated similarly by a successive application of Berbee's coupling Lemma. Also, $C_{n3}$ can be treated similarly. 
\end{proof}

 \begin{proof}[Proof of Lemma~\ref{lem:bias}]  We begin with the disjoint blocks estimator and write $(\hat T_n, T_n) = (\hat T_n^{\djb}, T_n^{\djb})$.
Recalling \eqref{eq:long2}, we can write
$
\kn \Exp[ \hat T_n - T_n]  = S_{n1} + S_{n2} + S_{n3} + S_{n4}, 
$
where 
\begin{align*}
S_{n1} &= \sum_{s =1}^{\bn} \Exp [ \ind (U_s > 1- \tfrac{Z_{n1}}{\bn}) - \tfrac{Z_{n1}}{\bn}]\\
S_{n2} &= \frac{\kn -1}{\kn} \sum_{s = 1 }^{\bn} \Exp [ \ind (U_s > 1- \tfrac{Z_{n2}}{\bn}) - \tfrac{Z_{n2}}{\bn}],\\
S_{n3} &= \frac{\kn - 1}{ \kn} \sum_{s=\bn+1}^{2 \bn}\Exp [ \ind (U_s > 1- \tfrac{Z_{n1}}{\bn}) - \tfrac{Z_{n1}}{\bn}]\\
S_{n4} &= \sum_{i=3}^{\kn } \frac{\kn - i+1}{\kn} \Big \{ \sum_{s \in I_1} \Exp[\ind(U_s > 1- \tfrac{Z_{ni}}{\bn}) - \tfrac{Z_{ni}}{\bn}]  \\
& \hspace{4cm}+ \sum_{s \in I_i}\Exp[\ind(U_s > 1- \tfrac{Z_{n1}}{\bn}) - \tfrac{Z_{n1}}{\bn}] \Big \}.
\end{align*}
Note that $S_{n1}= - \Exp [Z_{n1}] \to - \theta^{-1}$, as $n \to \infty$, by Condition \ref{cond:cond} \eqref{item:moment}. Hence, it remains to be shown that $S_{n2}$, $S_{n3}$ and $S_{n4}$ vanish as $n \to \infty$. 

Consider $S_{n2}$. Choose some integer $l\in \N$  and let $n$ be sufficiently large such that $ \bn>l $. Write $S_{n2} = (\kn-1)/\kn \{ S_{n2}^++S_{n2}^-\}$, where
\begin{align*}
S_{n2}^+ &= \sum_{s =1}^{\bn-l}  \Exp [ \ind(U_s > 1- \tfrac{Z_{n2}}{\bn}) - \tfrac{Z_{n2}}{\bn}], \\
S_{n2}^-&=\sum_{s=\bn - l +1}^{\bn} \Exp [\ind (U_s > 1- \tfrac{Z_{n2}}{\bn})- \tfrac{Z_{n2}}{\bn}].
\end{align*}
The absolute value of $S_{n2}^-$  can be bounded by
\[
\frac{l}{ \bn} \Exp[\vert Z_{n1} \vert] + l \; \Prob(\max_{s=1}^l U_s > \max_{s=l+1}^{l+\bn} U_s)
\]
which goes to $0$ as $n \to \infty$ for any fixed $l$ by Condition \ref{cond:cond} \eqref{item:moment} and similar reasons as in the proof of Lemma \ref{lem:fidirob}, see \eqref{eq:zn0}. 
For the treatment of $S_{n2}^+$ fix $q > 0$ such that $q < \lim_{n \to \infty} \Vert Z_{n1} \Vert_2 = {\sqrt 2}/{\theta}$. Then, for sufficiently large $n$, we can use the coupling construction leading to \eqref{eq:bradley} (with $X=U_s$ and $Y=Z_{n2}$) to find a random variable $Z_{n2}^*$ that has the same distribution as $Z_{n2}$, is in dependent of $U_s$ and satisfies
\[
\Prob(\vert Z_{n2} - Z_{n2}^* \vert > q ) \leq 18 ( \Vert Z_{n2} \Vert_2   / q)^{2/5} \alpha( \sigma(U_s) , \sigma(U_{n2}))^{4/5}.
\]
By a  monotonicity argument, we have
\begin{multline*}
\big \vert \Exp \big [ \big \{ \ind(U_s > 1- \tfrac{Z_{n2}}{\bn}) - \tfrac{Z_{n2}}{\bn}\big  \} \ind(\vert Z_{n2} - Z_{n2}^*\vert \leq q) \big ]\big \vert \\
\leq 
\big\vert  \Exp\big [\big \{ \ind(U_s > 1- \tfrac{Z_{n2}^* + q}{\bn}) - \tfrac{Z_{n2}^* +q }{\bn}\big\} \ind(\vert Z_{n2} - Z_{n2}^*\vert \leq q)\big ] \big \vert \\
+
\big \vert  \Exp \big [\big \{ \ind(Us > 1- \tfrac{Z_{n2}^*-q}{\bn}) - \tfrac{Z_{n2}^*-q}{\bn}\big \} \ind(\vert Z_{n2} - Z_{n2}^*\vert \leq  q) \big  ] \big \vert 
+
 \frac{2 q }{\bn}.
\end{multline*}
Furthermore, since $Z_{n2}^*$ is independent of $U_s$,
\begin{multline*}
\big\vert  \Exp\big [\big \{ \ind(U_s > 1- \tfrac{Z_{n2}^* \pm q}{\bn}) - \tfrac{Z_{n2}^* \pm q }{\bn}\big\} \ind(\vert Z_{n2} - Z_{n2}^*\vert \leq  q)\big ] \big \vert \\
=
 \big\vert  \Exp\big [\big \{ \ind(U_s > 1- \tfrac{Z_{n2}^* \pm q}{\bn}) - \tfrac{Z_{n2}^* \pm q }{\bn}\big\} \ind(\vert Z_{n2} - Z_{n2}^*\vert > q)\big ] \big \vert .
\end{multline*}
Combining everything we obtain 
\begin{align*}
|S_{n2}^+| &\le
\sum_{s =1}^{\bn-l}\big \vert \Exp \big [ \big \{ \ind(U_s > 1- \tfrac{Z_{n2}}{\bn}) - \tfrac{Z_{n2}}{\bn}\big  \} \ind(\vert Z_{n2} - Z_{n2}^*\vert \leq q) \big ]\big \vert \\
& \hspace{2cm} +
\sum_{s =1}^{\bn-l} \big \vert \Exp \big [ \big \{ \ind(U_s > 1- \tfrac{Z_{n2}}{\bn}) - \tfrac{Z_{n2}}{\bn}\big  \} \ind(\vert Z_{n2} - Z_{n2}^*\vert > q) \big ]\big \vert\\
&\le
 \frac{2 q (\bn -l)}{\bn} +54 (\Vert Z_{n2}\Vert_2/q)^{2/5} \sum_{s =l+1}^{\bn}   \alpha(s)^{4/5}.
\end{align*}
As a consequence, since $\alpha(s)\leq C s^{-\eta} \leq C s^{-3}$ by Condition \ref{cond:cond} \eqref{item:rate2},
\[
\limsup_{n\to \infty} \vert S_{n2} \vert \leq 2 q + 54 C (\sqrt 2/(\theta q))^{2/5} \sum_{s =l}^{\infty}   s^{-12/5}
\]
This bound in turn can be made arbitrarily small by first choosing $q$ sufficiently small and then choosing $l$ sufficiently large. Hence, $\lim_{n \to \infty} \vert S_{n2} \vert =0$. Along the same lines, we obtain that $\lim_{n \to \infty} \vert S_{n3} \vert =0$.

The term $S_{n4}$ can also be treated by a coupling construction. Here, we choose $q = q_n = \kn^{-1-\eps}$ for some $\eps \in (0,3/4)$. By similar arguments as before, we obtain that
\begin{align*}
\vert S_{n4} \vert 
&\le
 2 \sum_{i=3}^{\kn} \Big \{ 2 q_n + 54 (\Vert Z_{n1} \Vert_2/ q_n)^{2/5} \bn \alpha((i-2)\bn)^{4/5} \Big \}\\
&\le
 4 \kn^{- \eps} + 108\cdot \kn^{2/5(1+ \eps)} \bn ^{-7/5}\Vert Z_{n1} \Vert_2^{2/5}C  \sum_{i=3}^{\kn}(i-2)^{-12/5} \\
&= 
O(( \kn / \bn^2)^{2/5(1+\eps)}  \bn^{-3/5 + 4/5 \eps}) =o(1),
\end{align*}
by Condition \ref{cond:cond} \eqref{item:rate2} and by the choice of $\eps$. The proof for the disjoint blocks estimator is finished.

\textit{Sliding Blocks}.
By the definition of $\hat T_n^{\slb} $ and $T_n^{\slb}$ we can write
\[
\kn \Exp[\hat T_n^{\slb} - T_n^{\slb}]  = S_{n1}^{\slb} + S_{n2}^{\slb} + S_{n3} ^{\slb}+ S_{n4}^{\slb} + S_{n5}^{\slb} +o(1), 
\]
as $n \to \infty$, where
\begingroup
\allowdisplaybreaks 
\begin{align*}
S_{n1}^{\slb} &= \frac{1}{\bn} \sum_{s =1}^{\bn} \sum_{t=1}^{\bn} \Exp \Big[ \ind \Big(U_s > 1- \tfrac{Z_{nt}^{\slb}}{\bn}\Big) - \tfrac{Z_{nt}^{\slb}}{\bn}\Big]\\
S_{n2}^{\slb} &= \frac{1}{\bn} \frac{\kn -1}{\kn} \sum_{s = \bn+1 }^{2\bn}\sum_{t=1}^{\bn}  \Exp \Big[ \ind \Big(U_s > 1- \tfrac{Z_{nt}^{\slb}}{\bn}\Big) - \tfrac{Z_{nt}^{\slb}}{\bn}\Big]\\
S_{n3}^{\slb} &=\frac{1}{\bn} \frac{\kn - 2}{ \kn} \sum_{s=1}^{ \bn} \sum_{t=\bn+1}^{2\bn} \Exp \Big[ \ind\Big (U_s > 1- \tfrac{Z_{nt}^{\slb}}{\bn}\Big) - \tfrac{Z_{nt}^{\slb}}{\bn}\Big]\\
S_{n4}^{\slb}&=\frac{1}{\bn} \sum_{i = 3}^{\kn-1} \frac{\kn - i }{\kn } \sum_{s \in I_1} \sum_{t \in I_i} \Exp\Big [ \ind \Big(U_s > 1- \tfrac{Z_{nt}^{\slb}}{\bn}\Big) - \tfrac{Z_{nt}^{\slb}}{\bn}\Big]\\
S_{n5}^{\slb} &=\frac{1}{\bn} \sum_{i = 3}^{\kn} \frac{\kn - i + 1 }{\kn } \sum_{s \in I_i} \sum_{t \in I_1} \Exp\Big [ \ind \Big(U_s > 1- \tfrac{Z_{nt}^{\slb}}{\bn}\Big) - \tfrac{Z_{nt}^{\slb}}{\bn}\Big].
\end{align*}
\endgroup
$S_{n3}^{\slb}$ and $S_{n4}^{\slb}+S_{n5}^{\slb}$ are negligible by the same reasons as for the treatment of $S_{n2}$ and $S_{n4}$ above, respectively. Regarding $S_{n1}^{\slb}$, we can write
\begin{multline*}
 S_{n1}^{\slb}=\frac{1}{\bn} \sum_{s =1}^{\bn} \sum_{t=1}^{\bn} \Exp \Big [ \ind \Big (U_s > 1- \tfrac{Z_{nt}^{\slb}}{\bn} \Big) - \tfrac{Z_{nt}^{\slb}}{\bn}\Big ]\\
= \frac{1}{\bn} \sum_{t=1}^{\bn} \sum_{s=1}^{t-1} \Exp \Big[ \ind\Big(U_s > 1- \tfrac{Z_{nt}^{\slb}}{\bn}\Big) - \tfrac{Z_{nt}^{\slb}}{\bn}\Big] -  \frac{1}{\bn^2} \sum_{t=1}^{\bn} \sum_{s=t}^{\bn} \Exp [Z_{nt}^{\slb}].
\end{multline*}
 The first summand on the right-hand side vanishes by similar arguments as we used to show the negligibility of $S_{n2}$ above. Furthermore, the second sum on the right-hand side converges to $-\frac{1}{2 \theta}$ for $n \to \infty$, by Condition \ref{cond:cond}\eqref{item:moment}. Hence, $\lim_{n\to \infty} S_{n1}^{\slb} = - \frac{1}{2 \theta}$. Similarly,  $\lim_{n\to \infty} S_{n2}^{\slb} = - \frac{1}{2 \theta}$,  which finishes the proof.
\end{proof}

\begin{proof}[Proof of Lemma~\ref{lem:archbound}.]
A function $f$ is slowly varying with index $\alpha \in \R$, notationally $f\in RV_\alpha$, if $\lim_{t \to \infty} f(tx)/f(t) = x^\alpha$ for any $x>0$.
Recall the Potter bounds (\citealp{BinGolTeu87}, Theorem 1.5.6): if $f\in RV_\alpha$, then, for any $\delta_1, \delta_2>0$, there exists some constant $t_0=t_0(\delta_1, \delta_2)$ such that, for any $t$ and $x$ with $t\ge t_0, tx \ge t_0$:
\[
(1-\delta_1) x^\alpha \min(x^{\delta_2}, x^{-\delta_2}) \le \frac{f(tx)}{f(t)} \le (1+ \delta_1) x^\alpha \max(x^{\delta_2}, x^{-\delta_2}).
\] 
Let $U(z)=F^{\leftarrow}(1-1/z)=\{1/(1-F)\}^{\leftarrow}(z)$. Since $1-F(x) \sim cx^{-\kappa}$, the function $x\mapsto 1/(1-F(x))$ is regularly varying with index $\kappa$.  We obtain that $U\in RV_{1/\kappa}$ by, e.g., Proposition~0.8~(v) in \cite{Res87}, 

For non-negative integers $j>i$ define
\[
\textstyle \Pi_{i+1:j} = \prod_{k=i+1}^j A_k, \qquad Y_{i+1:j} = \sum_{k=i+1}^j \Pi_{k+1:j} B_k.
\]
Then $X_j = \Pi_{i+1:j} X_i + Y_{i+1:j}$ and $(\Pi_{i+1:j}, Y_{i+1:j})$ is independent of $X_i$. We obtain that
\begin{align*}
&\, \Prob(U_i > 1- y, U_j > 1-y)  \\
=&\,
\Prob\{ X_i > F^{\leftarrow}(1-y), \Pi_{i+1:j} X_i +Y_{i+1:j} >  F^{\leftarrow}(1-y) \} \\
\le&\, P_{n1} + P_{n2}
\end{align*}
where 
\begin{align*}
P_{n1} &= \Prob\{ X_i > F^{\leftarrow}(1-y), \Pi_{i+1:j} X_i >  F^{\leftarrow}(1-y) /2  \}, \\
P_{n2}& = 
\Prob\{ X_i > F^{\leftarrow}(1-y), Y_{i+1:j} >  F^{\leftarrow}(1-y)/2 \}.
\end{align*}
Consider $P_{n2}$. By independence of $Y_{i+1:j}$ and $X_i$, we get the bound
\begin{align*}
P_{n2} &\le \Prob\{ X_i > F^{\leftarrow}(1-y) \} \Prob\{ Y_{i+1:j} >  F^{\leftarrow}(1-y)/2 \} \\
&\le
y \Prob\{ X_j >  F^{\leftarrow}(1-y)/2 \} \\
&=
y [ 1- F\{ F^{\leftarrow}(1-y)/2\} ]  \\
&\le
2^{\kappa+2} y^2
\end{align*}
The last inequality follows from the Potter bounds applied to $1-F$ ($\delta_1=\delta_2=1$): we may choose $c_1$ sufficiently small  such that
\[
1- F\{ F^{\leftarrow}(1-y)/2\}  
\le
2 (1/2)^{-\kappa-1}  [ 1- F\{ F^{\leftarrow}(1-y)\}   ]  = 2^{\kappa+2} y  \qquad \forall \, y \in (0,c_1).
\]
Now consider $P_{n1}$. By Markov's inequality and a change of variable, for any $\xi\in(0,\kappa)$,
\begin{align*}
P_{n1} &= 
\int_{F^{\leftarrow}(1-y)}^\infty \Prob\left\{ \Pi_{i+1:j} u > F^{\leftarrow}(1-y)/2 \right\}\, F(\diff u) \\
&\le 
\int_{F^{\leftarrow}(1-y)}^\infty \Exp[ \Pi_{i+1:j}^\xi]  \left\{ \frac{U(1/y) )}{2u}\right\}^{-\xi} \, F(\diff u) \\
&= 
2^{\xi} \Exp[A_1^\xi]^{j-i}  \int_0^y  \left\{ \frac{U(1/v)}{U(1/y)}\right\}^{\xi} \, \diff v
\end{align*}
By the Potter bounds applied to $U\in RV_{1/\kappa}$, with $\delta_1=1$ and $\delta_2 \in (0,1/\xi - 1/\kappa)$,  we have, for all sufficiently large $t$ and for all $x\ge 1$,
\[
\frac{U(tx)}{U(t)} \le 2 x^{\tau}, \qquad \text{where } \tau=1/\kappa + \delta_2 < 1/\xi.
\]
With $t=1/y \ge 1/c_1$ and $x=y/v\ge 1$ we obtain, after decreasing $c_1$ if necessary,
\[
\int_0^y  \left\{ \frac{U(1/v)}{U(1/y)}\right\}^{\xi} \, \diff v 
\le
2^\xi \int_0^y (y/v)^{\xi \tau} \, \diff v
= \frac{2^\xi }{1-\tau\xi}  \cdot y
\]
As a consequence, $P_{n1} \le 4^\xi/(1-\tau\xi) \Exp[A_1^\xi]^{j-i} y$.

The derived bounds on $P_{n1}$ and $P_{n2}$ directly yield the bound 
\begin{align*}
&\hspace{-1.2cm} \Exp\Big\{\sum_{i=1}^{n} \ind(U_i > 1- y)  \Big\}^2  \\
=&\,
\sum_{i=1}^n \Prob(U_i>1-y) + 2\sum_{1\le i < j \le n}  \Prob(U_i >1-y, U_j>1-y) \\
\le&\, n y  + 2 n^2  \cdot 2^{\kappa+2} y^2 +  2 n \frac{4^\xi}{1-\tau\xi}\Big( \sum_{s=1}^\infty \Exp[A_1^\xi]^s\Big) y.
\end{align*}
The assertion follows from the fact that  $\Exp[A_1^\xi] < \Exp[A_1^\kappa]=1$ by condition (S).
\end{proof}

\section{Additional Simulation results}
\label{sec:addsim}

In this section, we present additional results of the simulation study (see also Section \ref{sec:sim}). Figures \ref{fig:mse2}, \ref{fig:mse3} and \ref{fig:mse4} depict the mean squared error $\Exp[(\hat \theta - \theta)^2]$ as a function of the block size parameter $b$ for the ARMAX, the squared ARCH and the Markovian Copula-model, respectively. The curves behave similar as for the ARCH-model (see Figure \ref{fig:mse1}). Additionally, in Figures \ref{fig:bias2}, \ref{fig:var2}, \ref{fig:bias3}, \ref{fig:var3}, \ref{fig:bias1}, \ref{fig:var1}, \ref{fig:bias4} and \ref{fig:var4} we depict the corresponding squared biases and variances for all of the four considered models as a function of the block size $b$. Finally, Figures \ref{fig:msevariance2} and \ref{fig:msevariance3} compare the performances of the estimator of the asymptotic variance for the estimators $\hat \theta_n^{\Be,\slb}$ and $\hat \theta_n^{\No,\slb}$ in the ARMAX and the squared ARCH model. We observe the same behavior as in the simulations for the ARCH model (see Figure \ref{fig:msevariance1}): the approximation for $\hat \theta_n^{\Be,\slb}$ is better than for $\hat \theta_n^{\No,\slb}$.


\setcounter{figure}{4}
\begin{figure}[h!]
\vspace{-.3cm}
\begin{center}
\includegraphics[width=0.46\textwidth]{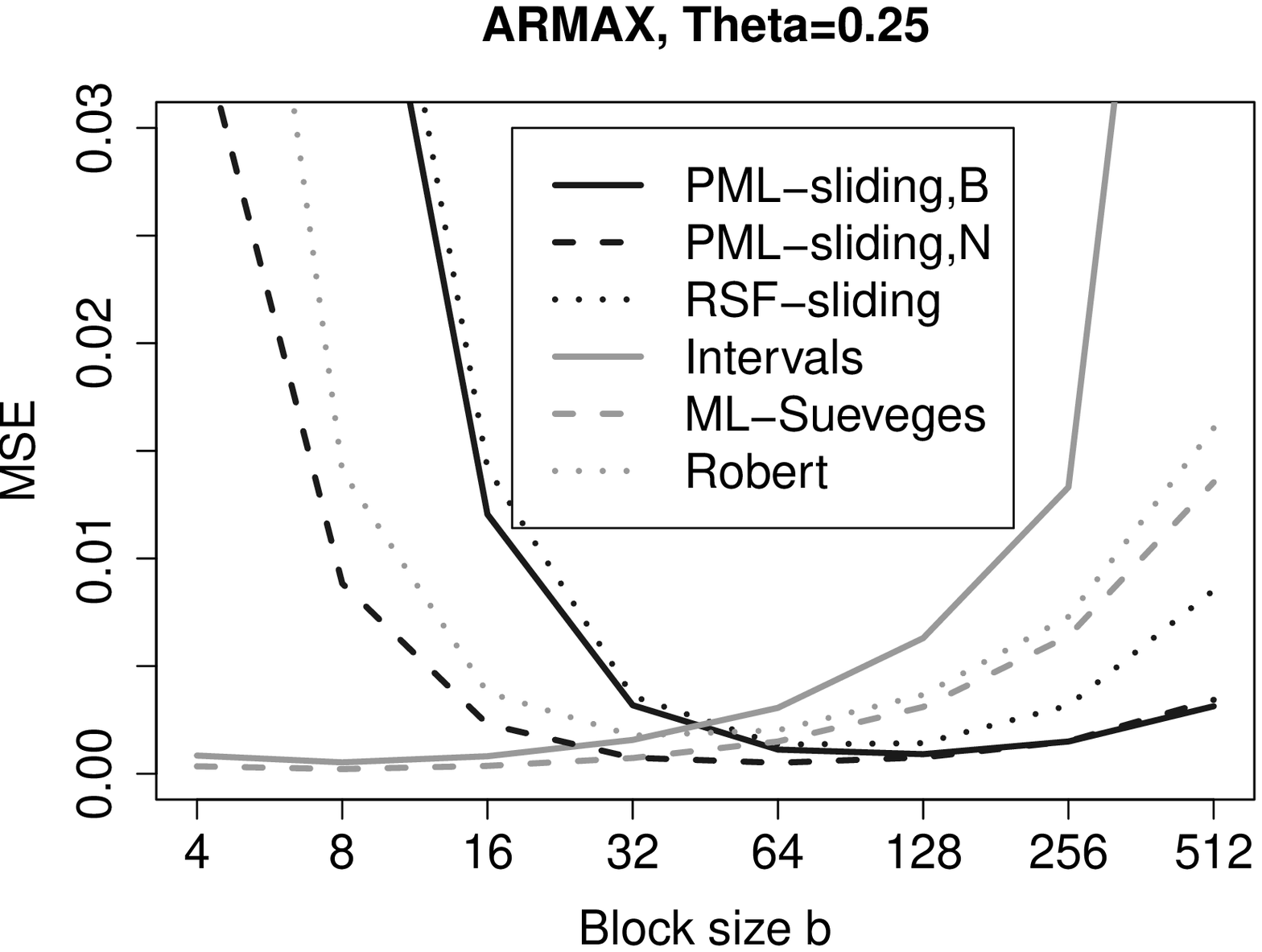}
\hspace{-.3cm}
\includegraphics[width=0.46\textwidth]{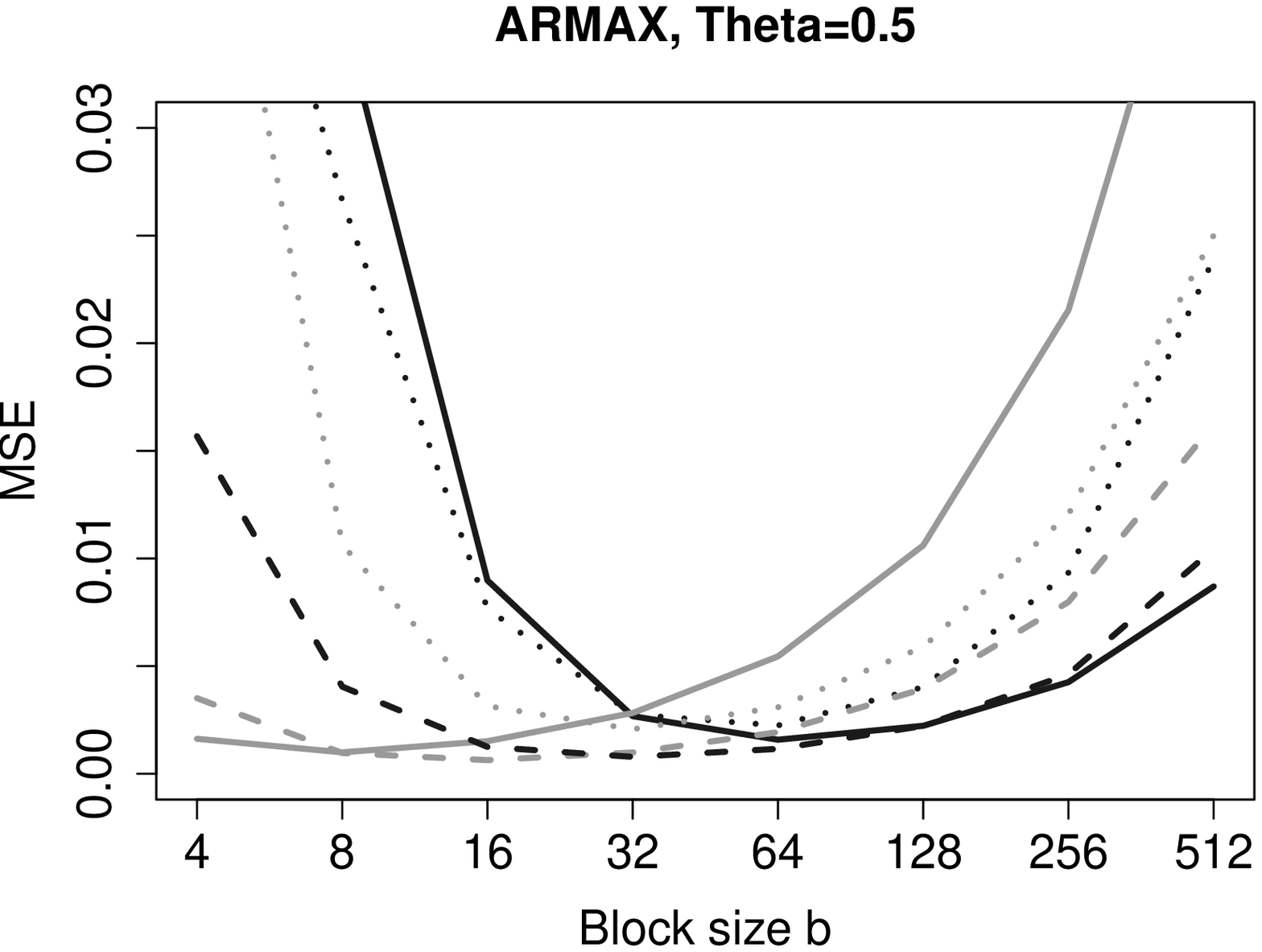}
\vspace{-.2cm}

\includegraphics[width=0.46\textwidth]{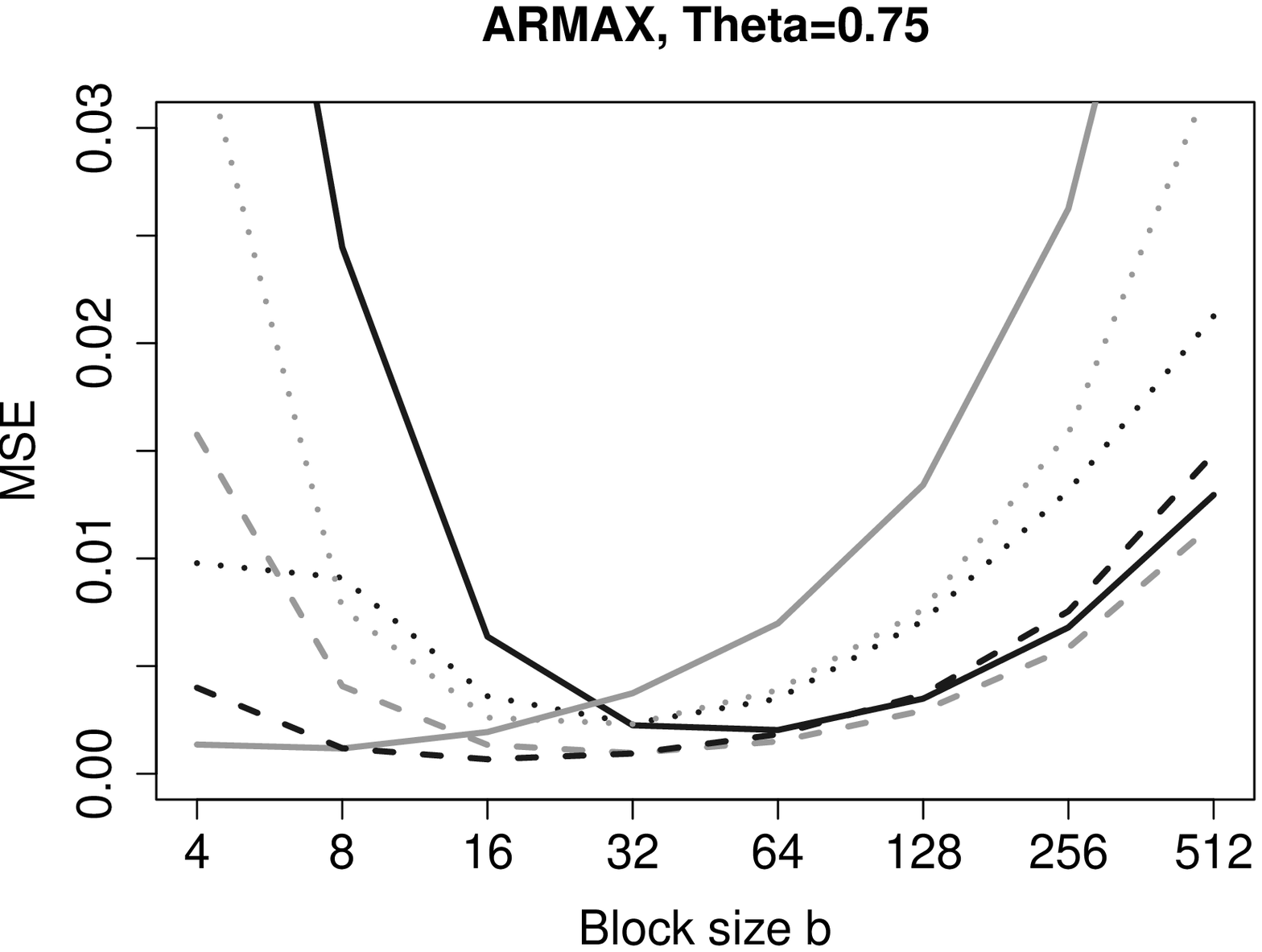}
\hspace{-.3cm}
\includegraphics[width=0.46\textwidth]{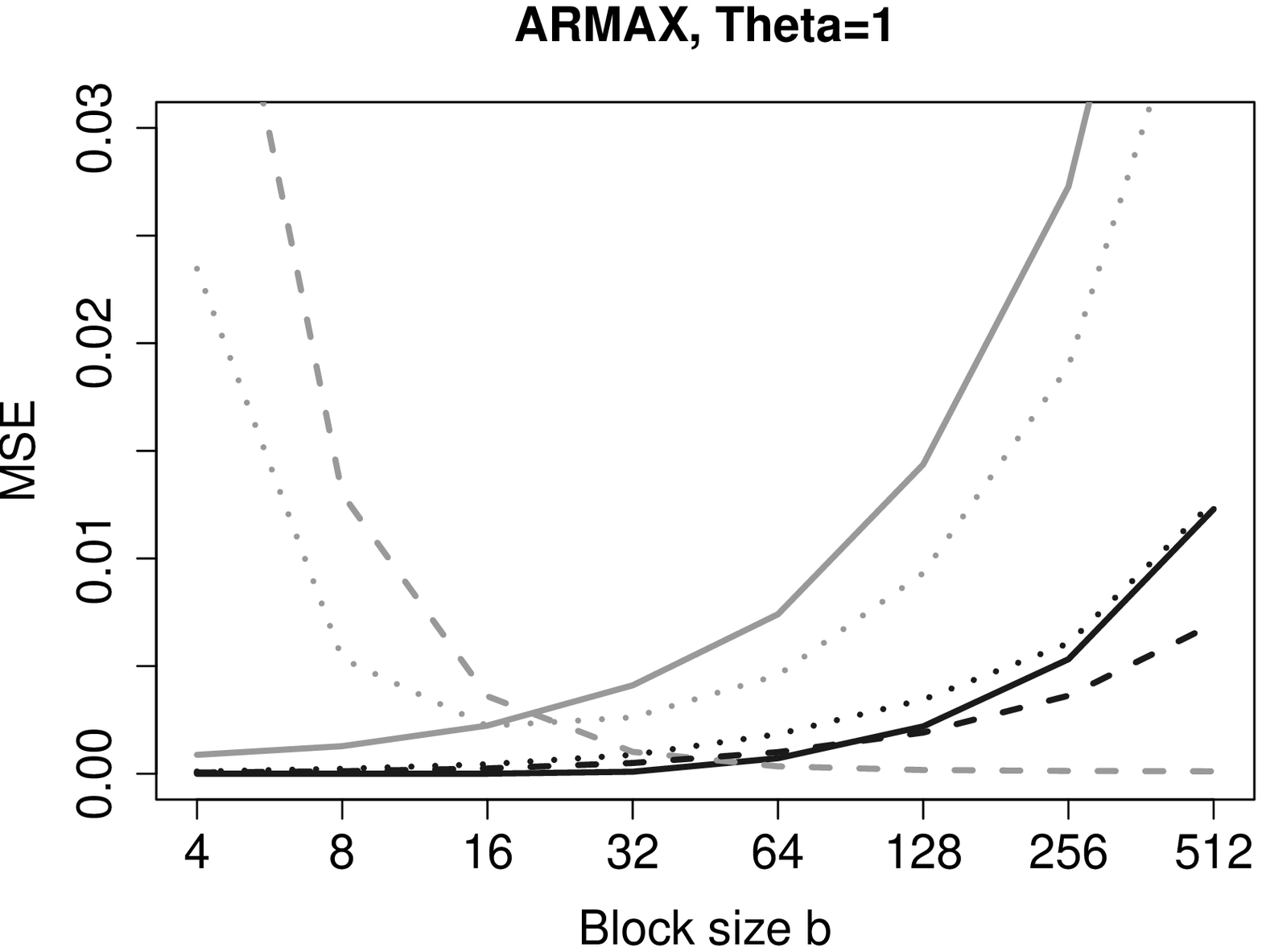}
\end{center}
\vspace{-.5cm}
\caption{\label{fig:mse2}  Mean squared error for the estimation of $\theta$ within the ARMAX-model for four values of $\theta\in\{0.25,0.5,0.75,1\}$. 
}

\vspace*{\floatsep}

\begin{center}
\includegraphics[width=0.43\textwidth]{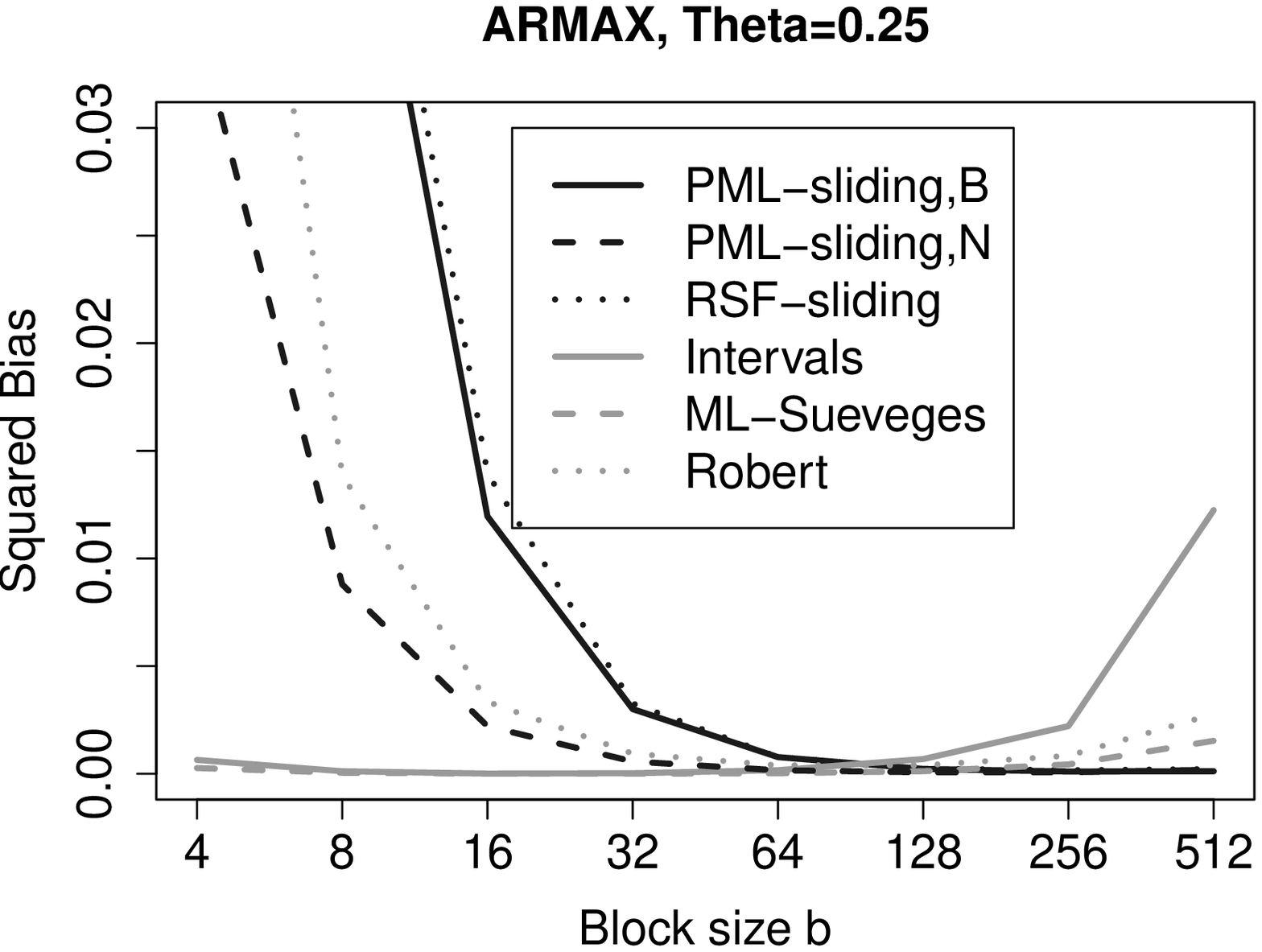}
\hspace{-.3cm}
\includegraphics[width=0.43\textwidth]{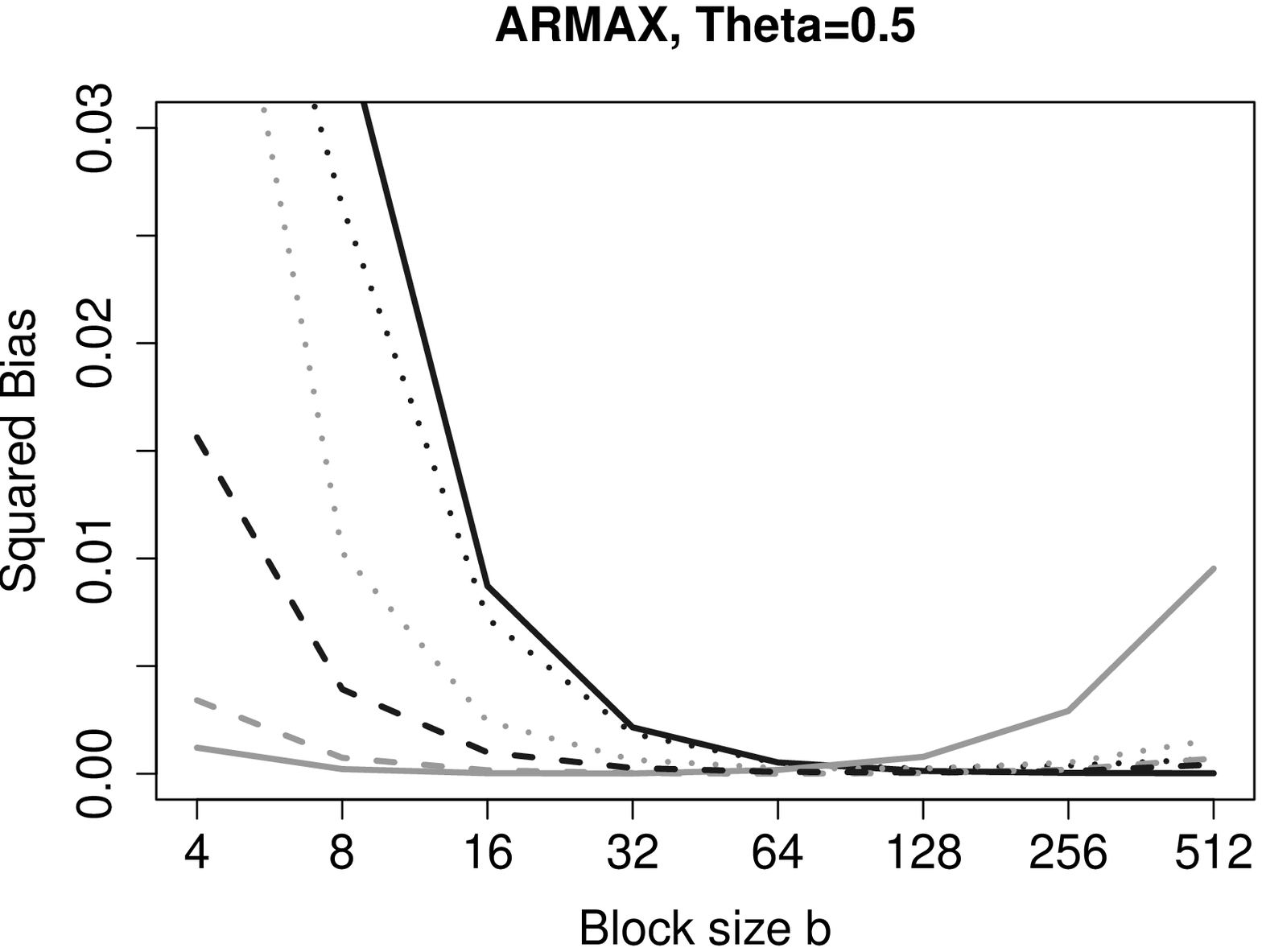}
\vspace{-.2cm}

\includegraphics[width=0.43\textwidth]{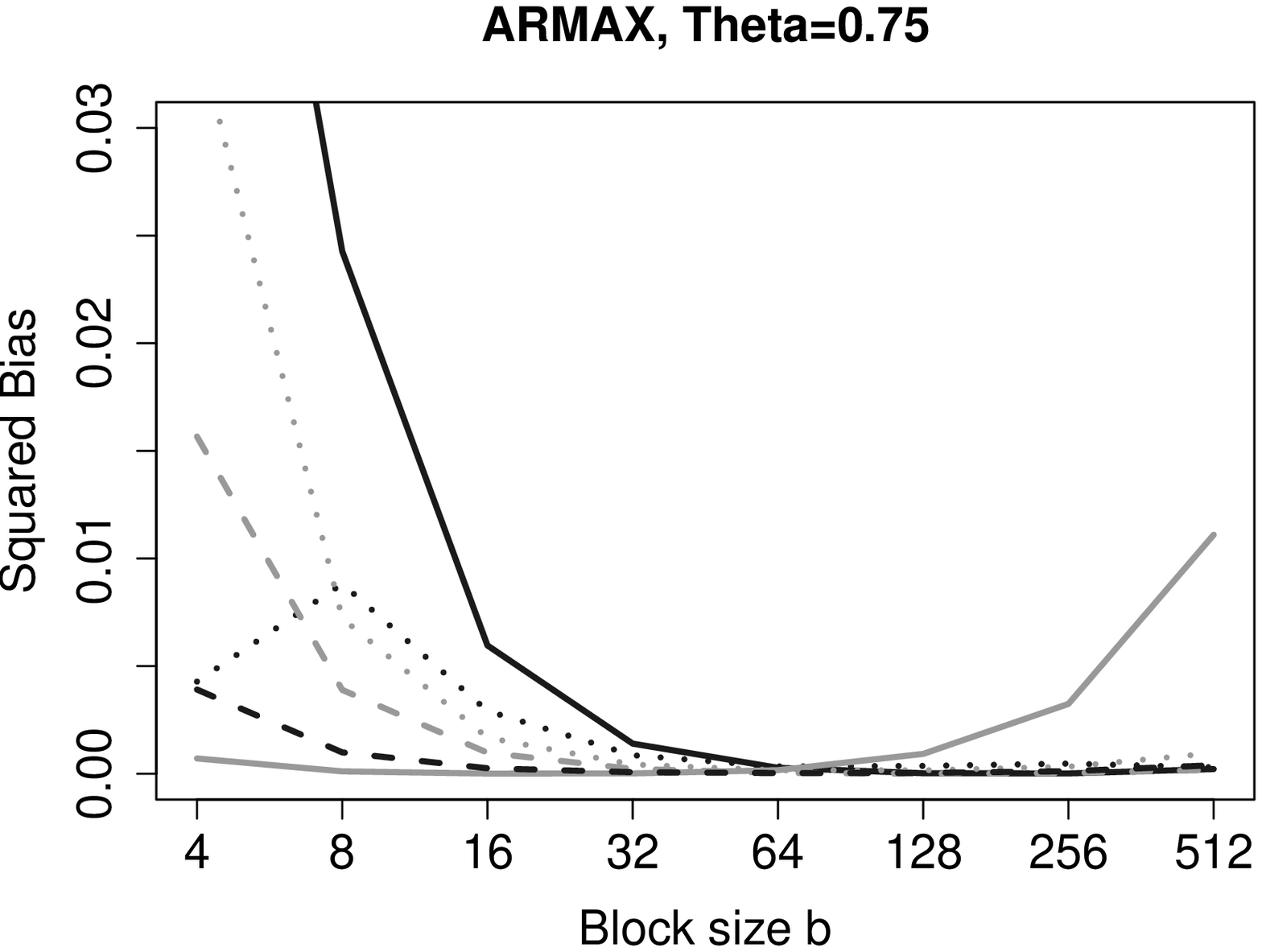}
\hspace{-.3cm}
\includegraphics[width=0.43\textwidth]{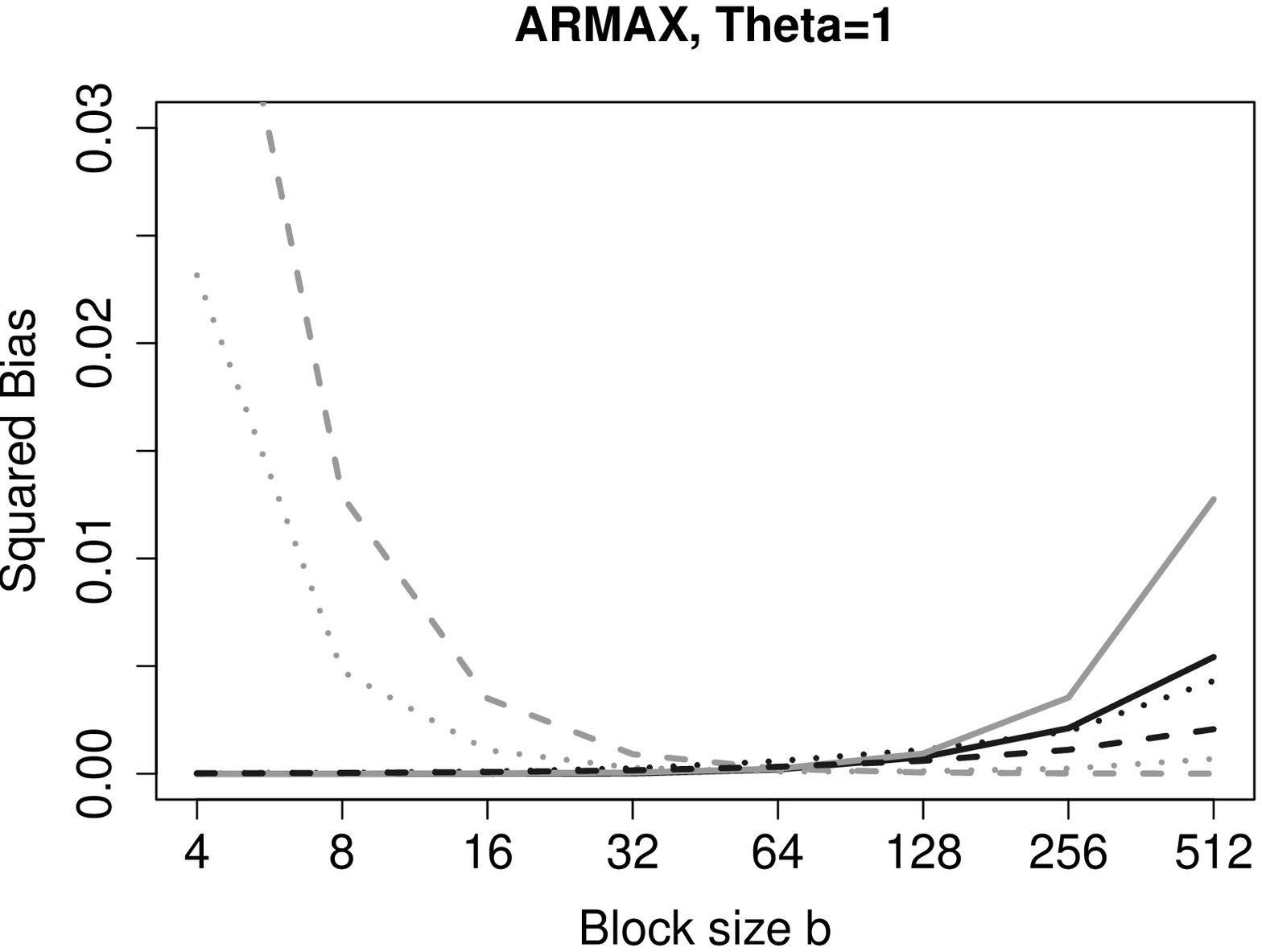}
\end{center}
\vspace{-.3cm}
\caption{\label{fig:bias2}  Squared Bias of the estimation of $\theta$ within the ARMAX-model for four values of $\theta\in\{0.25,0.5,0.75,1\}$. 
}
\end{figure}

\begin{figure}[t!]
\begin{center}
\includegraphics[width=0.43\textwidth]{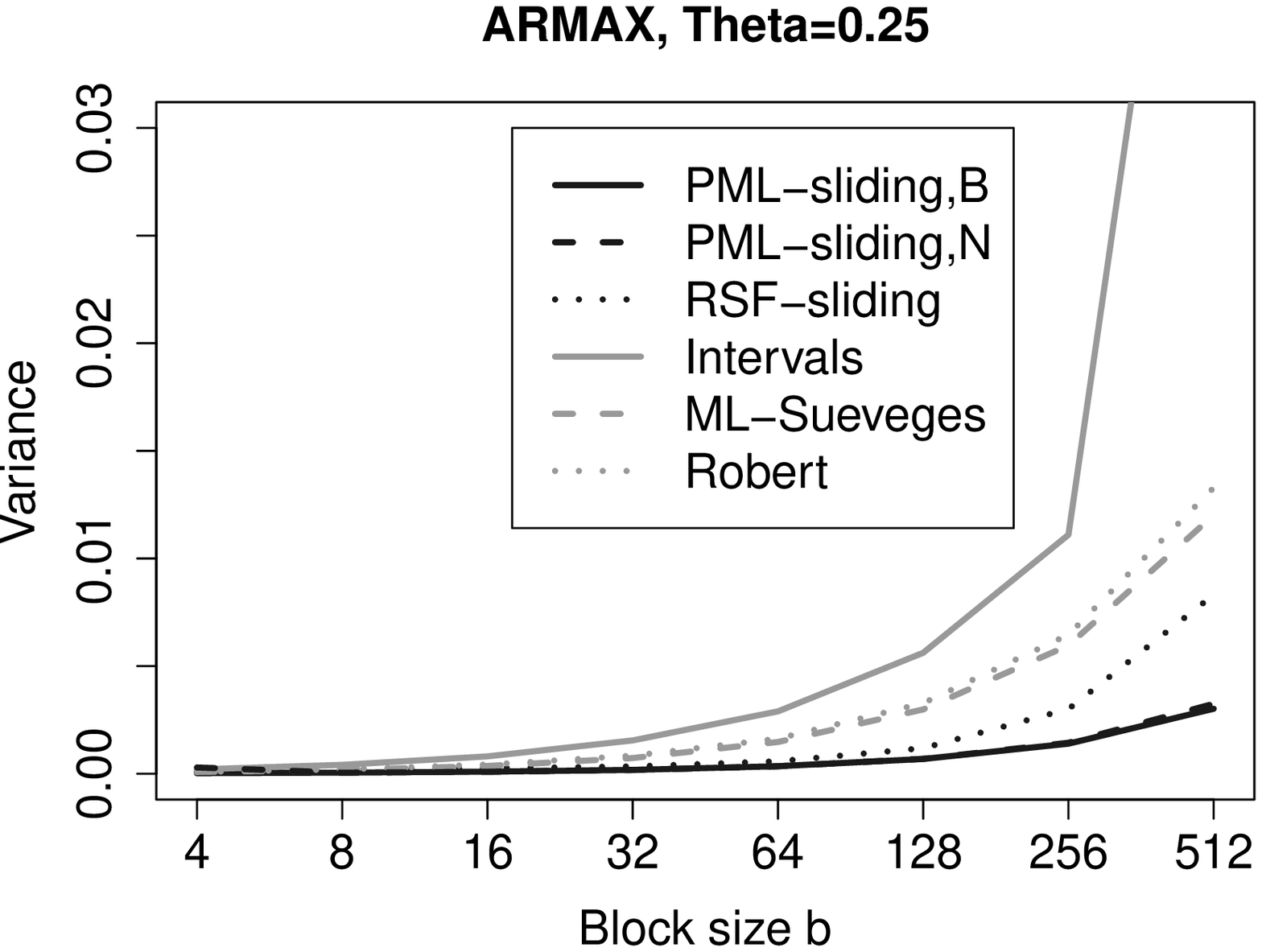}
\hspace{-.3cm}
\includegraphics[width=0.43\textwidth]{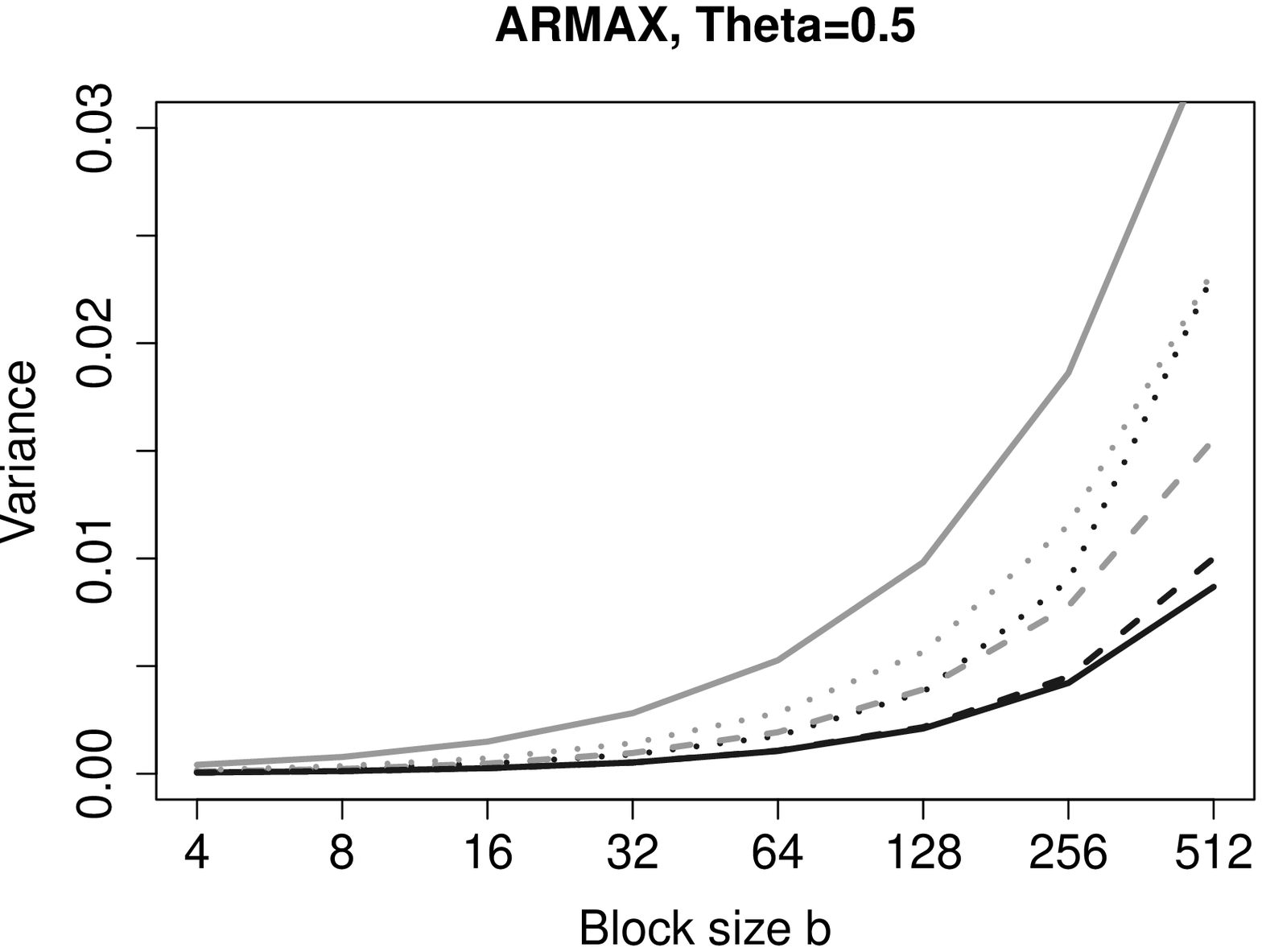}
\vspace{-.2cm}

\includegraphics[width=0.43\textwidth]{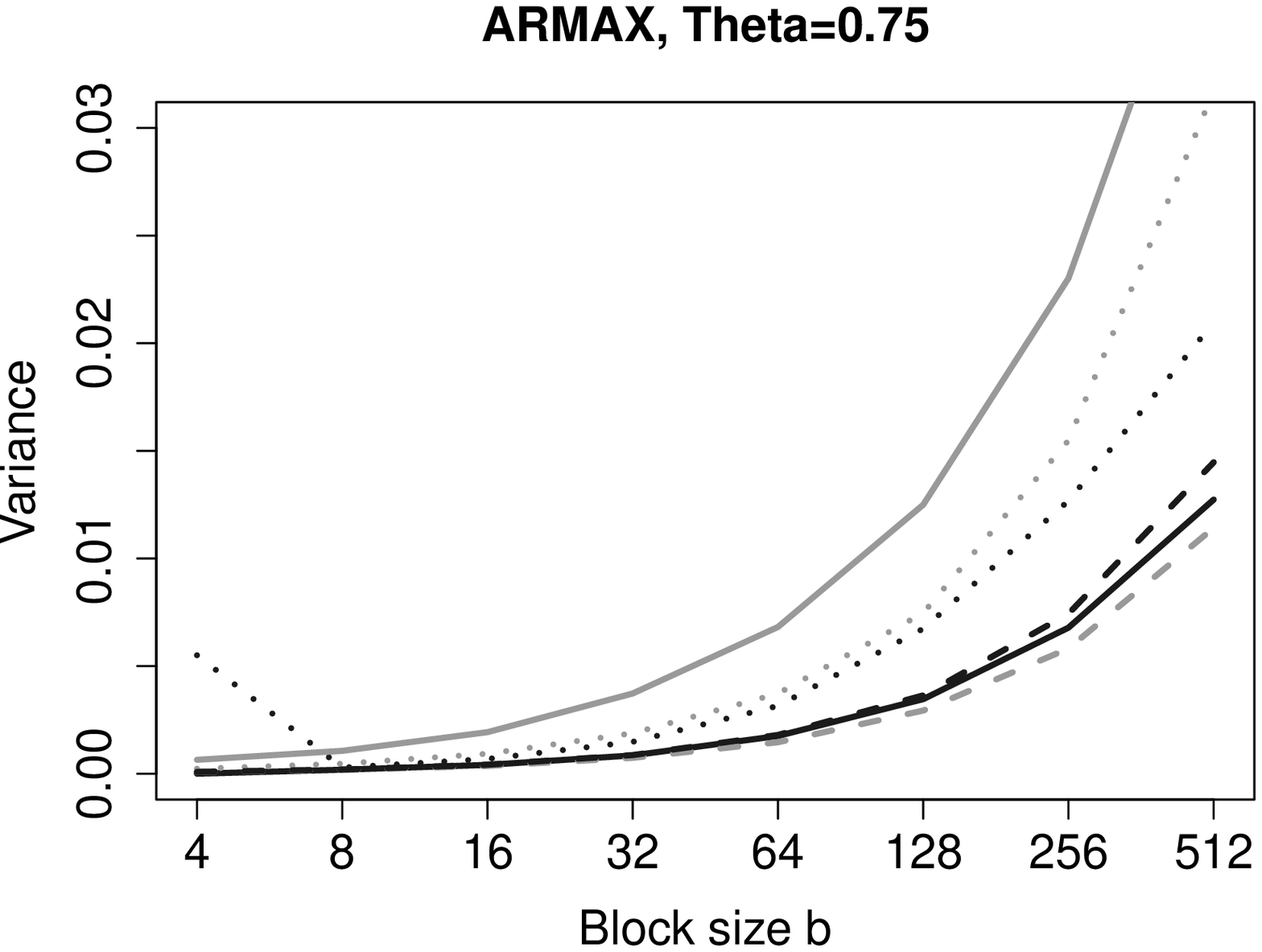}
\hspace{-.3cm}
\includegraphics[width=0.43\textwidth]{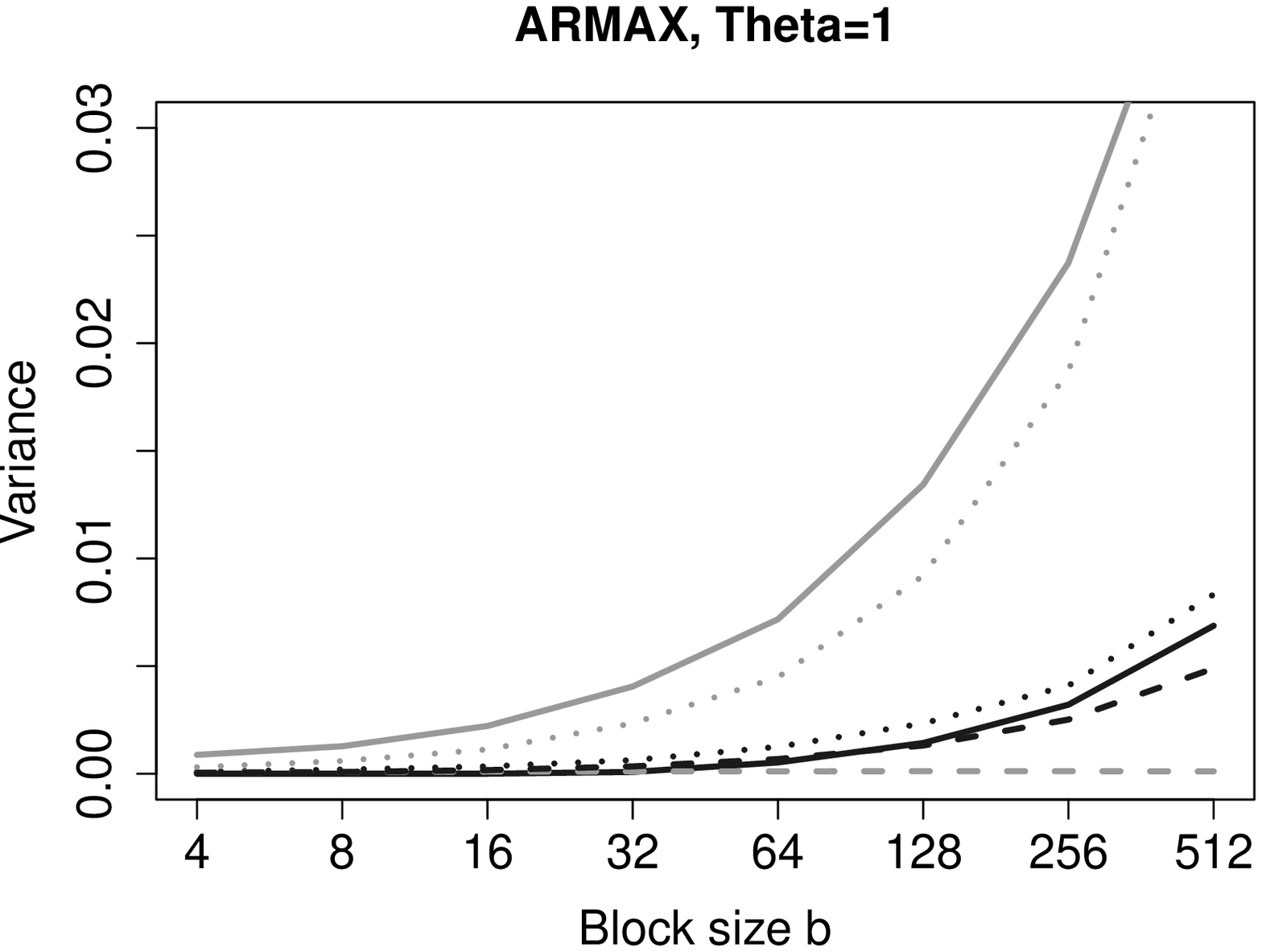}
\end{center}
\vspace{-.3cm}
\caption{\label{fig:var2}  Variance of the estimation of $\theta$ within the ARMAX-model for four values of $\theta\in\{0.25,0.5,0.75,1\}$. 
}
\vspace{-.1cm}
\vspace*{\floatsep}
%
%
\begin{center}
\includegraphics[width=0.43\textwidth]{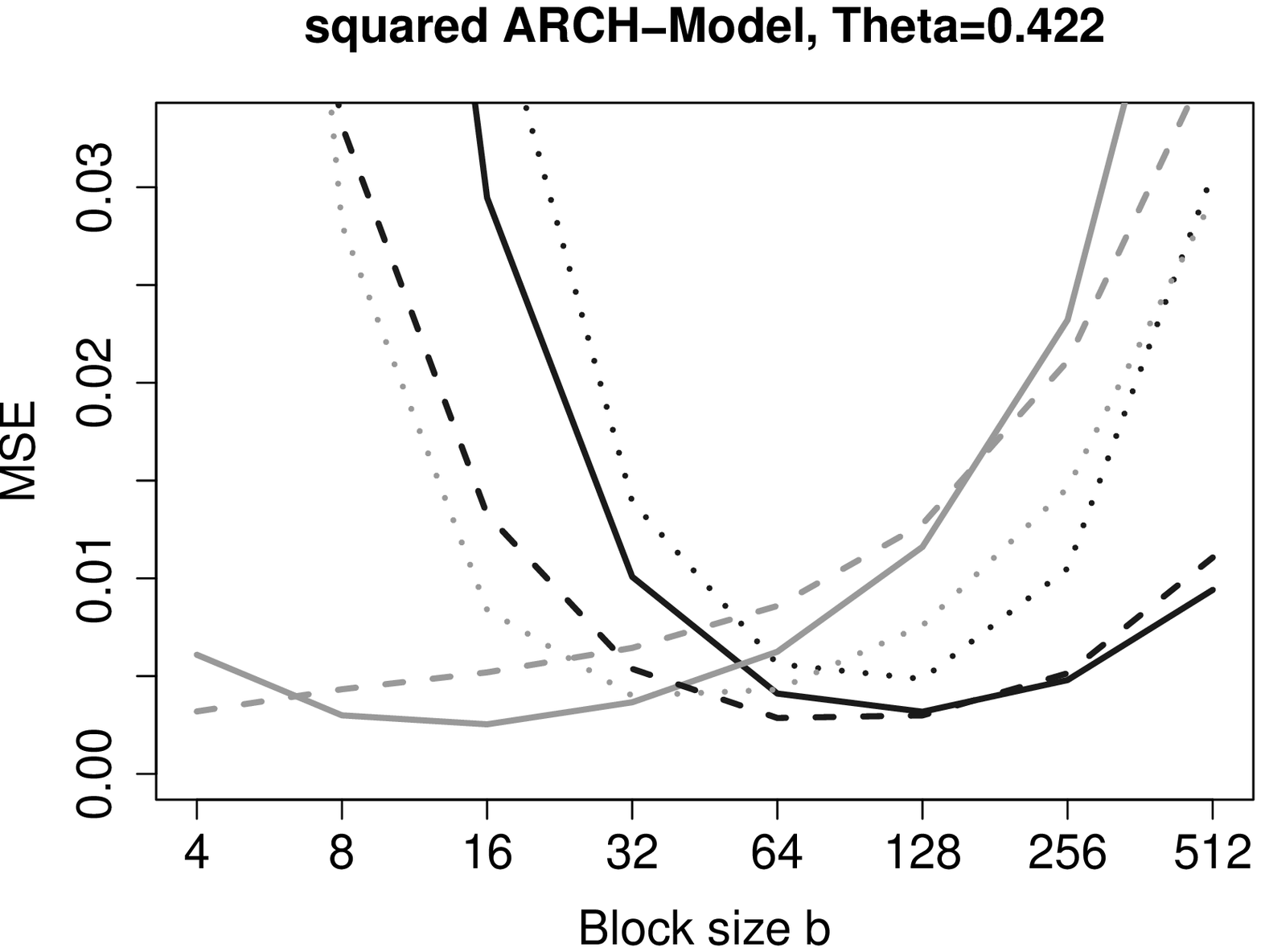}
\hspace{-.3cm}
\includegraphics[width=0.43\textwidth]{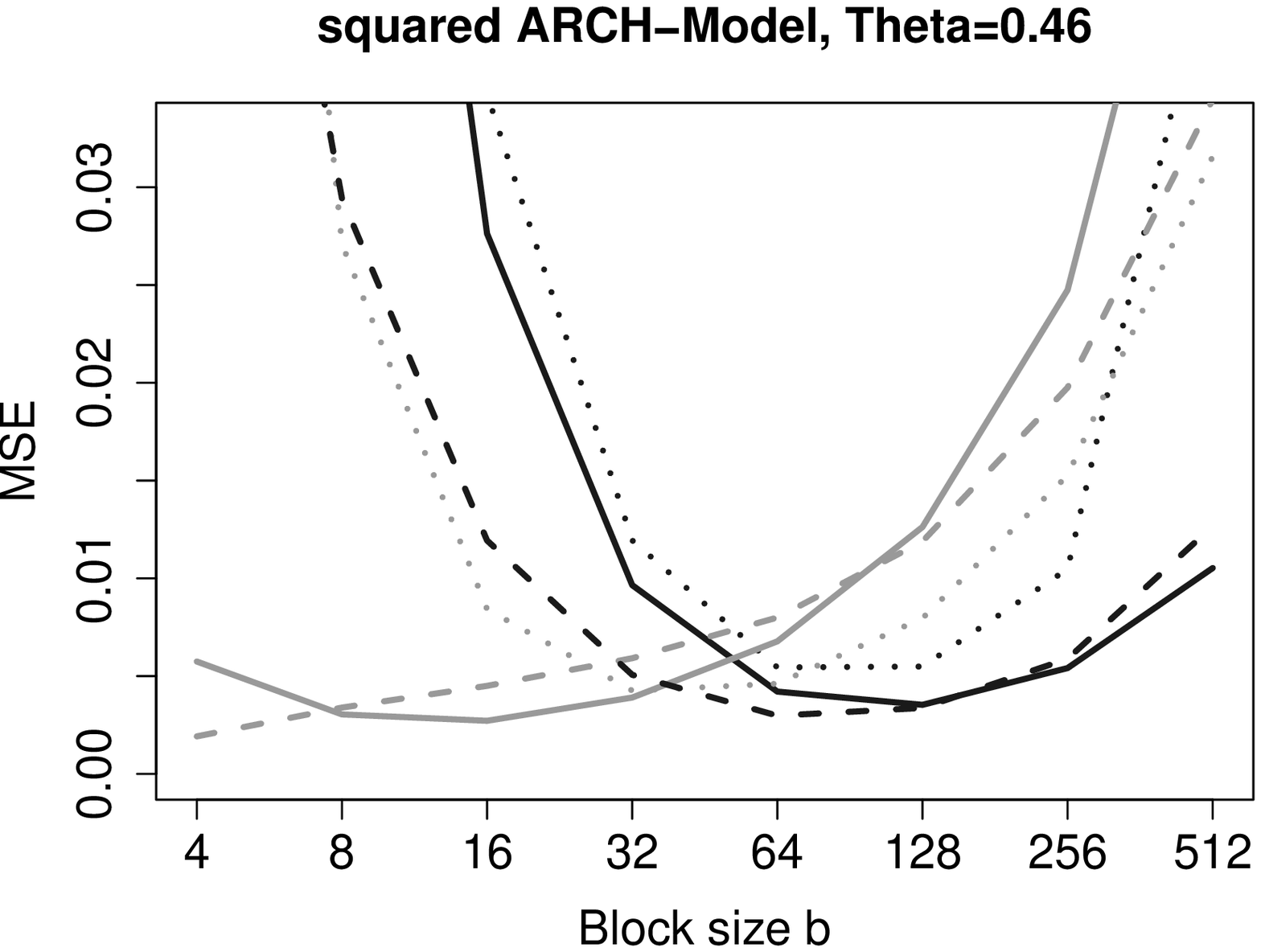}
\vspace{-.2cm}

\includegraphics[width=0.43\textwidth]{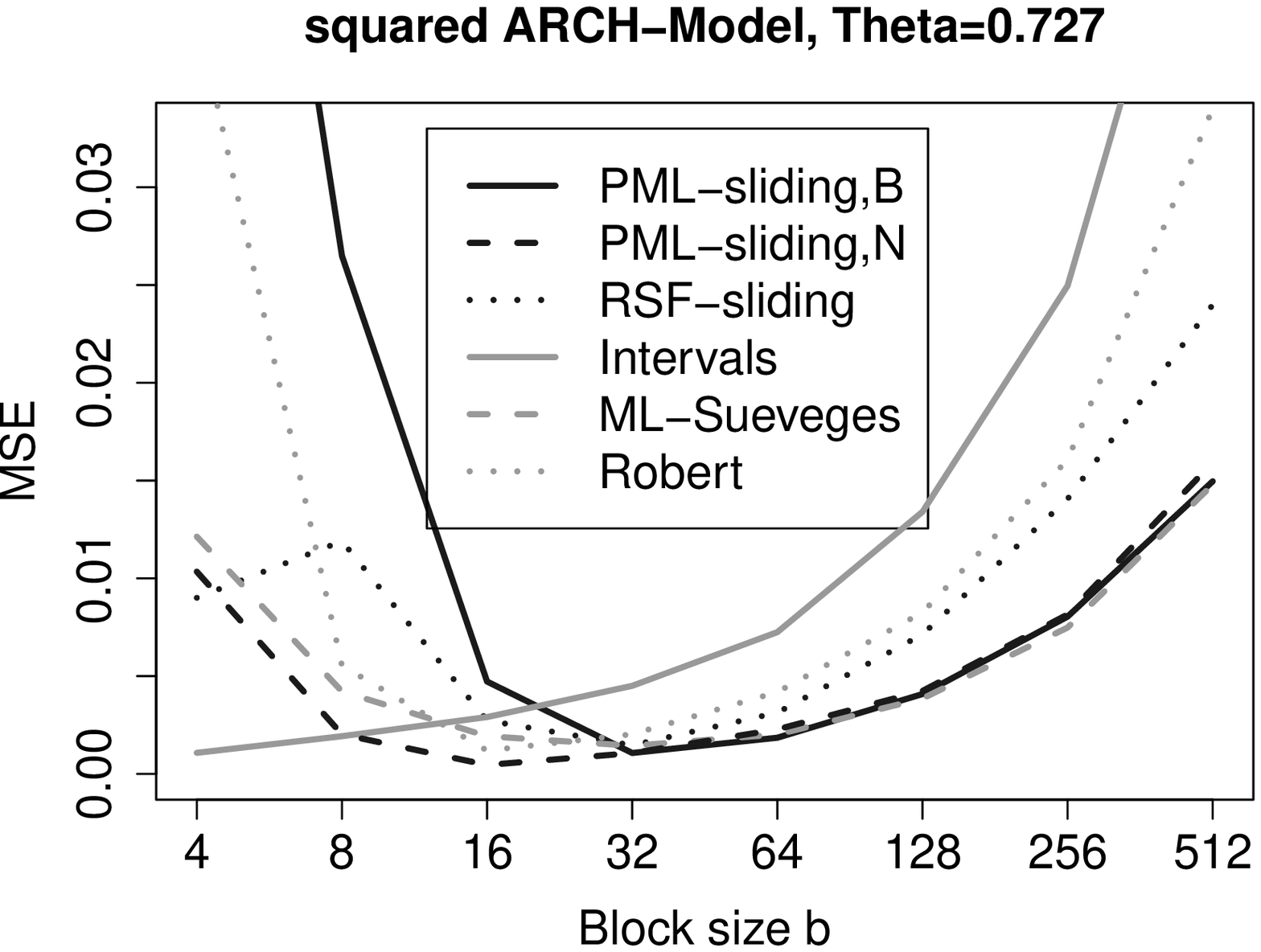}
\hspace{-.3cm}
\includegraphics[width=0.43\textwidth]{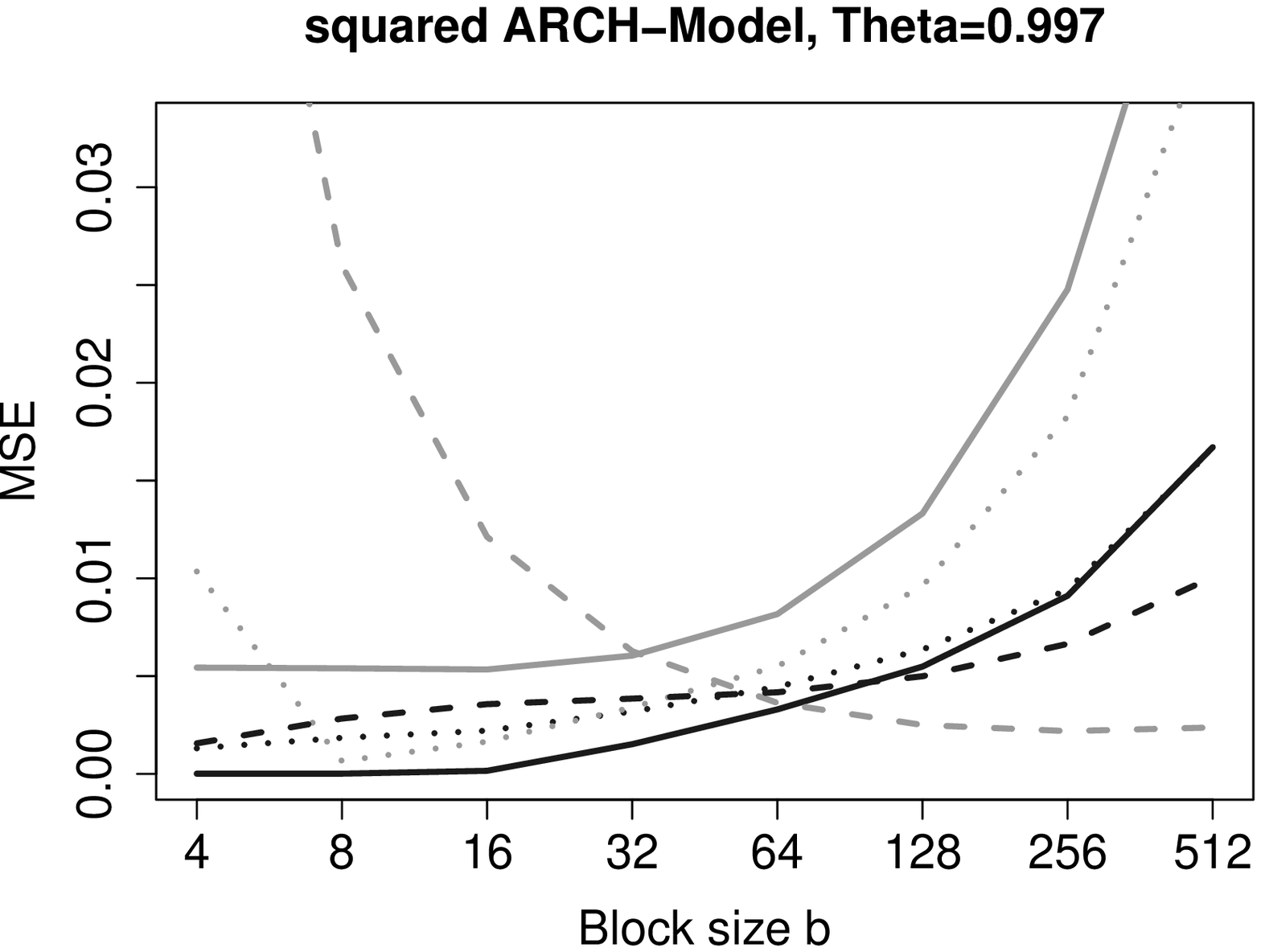}
\end{center}
\vspace{-.3cm}
\caption{\label{fig:mse3}  Mean squared error for the estimation of $\theta$ within the squared ARCH-model for four values of $\theta\in\{0.422, 0.460, 0.727, 0.997\}$. 
}
\end{figure}


\begin{figure}[p!]
\vspace{-.3cm}
\begin{center}
\includegraphics[width=0.43\textwidth]{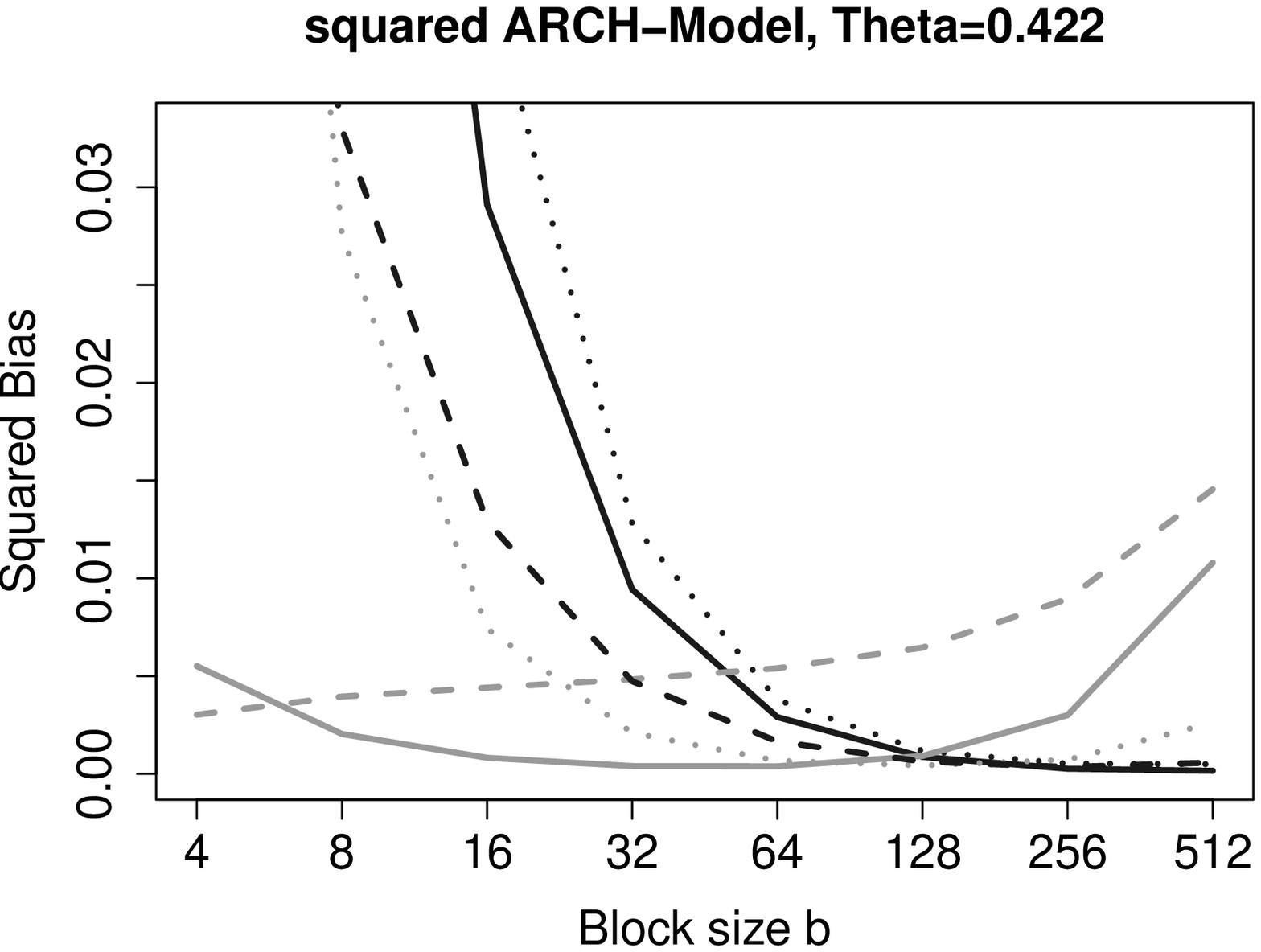}
\hspace{-.3cm}
\includegraphics[width=0.43\textwidth]{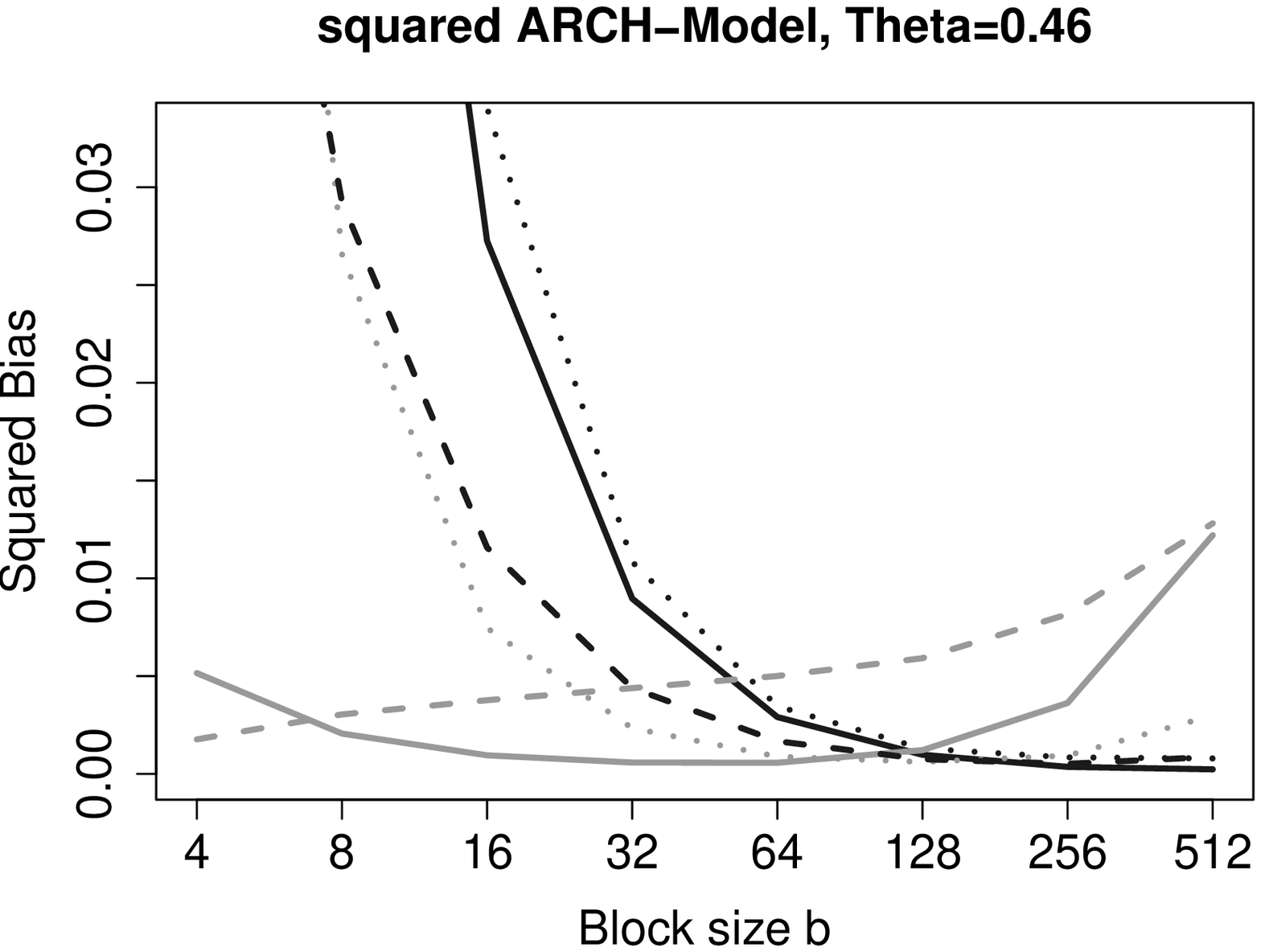}
\vspace{-.2cm}

\includegraphics[width=0.43\textwidth]{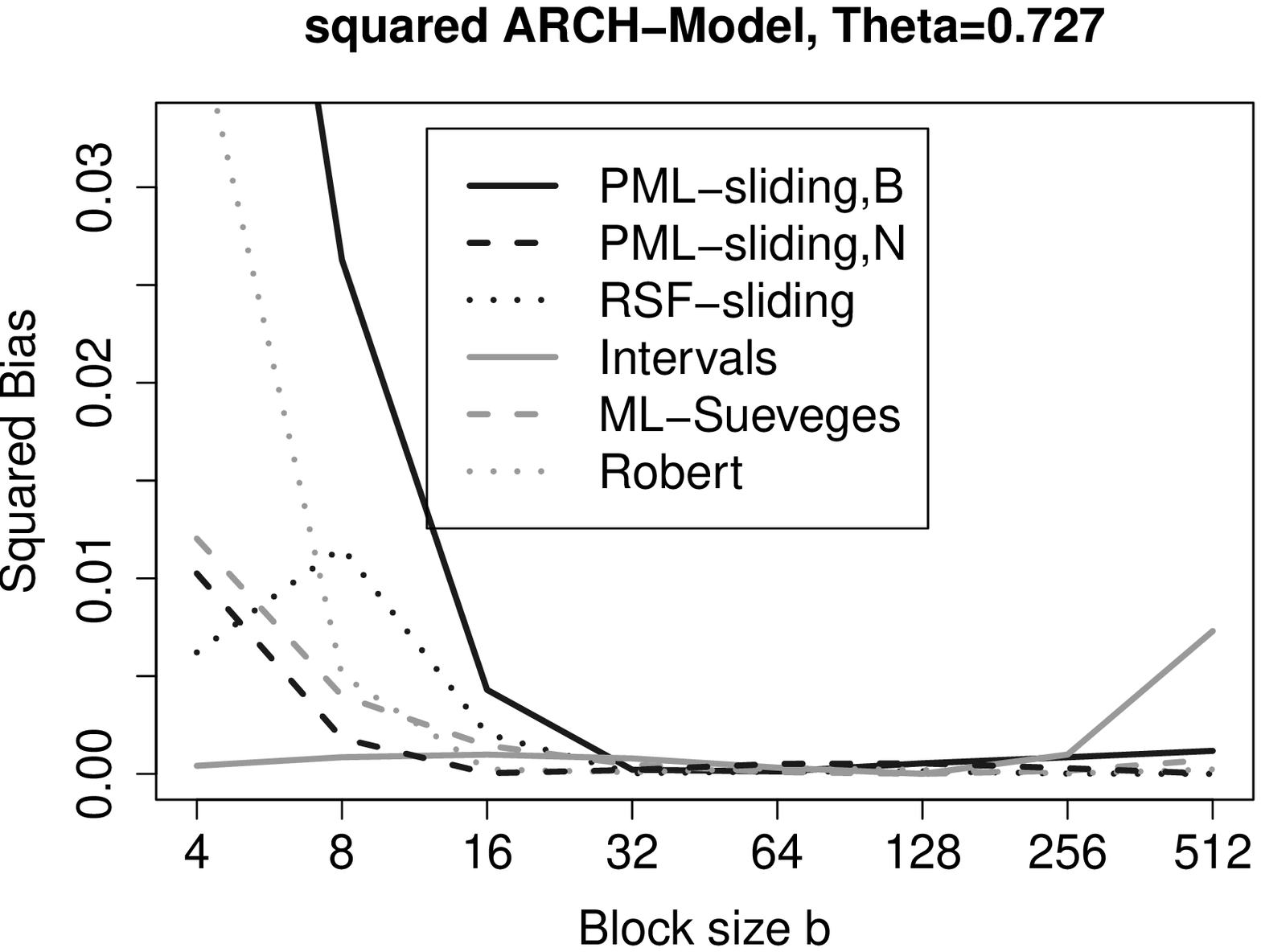}
\hspace{-.3cm}
\includegraphics[width=0.43\textwidth]{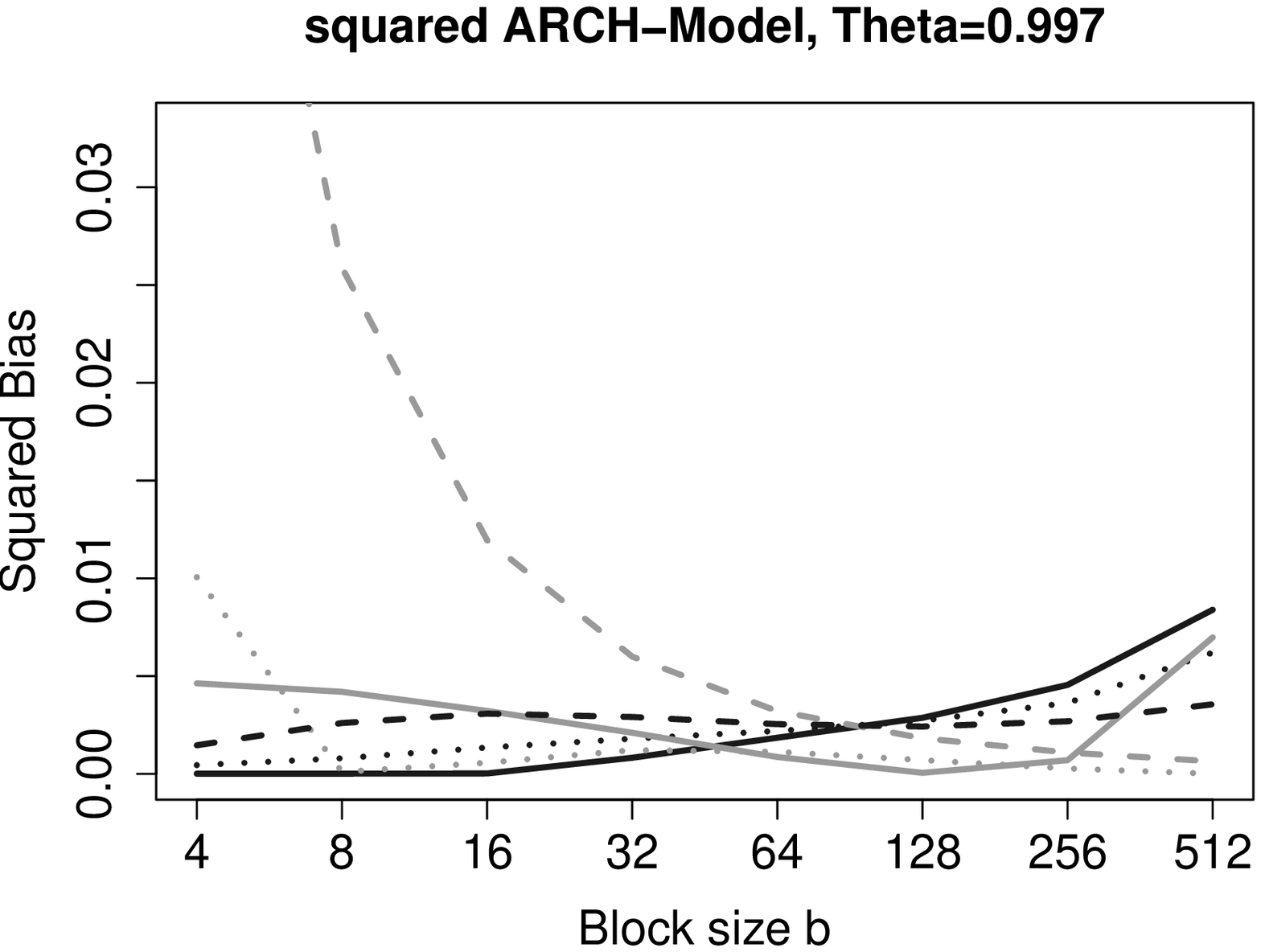}
\end{center}
\vspace{-.3cm}
\caption{\label{fig:bias3}  Bias of the estimation of $\theta$ within the squared ARCH-model for four values of $\theta\in\{0.422, 0.460, 0.727, 0.997\}$. 
}
\vspace*{\floatsep}
\begin{center}
\includegraphics[width=0.43\textwidth]{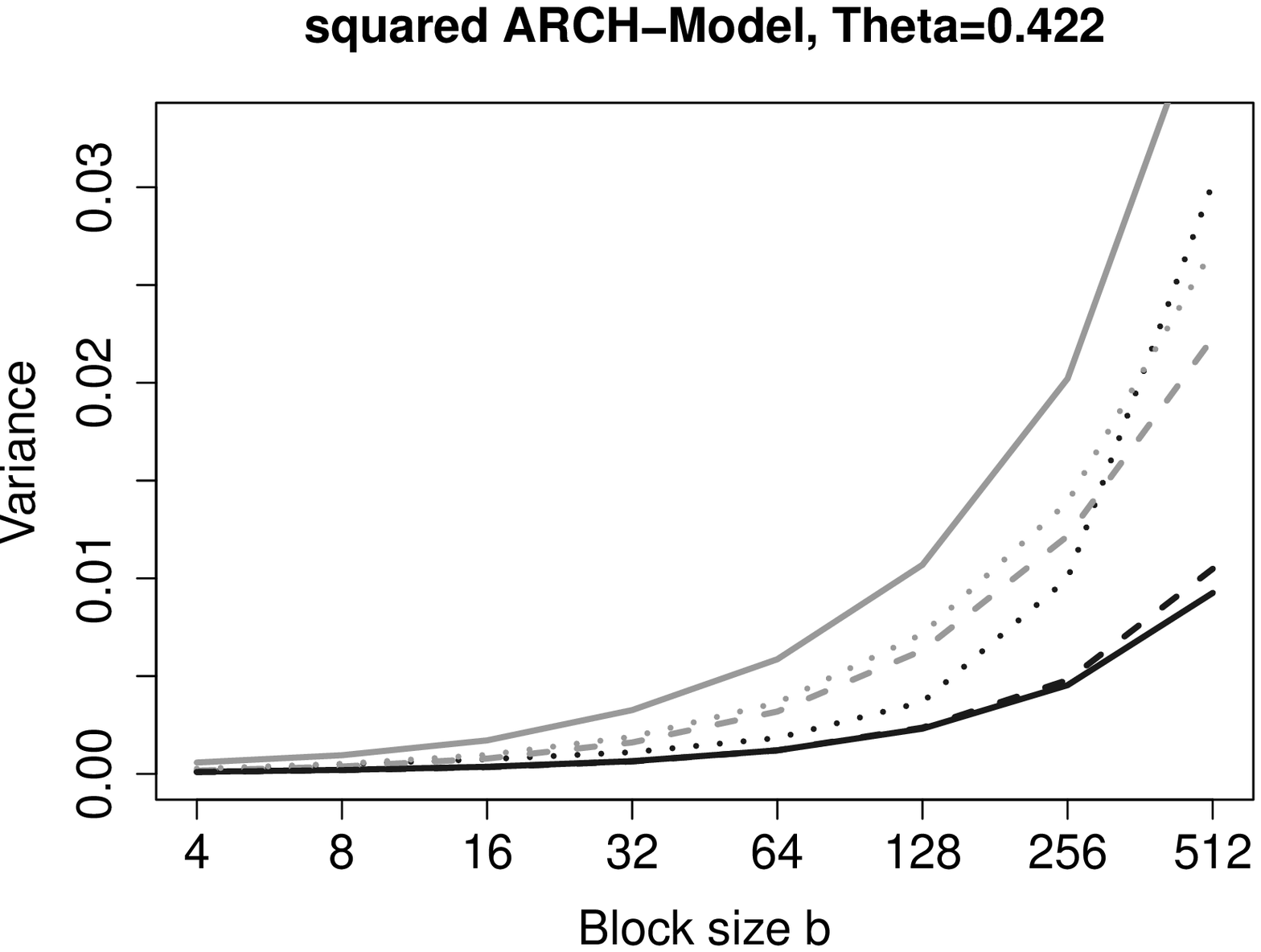}
\hspace{-.3cm}
\includegraphics[width=0.43\textwidth]{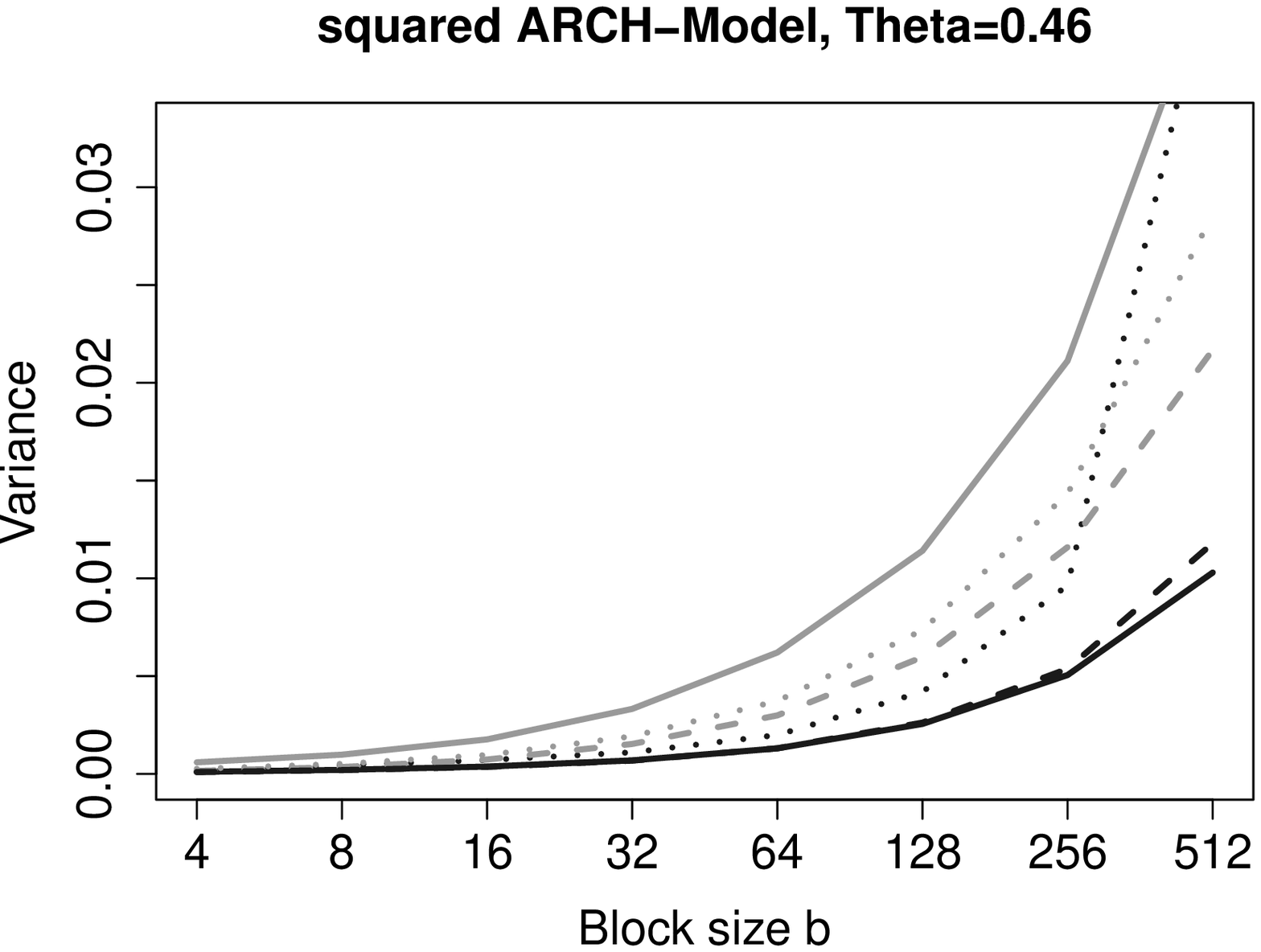}
\vspace{-.2cm}

\includegraphics[width=0.43\textwidth]{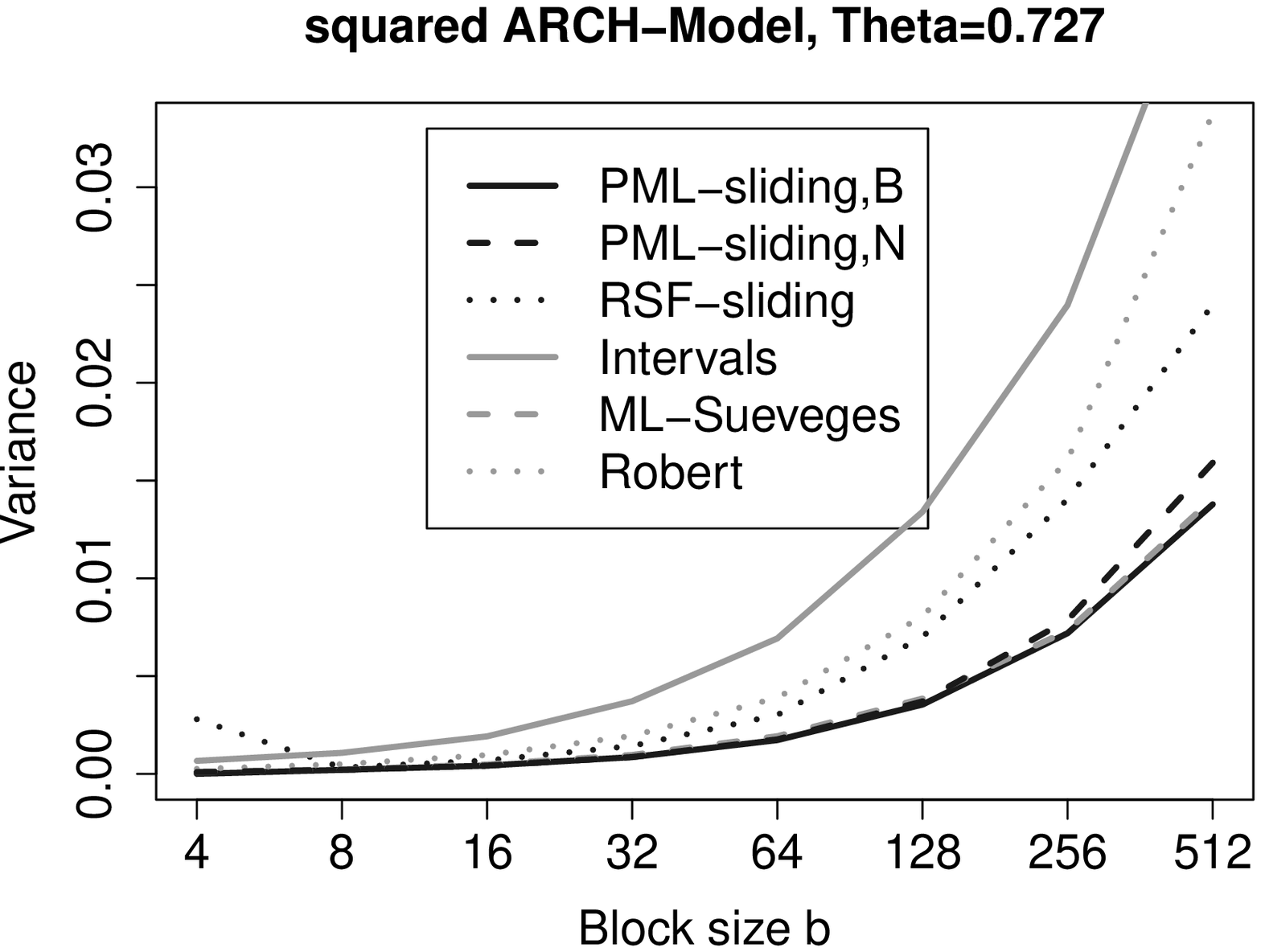}
\hspace{-.3cm}
\includegraphics[width=0.43\textwidth]{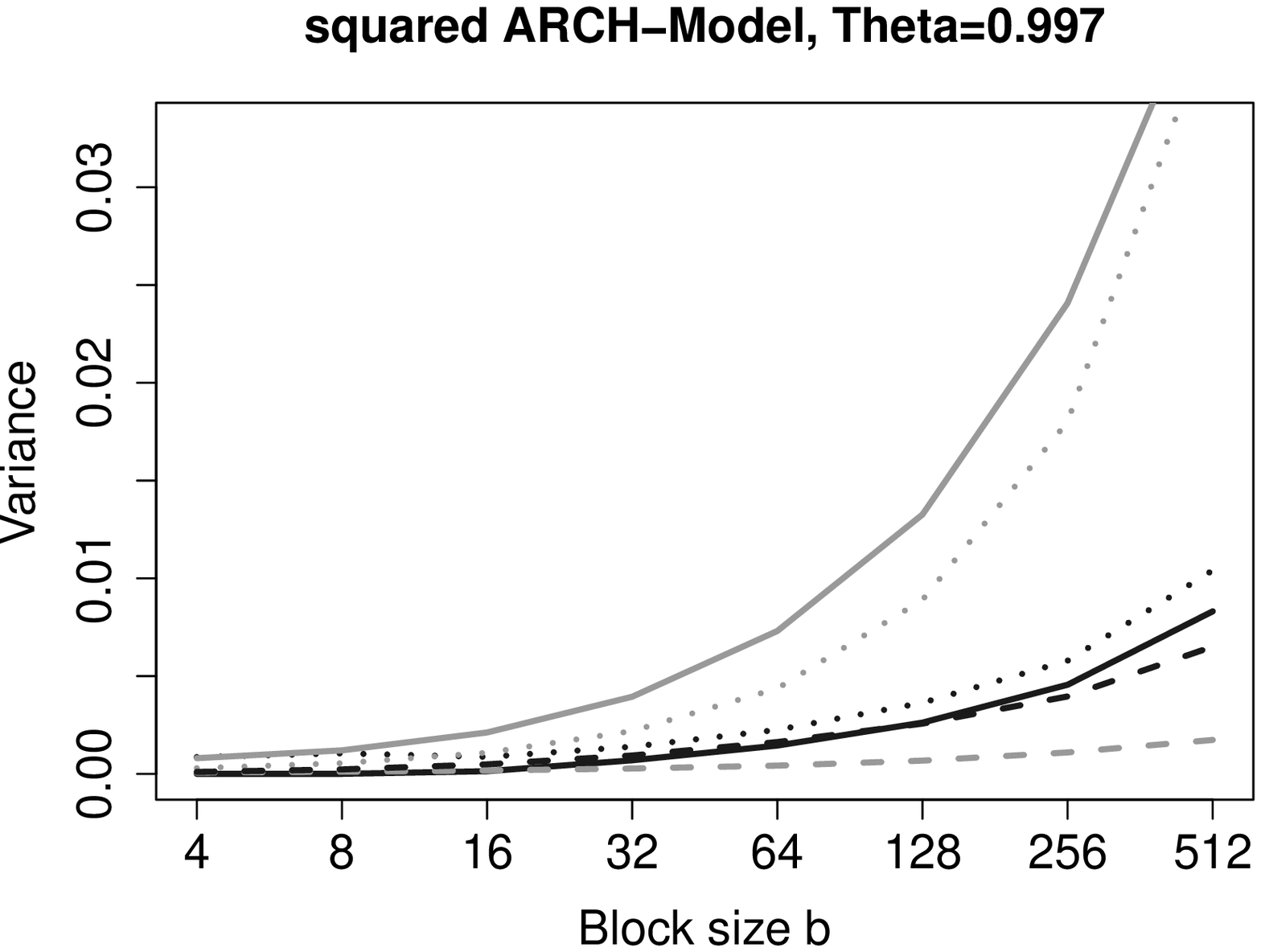}
\end{center}
\vspace{-.3cm}
\caption{\label{fig:var3}  Variance of the estimation of $\theta$ within the squared ARCH-model for four values of $\theta\in\{0.422, 0.460, 0.727, 0.997\}$. 
}

\vspace{-.1cm}
\end{figure}

\begin{figure}[p!]
\vspace{-.3cm}
\begin{center}
\includegraphics[width=0.43\textwidth]{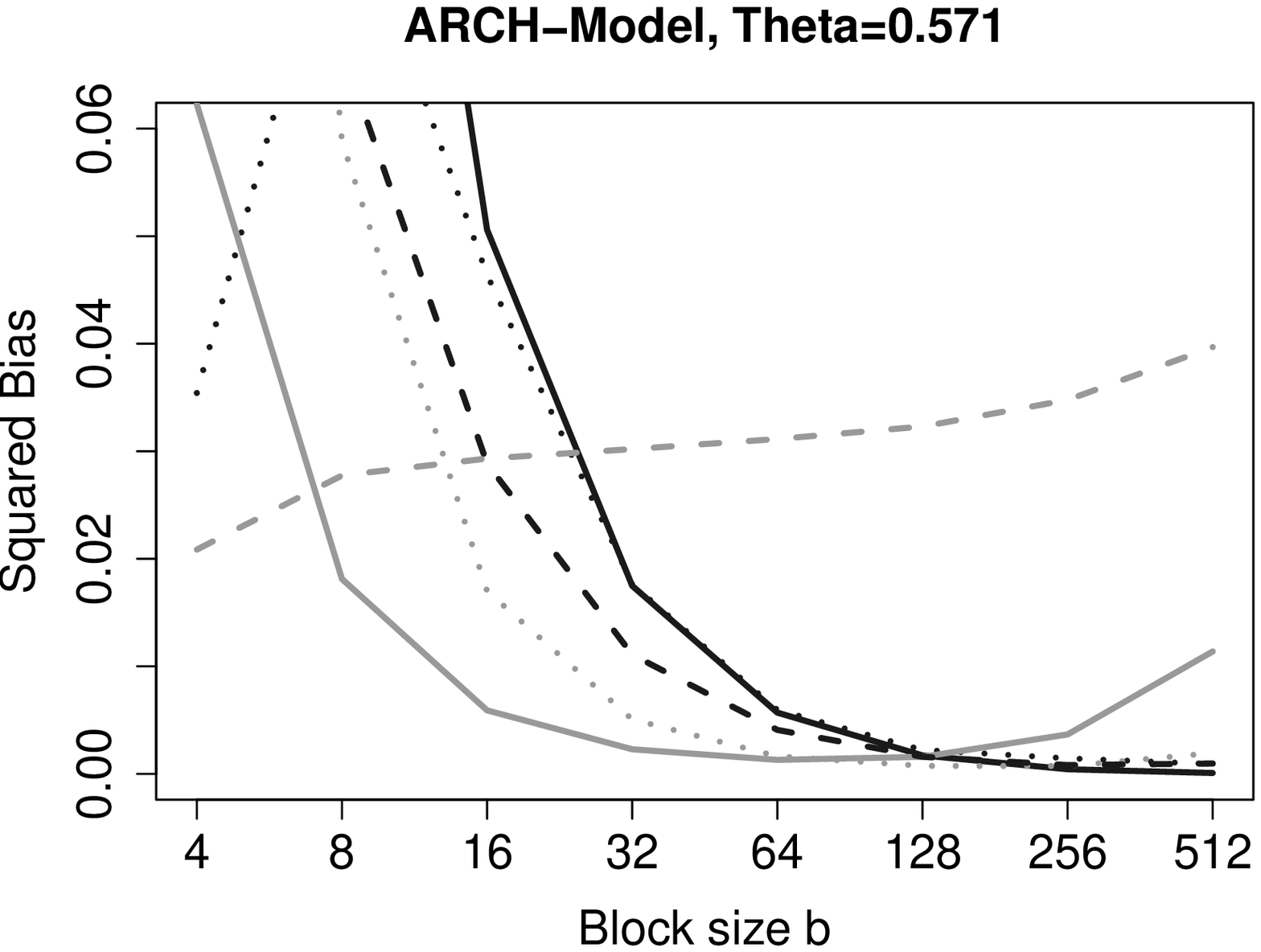}
\hspace{-.3cm}
\includegraphics[width=0.43\textwidth]{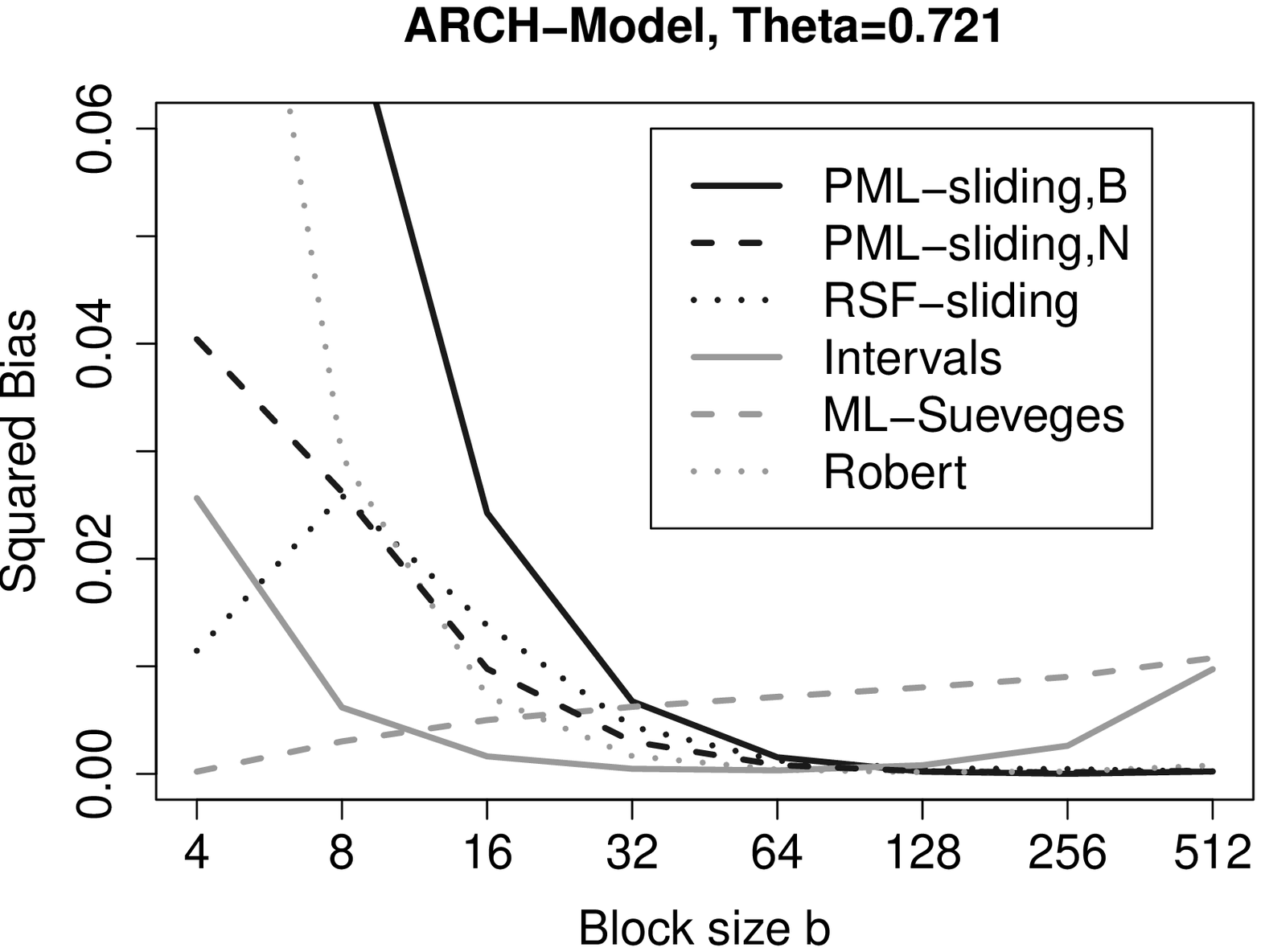}
\vspace{-.2cm}

\includegraphics[width=0.43\textwidth]{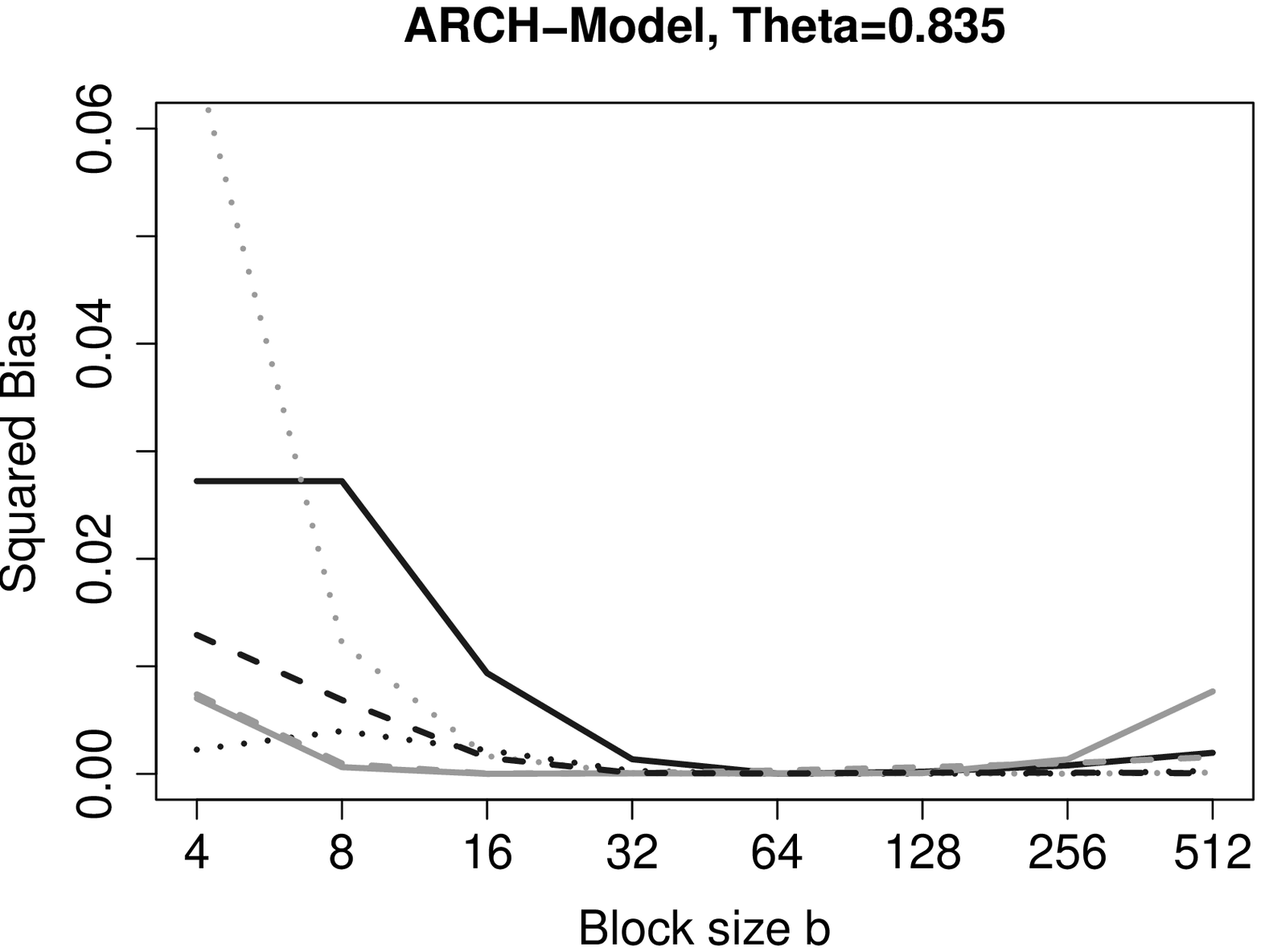}
\hspace{-.3cm}
\includegraphics[width=0.43\textwidth]{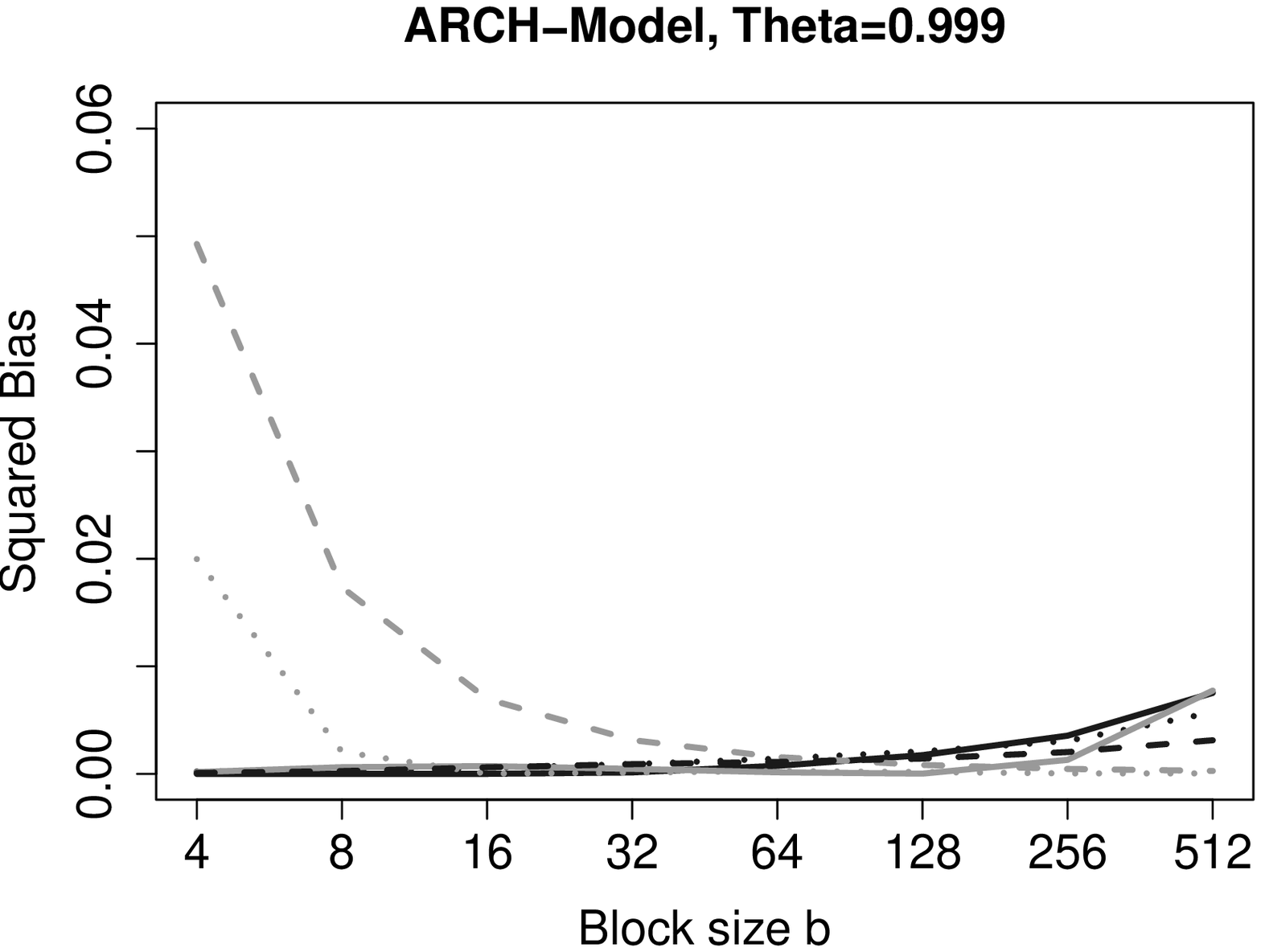}
\end{center}
\vspace{-.3cm}
\caption{\label{fig:bias1}  Bias of the estimation of $\theta$ within the ARCH-model for four values of $\theta\in\{0.571, 0.721, 0.835, 0.999\}$. 
}
\vspace*{\floatsep}
\begin{center}
\includegraphics[width=0.43\textwidth]{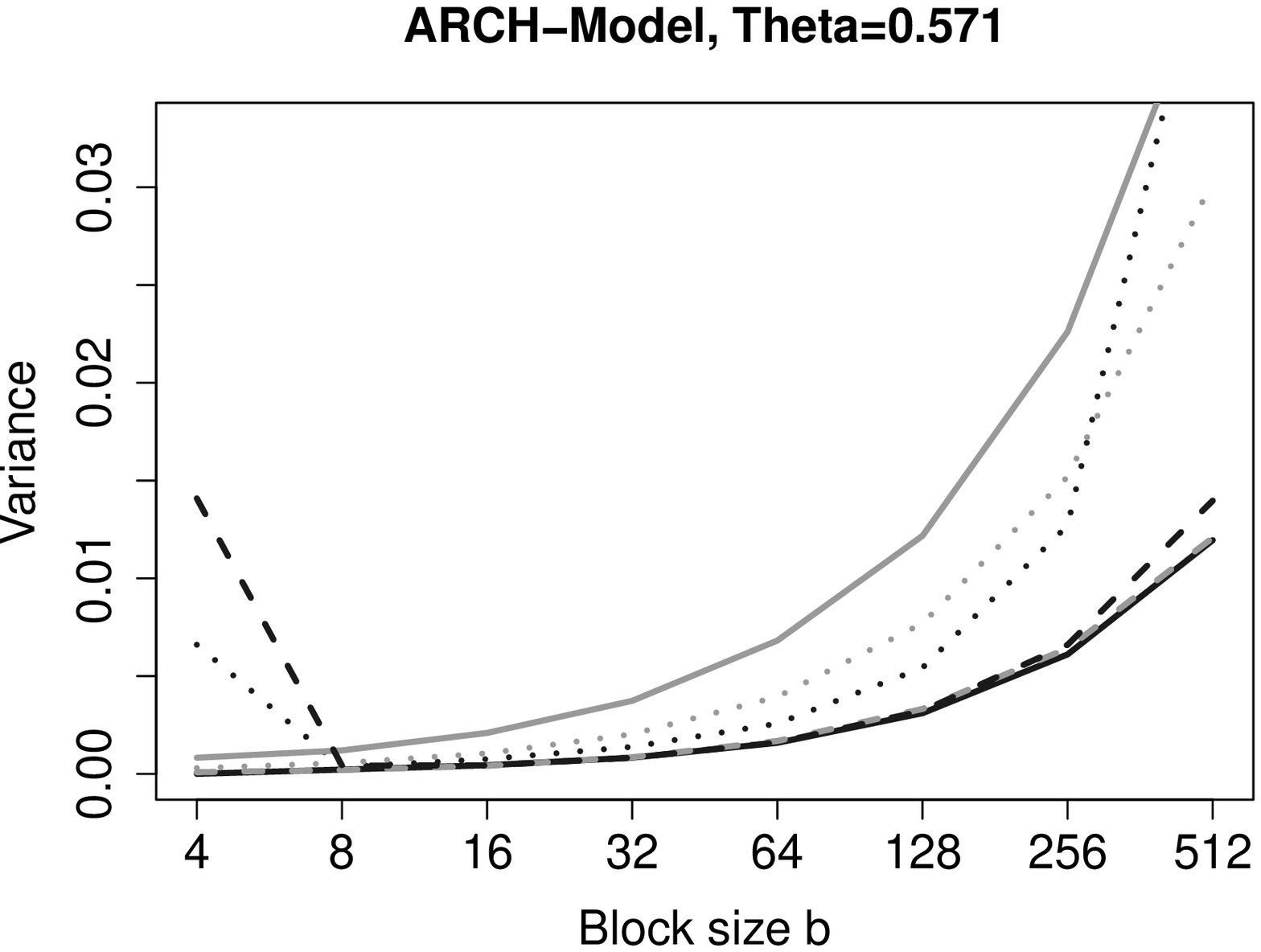}
\hspace{-.3cm}
\includegraphics[width=0.43\textwidth]{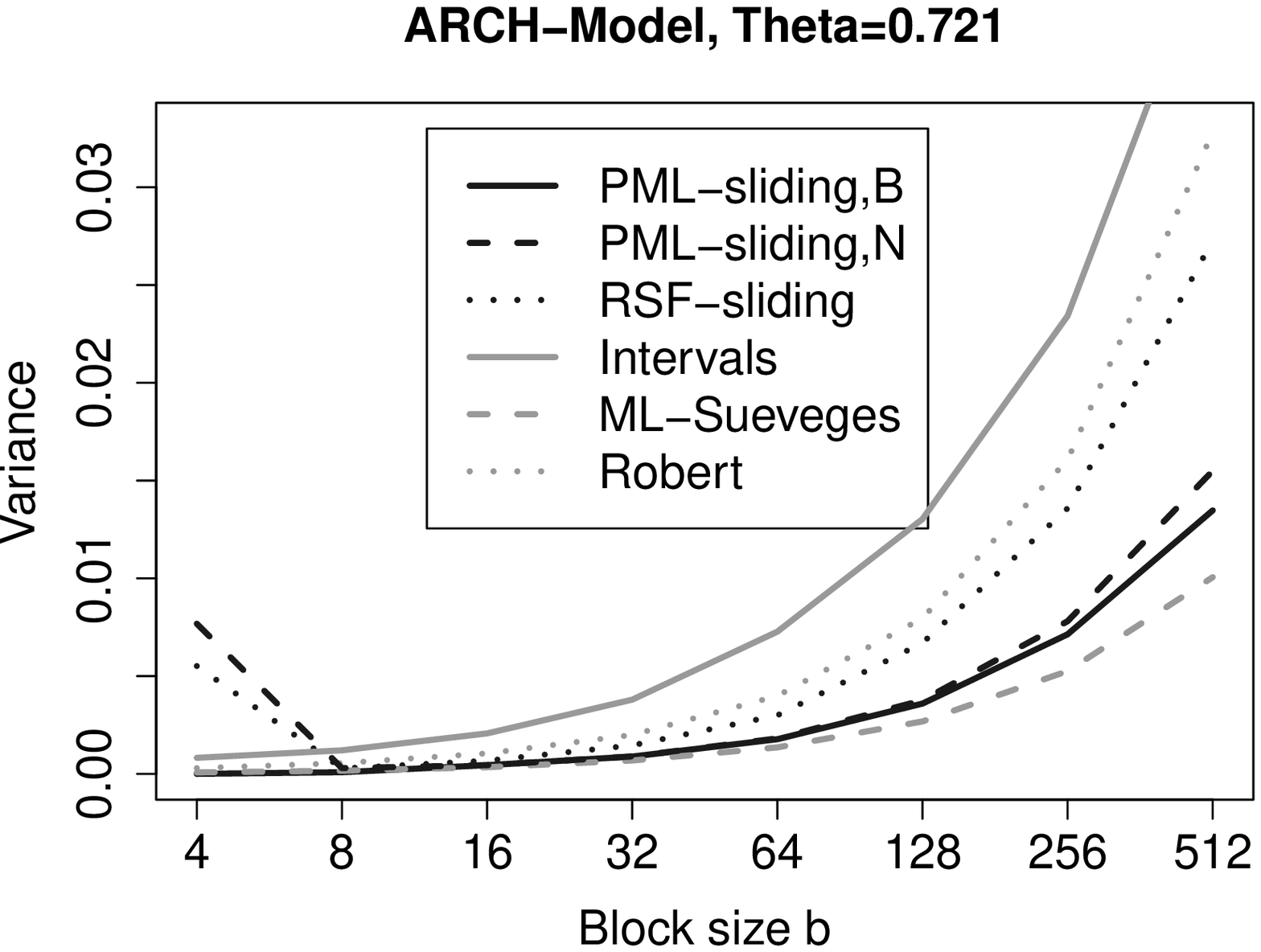}
\vspace{-.2cm}

\includegraphics[width=0.43\textwidth]{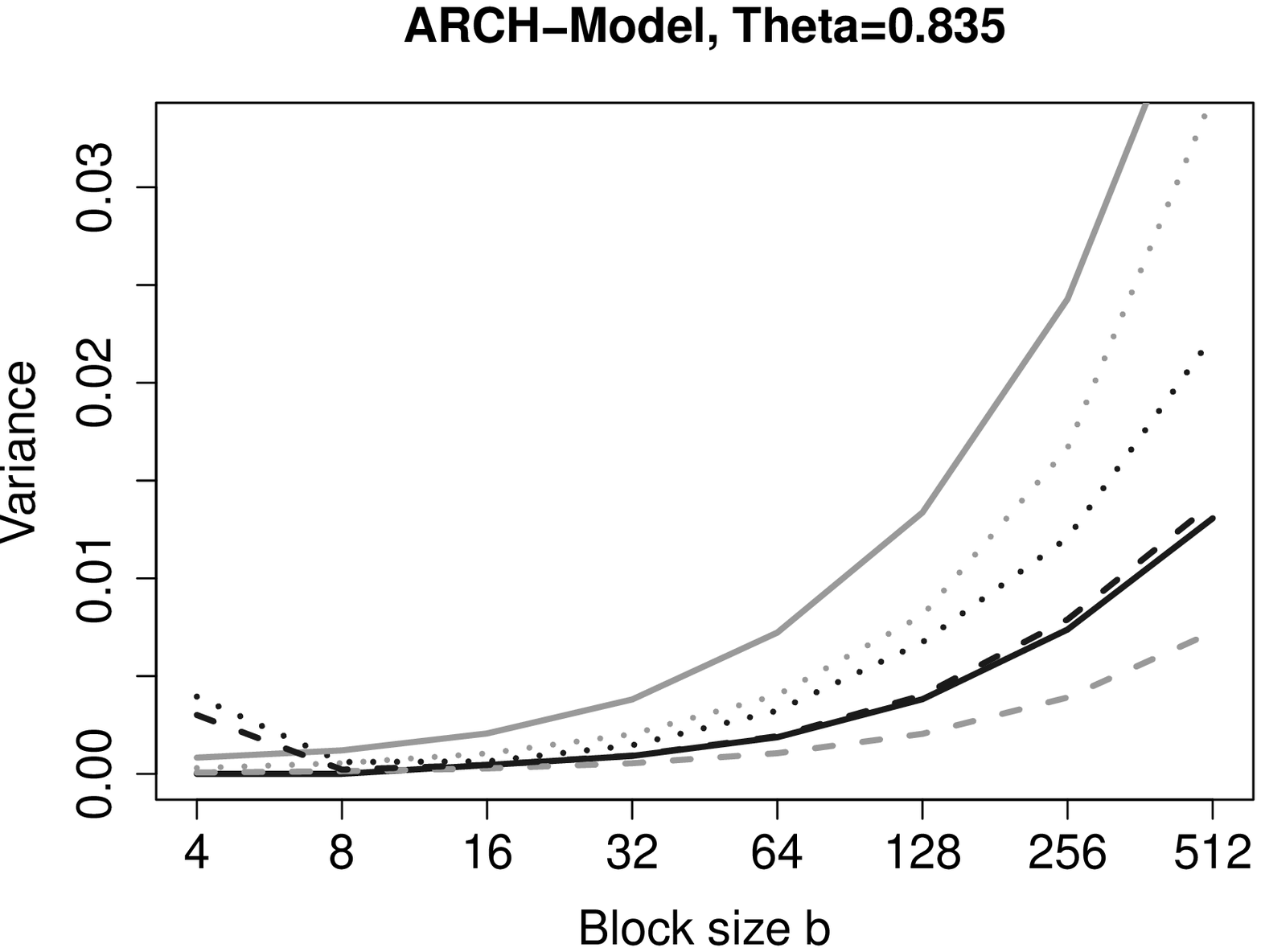}
\hspace{-.3cm}
\includegraphics[width=0.43\textwidth]{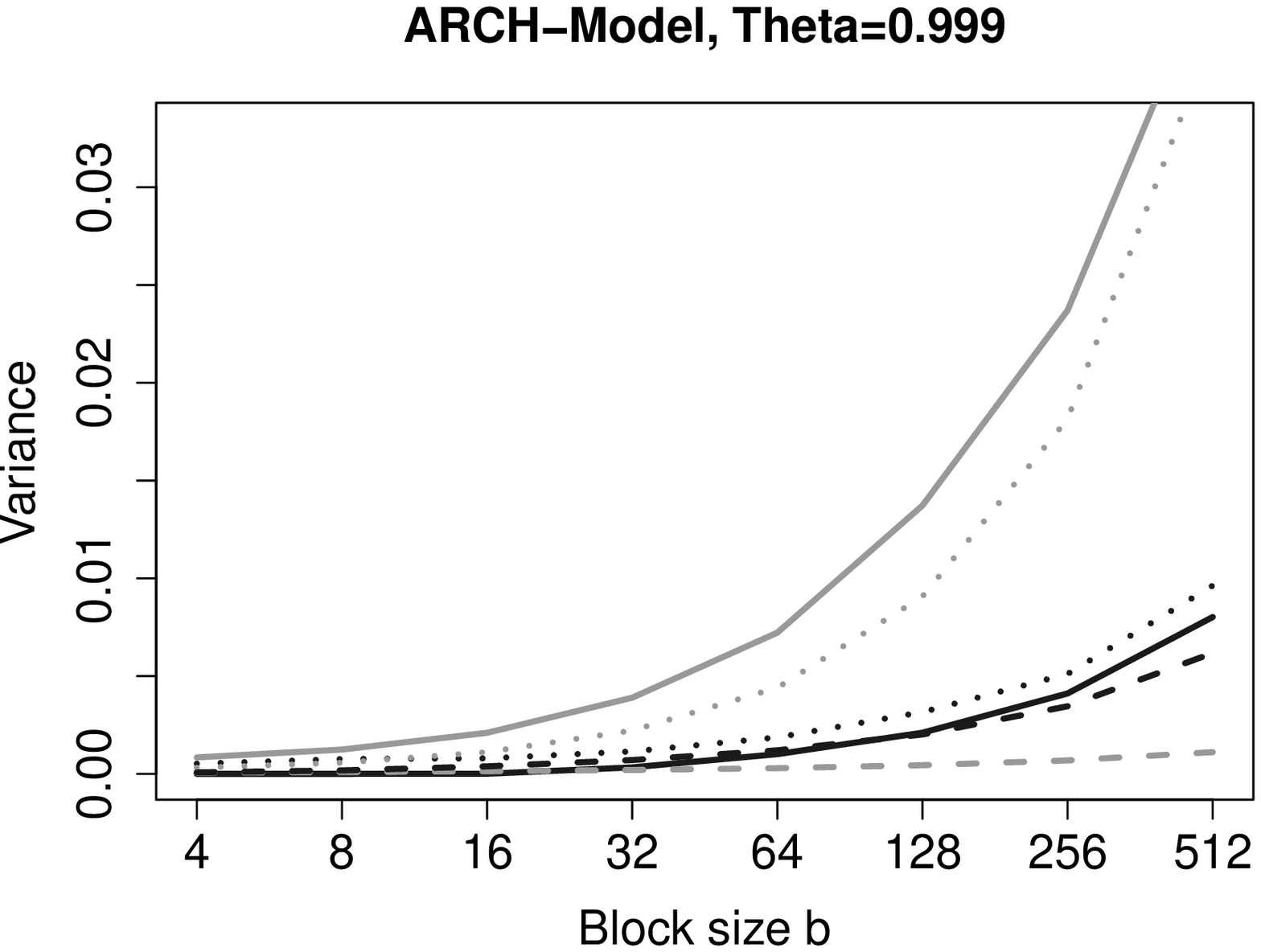}
\end{center}
\vspace{-.3cm}
\caption{\label{fig:var1}  Variance of the estimation of $\theta$ within the ARCH-model for four values of $\theta\in\{0.571, 0.721, 0.835, 0.999\}$. 
}

\vspace{-.1cm}
\end{figure}

\begin{figure}[p!]
\begin{center}
\includegraphics[width=0.43\textwidth]{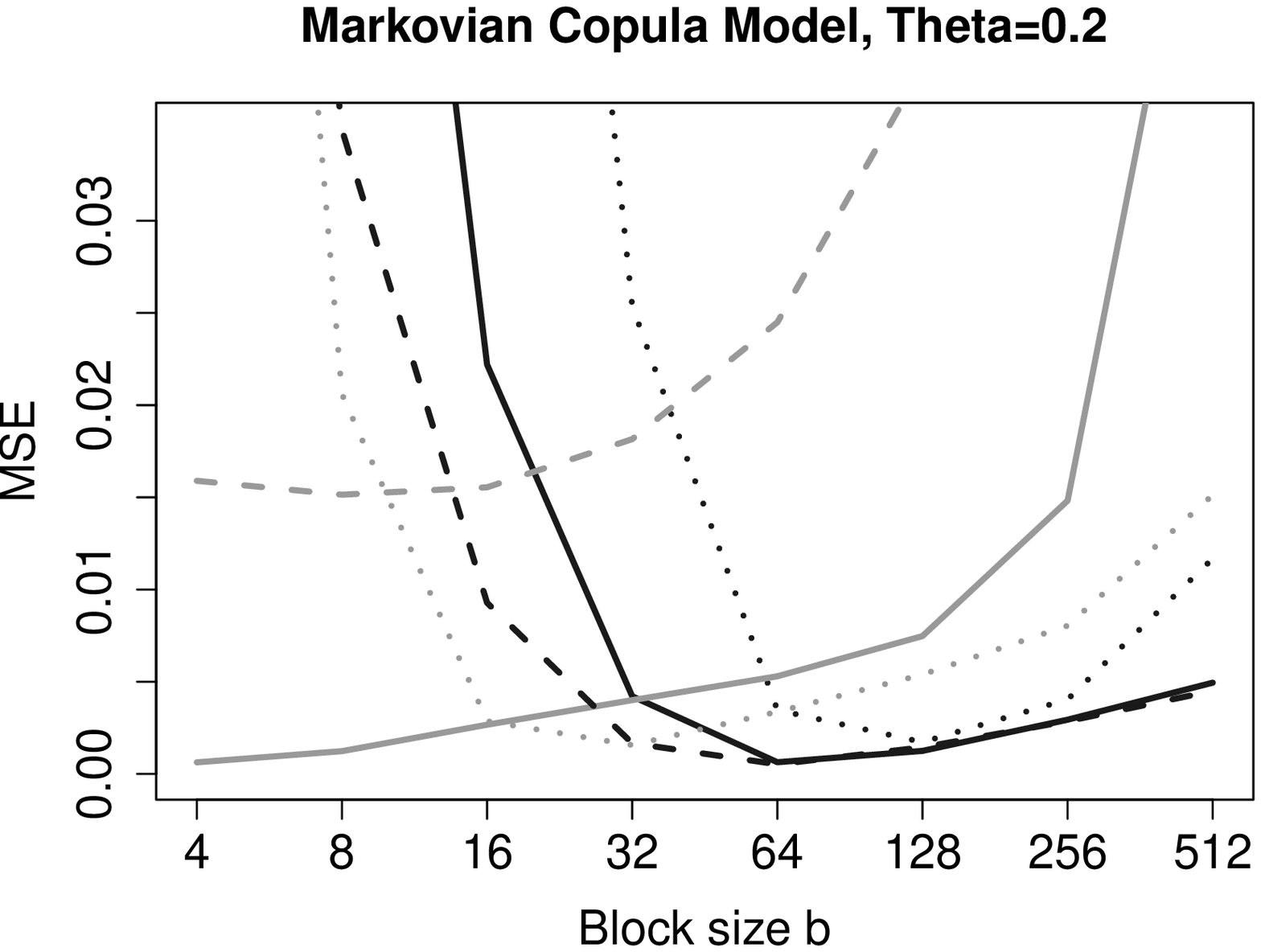}
\hspace{-.3cm}
\includegraphics[width=0.43\textwidth]{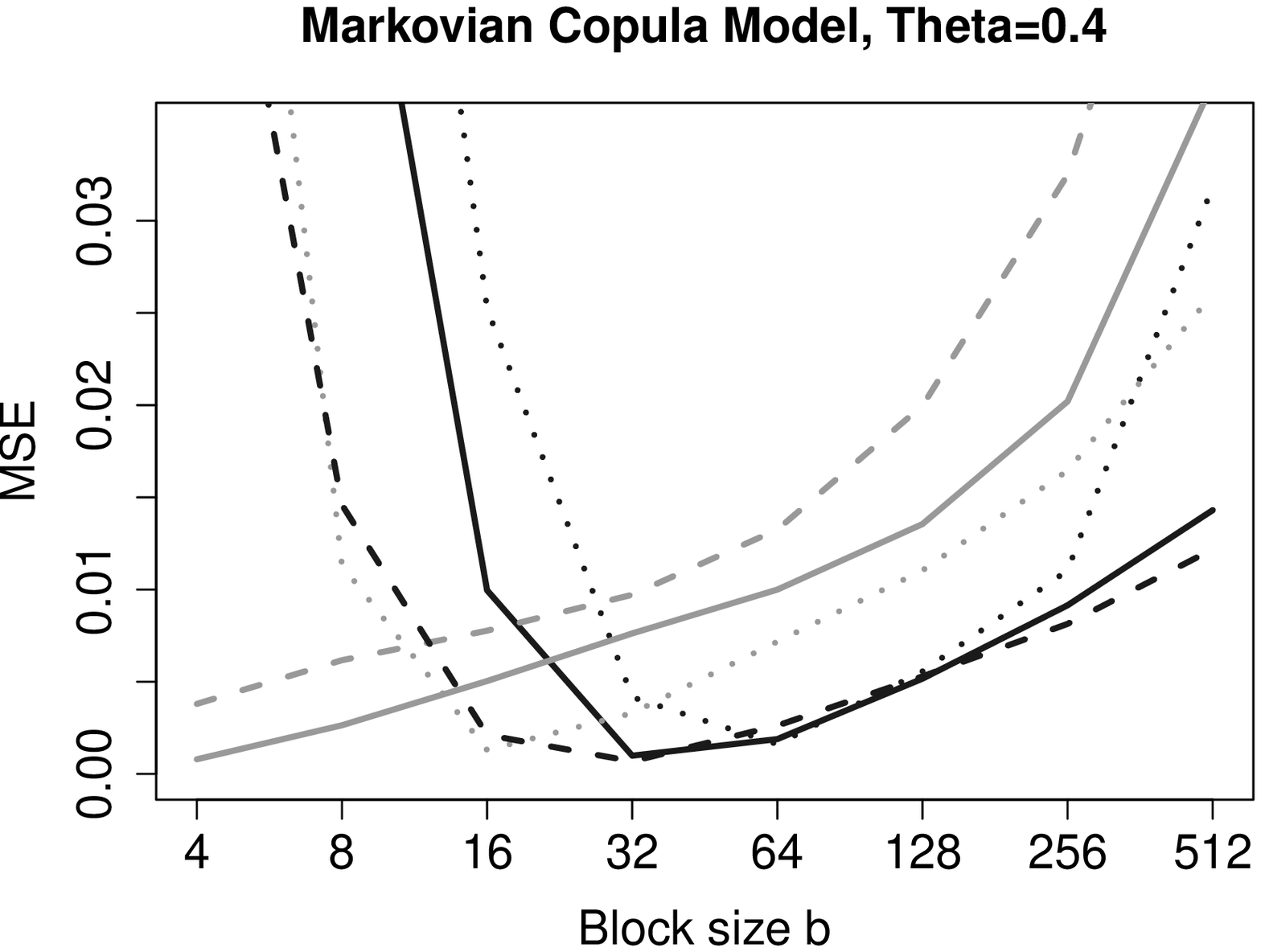}
\vspace{-.2cm}

\includegraphics[width=0.43\textwidth]{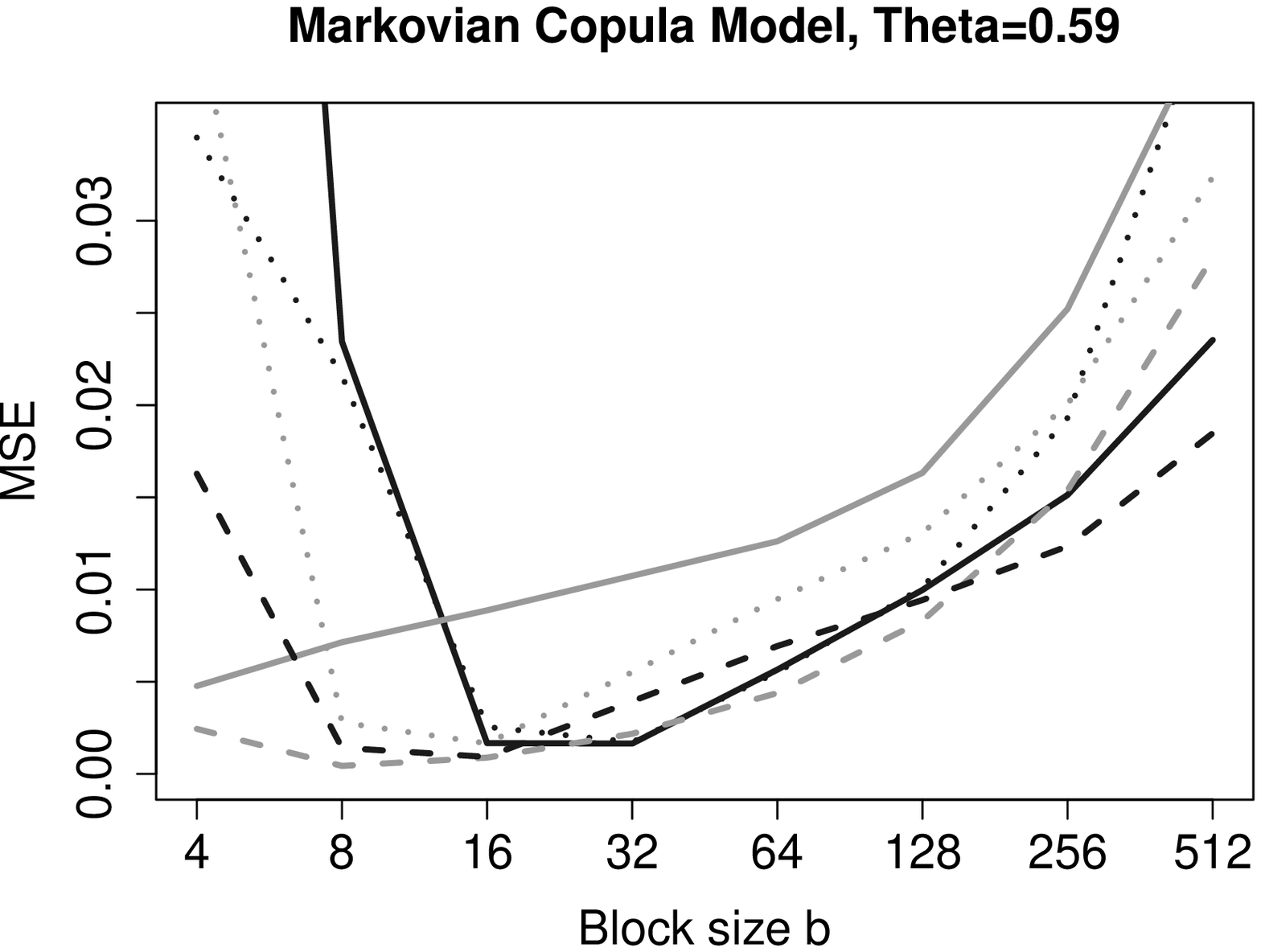}
\hspace{-.3cm}
\includegraphics[width=0.43\textwidth]{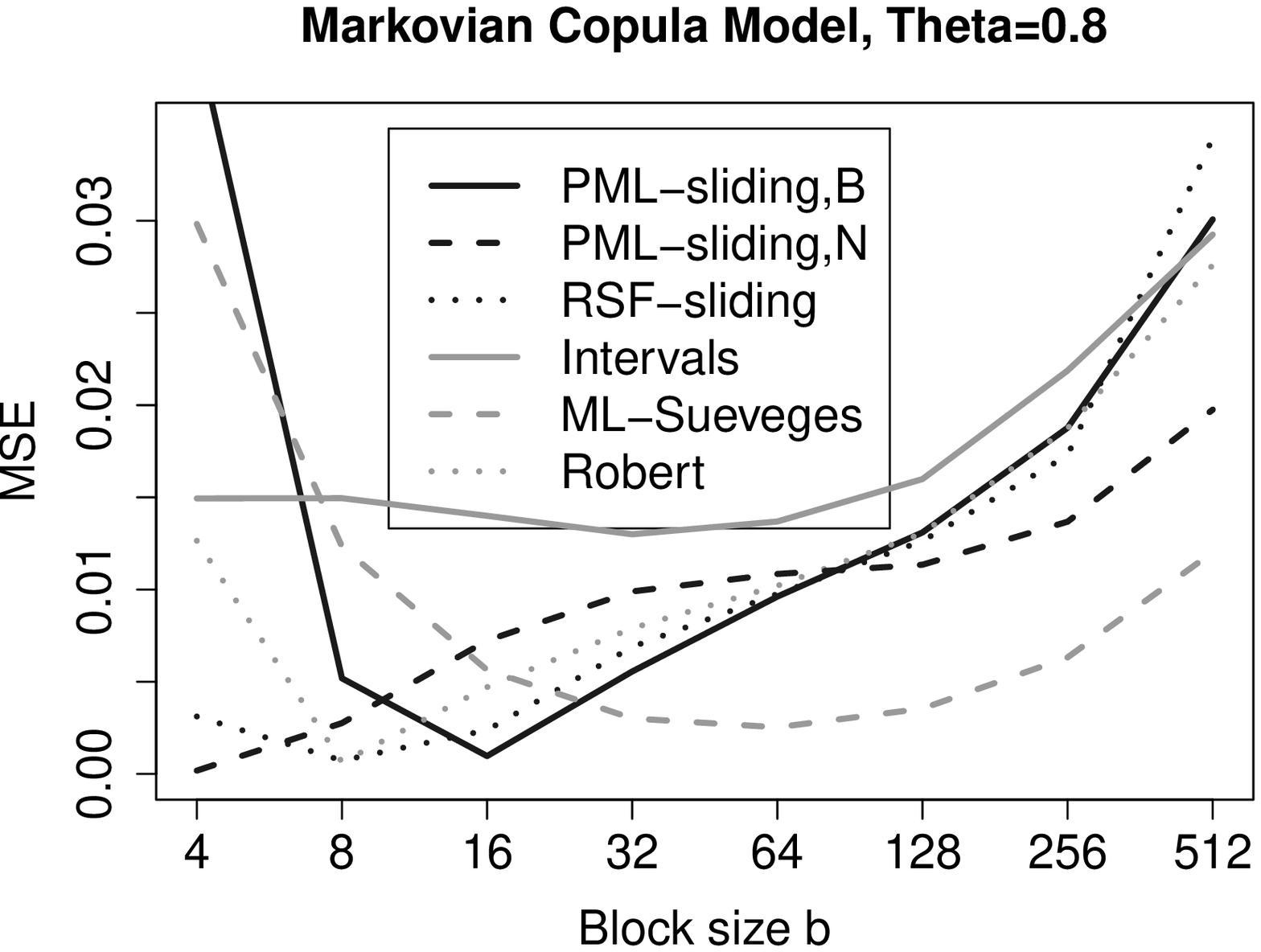}
\end{center}
\vspace{-.3cm}
\caption{\label{fig:mse4}  Mean squared error for the estimation of $\theta$ within the Markovian copula model for four values of $\theta\in\{0.2, 0.4, 0.6, 0.8\}$. 
}
\vspace*{\floatsep}
\begin{center}
\includegraphics[width=0.43\textwidth]{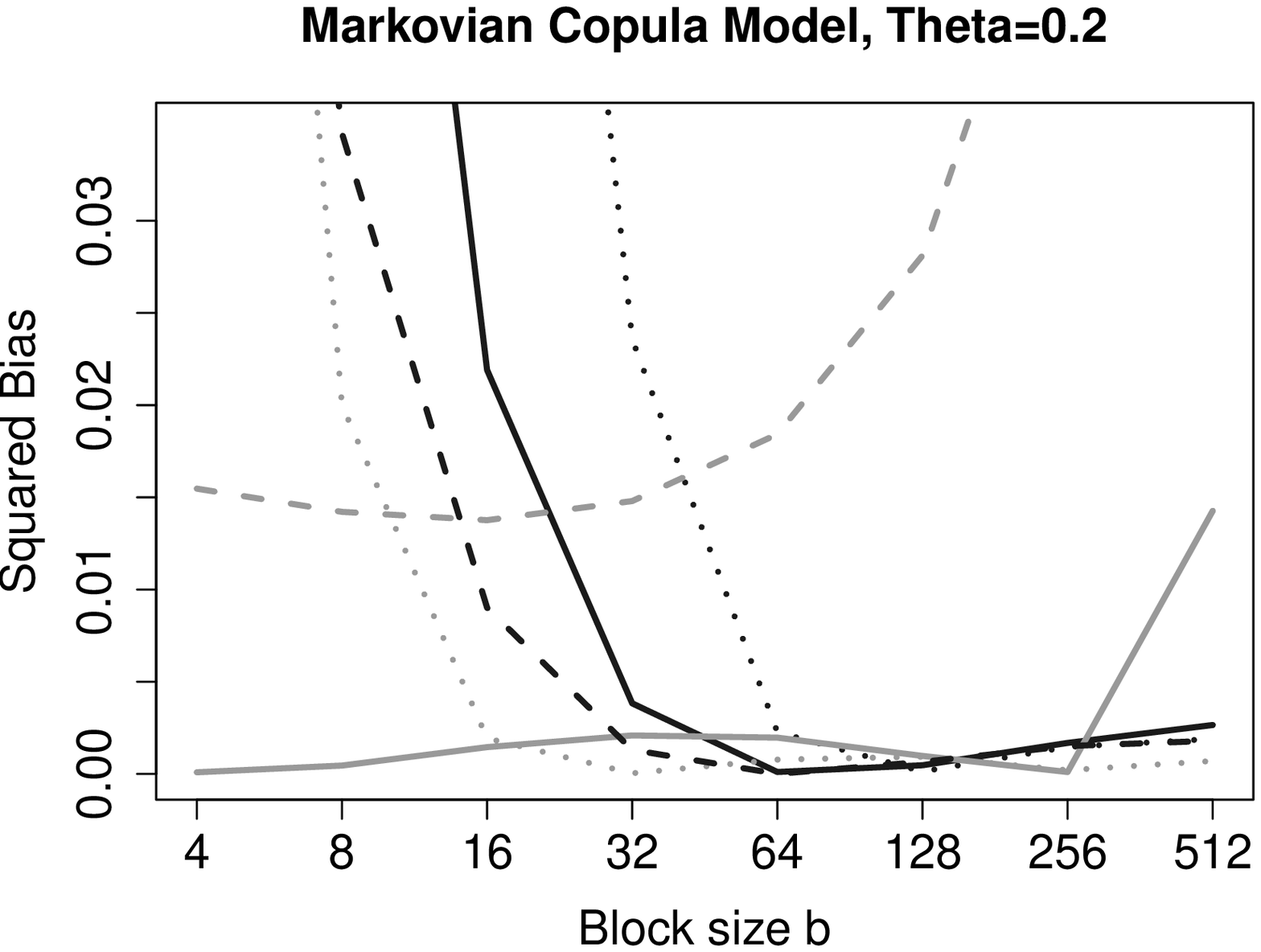}
\hspace{-.3cm}
\includegraphics[width=0.43\textwidth]{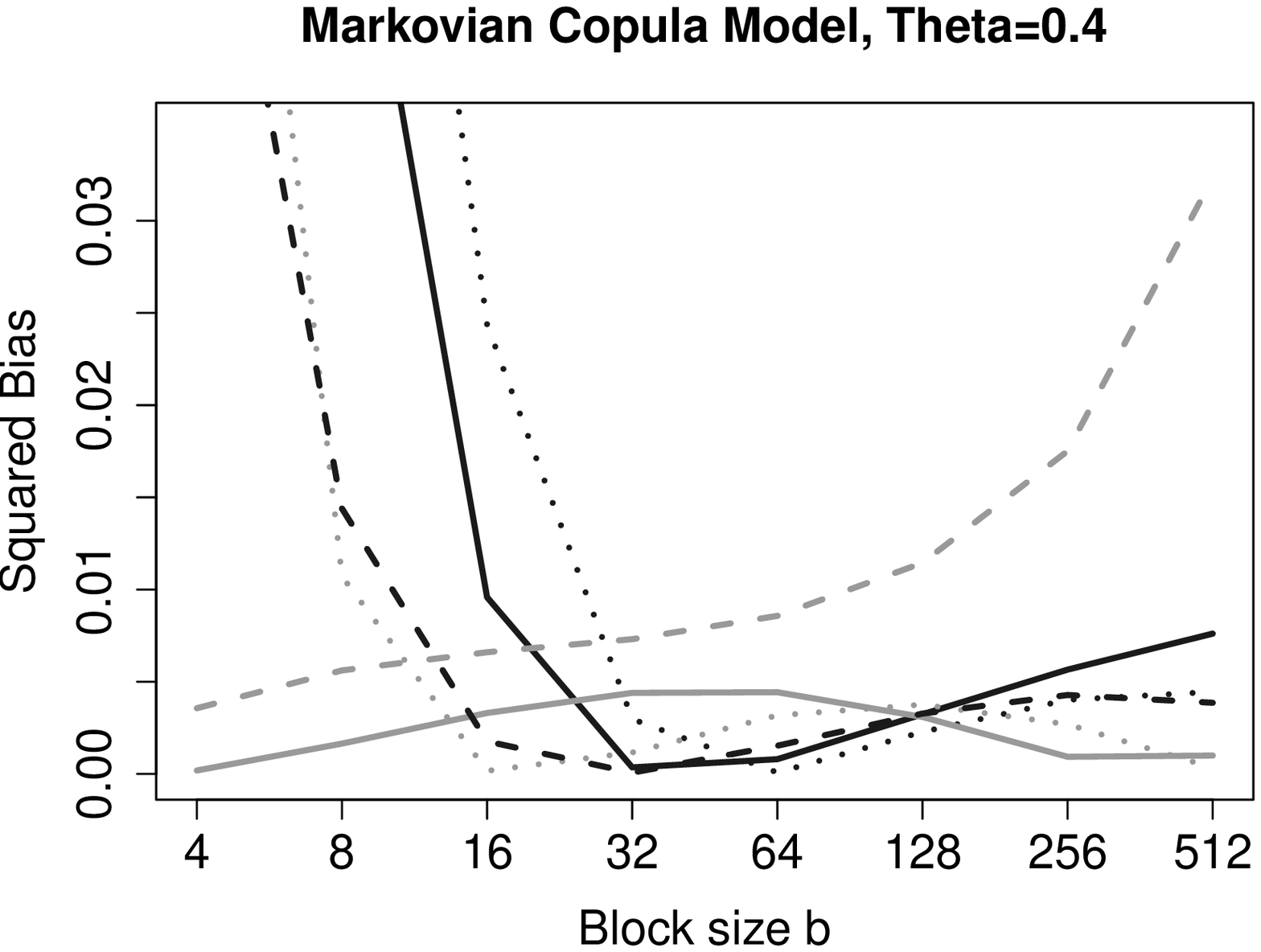}
\vspace{-.2cm}

\includegraphics[width=0.43\textwidth]{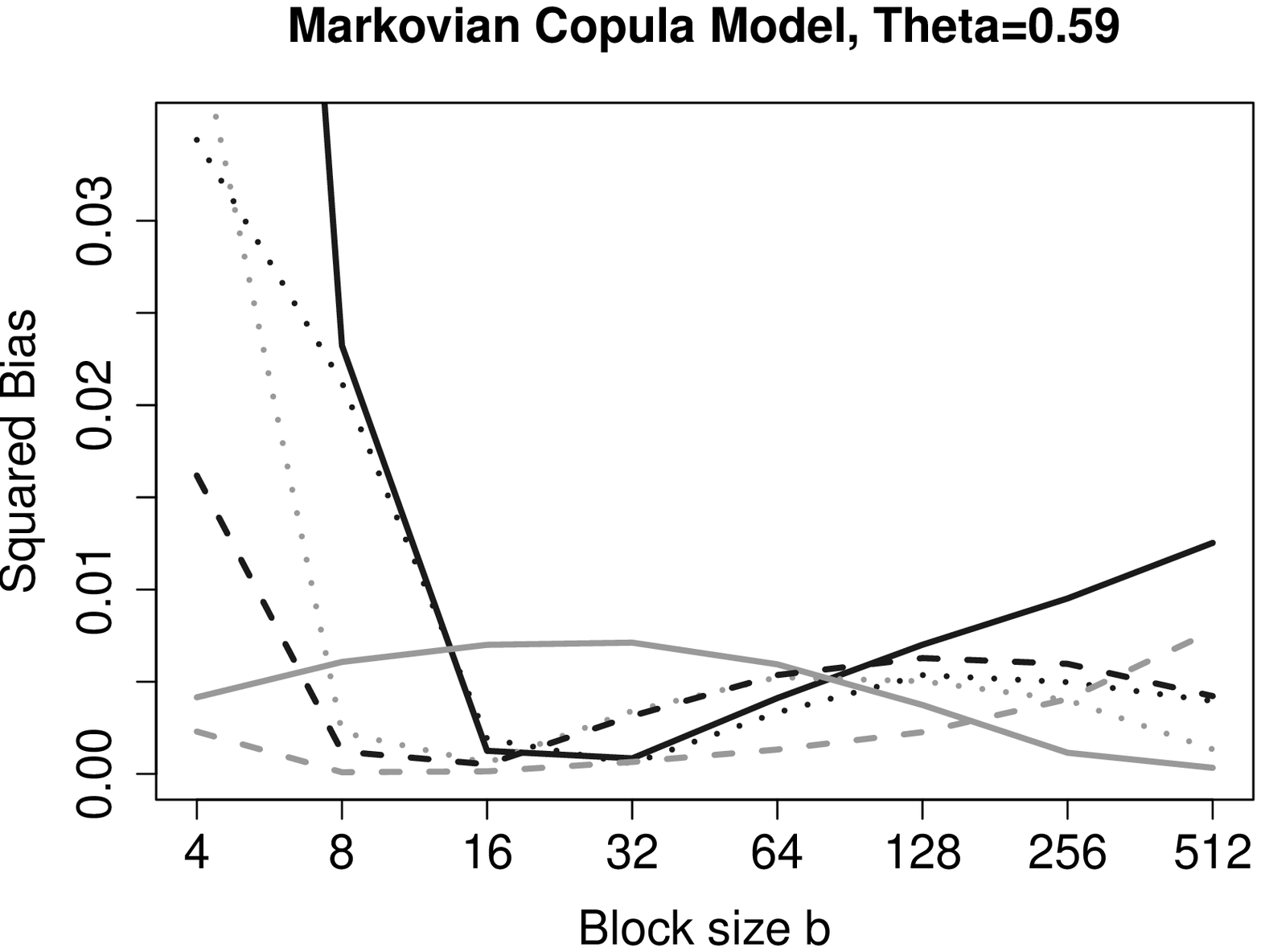}
\hspace{-.3cm}
\includegraphics[width=0.43\textwidth]{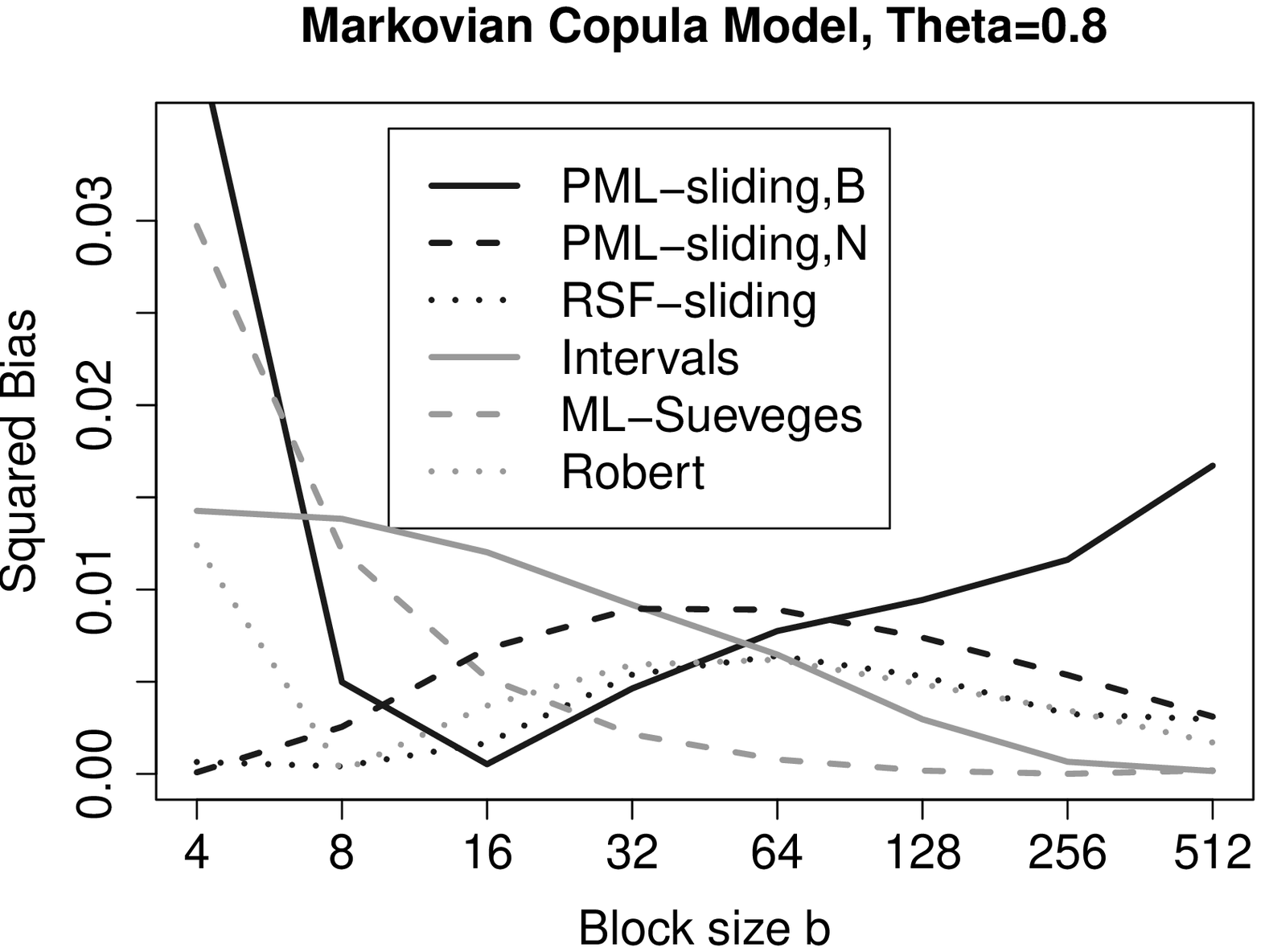}
\end{center}
\vspace{-.3cm}
\caption{\label{fig:bias4}  Bias of the estimation of $\theta$ within theMarkovian copula model for four values of $\theta\in\{0.2, 0.4, 0.6, 0.8\}$. 
}

\end{figure}

\begin{figure}[p!]

\begin{center}
\includegraphics[width=0.43\textwidth]{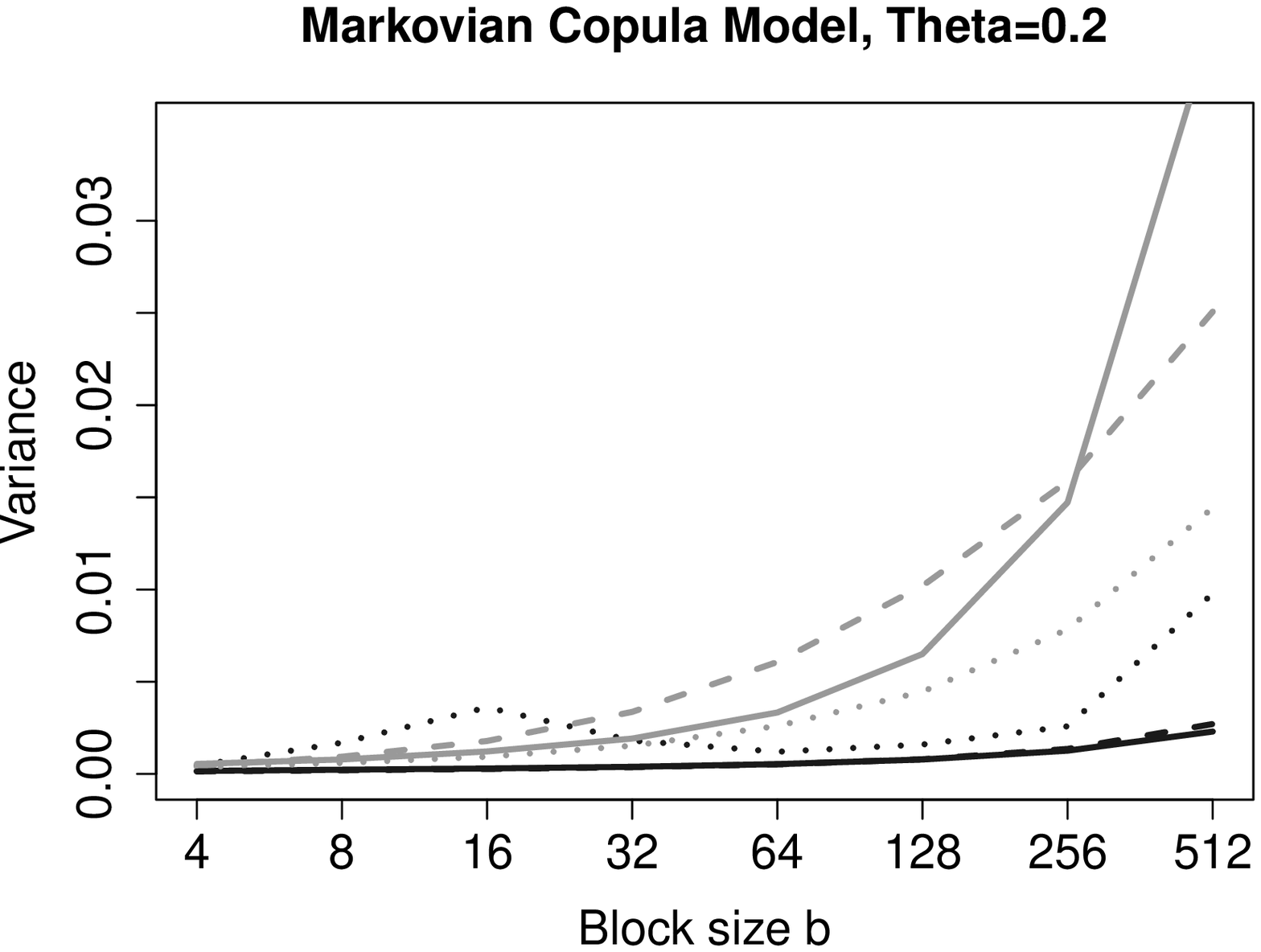}
\hspace{-.3cm}
\includegraphics[width=0.43\textwidth]{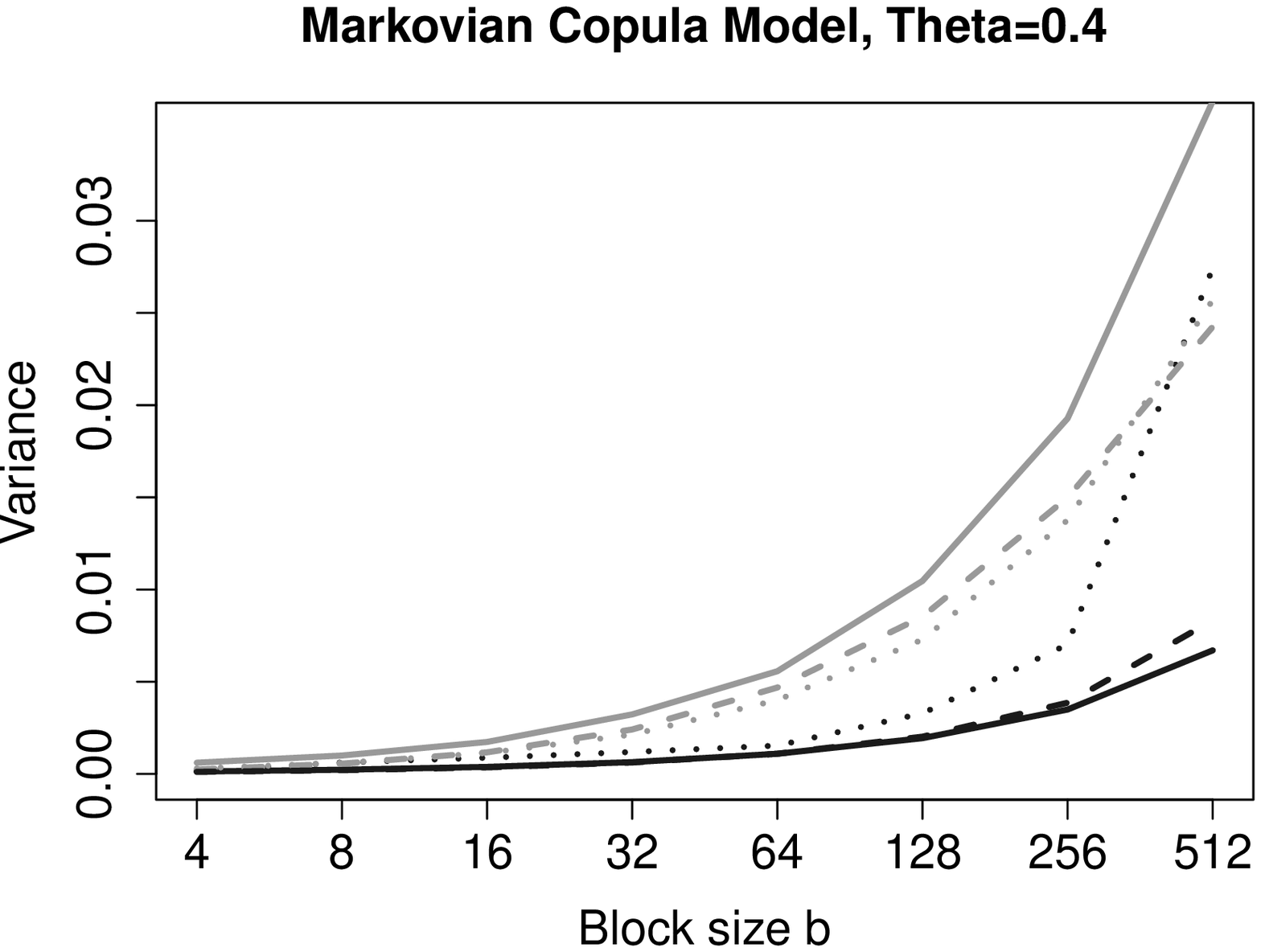}
\vspace{-.2cm}

\includegraphics[width=0.43\textwidth]{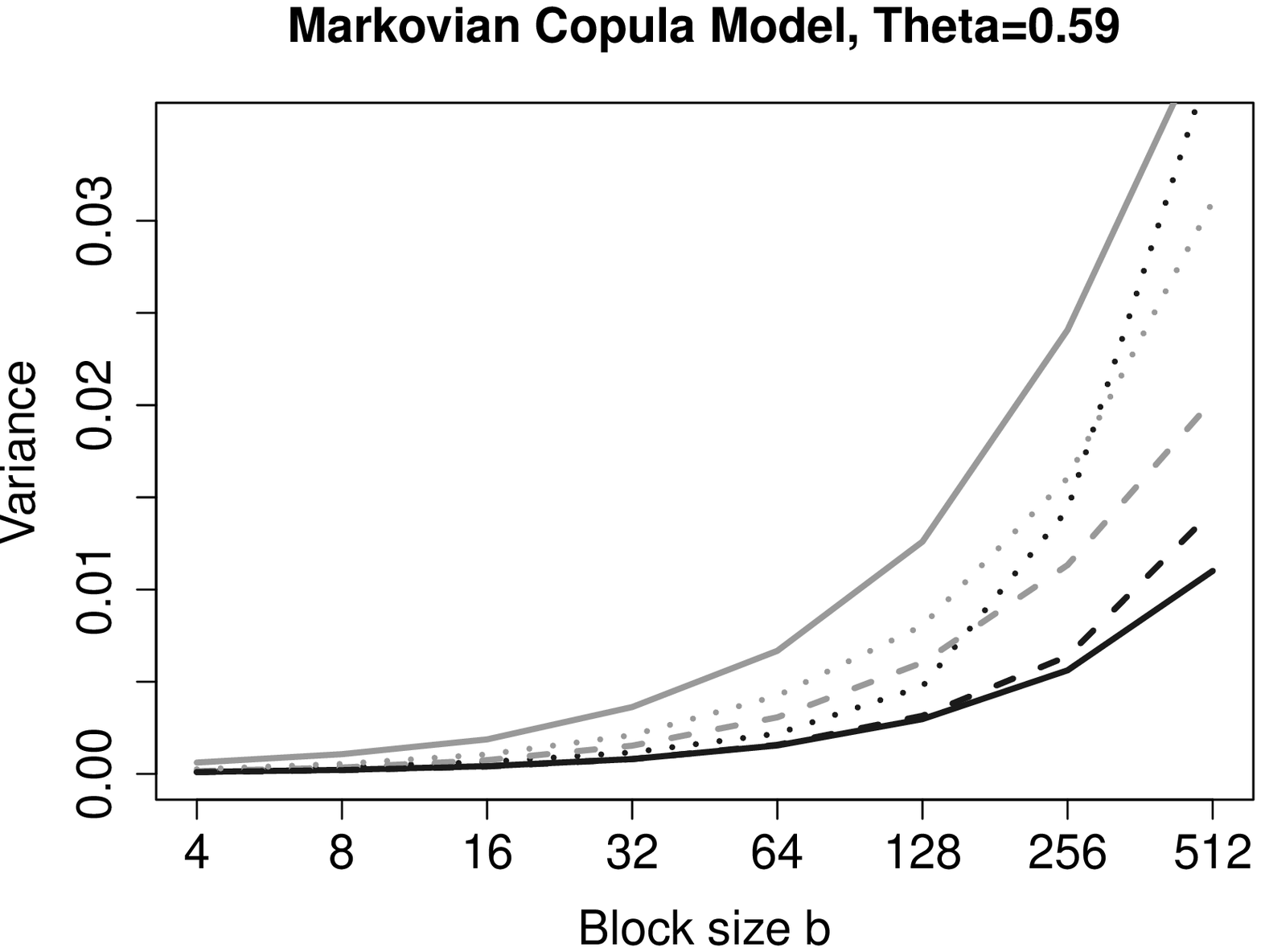}
\hspace{-.3cm}
\includegraphics[width=0.43\textwidth]{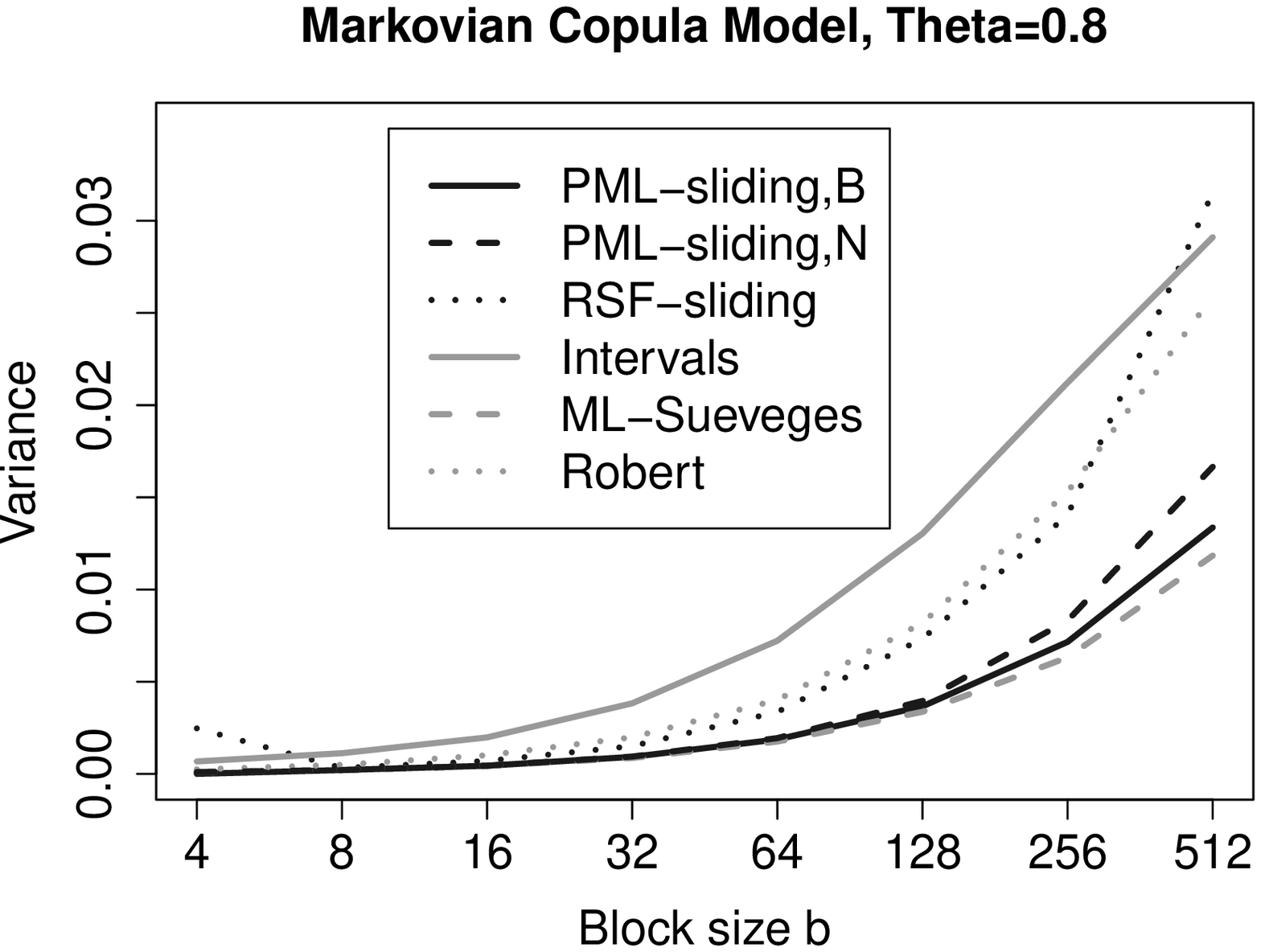}
\end{center}
\vspace{-.3cm}
\caption{\label{fig:var4}  Variance of the estimation of $\theta$ within the Markovian copula model for four values of $\theta\in\{0.2, 0.4, 0.6, 0.8\}$. 
}
\vspace*{\floatsep}
\begin{center}
\includegraphics[width=0.46\textwidth]{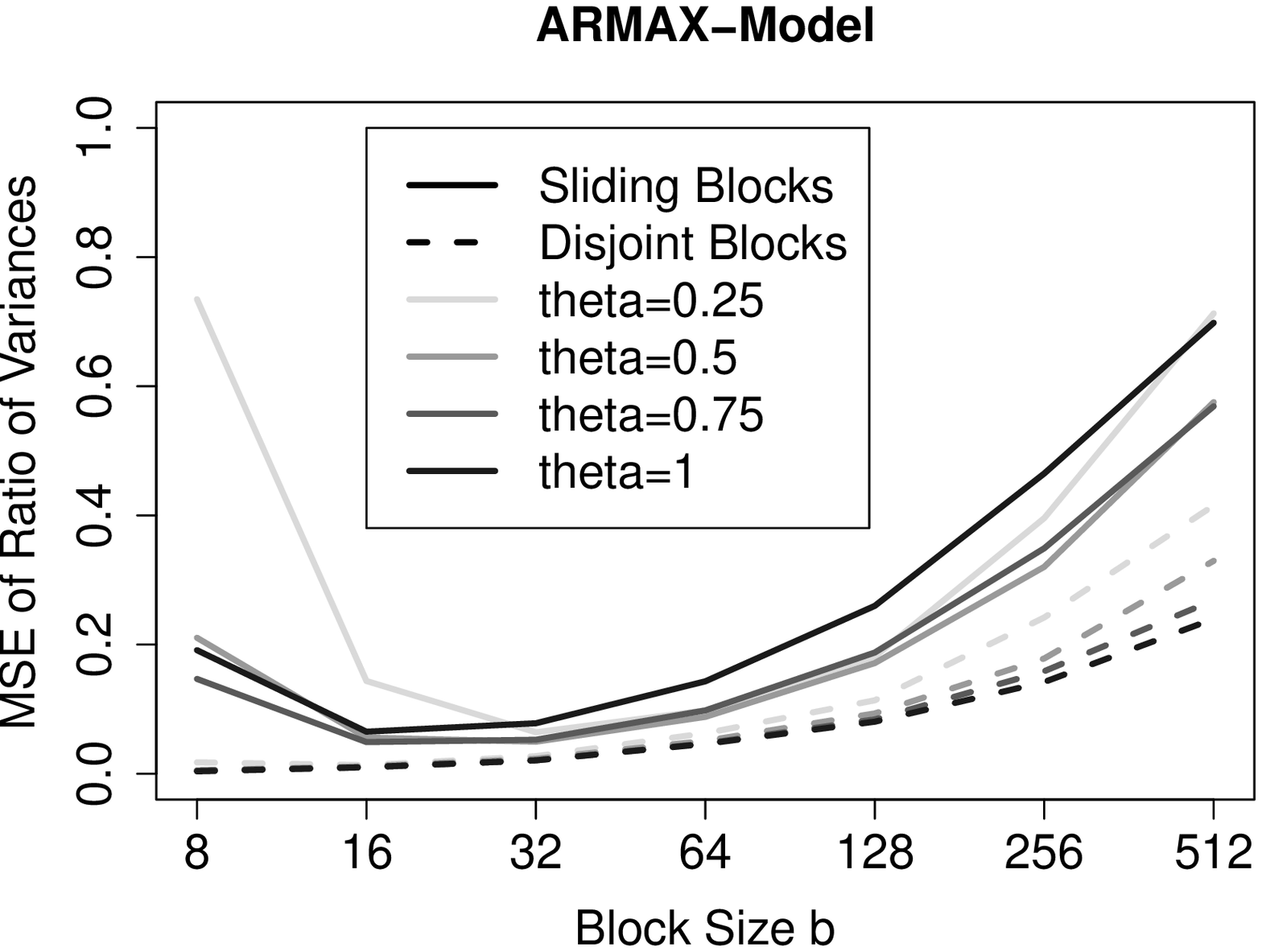}
\hspace{-.3cm}
\includegraphics[width=0.46\textwidth]{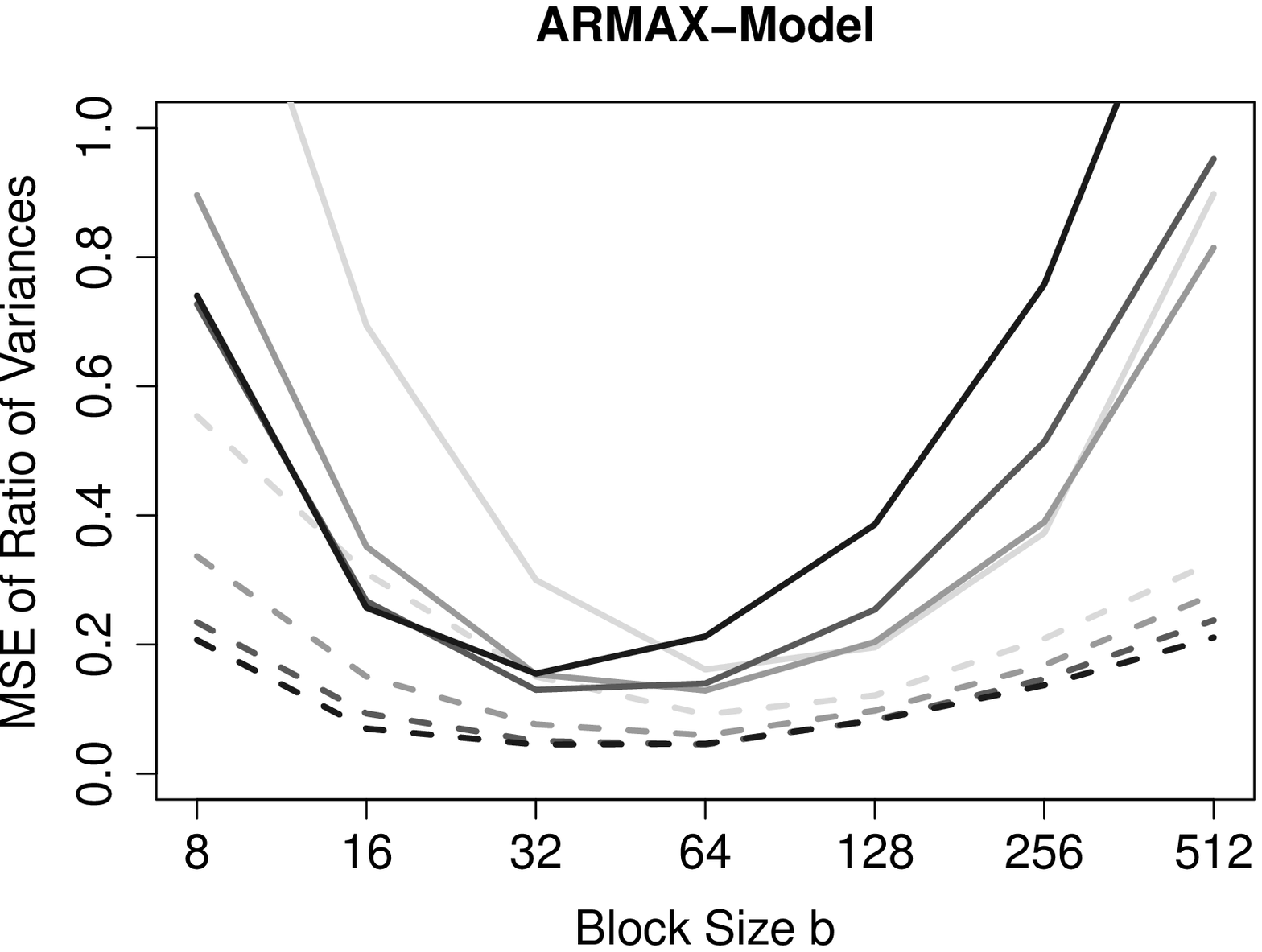}
\vspace{-.5cm}

\includegraphics[width=0.46\textwidth]{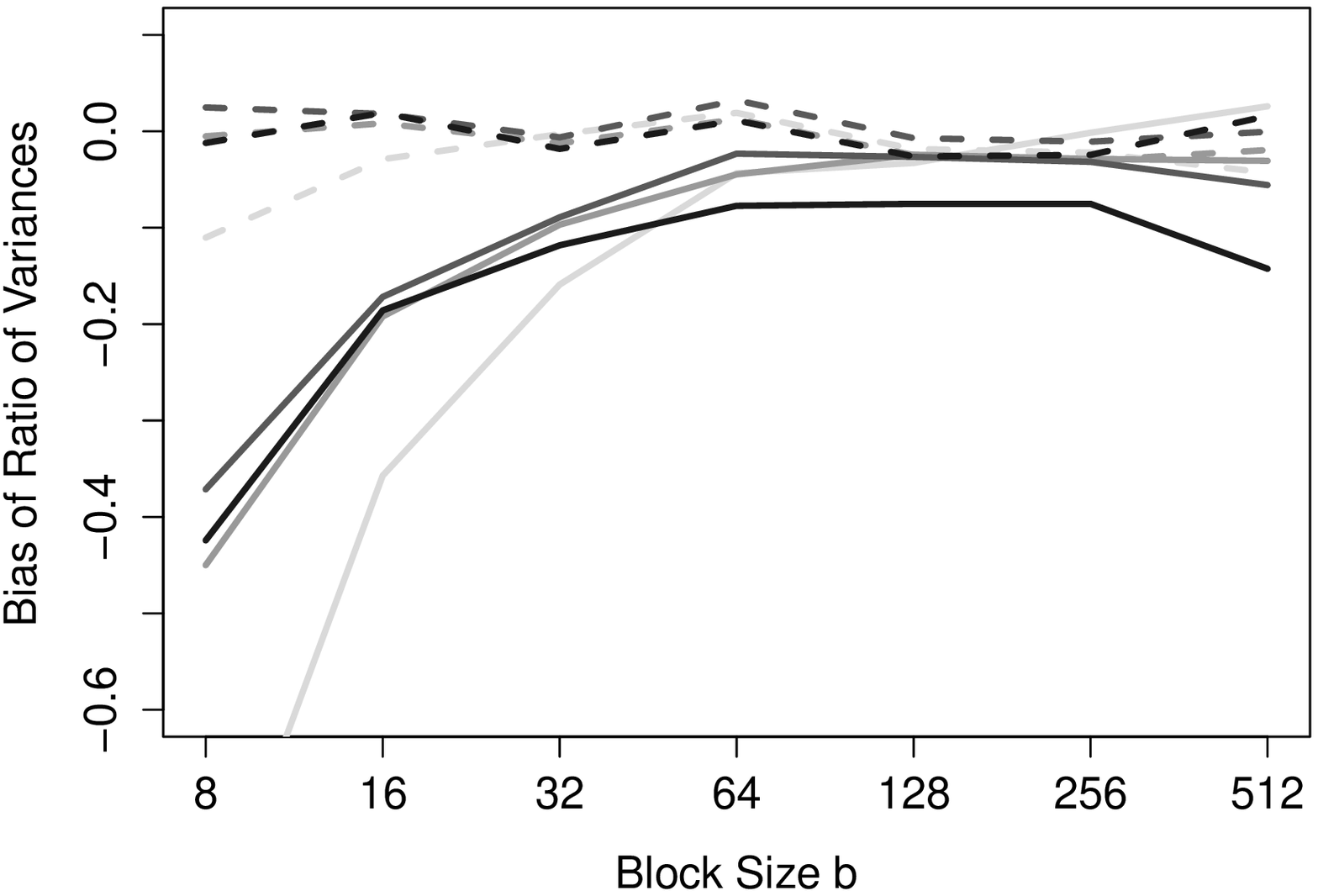}
\hspace{-.3cm}
\includegraphics[width=0.46\textwidth]{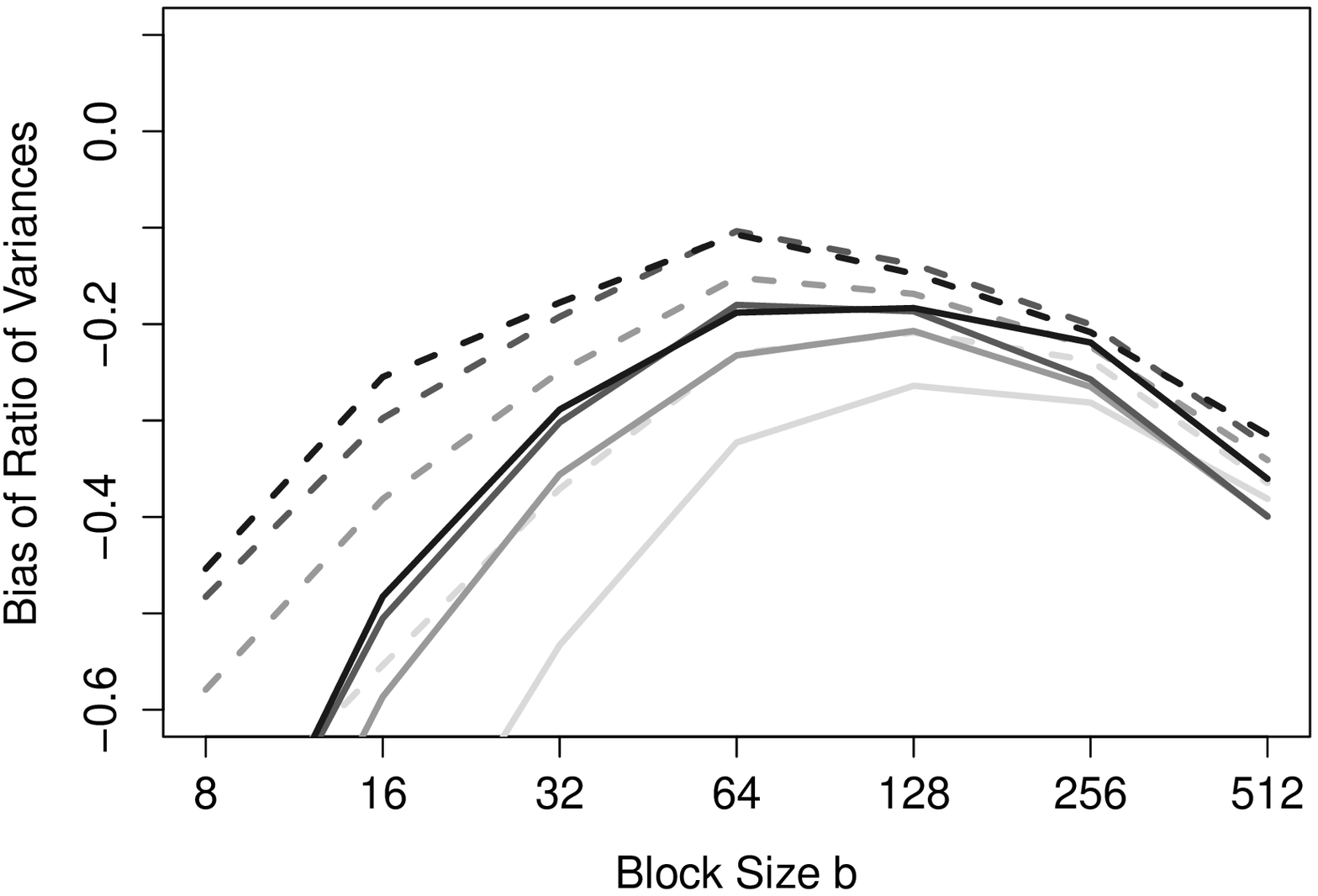}
\end{center}
\vspace{-.5cm} 
\caption{\label{fig:msevariance2}  Mean squared error $\Exp[(\hat \tau^2/ \Var (\hat \theta_n) - 1 )^2]$ and bias  $\Exp[\hat \tau^2/ \Var (\hat \theta_n) ]- 1 $ within the ARMAX-model for the unconstrained estimators  $\hat \theta_n^{\Be}$ (left) and $\hat \theta_n^{\No}$ (right).
}
\end{figure}

\begin{figure}[t!]

\begin{center}
\includegraphics[width=0.43\textwidth]{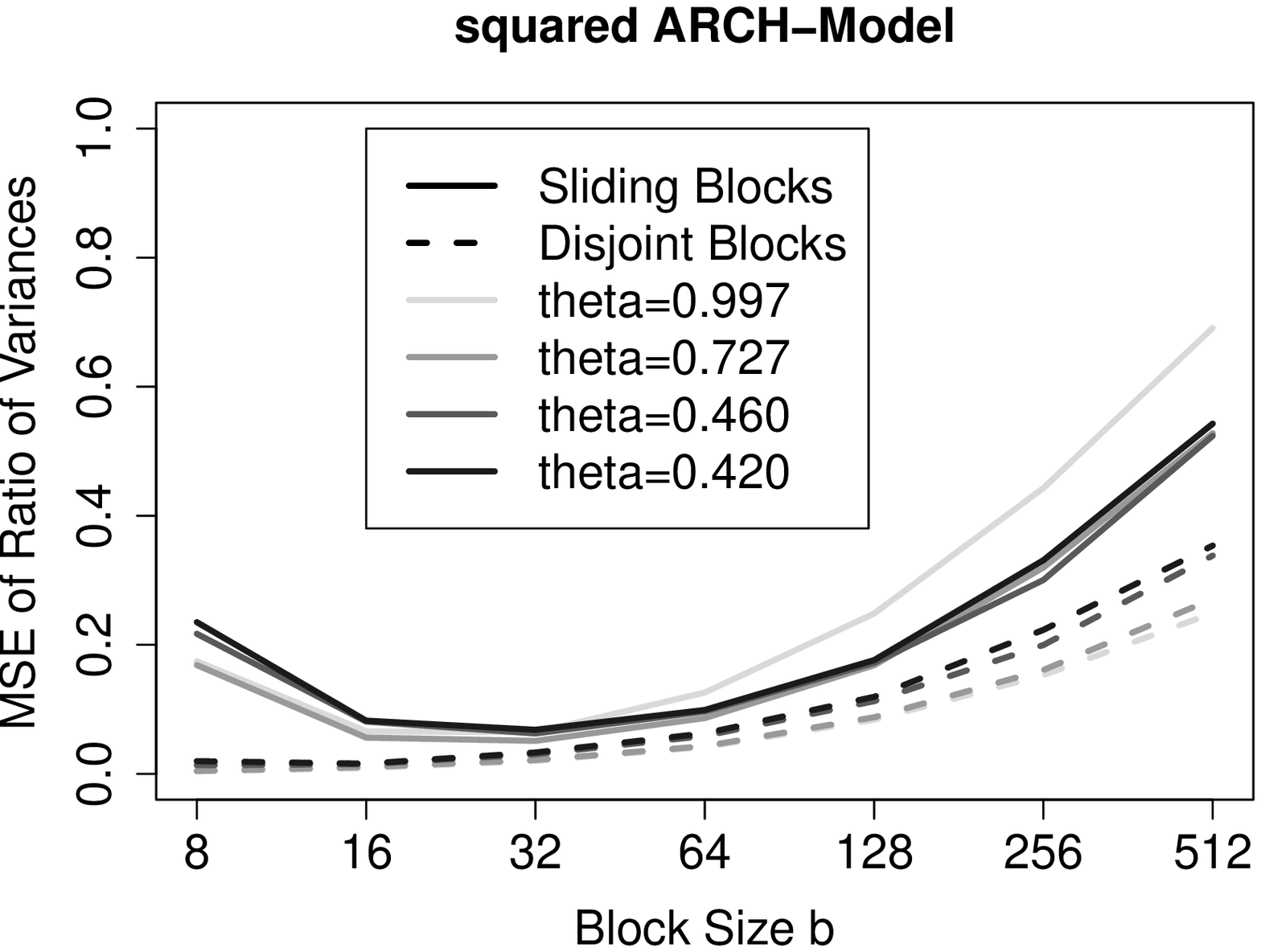}
\hspace{-.3cm}
\includegraphics[width=0.43\textwidth]{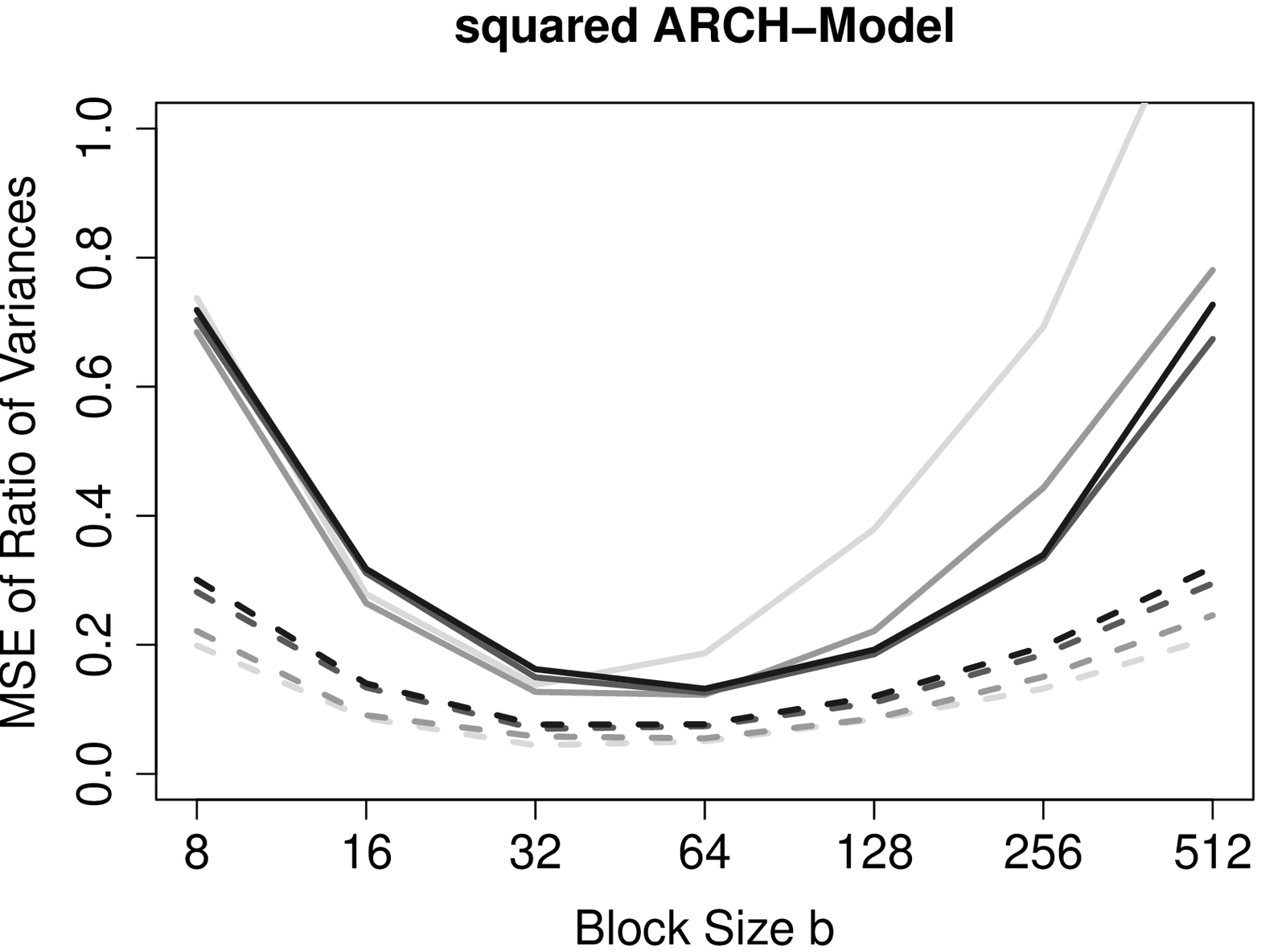}
\vspace{-.2cm}

\includegraphics[width=0.43\textwidth]{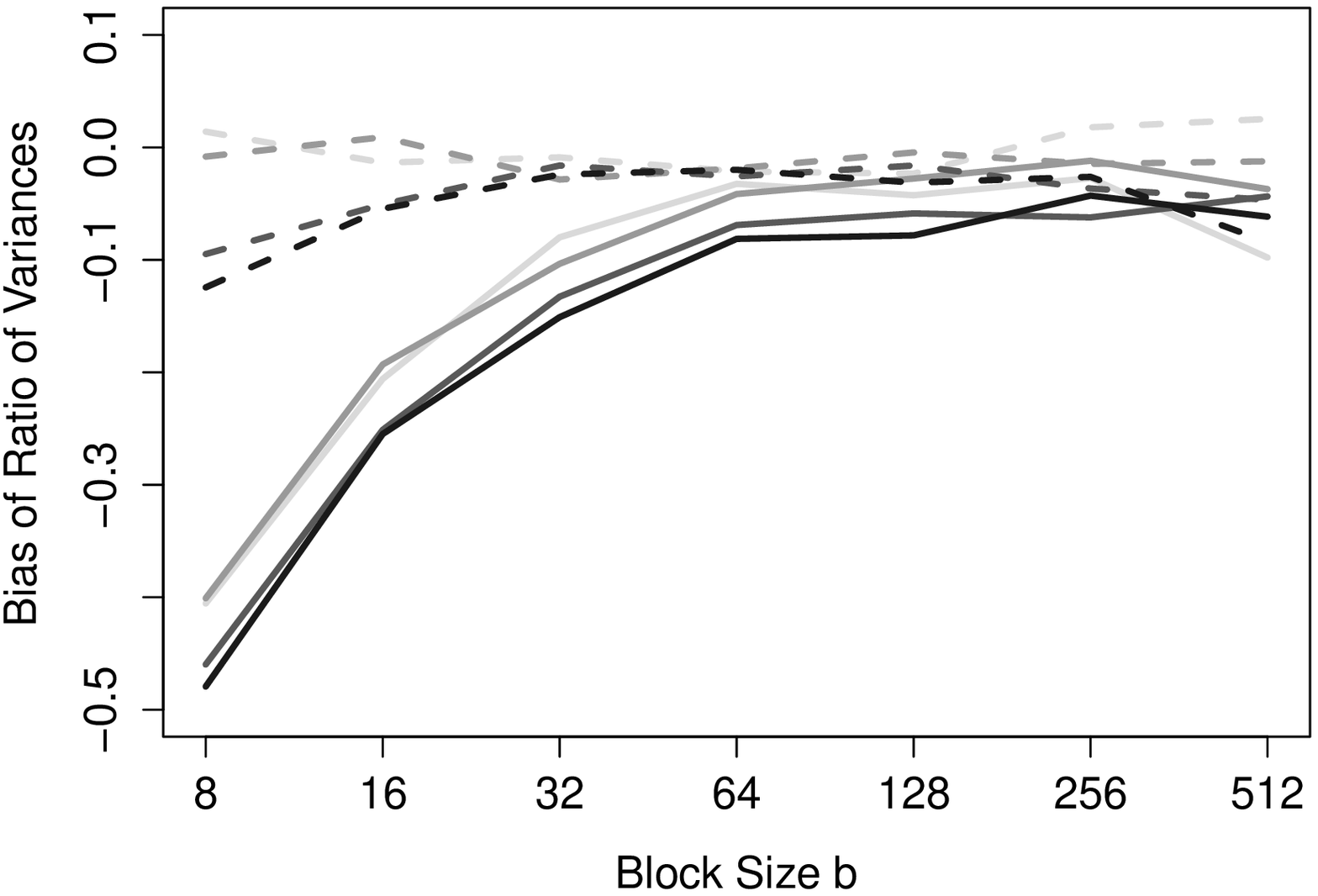}
\hspace{-.3cm}
\includegraphics[width=0.43\textwidth]{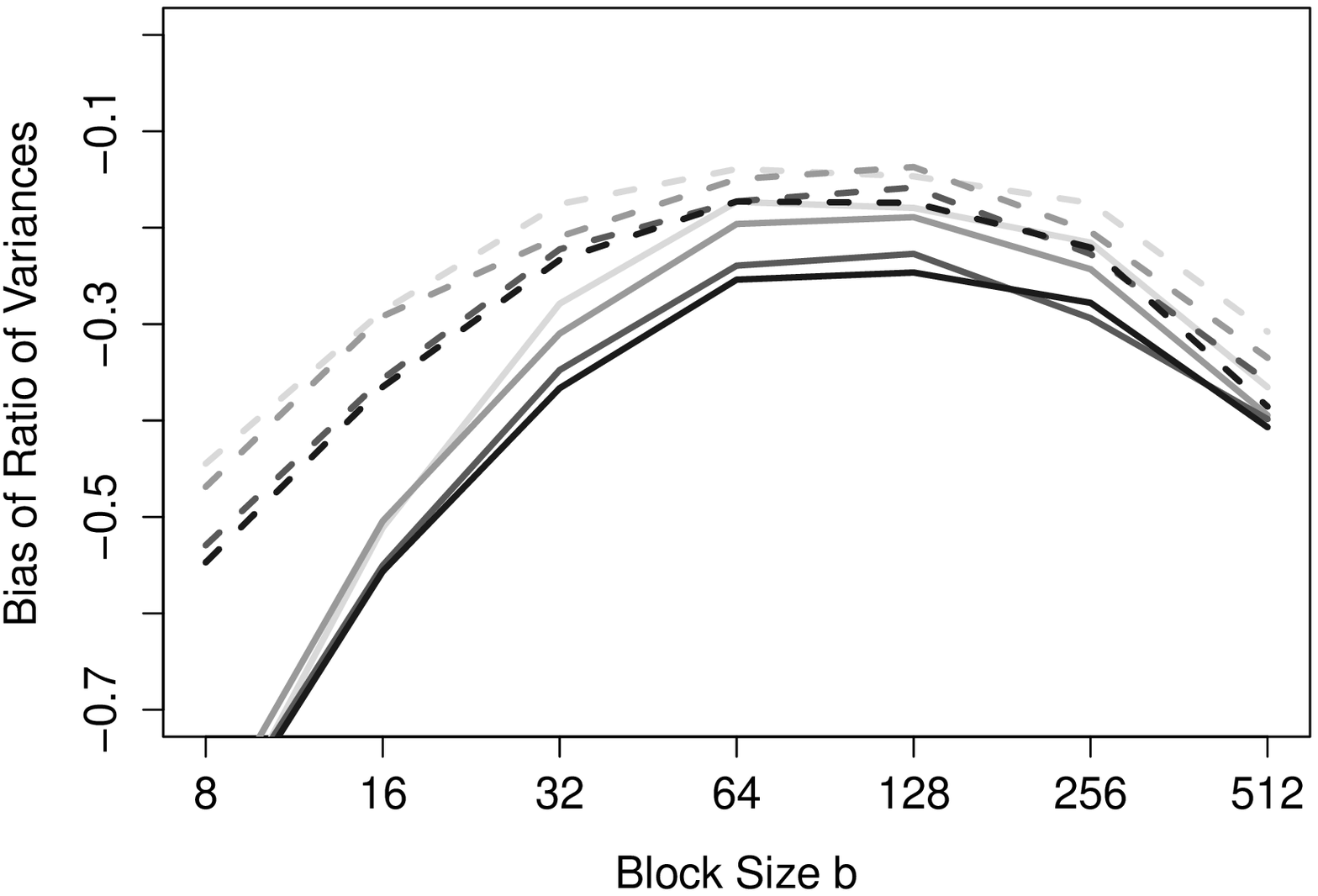}
\end{center}
\vspace{-.3cm}
\caption{\label{fig:msevariance3}  Mean squared error $\Exp[(\hat \tau^2/ \Var (\hat \theta_n) - 1 )^2]$ and bias  $\Exp[\hat \tau^2/ \Var (\hat \theta_n) ]- 1 $ within the squared ARCH-model for the PML-estimator (left) and Northrop's estimator (right).
}
\end{figure}

\bibliographystyle{chicago}
\bibliography{biblio}

\begin{thebibliography}{}

\bibitem[\protect\citeauthoryear{Beirlant, Goegebeur, Segers, and
  Teugels}{Beirlant et~al.}{2004}]{BeiGoeSegTeu04}
Beirlant, J., Y.~Goegebeur, J.~Segers, and J.~Teugels (2004).
\newblock {\em Statistics of extremes: Theory and Applications}.
\newblock Wiley Series in Probability and Statistics. Chichester: John Wiley \&
  Sons Ltd.

\bibitem[\protect\citeauthoryear{Berbee}{Berbee}{1979}]{Ber79}
Berbee, H. C.~P. (1979).
\newblock {\em Random walks with stationary increments and renewal theory},
  Volume 112 of {\em Mathematical Centre Tracts}.
\newblock Amsterdam: Mathematisch Centrum.

\bibitem[\protect\citeauthoryear{Berghaus and Bücher}{Berghaus and
  Bücher}{2017}]{BerBuc17}
Berghaus, B. and A.~Bücher (2017, 004).
\newblock Goodness-of-fit tests for multivariate copula-based time series
  models.
\newblock {\em Econometric Theory\/}~{\em 33\/}(2), 292--330.

\bibitem[\protect\citeauthoryear{Billingsley}{Billingsley}{1979}]{Bil79}
Billingsley, P. (1979).
\newblock {\em Probability and measure}.
\newblock John Wiley \& Sons, New York-Chichester-Brisbane.
\newblock Wiley Series in Probability and Mathematical Statistics.

\bibitem[\protect\citeauthoryear{Bingham, Goldie, and Teugels}{Bingham
  et~al.}{1987}]{BinGolTeu87}
Bingham, N.~H., C.~M. Goldie, and J.~L. Teugels (1987).
\newblock {\em Regular Variation}.
\newblock Cambridge: Cambridge University Press.

\bibitem[\protect\citeauthoryear{Bradley}{Bradley}{1983}]{Bra83}
Bradley, R.~C. (1983).
\newblock Approximation theorems for strongly mixing random variables.
\newblock {\em Michigan Math. J.\/}~{\em 30\/}(1), 69--81.

\bibitem[\protect\citeauthoryear{Bradley}{Bradley}{2005}]{Bra05}
Bradley, R.~C. (2005).
\newblock Basic properties of strong mixing conditions. {A} survey and some
  open questions.
\newblock {\em Probab. Surv.\/}~{\em 2}, 107--144.
\newblock Update of, and a supplement to, the 1986 original.

\bibitem[\protect\citeauthoryear{{B{\"u}cher} and {Segers}}{{B{\"u}cher} and
  {Segers}}{2015}]{BucSeg15}
{B{\"u}cher}, A. and J.~{Segers} (2015).
\newblock {Maximum likelihood estimation for the Fr\'{e}chet distribution based
  on block maxima extracted from a time series}.
\newblock {\em ArXiv e-prints\/}.

\bibitem[\protect\citeauthoryear{Darsow, Nguyen, and Olsen}{Darsow
  et~al.}{1992}]{DarNguOls92}
Darsow, W.~F., B.~Nguyen, and E.~T. Olsen (1992).
\newblock Copulas and {M}arkov processes.
\newblock {\em Illinois J. Math.\/}~{\em 36\/}(4), 600--642.

\bibitem[\protect\citeauthoryear{de~Haan, Resnick, Rootz{\'e}n, and
  de~Vries}{de~Haan et~al.}{1989}]{DehResRooVri89}
de~Haan, L., S.~I. Resnick, H.~Rootz{\'e}n, and C.~G. de~Vries (1989).
\newblock Extremal behaviour of solutions to a stochastic difference equation
  with applications to {ARCH} processes.
\newblock {\em Stochastic Process. Appl.\/}~{\em 32\/}(2), 213--224.

\bibitem[\protect\citeauthoryear{Dehling and Philipp}{Dehling and
  Philipp}{2002}]{DehPhi02}
Dehling, H. and W.~Philipp (2002).
\newblock Empirical process techniques for dependent data.
\newblock In {\em Empirical process techniques for dependent data}, pp.\
  3--113. Boston, MA: Birkh\"auser Boston.

\bibitem[\protect\citeauthoryear{Drees}{Drees}{2000}]{Dre00}
Drees, H. (2000).
\newblock Weighted approximations of tail processes for {$\beta$}-mixing random
  variables.
\newblock {\em Ann. Appl. Probab.\/}~{\em 10\/}(4), 1274--1301.

\bibitem[\protect\citeauthoryear{Drees}{Drees}{2002}]{Dre02}
Drees, H. (2002).
\newblock Tail empirical processes under mixing conditions.
\newblock In {\em Empirical process techniques for dependent data}, pp.\
  325--342. Birkh\"auser Boston, Boston, MA.

\bibitem[\protect\citeauthoryear{Drees and Rootz{\'e}n}{Drees and
  Rootz{\'e}n}{2010}]{DreRoo10}
Drees, H. and H.~Rootz{\'e}n (2010).
\newblock Limit theorems for empirical processes of cluster functionals.
\newblock {\em Ann. Statist.\/}~{\em 38\/}(4), 2145--2186.

\bibitem[\protect\citeauthoryear{Ferro and Segers}{Ferro and
  Segers}{2003}]{FerSeg03}
Ferro, C. A.~T. and J.~Segers (2003).
\newblock Inference for clusters of extreme values.
\newblock {\em J. R. Stat. Soc. Ser. B Stat. Methodol.\/}~{\em 65\/}(2),
  545--556.

\bibitem[\protect\citeauthoryear{Hsing}{Hsing}{1984}]{Hsi84}
Hsing, T. (1984).
\newblock {\em Point Processes Associated with Extreme Value Theory}.
\newblock ProQuest LLC, Ann Arbor, MI.
\newblock Thesis (Ph.D.)--The University of North Carolina at Chapel Hill.

\bibitem[\protect\citeauthoryear{Hsing}{Hsing}{1993}]{Hsi93}
Hsing, T. (1993).
\newblock Extremal index estimation for a weakly dependent stationary sequence.
\newblock {\em Ann. Statist.\/}~{\em 21\/}(4), 2043--2071.

\bibitem[\protect\citeauthoryear{Hsing, H{\"u}sler, and Leadbetter}{Hsing
  et~al.}{1988}]{HsiHusLea88}
Hsing, T., J.~H{\"u}sler, and M.~R. Leadbetter (1988).
\newblock On the exceedance point process for a stationary sequence.
\newblock {\em Probab. Theory Related Fields\/}~{\em 78\/}(1), 97--112.

\bibitem[\protect\citeauthoryear{Kesten}{Kesten}{1973}]{Kes73}
Kesten, H. (1973).
\newblock Random difference equations and renewal theory for products of random
  matrices.
\newblock {\em Acta Math.\/}~{\em 131}, 207--248.

\bibitem[\protect\citeauthoryear{Kosorok}{Kosorok}{2008}]{Kos08}
Kosorok, M.~R. (2008).
\newblock {\em Introduction to empirical processes and semiparametric
  inference}.
\newblock Springer Series in Statistics. New York: Springer.

\bibitem[\protect\citeauthoryear{Leadbetter}{Leadbetter}{1983}]{Lea83}
Leadbetter, M.~R. (1983).
\newblock Extremes and local dependence in stationary sequences.
\newblock {\em Z. Wahrsch. Verw. Gebiete\/}~{\em 65\/}(2), 291--306.

\bibitem[\protect\citeauthoryear{Leadbetter, Lindgren, and
  Rootz{\'e}n}{Leadbetter et~al.}{1983}]{LeaLinRoo83}
Leadbetter, M.~R., G.~Lindgren, and H.~Rootz{\'e}n (1983).
\newblock {\em Extremes and related properties of random sequences and
  processes}.
\newblock Springer Series in Statistics. Springer-Verlag, New York-Berlin.

\bibitem[\protect\citeauthoryear{Leadbetter and Rootz{\'e}n}{Leadbetter and
  Rootz{\'e}n}{1988}]{LeaRoo88}
Leadbetter, M.~R. and H.~Rootz{\'e}n (1988).
\newblock Extremal theory for stochastic processes.
\newblock {\em Ann. Probab.\/}~{\em 16\/}(2), 431--478.

\bibitem[\protect\citeauthoryear{Northrop}{Northrop}{2015}]{Nor15}
Northrop, P.~J. (2015).
\newblock An efficient semiparametric maxima estimator of the extremal index.
\newblock {\em Extremes\/}~{\em 18\/}(4), 585--603.

\bibitem[\protect\citeauthoryear{O'Brien}{O'Brien}{1987}]{Obr87}
O'Brien, G.~L. (1987).
\newblock Extreme values for stationary and {M}arkov sequences.
\newblock {\em Ann. Probab.\/}~{\em 15\/}(1), 281--291.

\bibitem[\protect\citeauthoryear{Perfekt}{Perfekt}{1994}]{Per94}
Perfekt, R. (1994).
\newblock Extremal behaviour of stationary {M}arkov chains with applications.
\newblock {\em Ann. Appl. Probab.\/}~{\em 4\/}(2), 529--548.

\bibitem[\protect\citeauthoryear{R{\'e}millard, Papageorgiou, and
  Soustra}{R{\'e}millard et~al.}{2012}]{RemPapSou12}
R{\'e}millard, B., N.~Papageorgiou, and F.~Soustra (2012).
\newblock Copula-based semiparametric models for multivariate time series.
\newblock {\em J. Multivariate Anal.\/}~{\em 110}, 30--42.

\bibitem[\protect\citeauthoryear{Resnick}{Resnick}{1987}]{Res87}
Resnick, S.~I. (1987).
\newblock {\em Extreme values, regular variation, and point processes},
  Volume~4 of {\em Applied Probability. A Series of the Applied Probability
  Trust}.
\newblock Springer-Verlag, New York.

\bibitem[\protect\citeauthoryear{Robert}{Robert}{2009}]{Rob09}
Robert, C.~Y. (2009).
\newblock Inference for the limiting cluster size distribution of extreme
  values.
\newblock {\em Ann. Statist.\/}~{\em 37\/}(1), 271--310.

\bibitem[\protect\citeauthoryear{Robert, Segers, and Ferro}{Robert
  et~al.}{2009}]{RobSegFer09}
Robert, C.~Y., J.~Segers, and C.~A.~T. Ferro (2009).
\newblock A sliding blocks estimator for the extremal index.
\newblock {\em Electron. J. Stat.\/}~{\em 3}, 993--1020.

\bibitem[\protect\citeauthoryear{Rootz{\'e}n}{Rootz{\'e}n}{2009}]{Roo09}
Rootz{\'e}n, H. (2009).
\newblock Weak convergence of the tail empirical process for dependent
  sequences.
\newblock {\em Stochastic Process. Appl.\/}~{\em 119\/}(2), 468--490.

\bibitem[\protect\citeauthoryear{Segers}{Segers}{2005}]{Seg05}
Segers, J. (2005).
\newblock Approximate distributions of clusters of extremes.
\newblock {\em Statist. Probab. Lett.\/}~{\em 74\/}(4), 330--336.

\bibitem[\protect\citeauthoryear{Smith and Weissman}{Smith and
  Weissman}{1994}]{SmiWei94}
Smith, R.~L. and I.~Weissman (1994).
\newblock Estimating the extremal index.
\newblock {\em J. Roy. Statist. Soc. Ser. B\/}~{\em 56\/}(3), 515--528.

\bibitem[\protect\citeauthoryear{S{\"u}veges}{S{\"u}veges}{2007}]{Suv07}
S{\"u}veges, M. (2007).
\newblock Likelihood estimation of the extremal index.
\newblock {\em Extremes\/}~{\em 10\/}(1-2), 41--55.

\bibitem[\protect\citeauthoryear{S{\"u}veges, Davison, et~al.}{S{\"u}veges
  et~al.}{2010}]{SuvDav10}
S{\"u}veges, M., A.~C. Davison, et~al. (2010).
\newblock Model misspecification in peaks over threshold analysis.
\newblock {\em The Annals of Applied Statistics\/}~{\em 4\/}(1), 203--221.

\bibitem[\protect\citeauthoryear{van~der Vaart}{van~der Vaart}{1998}]{Van98}
van~der Vaart, A.~W. (1998).
\newblock {\em Asymptotic statistics}, Volume~3 of {\em Cambridge Series in
  Statistical and Probabilistic Mathematics}.
\newblock Cambridge: Cambridge University Press.

\bibitem[\protect\citeauthoryear{Vervaat}{Vervaat}{1979}]{Ver79}
Vervaat, W. (1979).
\newblock On a stochastic difference equation and a representation of
  nonnegative infinitely divisible random variables.
\newblock {\em Adv. in Appl. Probab.\/}~{\em 11\/}(4), 750--783.

\bibitem[\protect\citeauthoryear{Weissman and Novak}{Weissman and
  Novak}{1998}]{WeiNov98}
Weissman, I. and S.~Y. Novak (1998).
\newblock On blocks and runs estimators of the extremal index.
\newblock {\em J. Statist. Plann. Inference\/}~{\em 66\/}(2), 281--288.

\end{thebibliography}

\end{document}